\theoremstyle{theorem}
\newtheorem{theorem}{Theorem}[section]
\newtheorem{proposition}[theorem]{Proposition}
\newtheorem{lemma}[theorem]{Lemma}
\newtheorem{condition}[theorem]{Condition}
\newtheorem{remark}[theorem]{Remark}
\numberwithin{equation}{section}
\theoremstyle{plain}
\newtoks\thehProclaim
\newtheorem*{Proclaim}{\the\thehProclaim}
\begin{document}

\title[Homogenization of hyperbolic systems]{On operator error estimates for homogenization of~hyperbolic systems with periodic coefficients}

\author{Yu.~M.~Meshkova}

\thanks{The study was supported by  project of Russian Science Foundation  no. 17-11-01069.}

\keywords{Periodic differential operators, hyperbolic systems, homogenization, operator error estimates.
}

\date{\today}

\address{Chebyshev Laboratory, St. Petersburg State University, 14th Line V.O., 29b, St.~Petersburg, 199178, Russia}
\email{y.meshkova@spbu.ru,\quad juliavmeshke@yandex.ru}

\subjclass[2000]{Primary 35B27. Secondary 35L52}

\begin{abstract}
In $L_2(\mathbb{R}^d;\mathbb{C}^n)$, we consider a selfadjoint matrix strongly elliptic second order differential operator $\mathcal{A}_\varepsilon$, 
$\varepsilon >0$. 
The coefficients of the operator $\mathcal{A}_\varepsilon$ are periodic and depend on $\mathbf{x}/\varepsilon$. 
We study the behavior of the operator $\mathcal{A}_\varepsilon ^{-1/2}\sin (\tau \mathcal{A}_\varepsilon ^{1/2})$, 
$\tau\in\mathbb{R}$, in the small period limit. The principal term of approximation in the $(H^1\rightarrow L_2)$-norm 
for this operator is found. Approximation in the $(H^2\rightarrow H^1)$-operator norm with the correction term taken 
into account is also established. The results are applied to homogenization for the solutions of the nonhomogeneous 
hyperbolic equation $\partial ^2_\tau \mathbf{u}_\varepsilon =-\mathcal{A}_\varepsilon \mathbf{u}_\varepsilon +\mathbf{F}$.
\end{abstract}
\maketitle

\tableofcontents

\section*{Introduction}
The paper is devoted to homogenization of periodic differential operators (DO's). A broad literature is devoted to homogenization theory, see, e.~g., the books \cite{BaPa,BeLP,Sa,ZhKO}. 
We use the spectral approach to homogenization problems based on the Floquet-Bloch theory and the analytic perturbation theory.

\subsection{The class of operators} 
In $L_2(\mathbb{R}^d;\mathbb{C}^n)$, we consider a matrix elliptic second order DO $\mathcal{A}_\varepsilon$ admitting a factorization 
$
\mathcal{A}_\varepsilon=b(\mathbf{D})^*g(\mathbf{x}/\varepsilon)b(\mathbf{D})$, $\varepsilon >0$.
Here $b(\mathbf{D})=\sum _{j=1}^d b_jD_j$ is an $(m\times n)$-matrix-valued first order DO with constant coefficients. Assume that $m\geqslant n$ and that the symbol $b(\boldsymbol{\xi})$ has maximal rank. A periodic $(m\times m)$-matrix-valued function $g(\mathbf{x})$ is such that 
$g(\mathbf{x})>0$; $g, g^{-1}\in L_\infty$. 
The coefficients of the operator $\mathcal{A}_\varepsilon$ oscillate rapidly as $\varepsilon \rightarrow 0$.

\subsection{Operator error estimates for elliptic and parabolic problems}

In a series of papers \cite{BSu,BSu05-1,BSu05,BSu06} by M.~Sh.~Birman and T.~A.~Suslina, an abstract operator-theoretic (spectral) approach to homogenization problems in $\mathbb{R}^d$ was developed. This approach is based on the scaling transformation, the Floquet-Bloch theory, and the analytic perturbation theory.  

A typical homogenization problem is to study the behavior of the solution 
$\mathbf{u}_\varepsilon$ of the equation $\mathcal{A}_\varepsilon \mathbf{u}_\varepsilon +\mathbf{u}_\varepsilon=\mathbf{F}$, where $\mathbf{F}\in L_2(\mathbb{R}^d;\mathbb{C}^n)$, as $\varepsilon\rightarrow 0$. 
It turns out that the solutions 
$\mathbf{u}_\varepsilon$ converge in some sense to the solution 
$\mathbf{u}_0$ of the homogenized equation $\mathcal{A}^0\mathbf{u}_0+\mathbf{u}_0=\mathbf{F}$. Here
\begin{equation*}
\mathcal{A}^0=b(\mathbf{D})^*g^0b(\mathbf{D})
\end{equation*}
is the \textit{effective operator} and $g^0$ is the constant \textit{effective matrix}. The way to construct $g^0$ is well known in homogenization theory.

In \cite{BSu}, it was shown that 
\begin{equation}
\label{eq/ 0}
\Vert \mathbf{u}_\varepsilon -\mathbf{u}_0\Vert _{L_2(\mathbb{R}^d)}\leqslant C\varepsilon \Vert\mathbf{F}\Vert _{L_2(\mathbb{R}^d)}.
\end{equation}
This estimate is order-sharp. The constant 
$C$ is controlled explicitly in terms of the problem data. Inequality \eqref{eq/ 0} means that  the resolvent  $(\mathcal{A}_\varepsilon +I)^{-1}$ converges to the resolvent of the effective operator in the  $L_2(\mathbb{R}^d;\mathbb{C}^n)$-operator norm, as $\varepsilon\rightarrow 0$. Moreover,
\begin{equation*}
\Vert (\mathcal{A}_\varepsilon +I)^{-1}-(\mathcal{A}^0 +I)^{-1}\Vert _{L_2(\mathbb{R}^d)\rightarrow L_2(\mathbb{R}^d)}\leqslant C\varepsilon .
\end{equation*}
Results of this type are called \textit{operator error estimates} in homogenization theory.

In \cite{BSu06}, approximation of the resolvent $(\mathcal{A}_\varepsilon +I)^{-1}$ in the $(L_2\rightarrow H^1)$-operator norm was found:
\begin{equation*}
\Vert (\mathcal{A}_\varepsilon +I)^{-1}-(\mathcal{A}^0 +I)^{-1}-\varepsilon K(\varepsilon)\Vert _{L_2(\mathbb{R}^d)\rightarrow H^1(\mathbb{R}^d)}\leqslant C\varepsilon .
\end{equation*}
Here the \textit{correction term} $K(\varepsilon)$ is taken into account. It contains a rapidly oscillating factor and so depends on  $\varepsilon$. Herewith, $\Vert \varepsilon K(\varepsilon)\Vert _{L_2\rightarrow H^1}=O(1)$. 
In contrast to the traditional corrector of homogenization theory, the operator 
$K(\varepsilon)$ contains an auxiliary smoothing operator $\Pi _\varepsilon $ (see \eqref{Pi eps} below).

To parabolic homogenization problems the spectral approach was applied in 
\cite{Su04,Su07,Su_MMNP}. The principal term of approximation was found in 
\cite{Su04,Su07}:
\begin{equation*}
\Vert e^{-\tau \mathcal{A}_\varepsilon}-e^{-\tau\mathcal{A}^0}\Vert _{L_2(\mathbb{R}^d)\rightarrow L_2(\mathbb{R}^d)}
\leqslant C\varepsilon \tau  ^{-1/2},\quad \tau >0.
\end{equation*}
Approximation with the corrector taken into account was obtained in 
\cite{Su_MMNP}:
\begin{equation*}
\Vert e^{-\tau \mathcal{A}_\varepsilon}-e^{-\tau\mathcal{A}^0}-\varepsilon\mathcal{K}(\varepsilon ,\tau)\Vert _{L_2(\mathbb{R}^d)\rightarrow H^1(\mathbb{R}^d)}\leqslant C\varepsilon (\tau ^{-1}+\tau ^{-1/2}),\quad 0<\varepsilon\leqslant\tau ^{1/2}.
\end{equation*}

Another approach to deriving operator error estimates (the so-called \textit{modified method of first order approximation} or the \textit{shift method})
was suggested by V.~V.~Zhikov \cite{Zh1,Zh2} and developed by V.~V.~Zhikov and S.~E.~Pastukhova \cite{ZhPas}. In these papers the elliptic problems for the operators of acoustics and elasticity theory were studied. To parabolic problems the shift method was applied in 
\cite{ZhPAs_parabol}. Further results of V. V. Zhikov and S. E. Pastukhova are discussed in the recent survey \cite{ZhPasUMN}.

\subsection{Operator error estimates for homogenization of hyperbolic equations and nonstationary Schr\"odinger-type equations}

For elliptic and parabolic problems operator error estimates are well studied. The situation with homogenization of nonstationary Schr\"odinger-type and hyperbolic equations is
different. 
In \cite{BSu08}, the operators $e^{-i\mathcal{A}_\varepsilon}$ and $\cos (\tau \mathcal{A}_\varepsilon ^{1/2})$ were studied. It turned out that for these operators it is impossible to find approximations in the $(L_2\rightarrow L_2)$-norm. Approximations in the $(H^s\rightarrow L_2)$-norms with suitable $s$ were found in 
\cite{BSu08}:
\begin{align}
\label{Schreg intr}
&\Vert e^{-i\tau \mathcal{A}_\varepsilon }-e^{-i\tau\mathcal{A}^0}\Vert _{H^3(\mathbb{R}^d)\rightarrow L_2(\mathbb{R}^d)}\leqslant C\varepsilon(1+\vert \tau\vert ),
\\
\label{cos intr}
&\Vert \cos (\tau\mathcal{A}_\varepsilon ^{1/2})-\cos (\tau (\mathcal{A}^0)^{1/2})\Vert _{H^2(\mathbb{R}^d)\rightarrow L_2(\mathbb{R}^d)}\leqslant C\varepsilon (1+\vert\tau\vert) .
\end{align}
Later T.~A.~Suslina \cite{Su17}, by using the analytic perturbation theory, 
proved that estimate 
\eqref{Schreg intr} cannot be refined with respect to the type of the operator norm. Developing the method of \cite{Su17}, M.~A.~Dorodnyi and T.~A.~Suslina \cite{DSu,DSu2} showed that estimate \eqref{cos intr} is sharp in the same sense. In \cite{DSu,DSu2,Su17}, under some additional assumptions on
the operator, the results \eqref{Schreg intr} and \eqref{cos intr} were  improved with respect to the type of the operator norm. In \cite{BSu08,DSu2}, by virtue of  the identity $\mathcal{A}_\varepsilon ^{-1/2}\sin(\tau\mathcal{A}_\varepsilon ^{1/2})=\int _0^\tau \cos(\widetilde{\tau} \mathcal{A}_\varepsilon ^{1/2})\,d\widetilde{\tau}$ and the similar identity  for the effective operator, 
the estimate
\begin{equation}
\label{Th BSu}
\begin{split}
\Vert  \mathcal{A}_\varepsilon^{-1/2}\sin(\tau \mathcal{A}^{1/2}_\varepsilon)-(\mathcal{A}^0)^{-1/2}\sin (\tau (\mathcal{A}^0)^{1/2})\Vert_{H^2(\mathbb{R}^d)\rightarrow L_2(\mathbb{R}^d)} \leqslant C\varepsilon (1+\vert \tau\vert )^2,
\quad \tau\in\mathbb{R},
\end{split}
\end{equation}
was deduced from \eqref{cos intr} as a (rough) consequence. 
The sharpness of estimate \eqref{Th BSu} with respect to the type of the operator norm was not discussed. Estimates \eqref{cos intr} and \eqref{Th BSu} were applied to homogenization for the solution of the Cauchy problem
\begin{equation}
\label{hyperbolic problem in introduction}
\begin{cases}
\partial ^2_\tau \mathbf{u}_\varepsilon (\mathbf{x},\tau)=-\mathcal{A}_\varepsilon\mathbf{u}_\varepsilon (\mathbf{x},\tau )+\mathbf{F}(\mathbf{x},\tau ),\\
\mathbf{u}_\varepsilon (\mathbf{x},0)=\boldsymbol{\varphi}(\mathbf{x}),\quad\partial _\tau \mathbf{u}_\varepsilon (\mathbf{x},0)=\boldsymbol{\psi}(\mathbf{x}).
\end{cases}
\end{equation}

\subsection{Approximation for the solutions of hyperbolic systems with the correction term taken into account}

Operator error estimates with the correction term for nonstationary equations of Schr\"odinger type and hyperbolic type previously have not been established. 
So, we discuss the known ``classical'' homogenization results that cannot be written in the uniform operator topology. These results concern the operators in a bounded domain  $\mathcal{O}\subset\mathbb{R}^d$. Approximation for the solution of the hyperbolic equation with the zero initial data and a non-zero right-hand side was obtained in  \cite[Chapter 2, Subsec. 3.6]{BeLP}. In \cite{BeLP}, it was shown that the difference of the solution and the first order approximation strongly converges to zero in  $L_2((0,T);H^1(\mathcal{O}))$. The error estimate was not established. The case of zero initial data and non-zero right-hand side was also considered in \cite[Chapter 4, Section 5]{BaPa}. In \cite{BaPa}, the complete asymptotic expansion of the solution was constructed and the estimate of order  $O(\varepsilon ^{1/2})$ for the difference of the solution and the first order approximation in the $H^1$-norm on the cylinder $\mathcal{O}\times(0,T)$ was obtained. Herewith, the right-hand side was assumed to be  $C^\infty$-smooth.

It is natural to be interested in the approximation with the correction term for the solutions of hyperbolic systems with non-zero initial data, i. e., in approximation of the operator cosine $\cos (\tau \mathcal{A}_\varepsilon ^{1/2})$ in some suitable sense. One could expect the correction term in this case to be of similar structure as for elliptic and parabolic problems.  However, in
 \cite{BrOFMu} it was observed that this is true only for very special class of initial data. 
In the general case, approximation with the corrector was found in 
\cite{BraLe,CaDCoCaMaMarG1}, but  the correction term was non-local because of the dispersion of waves in inhomogeneous media. Dispersion effects for homogenization of the wave equation were discussed in 
\cite{ABriV,ConOrV,ConSaMaBalV} via the Floquet-Bloch theory and the analytic perturbation theory. Operator error estimates have not been obtained.

\subsection{Main results }

\textit{Our goal} is to refine estimate \eqref{Th BSu} with respect to the type of the operator norm without any additional assumptions and to find an approximation for the operator $\mathcal{A}_\varepsilon ^{-1/2}\sin(\tau \mathcal{A}_\varepsilon ^{1/2})$ in the $(H^2\rightarrow H^1)$-norm. We wish to apply the results to problem  \eqref{hyperbolic problem in introduction} with $\boldsymbol{\varphi}=0$ and non-zero $\mathbf{F}$ and $\boldsymbol{\psi}$.

Our first main result is the estimate
\begin{align}
\label{main result 1}
\begin{split}
\Vert \mathcal{A}_\varepsilon^{-1/2}\sin(\tau \mathcal{A}^{1/2}_\varepsilon)-(\mathcal{A}^0)^{-1/2}\sin (\tau (\mathcal{A}^0)^{1/2})\Vert_{H^1(\mathbb{R}^d)\rightarrow L_2(\mathbb{R}^d)} \leqslant C\varepsilon (1+\vert \tau\vert ),
\quad\varepsilon >0,\quad \tau\in\mathbb{R}.
\end{split}
\end{align}
(Under additional assumptions on the operator, improvement of estimate \eqref{main result 1} with respect to the type of the norm was obtained by M.~A.~Dorodnyi and T.~A.~Suslina in the forthcoming paper \cite{DSu17} that is, actually, major revision of \cite{DSu2}.) 
Our second main result is the approximation
\begin{align}
\label{main result 2}
\begin{split}
\left\Vert \mathcal{A}_\varepsilon ^{-1/2}\sin (\tau \mathcal{A}_\varepsilon ^{1/2})-(\mathcal{A}^0)^{-1/2}\sin (\tau (\mathcal{A}^0)^{1/2})-\varepsilon\mathrm{K}(\varepsilon ,\tau)\right\Vert _{H^2(\mathbb{R}^d)\rightarrow H^1(\mathbb{R}^d)}
\leqslant C \varepsilon(1+\vert\tau\vert),
\end{split}
\end{align}
$\varepsilon >0$, $\tau\in\mathbb{R}$. 
In the general case, the corrector contains the smoothing operator. We distinguish the cases when the smoothing operator can be removed. Also we show that the smoothing operator naturally arising from our method can be replaced by the Steklov smoothing. The latter is more convenient for homogenization problems in a bounded domain. Using of the Steklov smoothing is borrowed from \cite{ZhPas}.

The results are applied to homogenization of the system \eqref{hyperbolic problem in introduction} with $\boldsymbol{\varphi}=0$. 
A more general equation $Q (\mathbf{x}/\varepsilon)\partial ^2_\tau \mathbf{u}_\varepsilon (\mathbf{x},\tau)=-\mathcal{A}_\varepsilon\mathbf{u}_\varepsilon (\mathbf{x},\tau )+Q (\mathbf{x}/\varepsilon) \mathbf{F}(\mathbf{x},\tau )$ is also considered. Here $Q(\mathbf{x})$ is a $\Gamma$-periodic $(n\times n)$-matrix-valued function such that $Q(\mathbf{x})>0$ and $Q,Q^{-1}\in L_\infty$. In Introduction, we discuss only the case $Q=\mathbf{1}_n$  for simplicity.

\subsection{Method} 

We apply the method of \cite{BSu08,DSu2} carrying out all the constructions for the operator $\mathcal{A}_\varepsilon ^{-1/2}\sin (\tau\mathcal{A}_\varepsilon ^{1/2})$.
To obtain the result with the correction term, we borrow some technical tools from 
\cite{Su_MMNP}.  By the \textit{scaling transformation}, inequality \eqref{main result 1} is equivalent to
\begin{equation}
\label{est no eps intr}
\begin{split}
\Bigl\Vert & \left(\mathcal{A}^{-1/2}\sin(\varepsilon ^{-1}\tau\mathcal{A}^{1/2})-(\mathcal{A}^0)^{-1/2}\sin(\varepsilon ^{-1}\tau (\mathcal{A}^0)^{1/2})\right)\varepsilon (-\Delta +\varepsilon ^2 I)^{-1/2}\Bigr\Vert _{L_2(\mathbb{R}^d)\rightarrow L_2(\mathbb{R}^d)}
\\
&\leqslant C(1+\vert\tau\vert),\quad \tau\in\mathbb{R},\quad \varepsilon >0.
\end{split}
\end{equation}
Here $\mathcal{A}=b(\mathbf{D})^*g(\mathbf{x})b(\mathbf{D})$. 
Because of the presence of differentiation in the definition of $H^1$-norm, by the scaling transformation, inequality \eqref{main result 2} reduces to the estimate of order $O(\varepsilon)$:
\begin{equation}
\label{0.8a introduction}
\begin{split}
\Bigl\Vert &\mathbf{D}\left(\mathcal{A}^{-1/2}\sin (\varepsilon ^{-1}\tau \mathcal{A}^{1/2})
-(\mathcal{A}^0)^{-1/2}\sin (\varepsilon ^{-1}\tau (\mathcal{A}^0)^{1/2})
-\mathrm{K}(1,\varepsilon ^{-1}\tau)
\right)
\\
&\times
\varepsilon ^2 (-\Delta +\varepsilon ^2 I)^{-1}\Bigr\Vert _{L_2(\mathbb{R}^d)\rightarrow L_2(\mathbb{R}^d)}
\leqslant C\varepsilon (1+\vert \tau\vert ),\quad \tau \in\mathbb{R},\quad\varepsilon >0.
\end{split}
\end{equation}
For this reason, in estimate \eqref{0.8a introduction}, we use the ,,smoothing operator'' $\varepsilon ^2(-\Delta +\varepsilon ^2 I)^{-1}$ instead of the operator $\varepsilon (-\Delta +\varepsilon ^2 I)^{-1/2}$ which was used in estimate \eqref{est no eps intr} of order $O(1)$. Thus, the principal term of approximation of the operator $\mathcal{A}_\varepsilon ^{-1/2}\sin (\tau\mathcal{A}_\varepsilon ^{1/2})$ is obtained in the $(H^1\rightarrow L_2)$-norm, but approximation in the energy class is given in the $(H^2\rightarrow H^1)$-norm. 

To obtain estimates \eqref{est no eps intr} and \eqref{0.8a introduction}, using  the unitary \textit{Gelfand transformation} (see Section~\ref{Subsec Gelfand} below), we decompose the operator $\mathcal{A}$ into the direct integral of operators $\mathcal{A}(\mathbf{k})$ acting in the space $L_2$ on the cell of periodicity and depending on the parameter $\mathbf{k}\in\mathbb{R}^d$ called the \textit{quasimomentum}. 
We study the family $\mathcal{A}(\mathbf{k})$ by means of the analytic perturbation theory with respect to the onedimensional parameter $\vert\mathbf{k}\vert$. 
Then we should make our
constructions and estimates uniform in the additional parameter $\boldsymbol{\theta}:=\mathbf{k}/\vert \mathbf{k}\vert$. Herewith, a good deal of considerations can be done in the framework of an abstract 
operator-theoretic scheme.

\subsection{Plan of the paper} The paper consists of three chapters. Chapter~{I} (Sec.~\ref{Section Preliminaries}--\ref{Section sandwiched abstract}) contains necessary operator-theoretic material. Chapter~{II} (Sec. \ref{Section Factorized families}--\ref{Section 10}) 
is devoted to periodic DO's. In Sec.~\ref{Section Factorized families}--\ref{Sec eff op}, the class of operators under consideration is introduced, the direct integral decomposition is described, and the effective characteristics are found.  In Sec.~\ref{Section 7 appr A(k)} and \ref{Section 10}, the approximations for the operator-valued function $\mathcal{A} ^{-1/2}\sin(\varepsilon ^{-1}\tau\mathcal{A}^{1/2})$ are obtained and estimates \eqref{est no eps intr} and \eqref{0.8a introduction} are proven. In Chapter~{III} (Sec.~\ref{Section main results in general case} and \ref{Section hyperbolic general case}), homogenization for hyperbolic systems is considered. In Sec.~\ref{Section main results in general case}, the main results of the paper in operator terms (estimates \eqref{main result 1} and \eqref{main result 2}) are obtained. Afterwards, in Sec.~\ref{Section hyperbolic general case}, these results are applied to homogenization for solutions of the nonhomogeneous hyperbolic systems.  Section~\ref{Section Applications} is devoted to applications of the general results to the acoustics equation, the operator of elasticity theory and the model equation of electrodynamics.

\subsection{Acknowledgement} The author is grateful to T.~A.~Suslina for 
 attention to work and numerous comments that helped to improve the quality of presentation.

\subsection{Notation} 
Let $\mathfrak{H}$ and $\mathfrak{H}_*$ be separable Hilbert spaces. The symbols $(\cdot ,\cdot )_\mathfrak{H}$ and $\Vert \cdot \Vert _\mathfrak{H}$ mean the inner product and the norm in 
 $\mathfrak{H}$, respectively; the symbol $\Vert \cdot\Vert _{\mathfrak{H}\rightarrow\mathfrak{H}_*}$ denotes the norm of a~bounded linear operator acting from $\mathfrak{H}$ to $\mathfrak{H}_*$. Sometimes we omit the indices if this does not lead to confusion. By $I=I_\mathfrak{H}$ we denote the identity operator in $\mathfrak{H}$. If $A:\mathfrak{H}\rightarrow\mathfrak{H}_*$ is a linear operator, then $\mathrm{Dom}\,A$ denotes the domain of $A$. If $\mathfrak{N}$ is a subspace of $\mathfrak{H}$, then $\mathfrak{N}^\perp:=\mathfrak{H}\ominus\mathfrak{N}$.
  
The symbol $\langle\cdot,\cdot\rangle$ denotes the inner product in $\mathbb{C}^n$, $\vert \cdot\vert$ means the norm of a vector in $\mathbb{C}^n$; $\mathbf{1}_n$ is the unit matrix of size $n\times n$. If $a$ is an $(m\times n)$-matrix, then $\vert a\vert$ denotes its norm as a linear operator from  $\mathbb{C}^n$ to $\mathbb{C}^m$; $a^*$ means the Hermitian conjugate $(n\times m)$-matrix.

The classes $L_p$ of $\mathbb{C}^n$-valued functions on a domain $\mathcal{O}\subset\mathbb{R}^d$ are denoted by $L_p(\mathcal{O};\mathbb{C}^n)$, $1\leqslant p\leqslant \infty$. The Sobolev spaces of order $s$ of $\mathbb{C}^n$-valued functions on a domain $\mathcal{O}\subset\mathbb{R}^d$  are denoted by $H^s(\mathcal{O};\mathbb{C}^n)$. By $\mathcal{S}(\mathbb{R}^d;\mathbb{C}^n)$ we denote the Schwartz class of $\mathbb{C}^n$-valued functions in $\mathbb{R}^d$.  If $n=1$, then we simply write $L_p(\mathcal{O})$, $H^s(\mathcal{O})$ and so on, but sometimes we use such simplified notation also for the spaces of vector-valued or matrix-valued functions. The symbol $L_p((0,T);\mathfrak{H})$, $1\leqslant p\leqslant\infty$, stands for $L_p$-space of $\mathfrak{H}$-valued functions on the interval $(0,T)$.

Next, $\mathbf{x}=(x_1,\dots,x_d)\in\mathbb{R}^d$, $iD_j=\partial _j=\partial /\partial x_j$, $j=1,\dots,d$, $\mathbf{D}=-i\nabla=(D_1,\dots,D_d)$. The Laplace operator is denoted by $\Delta =\partial ^2/\partial x_1^2+\dots +\partial ^2/\partial x_d^2$.

By $C$, $\mathcal{C}$, $\mathfrak{C}$, $c$, $\mathfrak{c}$ (probably, with indices and marks) we denote various constants in estimates. The absolute constants are denoted by $\beta$ with various indices. 

\section*{Chapter I. Abstract scheme}
\label{Section Abstract sheme}

\section{Preliminaries}
\label{Section Preliminaries}

\subsection{Quadratic operator pencils} 
\label{Subsubsection operator pencils}
Let $\mathfrak{H}$ and $\mathfrak{H}_*$ be separable complex Hilbert spaces. Suppose that $X_0 :\mathfrak{H}\rightarrow \mathfrak{H}_*$ is a densely
defined and closed operator, and that $X_1 :\mathfrak{H}\rightarrow\mathfrak{H}_*$ is a bounded operator. On the domain $\mathrm{Dom}\,X(t)=\mathrm{Dom}\,X_0$, consider the operator $X(t):=X_0+tX_1$, $t\in\mathbb{R}$. \textit{Our main object }is a family of operators
\begin{equation}
\label{A(t)=}
A(t):=X(t)^*X(t),\quad t\in\mathbb{R},
\end{equation}
that are selfadjoint in $\mathfrak{H}$ and non-negative. 
The operator $A(t)$ acting in $\mathfrak{H}$ is generated by the closed quadratic form $\Vert X(t)u\Vert ^2_{\mathfrak{H}_*}$, $u\in\mathrm{Dom}\,X_0$. Denote $A(0)=X_0^*X_0=:A_0$. 
Put $
\mathfrak{N}:=\mathrm{Ker}\,A_0=\mathrm{Ker}\,X_0$, $ \mathfrak{N}_*:=\mathrm{Ker}\,X_0^*$. 
We assume that 
the point $\lambda _0=0$ is  isolated in the spectrum of $A_0$ and 
$
0<n:=\mathrm{dim}\,\mathfrak{N}<\infty$, $n\leqslant n_*:=\mathrm{dim}\,\mathfrak{N}_*\leqslant\infty $. 
By $d_0$ we denote the distance from the point zero to the rest of the spectrum of $A_0$ and by $F(t,s)$ we denote the spectral projection of the operator $A(t)$ for the interval $[0,s]$. Fix $\delta >0$ such that $8\delta <d_0$. Next, we choose a number $t_0>0$ such that
\begin{equation}
\label{t_0(delta) abstact scheme}
t_0\leqslant \delta ^{1/2}\Vert X_1\Vert ^{-1} _{\mathfrak{H}\rightarrow \mathfrak{H}_*}.
\end{equation}
Then (see \cite[Chapter 1, (1.3)]{BSu}) $F(t,\delta )=F(t,3\delta)$ and $\mathrm{rank}\,F(t,\delta)=n$ for $\vert t\vert\leqslant t_0$. We often write $F(t)$ instead of $F(t,\delta )$. Let $P$ and $P_*$ be the orthogonal projections of  $\mathfrak{H}$ onto $\mathfrak{N}$ and of $\mathfrak{H}_*$ onto $\mathfrak{N}_*$, respectively. 

\subsection{Operators $Z$ and $R$}
Let $\mathcal{D}:=\mathrm{Dom}\,X_0\cap \mathfrak{N}^\perp$, and let $u\in\mathfrak{H}_*$. Consider the following equation for the element $\psi\in\mathcal{D}$ (cf. \cite[Chapter 1, (1.7)]{BSu}):
\begin{equation}
\label{first eq. for Z}
X_0^*(X_0\psi - u)=0.
\end{equation}
The equation is understood in the weak sense. In other words, $\psi\in\mathcal{D}$ satisfies the identity
\begin{equation*}
(X_0\psi ,X_0\zeta )_{\mathfrak{H}_*}=(u,X_0\zeta)_{\mathfrak{H}_*},\quad \forall \zeta\in\mathcal{D}.
\end{equation*}
Equation \eqref{first eq. for Z} has a unique solution $\psi$, and $\Vert X_0\psi\Vert_{\mathfrak{H}_*}\leqslant\Vert u\Vert _{\mathfrak{H}_*}$. Now, let $\omega\in\mathfrak{N}$ and $u=-X_1\omega$. The corresponding solution of equation \eqref{first eq. for Z} is denoted by $\psi(\omega)$. We define the bounded operator $Z:\mathfrak{H}\rightarrow\mathfrak{H}$ by the identities
\begin{equation*}
Z\omega =\psi (\omega),\quad\omega\in\mathfrak{N};\quad Zx=0,\quad x\in\mathfrak{N}^\perp .
\end{equation*}
Note that
\begin{equation}
\label{ZP=Z, PZ=0}
ZP=Z,\quad PZ=0.
\end{equation}

Now, we introduce an operator $
R :\mathfrak{N}\rightarrow \mathfrak{N}_*$ (see \cite[Chapter 1, Subsec. 1.2]{BSu}) as follows: 
 $R\omega =X_0\psi(\omega)+X_1\omega\in\mathfrak{N}_*$. 
Another description of $R$ is given by the formula 
$
R=P_*X_1\vert _\mathfrak{N}$. 

\subsection{The spectral germ}

The selfadjoint operator 
$
S:=R^*R :\mathfrak{N}\rightarrow \mathfrak{N}
$ 
is called the 
\textit{spectral germ} of the operator family \eqref{A(t)=} at $t=0$ (see \cite[Chapter 1, Subsec. 1.3]{BSu}). This operator also can be written as $S=PX_1^*P_*X_1\vert _\mathfrak{N}$. So,
\begin{equation}
\label{vert S vert =}
\Vert S\Vert \leqslant \Vert X_1\Vert ^2.
\end{equation}
The spectral germ $S$ is called \textit{nondegenerate,} if $\mathrm{Ker}\,S=\lbrace 0\rbrace$ or, equivalently, $\mathrm{rank}\,R=n$.

In accordance with the analytic perturbation theory 
(see \cite{K}), for $\vert t\vert\leqslant t_0$ there exist
real-analytic functions $\lambda _l(t)$ and real-analytic $\mathfrak{H}$-valued functions $\phi _l(t)$ such that
\begin{equation*}
A(t)\phi _l(t)=\lambda _l(t)\phi _l(t),\quad l=1,\dots,n,\quad\vert t\vert \leqslant t_0,
\end{equation*}
and $\phi _l(t)$, $l=1,\dots,n$, form an orthonormal basis in the eigenspace $F(t)\mathfrak{H}$. For sufficiently small $t_*$ ($\leqslant t_0$) and $\vert t\vert \leqslant t_*$, we have the following convergent power series expansions:
\begin{align}
\label{lambda_l(t)= series}
&\lambda _l(t)=\gamma _l t^2 +\mu _l t^3+\dots ,\quad \gamma _l\geqslant 0,\quad\mu _l\in\mathbb{R},\quad l=1,\dots,n;\\
&\phi _l(t)=\omega _l +t\phi _l^{(1)}+t^2\phi _l^{(2)}+\dots,\quad l=1,\dots,n.
\nonumber
\end{align}
The elements $\omega _l=\phi_l(0)$, $l=1,\dots ,n$, form an orthonormal basis in $\mathfrak{N}$.

In \cite[Chapter 1, Subsec. 1.6]{BSu} it was shown that the numbers $\gamma _l$ and the elements $\omega _l$, $l=1,\dots,n$, are
eigenvalues and eigenvectors of the operator $S$:
\begin{equation}
\label{S omega _l=}
S\omega_l=\gamma_l\omega_l,\quad l=1,\dots, n.
\end{equation}
The numbers $\gamma _l$ and the vectors $\omega _l$, $l=1,\dots ,n$, are called \textit{threshold characteristics at the bottom of the spectrum} of the operator family $A(t)$.

\subsection{Threshold approximations}
We assume that 
\begin{equation}
\label{A(t)>=}
A(t)\geqslant c_*t^2 I,\quad \vert t\vert\leqslant t_0,
\end{equation}
for some $c_*>0$. 
This is equivalent to the following estimates for the eigenvalues $\lambda _l(t)$ of the operator $A(t)$: 
$
\lambda_l(t)\geqslant c_* t^2$, $\vert t\vert\leqslant t_0$, $l=1,\dots,n$.  
Taking \eqref{lambda_l(t)= series} into account, we see that $\gamma _l\geqslant c_*$, $l=1,\dots,n$.  So, by \eqref{S omega _l=}, the germ $S$ is nondegenerate:
\begin{equation}
\label{S>=}
S\geqslant c_* I_\mathfrak{N} .
\end{equation}

As was shown in \cite[Chapter 1, Theorem 4.1]{BSu},
\begin{equation}
\label{F-P}
\Vert F(t)-P\Vert \leqslant C_1 \vert t\vert,\quad \vert t\vert\leqslant t_0;\quad C_1:=\beta_1 \delta ^{-1/2}\Vert X_1\Vert .
\end{equation}
Besides \eqref{F-P}, we need more accurate approximation of the spectral projection obtained in \cite[(2.10) and (2.15)]{BSu05-1}:
\begin{equation}
\label{F(t)=P+tF_1+F_2(t)}
F(t)=P+tF_1+F_2(t),\quad \Vert F_2(t)\Vert\leqslant C_2 t^2,\quad \vert t\vert\leqslant t_0;\quad C_2:= \beta _2 \delta ^{-1}\Vert X_1\Vert ^2;
\end{equation}
where
\begin{equation}
\label{F_1=}
F_1=ZP+PZ^*.
\end{equation}
From \eqref{ZP=Z, PZ=0} and \eqref{F_1=} it follows that
\begin{equation}
\label{F1P=ZP}
F_1P=ZP.
\end{equation}

In \cite[Chapter 1, Theorem 5.2]{BSu}, it was proven that
\begin{align}
\label{Th res BSu-108}
&\Vert (A(t)+\zeta I)^{-1}F(t)-(t^2SP+\zeta I)^{-1}P\Vert \leqslant C_3\vert t\vert (c_* t^2 +\zeta )^{-1},\quad \zeta >0,\quad\vert t\vert \leqslant t_0;
\\
\label{C_3 abstract}
&C_3:=\beta _3\delta ^{-1/2}\Vert X_1\Vert(1 +c_*^{-1} \Vert X_1\Vert ^2).
\end{align}

According to \cite[Theorem 2.4]{BSu08}, we have
\begin{align}
\label{Th A(t)+1/2}
&\Vert A(t)^{1/2}F(t)-(t^2 S)^{1/2}P\Vert \leqslant C_4t^2,\quad \vert t\vert \leqslant t_0;
\\
\label{C_4 abstract}
&C_4:=\beta _4 \delta ^{-1/2}\Vert X_1\Vert ^2(1+c_*^{-1/2}\Vert X_1\Vert ) .
\end{align}
Combining this with \eqref{vert S vert =}, we see that
\begin{equation}
\label{A(t)^1/2F<=}
\Vert A(t)^{1/2}F(t)\Vert \leqslant \vert t\vert \Vert S\Vert ^{1/2}+C_4 t^2\leqslant (\Vert X_1\Vert +C_4 t_0)\vert t\vert ,\quad \vert t\vert \leqslant t_0.
\end{equation}

We also need the following estimate for the operator $A(t)^{1/2}F_2(t)$ obtained in \cite[(2.23)]{BSu06}:
\begin{equation}
\label{A^1/2F_2}
\Vert A(t)^{1/2}F_2(t)\Vert _{\mathfrak{H}\rightarrow \mathfrak{H}}\leqslant C_5t^2,\quad\vert t\vert \leqslant t_0;\quad C_5:=\beta _5\delta ^{-1/2}\Vert X_1\Vert ^2.
\end{equation}

\subsection{Approximation of the operator $A(t)^{-1/2}F(t)$ for $t\neq 0$}

\begin{lemma}
For $\vert t\vert \leqslant t_0$ and $t\neq 0$ we have
\begin{equation}
\label{Lm A-1/2}
\Vert A(t)^{-1/2}F(t)-(t^2 S)^{-1/2} P\Vert \leqslant C_6.
\end{equation}
The constant $C_6$ is defined below in \eqref{C_6 abstract} and depends only on $\delta$, $\Vert X_1\Vert $, and $c_*$.
\end{lemma}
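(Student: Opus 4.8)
The plan is to derive \eqref{Lm A-1/2} from the already-established resolvent approximation \eqref{Th res BSu-108} by an appropriate integration in the spectral parameter $\zeta$, exploiting the operator identity
\[
A(t)^{-1/2}F(t)=\frac{1}{\pi}\int_0^\infty \zeta^{-1/2}(A(t)+\zeta I)^{-1}F(t)\,d\zeta ,
\]
together with its scalar/finite-dimensional counterpart
\[
(t^2 S)^{-1/2}P=\frac{1}{\pi}\int_0^\infty \zeta^{-1/2}(t^2 SP+\zeta I)^{-1}P\,d\zeta .
\]
Both formulas are justified by the spectral theorem: the first on the subspace $F(t)\mathfrak H$, where $A(t)\geqslant c_* t^2 I$ is invertible by \eqref{A(t)>=}, and the second on $\mathfrak N=P\mathfrak H$, where $t^2 S\geqslant c_* t^2 I_{\mathfrak N}$ by \eqref{S>=}; note that on the orthogonal complements both sides vanish, so $F(t)$ and $P$ may be inserted freely. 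Subtracting the two representations and inserting the bound \eqref{Th res BSu-108} under the integral sign gives
\[
\bigl\Vert A(t)^{-1/2}F(t)-(t^2 S)^{-1/2}P\bigr\Vert
\leqslant \frac{1}{\pi}\int_0^\infty \zeta^{-1/2}\,C_3\,\vert t\vert\,(c_* t^2+\zeta)^{-1}\,d\zeta .
\]

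**Computing the integral.** The remaining integral is elementary: substituting $\zeta = c_* t^2 s$ one finds
\[
\int_0^\infty \zeta^{-1/2}(c_* t^2+\zeta)^{-1}\,d\zeta
= (c_* t^2)^{-1/2}\int_0^\infty s^{-1/2}(1+s)^{-1}\,ds
= (c_* t^2)^{-1/2}\,\pi ,
\]
using $\int_0^\infty s^{-1/2}(1+s)^{-1}\,ds=\pi$ (a standard Beta-function value). Hence the right-hand side equals $\pi^{-1}C_3\,\vert t\vert\,(c_* t^2)^{-1/2}\,\pi = C_3 c_*^{-1/2}$, and the factor $\vert t\vert\cdot\vert t\vert^{-1}$ cancels, leaving a bound independent of $t$. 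Thus \eqref{Lm A-1/2} holds with
\[
C_6:=C_3\,c_*^{-1/2}=\beta_3\,\delta^{-1/2}c_*^{-1/2}\Vert X_1\Vert\bigl(1+c_*^{-1}\Vert X_1\Vert^2\bigr),
\]
which depends only on $\delta$, $\Vert X_1\Vert$, and $c_*$, as claimed.

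**Main obstacle.** No single step is deep; the crux is the rigorous justification of interchanging the operator norm with the integral and of the integral representations themselves near the (excluded) point $t=0$ — the integrand $\zeta^{-1/2}(c_*t^2+\zeta)^{-1}$ is integrable at $\zeta=0$ for each fixed $t\neq 0$, but one must check that the bound obtained is genuinely uniform as $t\to 0$, which it is precisely because the singularity $(c_*t^2)^{-1/2}$ in the integral is exactly compensated by the factor $\vert t\vert$ coming from \eqref{Th res BSu-108}. An alternative, entirely equivalent route avoids integral representations: write $A(t)^{-1/2}F(t)-(t^2S)^{-1/2}P = A(t)^{-1/2}F(t)\bigl(I-(t^2S)^{1/2}P\,\bigl((t^2S)^{1/2}P\bigr)^{+}\bigr)+\dots$, but the cleanest argument is to combine \eqref{Th A(t)+1/2} with the resolvent-type estimate; concretely, one can also use the algebraic identity $A(t)^{-1/2}F(t)-(t^2S)^{-1/2}P = -(t^2S)^{-1/2}P\bigl(A(t)^{1/2}F(t)-(t^2S)^{1/2}P\bigr)A(t)^{-1/2}F(t)$ valid on the relevant invariant subspaces, and then estimate the three factors by \eqref{S>=} (giving $(t^2S)^{-1/2}P$ bounded by $c_*^{-1/2}\vert t\vert^{-1}$), by \eqref{Th A(t)+1/2} (giving $C_4 t^2$), and by \eqref{A(t)>=} (giving $A(t)^{-1/2}F(t)$ bounded by $c_*^{-1/2}\vert t\vert^{-1}$). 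The product is $c_*^{-1/2}\vert t\vert^{-1}\cdot C_4 t^2\cdot c_*^{-1/2}\vert t\vert^{-1}=C_4 c_*^{-1}$, again $t$-independent. Either computation yields \eqref{Lm A-1/2}; I would present the second one, as it uses only estimates already collected in the excerpt and makes the cancellation of powers of $\vert t\vert$ transparent.
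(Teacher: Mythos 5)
Your primary route --- the integral representations of $A(t)^{-1/2}F(t)$ and $(t^2 S)^{-1/2}P$ combined with the resolvent estimate \eqref{Th res BSu-108} and the substitution $\widetilde\zeta=(c_*t^2)^{-1}\zeta$ --- is exactly the paper's proof. The only difference is cosmetic: you evaluate $\int_0^\infty s^{-1/2}(1+s)^{-1}\,ds=\pi$ exactly (a Beta integral), yielding $C_6 = c_*^{-1/2}C_3$, whereas the paper splits the integral at $s=1$ and bounds it by $4$, giving the slightly larger $C_6 = 4\pi^{-1}c_*^{-1/2}C_3$. Either constant is acceptable.

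You state, however, that you would actually present the ``algebraic'' alternative, and that route \emph{as written} has a genuine gap. The claimed identity
\[
A(t)^{-1/2}F(t)-(t^2S)^{-1/2}P = -(t^2S)^{-1/2}P\bigl(A(t)^{1/2}F(t)-(t^2S)^{1/2}P\bigr)A(t)^{-1/2}F(t)
\]
is false for $t\neq 0$. Expanding the right-hand side gives $PA(t)^{-1/2}F(t)-(t^2S)^{-1/2}PF(t)$, so equality would require $(I-P)A(t)^{-1/2}F(t)=(t^2S)^{-1/2}P(I-F(t))$; but the two sides take values in the mutually orthogonal subspaces $\mathfrak N^\perp$ and $\mathfrak N$, so both would have to vanish, forcing $F(t)\mathfrak H=\mathfrak N$ --- true only at $t=0$. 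The standard resolvent calculus ``$a^{-1}-b^{-1}=-b^{-1}(a-b)a^{-1}$'' cannot be invoked because $A(t)^{1/2}F(t)$ and $(t^2S)^{1/2}P$ are invertible only on the \emph{different} subspaces $F(t)\mathfrak H$ and $\mathfrak N$. The algebraic route can be salvaged, but only by tracking the mismatch $F(t)\neq P$ explicitly; a correct decomposition is
\begin{align*}
A(t)^{-1/2}F(t)-(t^2S)^{-1/2}P
&= A(t)^{-1/2}F(t)\bigl(F(t)-P\bigr)
+A(t)^{-1/2}F(t)\bigl[(t^2S)^{1/2}P-A(t)^{1/2}F(t)\bigr](t^2S)^{-1/2}P\\
&\quad+\bigl(F(t)-P\bigr)(t^2S)^{-1/2}P,
\end{align*}
where the middle term is the one you estimated (bounded by $C_4c_*^{-1}$ via \eqref{A(t)>=}, \eqref{Th A(t)+1/2}, \eqref{S>=}), while the first and third, which your sketch drops, each contribute $c_*^{-1/2}C_1$ by \eqref{F-P} together with \eqref{A(t)>=} or \eqref{S>=}. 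The corrected algebraic argument therefore gives a $t$-independent bound of the form $2c_*^{-1/2}C_1+C_4c_*^{-1}$, but it is no shorter than the integral representation, and as stated the identity is incorrect. If you prefer the algebraic route, include the two error terms.
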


\begin{proof}
We have
\begin{equation}
\label{A-1/2 tozd}
A(t)^{-1/2}F(t)=\frac{1}{\pi}\int _0^\infty \zeta ^{-1/2}(A(t)+\zeta I)^{-1}F(t)\,d\zeta ,\quad t\neq 0.
\end{equation}
(See, e.~g., \cite[Chapter III, Section 3, Subsection 4]{ViKr}). Similarly,
\begin{equation}
\label{germ -1/2 tozd}
(t^2 S)^{-1/2}P=\frac{1}{\pi}\int _0^\infty \zeta ^{-1/2}(t^2 S+\zeta I_\mathfrak{N} )^{-1}P\,d\zeta
=\frac{1}{\pi}\int _0^\infty \zeta ^{-1/2}(t^2 SP+\zeta I )^{-1}P\,d\zeta .
\end{equation}
Subtracting \eqref{germ -1/2 tozd} from \eqref{A-1/2 tozd}, using \eqref{Th res BSu-108}, and changing the variable $\widetilde{\zeta}:=(c_* t^2)^{-1}\zeta$, we obtain
\begin{equation*}
\begin{split}
\Vert A(t)^{-1/2}F(t)-(t^2 S)^{-1/2}P\Vert 
&\leqslant \frac{C_3}{\pi}\int _0^\infty \zeta ^{-1/2}\vert t\vert (c_* t^2 +\zeta )^{-1}\,d\zeta
=\frac{C_3}{\pi} c_*^{-1/2}\int _0^\infty \widetilde{\zeta}^{-1/2}(1+\widetilde{\zeta})^{-1}\,d\widetilde{\zeta}
\\
&\leqslant \frac{C_3}{\pi} c_*^{-1/2}\left(\int _0 ^1 \widetilde{\zeta}^{-1/2}\,d\widetilde{\zeta}
+\int _1^\infty \widetilde{\zeta}^{-3/2}\,d\widetilde{\zeta}\right)
= 4 \pi ^{-1}c_*^{-1/2}C_3.
\end{split}
\end{equation*}
We arrive at estimate 
\eqref{Lm A-1/2} with the constant
\begin{equation}
\label{C_6 abstract}
C_6:=4 \pi ^{-1}c_*^{-1/2}C_3.
\end{equation}
\end{proof}

\section{Approximation of the operator $A(t)^{-1/2}\sin (\tau A(t)^{1/2})$}
\label{Subsection abstract approximations for sin}

\subsection{The principal term of approximation}

\begin{proposition}
\label{Proposition abstract sin tau principal}
For $\vert t\vert \leqslant t_0$ and $\tau\in\mathbb{R}$ we have
\begin{equation}
\label{A-1/2sin F}
\begin{split}
&\left\Vert \left( A(t)^{-1/2}\sin (\tau A(t)^{1/2})-(t^2 S)^{-1/2}\sin (\tau (t^2S)^{1/2}P)\right)P\right \Vert 
\leqslant C_7(1+\vert\tau\vert\vert t\vert )
.
\end{split}
\end{equation}
The constant $C_7$ depends only on $\delta$, $\Vert X_1\Vert $, and $c_*$.
\end{proposition}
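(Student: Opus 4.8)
The plan is to localize everything to the spectral subspace $F(t)\mathfrak{H}$, on which $A(t)\geqslant c_* t^2 I>0$ by \eqref{A(t)>=} and the functional calculus for $A(t)^{-1/2}$ is unproblematic, and then to compare the two sides factor by factor. Fix $\tau$ and let $g(x):=x^{-1/2}\sin(\tau x^{1/2})$, regarded as the bounded continuous function on $[0,\infty)$ with $g(0)=\tau$; note $|g(x)|\leqslant\min\lbrace|\tau|,x^{-1/2}\rbrace$. For $t\neq 0$ one has $A(t)^{-1/2}\sin(\tau A(t)^{1/2})=g(A(t))$ and $(t^2 S)^{-1/2}\sin(\tau(t^2 S)^{1/2}P)=g(t^2 S)P$ (at $t=0$ the left-hand side of \eqref{A-1/2sin F} vanishes, since $F(0)=P$). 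Since $F(t)$ commutes with every function of $A(t)$, I would first split
\begin{equation*}
\bigl(g(A(t))-g(t^2 S)P\bigr)P=\bigl(g(A(t))F(t)-g(t^2 S)P\bigr)P+g(A(t))(I-F(t))P .
\end{equation*}
On $(I-F(t))\mathfrak{H}$ the spectrum of $A(t)$ lies in $[3\delta,\infty)$ (because $F(t,\delta)=F(t,3\delta)$), so $\Vert g(A(t))(I-F(t))\Vert\leqslant(3\delta)^{-1/2}$; together with $\Vert(I-F(t))P\Vert=\Vert(I-F(t))(P-F(t))\Vert\leqslant C_1|t|\leqslant C_1 t_0$ from \eqref{F-P}, the remainder term has norm at most $(3\delta)^{-1/2}C_1 t_0$.

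For the main term I would use the factorization $g(x)=x^{-1/2}\cdot\sin(\tau x^{1/2})$ on $F(t)\mathfrak{H}$, that is, $g(A(t))F(t)=\bigl(A(t)^{-1/2}F(t)\bigr)\bigl(\sin(\tau A(t)^{1/2})F(t)\bigr)$ and $g(t^2 S)P=\bigl((t^2 S)^{-1/2}P\bigr)\bigl(\sin(\tau(t^2 S)^{1/2})P\bigr)$, and subtract in the form $AC-BD=(A-B)C+B(C-D)$. The first resulting term is controlled via \eqref{Lm A-1/2}, $\Vert A(t)^{-1/2}F(t)-(t^2 S)^{-1/2}P\Vert\leqslant C_6$, together with $\Vert\sin(\tau A(t)^{1/2})F(t)\Vert\leqslant 1$, and contributes $\leqslant C_6$, crucially with \emph{no} dependence on $\tau$. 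In the second term, $\Vert(t^2 S)^{-1/2}P\Vert\leqslant(c_* t^2)^{-1/2}$ by \eqref{S>=}, whereas $\sin(\tau A(t)^{1/2})F(t)=\sin\bigl(\tau\,[A(t)^{1/2}F(t)]\bigr)$ and $\sin(\tau(t^2 S)^{1/2})P=\sin\bigl(\tau\,[(t^2 S)^{1/2}P]\bigr)$ are values of the entire function $\sin(\tau\,\cdot\,)$ at the bounded self-adjoint operators $A(t)^{1/2}F(t)$ and $(t^2 S)^{1/2}P$. A Duhamel identity, $e^{i\tau B_1}-e^{i\tau B_2}=i\int_0^\tau e^{i(\tau-s)B_1}(B_1-B_2)e^{isB_2}\,ds$ with $\Vert e^{isB}\Vert=1$ for self-adjoint $B$, gives the operator-Lipschitz bound $\Vert\sin(\tau B_1)-\sin(\tau B_2)\Vert\leqslant|\tau|\,\Vert B_1-B_2\Vert$, and with \eqref{Th A(t)+1/2}, $\Vert A(t)^{1/2}F(t)-(t^2 S)^{1/2}P\Vert\leqslant C_4 t^2$, this yields $\Vert\sin(\tau A(t)^{1/2})F(t)-\sin(\tau(t^2 S)^{1/2})P\Vert\leqslant C_4|\tau|t^2$. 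Hence the second term is $\leqslant(c_* t^2)^{-1/2}C_4|\tau|t^2=c_*^{-1/2}C_4|\tau||t|$, the singular factor $|t|^{-1}$ being absorbed by the $O(t^2)$ bound of \eqref{Th A(t)+1/2}.

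Adding the three contributions bounds the left-hand side of \eqref{A-1/2sin F} by $C_6+(3\delta)^{-1/2}C_1 t_0+c_*^{-1/2}C_4|\tau||t|\leqslant C_7(1+|\tau||t|)$ with, say, $C_7:=\max\lbrace C_6+(3\delta)^{-1/2}C_1 t_0,\ c_*^{-1/2}C_4\rbrace$; tracking the constants $C_1$, $C_4$, $C_6$ and $t_0$ shows that $C_7$ depends only on $\delta$, $\Vert X_1\Vert$ and $c_*$. The step that requires real care, and which I expect to be the heart of the matter, is precisely this pairing in the second term: $(t^2 S)^{-1/2}P$ has norm of order $|t|^{-1}$, so estimating $g(A(t))F(t)-g(t^2 S)P$ by a single operator-Lipschitz inequality for $g$ near the bottom of the spectrum (where $g$ has Lipschitz constant of order $|\tau|^3$) would be too crude; factoring off the sine, so that the singular factor multiplies the genuinely $O(t^2)$ quantity of \eqref{Th A(t)+1/2} rather than an $O(|t|^3)$ quantity carrying a $\tau$-dependent constant, is what produces the clean bound $1+|\tau||t|$, uniform in $\tau$.
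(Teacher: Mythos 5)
Your proof is correct and rests on exactly the same three ingredients the paper uses --- the estimate $\Vert A(t)^{-1/2}F(t)-(t^2S)^{-1/2}P\Vert\leqslant C_6$ of \eqref{Lm A-1/2}, the threshold bound $\Vert A(t)^{1/2}F(t)-(t^2S)^{1/2}P\Vert\leqslant C_4 t^2$ of \eqref{Th A(t)+1/2}, and $\Vert F(t)-P\Vert\leqslant C_1\vert t\vert$ --- but the bookkeeping is genuinely different and, I think, cleaner. The paper works with the complex exponential: it forms $E(\tau)=e^{-i\tau A(t)^{1/2}}A(t)^{-1/2}F(t)-e^{-i\tau(t^2S)^{1/2}P}(t^2S)^{-1/2}P$, multiplies on the left by $e^{i\tau(t^2S)^{1/2}P}$ to get $\Sigma(\tau)$, differentiates in $\tau$ (so that the $O(t^2)$ quantity $A(t)^{1/2}F(t)-(t^2S)^{1/2}P$ appears sandwiched against the weight $A(t)^{-1/2}F(t)$ of norm $\leqslant c_*^{-1/2}\vert t\vert^{-1}$), integrates against $\Vert\Sigma(0)\Vert\leqslant C_6$, then takes imaginary parts to recover the sine, and only at the very end trades $F(t)$ for $P$ at cost $c_*^{-1/2}C_1$. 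You avoid the exponentials and the $\Sigma(\tau)$ construction entirely: you split off $(I-F(t))P$ first, then apply the algebraic identity $AC-BD=(A-B)C+B(C-D)$ with $A=A(t)^{-1/2}F(t)$, $B=(t^2S)^{-1/2}P$ and $C,D$ the two sines, so that the Lemma handles $(A-B)C$ and a Duhamel/operator-Lipschitz inequality $\Vert\sin(\tau B_1)-\sin(\tau B_2)\Vert\leqslant\vert\tau\vert\,\Vert B_1-B_2\Vert$ handles $B(C-D)$. The paper's differentiation of $\Sigma$ is precisely this Duhamel estimate in integrated form, with the singular weight carried inside the derivative rather than as your prefactor $B$; the decisive cancellation --- the $\vert t\vert^{-1}$ singularity hitting the $O(t^2)$ threshold bound to produce the $\tau$-linear term --- is identical. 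A small side deviation: for the $(I-F(t))P$ remainder you exploit the spectral gap $\geqslant 3\delta$ on $(I-F(t))\mathfrak{H}$ to bound $\Vert g(A(t))(I-F(t))\Vert\leqslant(3\delta)^{-1/2}$, whereas the paper simply uses $\Vert A(t)^{-1/2}\Vert\leqslant c_*^{-1/2}\vert t\vert^{-1}$ together with $\Vert F(t)-P\Vert\leqslant C_1\vert t\vert$; both give constants depending only on $\delta$, $\Vert X_1\Vert$, $c_*$ (in your case because $C_1 t_0\leqslant\beta_1$).
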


\begin{proof}
For $t=0$ the operator under the norm sign in \eqref{A-1/2sin F} is understood as a limit for $t\rightarrow 0$. Using the Taylor series expansion for the sine function, we see that this limit is equal to zero.

Now, let $t\neq 0$. We put
\begin{align}
\label{E(tau)}
&E(\tau):=e^{-i\tau A(t)^{1/2}}A(t)^{-1/2}F(t)-e^{-i\tau (t^2 S)^{1/2}P}(t^2 S)^{-1/2}P;
\\
\label{Sigma(tau)}
&\Sigma (\tau):=e^{i\tau (t^2 S)^{1/2}P}E(\tau)=e^{i\tau (t^2 S)^{1/2}P}e^{-i\tau A(t)^{1/2}}A(t)^{-1/2}F(t)-(t^2S)^{-1/2}P.
\end{align}
Then
\begin{equation}
\label{2.4a}
\Sigma (0)=A(t)^{-1/2}F(t)-(t^2 S)^{-1/2}P
\end{equation}
and
\begin{equation}
\label{d Sigma /d tau}
\frac{d\Sigma (\tau)}{d\tau}=ie^{i\tau (t^2 S)^{1/2}P}\left( (t^2S)^{1/2}P-A(t)^{1/2}F(t)\right)e^{-i\tau A(t)^{1/2}}A(t)^{-1/2}F(t).
\end{equation}
By \eqref{A(t)>=} and \eqref{Th A(t)+1/2}, the operator-valued function \eqref{d Sigma /d tau} satisfies the following estimate:
\begin{equation}
\label{2.5a very new}
\left\Vert \frac{d\Sigma (\tau)}{d\tau}\right\Vert \leqslant C_4 t^2\Vert A(t)^{-1/2}\Vert \leqslant C_4 c_*^{-1/2}\vert t\vert,\quad \vert t\vert \leqslant t_0,\quad t\neq 0.
\end{equation}
Then, taking \eqref{Lm A-1/2},  \eqref{Sigma(tau)}, \eqref{2.4a}, and \eqref{2.5a very new} into account, we see that
\begin{align}
\label{E(tau) estimate}
&\Vert E(\tau)\Vert =\Vert \Sigma(\tau)\Vert \leqslant C_4 c_*^{-1/2}\vert t\vert  \vert\tau\vert  +\Vert \Sigma(0)\Vert 
\leqslant C_8(1+\vert\tau\vert\vert t\vert),\quad \vert t\vert \leqslant t_0,\quad t\neq 0;
\\
\label{C_7 abstract}
&C_8:=\max\lbrace C_4 c_*^{-1/2};C_6\rbrace .
\end{align} 
(Cf. the proof of Theorem 2.5 from \cite{BSu08}.) So,
\begin{equation}
\label{A-1/2sin F 8}
\Vert A(t)^{-1/2}\sin (\tau A(t)^{1/2})F(t)-(t^2 S)^{-1/2}\sin (\tau (t^2S)^{1/2}P)P\Vert \leqslant C_8(1+\vert\tau\vert \vert t\vert ).
\end{equation}
By virtue of \eqref{A(t)>=} and \eqref{F-P}, from \eqref{A-1/2sin F 8} we derive the inequality
\begin{equation}
\label{A-1/2sin F 2}
\begin{split}
\Bigl\Vert &\left( A(t)^{-1/2}\sin (\tau A(t)^{1/2})-(t^2 S)^{-1/2}\sin (\tau (t^2S)^{1/2}P)\right)P\Bigr \Vert 
\\
& \leqslant C_8(1+\vert\tau\vert\vert t\vert )+\Vert A(t)^{-1/2}\sin (\tau A(t)^{1/2})(F(t)-P)\Vert 
\\
&\leqslant C_7(1+\vert\tau\vert\vert t\vert ),\quad \vert t\vert \leqslant t_0;\quad C_7:=C_8+c_*^{-1/2}C_1
.
\end{split}
\end{equation}
\end{proof}

\subsection{Approximation in the ``energy'' norm}

Now, we obtain another approximation for the operator 
$A(t)^{-1/2}\sin(\tau A(t)^{1/2})$ (in the ``energy'' norm).

\begin{proposition}
\label{Proposition corr sin tau abstr no hat}
For $\tau\in\mathbb{R}$ and $\vert t\vert \leqslant t_0$, we have
\begin{equation}
\label{Th corr abstr}
\left\Vert A(t)^{1/2}\left(
A(t)^{-1/2}\sin (\tau A(t)^{1/2})-(I+tZ)(t^2 S)^{-1/2}\sin (\tau (t^2S)^{1/2}P)\right)P\right\Vert
\leqslant C_9 (\vert t\vert + \vert \tau\vert t^2 ).
\end{equation}
The constant $C_9$ depends only on $\delta$, $\Vert X_1\Vert $, and $c_*$.
\end{proposition}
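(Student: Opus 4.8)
\emph{Overview of the plan.} I would first dispose of $t=0$: there the operator under the norm in \eqref{Th corr abstr} is understood as the limit $t\to0$, and since $\mu\mapsto\mu^{-1/2}\sin(\tau\mu^{1/2})$ extends analytically to $\mu=0$ with value $\tau$ while $A(0)^{1/2}P=0$, this limit is $0$. For $t\neq0$ one has, by \eqref{A(t)>=}, $A(t)\geqslant c_*t^2 I$, so $A(t)$ is boundedly invertible, $A(t)^{1/2}A(t)^{-1/2}=I$, and $\Vert A(t)^{-1/2}\Vert\leqslant c_*^{-1/2}\vert t\vert^{-1}$. Writing $\mathfrak{T}(\tau)$ for the operator under the norm, so that $\mathfrak{T}(\tau)=\sin(\tau A(t)^{1/2})P-A(t)^{1/2}(I+tZ)(t^2 S)^{-1/2}\sin(\tau(t^2 S)^{1/2}P)P$, I would replace $P$ by $F(t)$ in the first summand; this costs $\Vert\sin(\tau A(t)^{1/2})(F(t)-P)\Vert\leqslant C_1\vert t\vert$ by \eqref{F-P}. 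Thus it suffices to estimate
\[
\mathcal{E}(\tau):=\sin(\tau A(t)^{1/2})F(t)-A(t)^{1/2}(I+tZ)(t^2 S)^{-1/2}\sin(\tau(t^2 S)^{1/2}P)P .
\]

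\emph{Main idea: differentiate in $\tau$ and telescope.} One has $\mathcal{E}(0)=0$, and, since $A(t)^{1/2}$ is bounded on the finite-dimensional subspaces $F(t)\mathfrak{H}$ and $(I+tZ)\mathfrak{N}$, $\mathcal{E}$ is norm-differentiable with $\mathcal{E}'(\widetilde{\tau})=A(t)^{1/2}\cos(\widetilde{\tau}A(t)^{1/2})F(t)-A(t)^{1/2}(I+tZ)\cos(\widetilde{\tau}(t^2 S)^{1/2}P)P$, so $\mathcal{E}(\tau)=\int_0^\tau\mathcal{E}'(\widetilde{\tau})\,d\widetilde{\tau}$. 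Inserting $\pm\cos(\widetilde{\tau}A(t)^{1/2})(t^2 S)^{1/2}P$ and $\pm\cos(\widetilde{\tau}(t^2 S)^{1/2}P)(t^2 S)^{1/2}P$ and using commutativity of functions of $A(t)$ (resp.\ of $t^2S$ on $\mathfrak{N}$), the integrand telescopes into
\begin{align*}
\mathcal{E}'(\widetilde{\tau})={}&\cos(\widetilde{\tau}A(t)^{1/2})\bigl(A(t)^{1/2}F(t)-(t^2 S)^{1/2}P\bigr)\\
&+\bigl(\cos(\widetilde{\tau}A(t)^{1/2})-\cos(\widetilde{\tau}(t^2 S)^{1/2}P)\bigr)(t^2 S)^{1/2}P\\
&+\bigl((t^2 S)^{1/2}P-A(t)^{1/2}(I+tZ)P\bigr)\cos(\widetilde{\tau}(t^2 S)^{1/2}P)P .
\end{align*}
Integrating over $[0,\tau]$, and using $\int_0^\tau\cos(\widetilde{\tau}A(t)^{1/2})\,d\widetilde{\tau}=A(t)^{-1/2}\sin(\tau A(t)^{1/2})$ together with its analogue on $\mathfrak{N}$, I would obtain the sum of $A(t)^{-1/2}\sin(\tau A(t)^{1/2})\bigl(A(t)^{1/2}F(t)-(t^2 S)^{1/2}P\bigr)$, of $\bigl(A(t)^{-1/2}\sin(\tau A(t)^{1/2})-(t^2 S)^{-1/2}\sin(\tau(t^2 S)^{1/2}P)\bigr)(t^2 S)^{1/2}P$, and of $\bigl((t^2 S)^{1/2}P-A(t)^{1/2}(I+tZ)P\bigr)(t^2 S)^{-1/2}\sin(\tau(t^2 S)^{1/2}P)P$.

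\emph{Estimating the three pieces.} The first is $\leqslant\Vert A(t)^{-1/2}\Vert\,C_4 t^2\leqslant C_4 c_*^{-1/2}\vert t\vert$ by \eqref{A(t)>=} and \eqref{Th A(t)+1/2}. In the second, since $(t^2 S)^{1/2}P=P(t^2 S)^{1/2}P$, I would insert $P$ before $(t^2 S)^{1/2}P$ and bound the resulting $(\dots)P$ by $C_7(1+\vert\tau\vert\vert t\vert)$ via Proposition~\ref{Proposition abstract sin tau principal}, while $\Vert(t^2 S)^{1/2}P\Vert\leqslant\Vert S\Vert^{1/2}\vert t\vert\leqslant\Vert X_1\Vert\vert t\vert$ by \eqref{vert S vert =}; this yields $\leqslant C_7\Vert X_1\Vert(\vert t\vert+\vert\tau\vert t^2)$. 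In the third, $\Vert(t^2 S)^{-1/2}\sin(\tau(t^2 S)^{1/2}P)P\Vert\leqslant c_*^{-1/2}\vert t\vert^{-1}$ by \eqref{S>=}, and the auxiliary estimate $\Vert(t^2 S)^{1/2}P-A(t)^{1/2}(I+tZ)P\Vert\leqslant(C_4+C_5)t^2$ gives $\leqslant(C_4+C_5)c_*^{-1/2}\vert t\vert$. To prove the auxiliary estimate I would use \eqref{F(t)=P+tF_1+F_2(t)} and \eqref{F1P=ZP} to write $F(t)P=(I+tZ)P+F_2(t)P$, so that $A(t)^{1/2}(I+tZ)P=A(t)^{1/2}F(t)P-A(t)^{1/2}F_2(t)P$, and then invoke \eqref{Th A(t)+1/2} (multiplied on the right by $P$) together with \eqref{A^1/2F_2}. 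Adding the three bounds gives $\Vert\mathcal{E}(\tau)\Vert\leqslant C(\vert t\vert+\vert\tau\vert t^2)$, and together with the reduction above this establishes \eqref{Th corr abstr}, with $C_9$ depending only on $\delta$, $\Vert X_1\Vert$, $c_*$.

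\emph{Where the difficulty lies.} The step I expect to be most delicate is the second piece: its integrand $\bigl(\cos(\widetilde{\tau}A(t)^{1/2})-\cos(\widetilde{\tau}(t^2 S)^{1/2}P)\bigr)(t^2 S)^{1/2}P$ is only of order $\vert t\vert$ in norm, so a term-by-term estimate of the integral would produce the unacceptable bound $O(\vert\tau\vert\vert t\vert)$; the point is that one must integrate \emph{first}, which turns this contribution into the principal-term difference of Proposition~\ref{Proposition abstract sin tau principal}, already controlled by $C_7(1+\vert\tau\vert\vert t\vert)$, and only then multiply by the small factor $(t^2 S)^{1/2}P$. The other nontrivial ingredient, the bound $\Vert(t^2 S)^{1/2}P-A(t)^{1/2}(I+tZ)P\Vert=O(t^2)$, is the place where the specific corrector $(I+tZ)$ enters; it reduces cleanly to the refined threshold approximations \eqref{F(t)=P+tF_1+F_2(t)}, \eqref{Th A(t)+1/2}, \eqref{A^1/2F_2}, but it is what makes $(I+tZ)$ the right choice.
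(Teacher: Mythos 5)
Your argument is correct, and it takes a genuinely different route from the paper's. The paper proves the estimate for the unitary group: starting from $A(t)^{1/2}e^{-i\tau A(t)^{1/2}}A(t)^{-1/2}P$, it splits off $e^{-i\tau A(t)^{1/2}}(P-F(t))P$ (controlled by \eqref{F-P}), writes the remaining piece as $A(t)^{1/2}F(t)e^{-i\tau (t^2 S)^{1/2}P}(t^2 S)^{-1/2}P+A(t)^{1/2}F(t)E(\tau)P$ with $E(\tau)$ the auxiliary quantity already built in the proof of Proposition~\ref{Proposition abstract sin tau principal}, and then uses the corrector identity $F(t)P=(I+tZ)P+F_2(t)P$ to peel off a single $A(t)^{1/2}F_2(t)(\cdots)$ error; the sine estimate then follows silently by writing $\sin$ as a linear combination of $e^{\pm i\tau(\cdot)}$. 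You instead work with the sine directly, differentiating $\mathcal{E}(\tau)$ in $\tau$ and telescoping the derivative; after integration you recover the same three error sources --- $A(t)^{1/2}F(t)-(t^2 S)^{1/2}P$ via \eqref{Th A(t)+1/2}, the principal-term difference from Proposition~\ref{Proposition abstract sin tau principal} multiplied by the $O(\vert t\vert)$ factor $(t^2 S)^{1/2}P$, and $(t^2 S)^{1/2}P-A(t)^{1/2}(I+tZ)P$ via \eqref{F(t)=P+tF_1+F_2(t)}, \eqref{F1P=ZP}, \eqref{Th A(t)+1/2}, \eqref{A^1/2F_2}. The paper's route is more economical (it reuses $E(\tau)$ verbatim and needs no differentiation, hence no discussion of domains and norm-differentiability), while yours is more self-contained and exhibits the structure of the errors more transparently; the constant you get is slightly larger (it involves $C_7=C_8+c_*^{-1/2}C_1$ where the paper's has $C_8$) but has the same dependence on $\delta$, $\Vert X_1\Vert$, $c_*$. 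You also correctly identify the delicate point: for the middle term one must integrate before multiplying by $(t^2 S)^{1/2}P$, turning an apparent $O(\vert\tau\vert\vert t\vert)$ into the acceptable $O(\vert t\vert+\vert\tau\vert t^2)$; this is precisely the sine-analogue of the paper's bound on $A(t)^{1/2}F(t)E(\tau)P$, where the $O(\vert t\vert)$ factor $A(t)^{1/2}F(t)$ sits on the left of $E(\tau)$ instead of $(t^2 S)^{1/2}P$ sitting on the right of the principal-term difference.
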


\begin{proof}
Note that
\begin{equation}
\label{corr abstr start}
\begin{split}
A(t)^{1/2}e^{-i\tau A(t)^{1/2}}A(t)^{-1/2}P
=A(t)^{1/2}e^{-i\tau A(t)^{1/2}}A(t)^{-1/2}F(t)P
+e^{-i\tau A(t)^{1/2}}(P-F(t))P.
\end{split}
\end{equation}
By \eqref{F-P},
\begin{equation}
\Vert e^{-i\tau A(t)^{1/2}}(P-F(t))P\Vert \leqslant C_1\vert t\vert ,\quad\tau\in\mathbb{R},
\quad \vert t\vert \leqslant t_0.
\end{equation}
Next,
\begin{equation}
\begin{split}
A(t)^{1/2}e^{-i\tau A(t)^{1/2}}A(t)^{-1/2}F(t)P=A(t)^{1/2}F(t)e^{-i\tau (t^2S)^{1/2}P}(t^2 S)^{-1/2}P
+A(t)^{1/2}F(t)
E(\tau)
P,
\end{split}
\end{equation}
where $E(\tau)$ is given by \eqref{E(tau)}. 
By \eqref{A(t)^1/2F<=} and \eqref{E(tau) estimate}, for $t\neq 0$ we have
\begin{equation}
\label{corr using principal term t neq 0}
\Bigl\Vert A(t)^{1/2}F(t)
E(\tau)
P\Bigr\Vert
\leqslant C_8(\Vert X_1\Vert +C_4t_0)(\vert t\vert +\vert \tau\vert t^2),
\quad
\tau\in\mathbb{R}, \quad \vert t\vert \leqslant t_0,\quad
 t\neq 0.
\end{equation}
For $t=0$ the operator under the norm sign in \eqref{corr using principal term t neq 0} is understood as a limit for $t\rightarrow 0$. We have $e^{-i\tau A(t)^{1/2}}F(t)\rightarrow P$, as $t\rightarrow 0$. Next, by \eqref{S>=} and \eqref{Th A(t)+1/2},
\begin{equation*}
\begin{split}
\Vert &A(t)^{1/2}F(t)e^{-i\tau (t^2 S)^{1/2}P}(t^2 S)^{-1/2}P -e^{-i\tau (t^2 S)^{1/2}P}P\Vert 
\\
&=\Vert  A(t)^{1/2}F(t)(t^2 S)^{-1/2}P -P\Vert \leqslant c_*^{-1/2}C_4\vert t\vert ,\quad\tau\in\mathbb{R},\quad
 \vert t\vert \leqslant t_0.
\end{split}
\end{equation*}
Using these arguments, we see that the limit of the left-hand side of 
\eqref{corr using principal term t neq 0} as $t\rightarrow 0$ is equal to zero. 

According to \eqref{F(t)=P+tF_1+F_2(t)} and \eqref{F1P=ZP},
\begin{equation}
\begin{split}
A&(t)^{1/2}F(t)e^{-i\tau (t^2 S)^{1/2}P}(t^2 S)^{-1/2}P
-A(t)^{1/2}(I+tZ)e^{-i\tau (t^2 S)^{1/2}P}(t^2 S)^{-1/2}P
\\
&=A(t)^{1/2}F_2(t)e^{-i\tau (t^2S)^{1/2}P}(t^2 S)^{-1/2}P.
\end{split}
\end{equation}
By \eqref{S>=} and \eqref{A^1/2F_2},
\begin{equation}
\label{est corr finish}
\Vert A(t)^{1/2}F_2(t)e^{-i\tau (t^2S)^{1/2}P}(t^2 S)^{-1/2}P\Vert \leqslant c_*^{-1/2}C_5\vert t\vert,\quad \tau\in\mathbb{R},\quad \vert t\vert\leqslant t_0.
\end{equation}
Combining \eqref{corr abstr start}--\eqref{est corr finish}, we arrive at
\begin{equation}
\label{Th corr exp}
\begin{split}
\left\Vert A(t)^{1/2}\left(e^{-i\tau A(t)^{1/2}}A(t)^{-1/2}-(I+tZ)e^{-i\tau (t^2 S)^{1/2}P}(t^2S)^{-1/2}P\right)P\right\Vert
\leqslant C_9(\vert t\vert +\vert \tau\vert t^2),\\
\tau\in\mathbb{R},\quad\vert t\vert \leqslant t_0;\quad C_9:=C_1+c_*^{-1/2}C_5+C_8(\Vert X_1\Vert +C_4t_0).
\end{split}
\end{equation}

(Cf. the proof of Theorem 3.1 from \cite{Su_MMNP}.)
\end{proof}

\subsection{Approximation of the operator $A(t)^{-1/2}\sin (\varepsilon ^{-1}\tau A(t)^{1/2})P$} 
Now, we introduce a parameter $\varepsilon>0$. We need to study the behavior of the operator 
$A(t)^{-1/2}\sin (\varepsilon ^{-1
}\tau A(t)^{1/2})P$ for small $\varepsilon$. Replace $\tau$ by $\varepsilon ^{-1}\tau$ in \eqref{A-1/2sin F}:
\begin{equation*}
\begin{split}
\left\Vert \left( A(t)^{-1/2}\sin (\varepsilon ^{-1}\tau A(t)^{1/2})-(t^2 S)^{-1/2}\sin (\varepsilon ^{-1}\tau (t^2S)^{1/2}P)\right)P\right\Vert 
\leqslant  C_7(1+\varepsilon ^{-1}\vert\tau\vert\vert t\vert ),\\
 \vert t\vert \leqslant t_0,\quad \varepsilon >0,\quad \tau\in\mathbb{R}.
\end{split}
\end{equation*}
Multiplying this inequality by the ``smoothing'' factor 
$\varepsilon (t^2+\varepsilon ^2)^{-1/2}$  and taking into account the inequalities 
$
\varepsilon (t^2+\varepsilon ^2)^{-1/2}\leqslant 1
$ 
and 
$
\vert\tau\vert \vert t\vert (t^2+\varepsilon ^2)^{-1/2}\leqslant \vert\tau\vert $, 
we obtain the following result.

\begin{theorem}
\label{Theorem sin A(t) smoothed}
For $\tau\in\mathbb{R}$, $\varepsilon >0$, and $\vert t\vert\leqslant t_0$ we have
\begin{equation*}
\left\Vert \left( A(t)^{-1/2}\sin (\varepsilon ^{-1}\tau A(t)^{1/2})-(t^2 S)^{-1/2}\sin (\varepsilon ^{-1}\tau (t^2S)^{1/2}P)\right) \varepsilon (t^2+\varepsilon ^2)^{-1/2}P\right\Vert \leqslant C_7(1+\vert\tau\vert ).
\end{equation*}
\end{theorem}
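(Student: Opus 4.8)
The plan is to reduce the statement directly to Proposition~\ref{Proposition abstract sin tau principal} by rescaling the time variable and then inserting the scalar smoothing factor. First I would replace $\tau$ by $\varepsilon^{-1}\tau$ in the bound \eqref{A-1/2sin F}; since that bound holds for every real value of the argument of the sine, this substitution is legitimate and yields
\[
\left\Vert \left( A(t)^{-1/2}\sin (\varepsilon ^{-1}\tau A(t)^{1/2})-(t^2 S)^{-1/2}\sin (\varepsilon ^{-1}\tau (t^2S)^{1/2}P)\right)P\right\Vert
\leqslant C_7\bigl(1+\varepsilon ^{-1}\vert\tau\vert\,\vert t\vert \bigr),\qquad \vert t\vert\le t_0,\ \varepsilon>0,\ \tau\in\mathbb R.
\]

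Next, for fixed $t$ and $\varepsilon$ the quantity $\varepsilon (t^2+\varepsilon^2)^{-1/2}$ is a positive real number, so multiplying the operator inside the norm on the left by it simply multiplies the norm by the same number (and it commutes with $P$ trivially). Hence the left-hand side of the claimed inequality equals $\varepsilon (t^2+\varepsilon^2)^{-1/2}$ times the left-hand side above, and is therefore bounded by
\[
C_7\,\varepsilon (t^2+\varepsilon^2)^{-1/2}\bigl(1+\varepsilon ^{-1}\vert\tau\vert\,\vert t\vert \bigr)
= C_7\Bigl( \varepsilon (t^2+\varepsilon^2)^{-1/2} + \vert\tau\vert\,\vert t\vert\, (t^2+\varepsilon^2)^{-1/2}\Bigr).
\]

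Finally I would invoke the two elementary inequalities $\varepsilon (t^2+\varepsilon^2)^{-1/2}\le 1$ (because $\varepsilon^2\le t^2+\varepsilon^2$) and $\vert t\vert (t^2+\varepsilon^2)^{-1/2}\le 1$ (because $t^2\le t^2+\varepsilon^2$), which make the right-hand side at most $C_7(1+\vert\tau\vert)$, as required. There is no genuine obstacle in this argument: the only point worth noting is that at $t=0$ the operator under the norm is interpreted as the (vanishing) limit as $t\to 0$, exactly as in the proof of Proposition~\ref{Proposition abstract sin tau principal}, so the estimate holds on the whole range $\vert t\vert\le t_0$ by continuity, and the constant $C_7$ is the same as there, depending only on $\delta$, $\Vert X_1\Vert$, and $c_*$.
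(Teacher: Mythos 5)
Your argument is exactly the paper's: replace $\tau$ by $\varepsilon^{-1}\tau$ in \eqref{A-1/2sin F}, multiply the resulting bound by the scalar $\varepsilon(t^2+\varepsilon^2)^{-1/2}$, and finish with the elementary inequalities $\varepsilon(t^2+\varepsilon^2)^{-1/2}\leqslant 1$ and $\vert t\vert(t^2+\varepsilon^2)^{-1/2}\leqslant 1$. The proof is correct and matches the paper's derivation step for step.
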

Replacing  $\tau$ by $\varepsilon ^{-1}\tau$ in \eqref{Th corr abstr}  and multiplying the operator by  $\varepsilon ^2 (t^2+\varepsilon ^2)^{-1}$,  we arrive at the following statement.
\begin{theorem}
\label{Theorem sin A(t) smoothed corrector}
For $\tau\in\mathbb{R}$, $\varepsilon >0$, and $\vert t\vert\leqslant t_0$ we have
\begin{equation*}
\begin{split}
\Bigl\Vert & A(t)^{1/2}\left(
A(t)^{-1/2}\sin (\varepsilon ^{-1}\tau A(t)^{1/2})-(I+tZ)(t^2 S)^{-1/2}\sin (\varepsilon ^{-1}\tau (t^2S)^{1/2}P)\right)\varepsilon ^2(t^2+\varepsilon ^2)^{-1} P\Bigr\Vert\\
&\leqslant C_9\varepsilon (1+\vert\tau\vert ).
\end{split}
\end{equation*}
\end{theorem}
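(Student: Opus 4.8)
The plan is to derive this statement as an immediate corollary of Proposition~\ref{Proposition corr sin tau abstr no hat}, in complete analogy with the way Theorem~\ref{Theorem sin A(t) smoothed} was deduced from Proposition~\ref{Proposition abstract sin tau principal}. Since estimate \eqref{Th corr abstr} holds for \emph{every} real $\tau$, I would first substitute $\varepsilon^{-1}\tau$ for $\tau$ there; this only changes the argument of the sines and replaces the right-hand side $C_9(\vert t\vert+\vert\tau\vert t^2)$ by $C_9(\vert t\vert+\varepsilon^{-1}\vert\tau\vert t^2)$, still for $\vert t\vert\leqslant t_0$, $\varepsilon>0$, $\tau\in\mathbb{R}$.

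Next, because $t$ is a fixed real parameter, the factor $\varepsilon^2(t^2+\varepsilon^2)^{-1}$ is a positive scalar: it commutes with $A(t)^{1/2}$, $Z$, $S$, $P$ and may be placed anywhere inside the norm without affecting anything. Multiplying the rescaled version of \eqref{Th corr abstr} by it, I obtain precisely the operator on the left-hand side of the theorem, bounded by
\begin{equation*}
C_9\,\varepsilon^2(t^2+\varepsilon^2)^{-1}\bigl(\vert t\vert+\varepsilon^{-1}\vert\tau\vert t^2\bigr)
=C_9\Bigl(\varepsilon\cdot\tfrac{\varepsilon\vert t\vert}{t^2+\varepsilon^2}+\varepsilon\vert\tau\vert\cdot\tfrac{t^2}{t^2+\varepsilon^2}\Bigr).
\end{equation*}
Then I would finish with the two elementary inequalities $2\varepsilon\vert t\vert\leqslant t^2+\varepsilon^2$ and $t^2\leqslant t^2+\varepsilon^2$, which bound the two fractions by $1/2$ and $1$, respectively. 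This yields the asserted estimate $C_9\varepsilon(1+\vert\tau\vert)$ (in fact with an extra $\tfrac12$ in front of the first summand).

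I do not expect a genuine obstacle here: the argument is just the rescaling-and-smoothing bookkeeping already employed in the passage to Theorem~\ref{Theorem sin A(t) smoothed}. The only points worth recording are that $\varepsilon^2(t^2+\varepsilon^2)^{-1}$ is scalar (so its position is immaterial), and that the degenerate case $t=0$ is already subsumed in Proposition~\ref{Proposition corr sin tau abstr no hat}, whose bound is interpreted there as the $t\to0$ limit.
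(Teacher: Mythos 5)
Your argument is exactly the one the paper uses: replace $\tau$ by $\varepsilon^{-1}\tau$ in \eqref{Th corr abstr}, multiply by the scalar smoothing factor $\varepsilon^2(t^2+\varepsilon^2)^{-1}$, and absorb that factor via $2\varepsilon\vert t\vert\leqslant t^2+\varepsilon^2$ and $t^2\leqslant t^2+\varepsilon^2$. The proposal is correct, complete, and matches the paper's (terser) derivation step for step.
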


\section{Approximation of the sandwiched operator sine}

\label{Section sandwiched abstract}

\subsection{The operator family $A(t)=M^*\widehat{A}(t)M$} 
\label{Subsec hat famimy abstract}
Now, we consider an operator family of the form $ A(t) = M^* \widehat{A}(t)M$ (see \cite[Chapter 1, Subsections
1.5 and 5.3]{BSu}).

Let $\widehat{\mathfrak{H}}$ be yet another separable Hilbert space. Let $\widehat{X}(t)=\widehat{X}_0+t\widehat{X}_1 : \widehat{\mathfrak{H}}\rightarrow \mathfrak{H}_*$ 
be a family of operators of the same form as $X(t)$, and suppose that $\widehat{X
}(t)$ satisfies the assumptions of Subsection~\ref{Subsubsection operator pencils}. 

Let $M : \mathfrak{H}\rightarrow \widehat{\mathfrak{H}} $ be an isomorphism. Suppose that 
$
M\mathrm{Dom}\,X_0 = \mathrm{Dom}\,\widehat{X}_0$; $X_0 = \widehat{X}_0M$; $ X_1 = \widehat{X}_1M$. 
Then $X(t) = \widehat{X}(t)M$. Consider the family of operators
\begin{equation}
\label{hat A(t)}
\widehat{A}(t) = \widehat{X}(t)^*\widehat{X}(t)
: \widehat{\mathfrak{H}}\rightarrow\widehat{\mathfrak{H}}.
\end{equation}
Obviously, 
\begin{equation}
\label{A(t) and hat A(t)}
A(t) = M^*\widehat{A} (t)M. 
\end{equation}
In what follows, all the objects corresponding to the family \eqref{hat A(t)} are supplied with the upper
mark "$\widehat{\phantom{a}} $". Note that $\widehat{\mathfrak{N}} = M\mathfrak{N}$, $\widehat{n} = n$, $\widehat{\mathfrak{N}}_* = \mathfrak{N}_*$, $\widehat{n}_* = n_*$, and $\widehat{P}_* = P_*$.

We denote
\begin{equation}
\label{Q= abstract}
Q:=(MM^*)^{-1}=(M^*)^{-1}M^{-1}:\widehat{\mathfrak{H}}\rightarrow\widehat{\mathfrak{H}}.
\end{equation}
Let $Q_{\widehat{\mathfrak{N}}}$ be the block of $Q$ in the subspace $\widehat{\mathfrak{N}}$: 
$
Q_{\widehat{\mathfrak{N}}}=\widehat{P}Q\vert _{\widehat{\mathfrak{N}}}:\widehat{\mathfrak{N}}\rightarrow\widehat{\mathfrak{N}}$. 
Obviously, $Q_{\widehat{\mathfrak{N}}}$ is an isomorphism in $\widehat{\mathfrak{N}}$. Let $M_0:=\left(Q_{\widehat{\mathfrak{N}}}\right)^{-1/2}: \widehat{\mathfrak{N}}\rightarrow \widehat{\mathfrak{N}}$. As was shown in \cite[Proposition 1.2]{Su07}, the orthogonal projection $P$ of the space $\mathfrak{H}$ onto $\mathfrak{N}$ and the orthogonal projection $\widehat{P}$ of the space $\widehat{\mathfrak{H}}$ onto $\widehat{\mathfrak{N}}$ satisfy the following relation: 
$
P=M^{-1}(Q_{\widehat{\mathfrak{N}}})^{-1}\widehat{P}(M^*)^{-1}$. 
Hence,
\begin{equation}
\label{PM*=}
PM^*=M^{-1}(Q_{\widehat{\mathfrak{N}}})^{-1}\widehat{P}=M^{-1}M_0^2 \widehat{P}.
\end{equation}
According to \cite[Chapter 1, Subsec. 1.5]{BSu}, the spectral germs $S$ and $\widehat{S}$ satisfy
\begin{equation*}
S=PM^*\widehat{S}M\vert _\mathfrak{N}.
\end{equation*}

For the operator family \eqref{hat A(t)} we introduce the operator $\widehat{Z}_Q$ acting in $\widehat{\mathfrak{H}}$ and taking an element $\widehat{u}\in\widehat{\mathfrak{H}}$ to the solution $\widehat{\varphi}_Q$ of the problem
\begin{equation}
\label{hat ZQ equation}
\widehat{X}_0^*(\widehat{X}_0\widehat{\varphi}_Q+\widehat{X}_1\widehat{\omega})=0,\quad Q\widehat{\varphi}_Q \perp \widehat{\mathfrak{N}},
\end{equation}
where $\widehat{\omega}:=\widehat{P}\widehat{u}$. Equation \eqref{hat ZQ equation} is understood in the weak sense. As was shown in \cite[Lemma~6.1]{BSu05-1}, the operator $Z$ for $A(t)$ and the operator $\widehat{Z}_Q$ satisfy 
\begin{equation}
\label{hat Z_Q=}
\widehat{Z}_Q=MZM^{-1}\widehat{P}.
\end{equation}

\subsection{The principal term of approximation for the sandwiched operator $A(t)^{-1/2}\sin(\tau A(t)^{1/2})$}
In this subsection, we find an approximation for the operator $A(t)^{-1/2}\sin(\tau A(t)^{1/2})$, where $A(t)$
is given by \eqref{A(t) and hat A(t)}, in terms of the germ $\widehat{S}$ of $\widehat{A}(t)$ and the isomorphism $M$. It
is convenient to border the operator $A(t)^{-1/2}\sin(\tau A(t)^{1/2})$ by appropriate factors.

\begin{proposition} Suppose that the assumptions of Subsec.~\textnormal{\ref{Subsec hat famimy abstract}} are satisfied. 
Then 
for $\tau\in\mathbb{R}$ and $\vert t\vert \leqslant t_0$ we have
\begin{align}
\label{prop hat sin principal abstr scheme}
\begin{split}
\Vert & MA(t)^{-1/2}\sin (\tau A(t)^{1/2})M^{-1}\widehat{P}
-M_0(t^2M_0\widehat{S}M_0)^{-1/2}\sin (\tau(t^2M_0\widehat{S}M_0)^{1/2})M_0^{-1}\widehat{P}\Vert _{\widehat{\mathfrak{H}}\rightarrow\widehat{\mathfrak{H}}}
\\
&\leqslant C_7\Vert M\Vert \Vert M^{-1}\Vert  (1+\vert\tau\vert \vert t\vert ).
\end{split}
\end{align}
Here $t_0$ is defined according to \eqref{t_0(delta) abstact scheme}, and $C_7$ is the constant from \eqref{A-1/2sin F 2} depending only on $\delta$, $\Vert X_1\Vert $, and $c_*$.
\end{proposition}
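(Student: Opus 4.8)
The plan is to deduce \eqref{prop hat sin principal abstr scheme} from the already established estimate \eqref{A-1/2sin F} by conjugating the operator under the norm with $M$ on the left and with $M^{-1}\widehat{P}$ on the right, and then recognizing the conjugated ``germ term'' as the operator written in terms of $\widehat{S}$ and $M_0$. No new perturbation theory is needed: this is the standard passage of an estimate through the isomorphism $M$, as in \cite{BSu,Su07}.

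First I would record two elementary facts about $M$. Since $M\mathfrak{N}=\widehat{\mathfrak{N}}$, the operator $M^{-1}\widehat{P}$ maps $\widehat{\mathfrak{H}}$ into $\mathfrak{N}$, whence $PM^{-1}\widehat{P}=M^{-1}\widehat{P}$ and $\Vert M^{-1}\widehat{P}\Vert\leqslant\Vert M^{-1}\Vert$. Multiplying the operator under the norm in \eqref{A-1/2sin F} by $M^{-1}\widehat{P}$ on the right therefore absorbs the trailing projection $P$ into $M^{-1}\widehat{P}$; applying \eqref{A-1/2sin F} and then multiplying by $M$ on the left gives
\[
\bigl\Vert MA(t)^{-1/2}\sin(\tau A(t)^{1/2})M^{-1}\widehat{P}-M(t^2 S)^{-1/2}\sin(\tau(t^2 S)^{1/2}P)PM^{-1}\widehat{P}\bigr\Vert\leqslant C_7\Vert M\Vert\,\Vert M^{-1}\Vert(1+\vert\tau\vert\vert t\vert).
\]
Hence it only remains to establish the algebraic identity
\[
M(t^2 S)^{-1/2}\sin(\tau(t^2 S)^{1/2}P)PM^{-1}\widehat{P}=M_0(t^2 M_0\widehat{S}M_0)^{-1/2}\sin(\tau(t^2 M_0\widehat{S}M_0)^{1/2})M_0^{-1}\widehat{P}.
\]

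To prove this identity I would use that $\varphi_\tau(x):=x^{-1/2}\sin(\tau x^{1/2})$ is an entire function of $x$, so that $(t^2 S)^{-1/2}\sin(\tau(t^2 S)^{1/2}P)P=\varphi_\tau(t^2 S)P$ is a convergent power series in the bounded operator $t^2 S$ acting in $\mathfrak{N}$, and the same holds on the right-hand side. It then suffices to verify, for each $k\geqslant 0$, that $M(t^2 S)^k M^{-1}\widehat{P}=M_0(t^2 M_0\widehat{S}M_0)^k M_0^{-1}\widehat{P}$; since $M_0(M_0\widehat{S}M_0)^k M_0^{-1}=(M_0^2\widehat{S})^k$, this reduces to the single relation $MSM^{-1}=M_0^2\widehat{S}$ on $\widehat{\mathfrak{N}}$. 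That relation I would read off from the germ identity $S=PM^*\widehat{S}M\vert_{\mathfrak{N}}$ recorded above together with the identity $MPM^*=M_0^2\widehat{P}$, which is immediate from \eqref{PM*=}. The value $t=0$ requires no separate argument, since both operators are then equal to $\tau\widehat{P}$; and it is exactly the analyticity of $\varphi_\tau$ at the origin that makes the power-series manipulation legitimate there.

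I expect the only delicate point to be the bookkeeping in this last step: one must keep track of which operators act in $\mathfrak{N}$, $\widehat{\mathfrak{N}}$ and which in $\mathfrak{H}$, $\widehat{\mathfrak{H}}$, and of the projections $P$ and $\widehat{P}$, so that the term-by-term power-series computation is correct. Everything else is an immediate consequence of \eqref{A-1/2sin F}, and the resulting constant is $C_7\Vert M\Vert\Vert M^{-1}\Vert$, as claimed.
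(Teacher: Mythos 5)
Your proof is correct and its global architecture coincides with the paper's: conjugate the estimate \eqref{A-1/2sin F} of Proposition~\ref{Proposition abstract sin tau principal} by $M$ on the left and by $M^{-1}\widehat{P}$ on the right, absorb the trailing projection $P$ into $M^{-1}\widehat{P}$ (using $M\mathfrak{N}=\widehat{\mathfrak{N}}$, so $PM^{-1}\widehat{P}=M^{-1}\widehat{P}$), and thereby reduce everything to the algebraic identity
\begin{equation*}
M(t^2 S)^{-1/2}\sin\bigl(\tau(t^2 S)^{1/2}P\bigr)PM^{-1}\widehat{P}
=M_0(t^2 M_0\widehat{S}M_0)^{-1/2}\sin\bigl(\tau(t^2 M_0\widehat{S}M_0)^{1/2}\bigr)M_0^{-1}\widehat{P},
\end{equation*}
which is exactly \eqref{3.13a}. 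Where you genuinely diverge from the paper is in how this identity is established. The paper imports the cosine intertwining identity \eqref{cos and hat cos identity} from \cite[Proposition 3.3]{BSu08}, integrates it over $\tau$ to obtain \eqref{sine and hat sine} (the sine identity with $M^*$ instead of $M^{-1}\widehat{P}$), and then uses $PM^*M_0^{-2}\widehat{P}=M^{-1}\widehat{P}$, a consequence of \eqref{PM*=}, to replace $M^*$. You instead expand $\varphi_\tau(x)=x^{-1/2}\sin(\tau x^{1/2})$ as an entire power series, observe by an easy induction (using that $B:=M(t^2S)M^{-1}\widehat{P}$ and $t^2M_0^2\widehat{S}\widehat{P}$ both have range in $\widehat{\mathfrak{N}}$) that $M(t^2S)^kM^{-1}\widehat{P}=B^k$ and $M_0(t^2M_0\widehat{S}M_0)^kM_0^{-1}\widehat{P}=(t^2M_0^2\widehat{S}\widehat{P})^k$, and so reduce the identity to $MSM^{-1}\widehat{P}=M_0^2\widehat{S}\widehat{P}$ — which in turn drops out of $S=PM^*\widehat{S}M\vert_{\mathfrak{N}}$ combined with $MPM^*=M_0^2\widehat{P}$. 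This power-series derivation is self-contained and does not rely on the cosine result from \cite{BSu08}; the paper's route is shorter given that the cosine identity is already at hand and the functional-calculus bookkeeping is handled there once and for all. Both routes are sound and both yield the constant $C_7\Vert M\Vert\Vert M^{-1}\Vert$.
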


\begin{proof}
Estimate \eqref{prop hat sin principal abstr scheme} follows from Proposition~\ref{Proposition abstract sin tau principal} by recalculation. 
In \cite[Proposition 3.3]{BSu08}, it was shown that
\begin{equation}
\label{cos and hat cos identity}
M\cos (\tau (t^2S)^{1/2}P)PM^*
=M_0\cos (\tau (t^2 M_0\widehat{S}M_0)^{1/2})M_0\widehat{P}.
\end{equation}
Obviously,
\begin{equation}
\label{sin not hat = int}
(t^2S)^{-1/2}\sin (\tau (t^2S)^{1/2}P)P=
\int _0^\tau \cos (\widetilde{\tau}(t^2S)^{1/2}P)P\,d\widetilde{\tau}.
\end{equation}
Similarly,
\begin{equation}
\label{sin hat = int}
(t^2 M_0 \widehat{S}M_0)^{-1/2}\sin (\tau (t^2 M_0\widehat{S}M_0)^{1/2})M_0\widehat{P}
=\int _0^\tau \cos(\widetilde{\tau}(t^2 M_0\widehat{S}M_0)^{1/2})M_0\widehat{P}\,d\widetilde{\tau}.
\end{equation}
Integrating \eqref{cos and hat cos identity} over $\tau$ and taking \eqref{sin not hat = int}, \eqref{sin hat = int} into account, we conclude that
\begin{equation}
\label{sine and hat sine}
M (t^2S)^{-1/2}\sin (\tau (t^2S)^{1/2}P)P M^*=M_0(t^2 M_0 \widehat{S}M_0)^{-1/2}\sin (\tau (t^2 M_0\widehat{S}M_0)^{1/2})M_0\widehat{P}.
\end{equation}
Next, since $M_0=(Q_{\widehat{\mathfrak{N}}})^{-1/2}$, using \eqref{PM*=}, we obtain 
$PM^*M_0^{-2}\widehat{P}=M^{-1}\widehat{P}$. 
So, by \eqref{sine and hat sine},
\begin{equation}
\label{3.13a}
M(t^2S)^{-1/2}\sin (\tau (t^2S)^{1/2}P))M^{-1}\widehat{P}
=M_0(t^2M_0\widehat{S}M_0)^{-1/2}\sin (\tau (t^2M_0\widehat{S}M_0)^{1/2})M_0^{-1}\widehat{P}.
\end{equation}
Thus,
\begin{equation}
\label{end of proof prop 3.1}
\begin{split}
& MA(t)^{-1/2}\sin (\tau A(t)^{1/2})M^{-1}\widehat{P}
-M_0(t^2M_0\widehat{S}M_0)^{-1/2}\sin (\tau(t^2M_0\widehat{S}M_0)^{1/2})M_0^{-1}\widehat{P}\\
&=M\Bigl(
A(t)^{-1/2}\sin (\tau A(t)^{1/2})P
-(t^2S)^{-1/2}\sin (\tau (t^2S)^{1/2}P)P
\Bigr)M^{-1}\widehat{P}.
\end{split}
\end{equation}
Using Proposition~\ref{Proposition abstract sin tau principal} and \eqref{end of proof prop 3.1}, we arrive at inequality~\eqref{prop hat sin principal abstr scheme}.
\end{proof}

\subsection{Approximation with the corrector}

\begin{proposition}
Under the assumptions of Subsec.~\textnormal{\ref{Subsec hat famimy abstract}}, for $\tau\in\mathbb{R}$ and $\vert t\vert \leqslant t_0$ we have
\begin{equation}
\label{prop sin tu corr abstract}
\begin{split}
\Bigl\Vert & \widehat{A}(t)^{1/2}\Bigl(
MA(t)^{-1/2}\sin(\tau A(t)^{1/2})M^{-1}\widehat{P}
\\
&-(I+t\widehat{Z}_Q)M_0(t^2 M_0\widehat{S}M_0)^{-1/2}\sin (\tau (t^2 M_0\widehat{S}M_0)^{1/2})M_0^{-1}\widehat{P}\Bigr)\Bigr\Vert _{\widehat{\mathfrak{H}}\rightarrow\widehat{\mathfrak{H}}}
\\
&\leqslant C_9 \Vert M^{-1}\Vert  (\vert t\vert +\vert \tau\vert t^2).
\end{split}
\end{equation}
The constant $C_9$ is defined in \eqref{Th corr exp} and depends only on $\delta$, $\Vert X_1\Vert$, and $c_*$.
\end{proposition}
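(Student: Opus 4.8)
The plan is to deduce \eqref{prop sin tu corr abstract} from Proposition~\ref{Proposition corr sin tau abstr no hat} (estimate \eqref{Th corr abstr}) by a purely algebraic ``recalculation'', in complete parallel with the way the principal-term estimate \eqref{prop hat sin principal abstr scheme} was obtained from \eqref{A-1/2sin F}. Put
\[
\mathcal{D}(t,\tau):=A(t)^{-1/2}\sin(\tau A(t)^{1/2})-(I+tZ)(t^2S)^{-1/2}\sin(\tau(t^2S)^{1/2}P),
\]
so that \eqref{Th corr abstr} is precisely the bound $\Vert A(t)^{1/2}\mathcal{D}(t,\tau)P\Vert\leqslant C_9(\vert t\vert+\vert\tau\vert t^2)$, and write $\widehat{\mathcal{G}}(t,\tau):=M_0(t^2M_0\widehat{S}M_0)^{-1/2}\sin(\tau(t^2M_0\widehat{S}M_0)^{1/2})M_0^{-1}\widehat{P}$ for the ``germ part'' appearing in \eqref{prop sin tu corr abstract}. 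The whole argument has two steps: first, to show that the operator under the norm sign in \eqref{prop sin tu corr abstract} equals $\widehat{A}(t)^{1/2}\,M\,\mathcal{D}(t,\tau)P\,M^{-1}\widehat{P}$; second, to remove the left factor $\widehat{A}(t)^{1/2}$ in favour of $A(t)^{1/2}$ via the factorization \eqref{A(t) and hat A(t)} and then invoke \eqref{Th corr abstr}.

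For the identification in the first step I would use three ingredients. (i) Since $M^{-1}\widehat{\mathfrak{N}}=\mathfrak{N}$, the operator $M^{-1}\widehat{P}$ has range in $\mathfrak{N}$, so $PM^{-1}\widehat{P}=M^{-1}\widehat{P}$; this allows one to insert or delete, at will, the projections $P$ occurring as right factors. (ii) By \eqref{3.13a}, $\widehat{\mathcal{G}}(t,\tau)=M(t^2S)^{-1/2}\sin(\tau(t^2S)^{1/2}P)M^{-1}\widehat{P}$, and its range lies in $\widehat{\mathfrak{N}}$, hence $\widehat{P}\widehat{\mathcal{G}}(t,\tau)=\widehat{\mathcal{G}}(t,\tau)$ and $M^{-1}\widehat{\mathcal{G}}(t,\tau)=(t^2S)^{-1/2}\sin(\tau(t^2S)^{1/2}P)M^{-1}\widehat{P}$. (iii) By \eqref{hat Z_Q=}, $\widehat{Z}_Q=MZM^{-1}\widehat{P}$. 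Combining (i)--(iii),
\[
(I+t\widehat{Z}_Q)\widehat{\mathcal{G}}(t,\tau)=\widehat{P}\widehat{\mathcal{G}}(t,\tau)+tMZM^{-1}\widehat{\mathcal{G}}(t,\tau)=M(I+tZ)(t^2S)^{-1/2}\sin(\tau(t^2S)^{1/2}P)PM^{-1}\widehat{P},
\]
while $MA(t)^{-1/2}\sin(\tau A(t)^{1/2})M^{-1}\widehat{P}=MA(t)^{-1/2}\sin(\tau A(t)^{1/2})PM^{-1}\widehat{P}$ by (i); subtracting the two yields exactly $\widehat{A}(t)^{1/2}M\,\mathcal{D}(t,\tau)P\,M^{-1}\widehat{P}$.

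For the second step, the factorization $A(t)=M^*\widehat{A}(t)M$ gives $\Vert\widehat{A}(t)^{1/2}Mv\Vert_{\widehat{\mathfrak{H}}}^2=(\widehat{A}(t)Mv,Mv)=(A(t)v,v)=\Vert A(t)^{1/2}v\Vert_{\mathfrak{H}}^2$ for every $v\in\mathrm{Dom}\,A(t)^{1/2}=M^{-1}\mathrm{Dom}\,\widehat{A}(t)^{1/2}$. Applying this with $v$ ranging over the range of $\mathcal{D}(t,\tau)PM^{-1}\widehat{P}$ — which, for $t\neq0$, lies in $\mathrm{Dom}\,A(t)^{1/2}$ since then $A(t)\geqslant c_*t^2I>0$ — and extracting $\Vert M^{-1}\widehat{P}\Vert\leqslant\Vert M^{-1}\Vert$, one obtains $\Vert\widehat{A}(t)^{1/2}M\,\mathcal{D}(t,\tau)P\,M^{-1}\widehat{P}\Vert_{\widehat{\mathfrak{H}}\rightarrow\widehat{\mathfrak{H}}}\leqslant\Vert A(t)^{1/2}\mathcal{D}(t,\tau)P\Vert\,\Vert M^{-1}\Vert\leqslant C_9\Vert M^{-1}\Vert(\vert t\vert+\vert\tau\vert t^2)$, which is \eqref{prop sin tu corr abstract}.

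I do not expect any genuine obstacle. The only point that needs care — exactly as in the proofs of Propositions~\ref{Proposition abstract sin tau principal} and \ref{Proposition corr sin tau abstr no hat} — is the degeneracy at $t=0$ of $A(t)^{-1/2}$ and $(t^2S)^{-1/2}$: one carries out all the computations above for $t\neq0$, where these operators are honestly bounded and all the domain/range conditions hold, and then passes to the limit $t\to0$, the operator under the norm sign in \eqref{prop sin tu corr abstract} tending to $0$ by the Taylor expansion of the sine. Everything else is bookkeeping with the projections $P$, $\widehat{P}$ and the isomorphism $M$; the entire analytic content already resides in the established estimate \eqref{Th corr abstr}.
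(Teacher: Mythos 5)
Your proposal is correct and follows essentially the same route as the paper's own proof: it identifies the operator under the norm sign with $\widehat{A}(t)^{1/2}M\bigl(A(t)^{-1/2}\sin(\tau A(t)^{1/2})-(I+tZ)(t^2S)^{-1/2}\sin(\tau(t^2S)^{1/2}P)\bigr)PM^{-1}\widehat{P}$ using \eqref{hat Z_Q=} and \eqref{3.13a}, replaces $\widehat{A}(t)^{1/2}M$ by $A(t)^{1/2}$ at the level of norms via the factorization \eqref{A(t) and hat A(t)}, and invokes \eqref{Th corr abstr}. (The only minor quibble is the stated reason that the intermediate range lies in $\mathrm{Dom}\,A(t)^{1/2}$: positive definiteness $A(t)\geqslant c_*t^2I$ does not by itself enlarge the form domain; the correct justification is that the range of $A(t)^{-1/2}$ and of $(I+tZ)P$ lies in $\mathrm{Dom}\,X_0=\mathrm{Dom}\,A(t)^{1/2}$, as is already implicit in the boundedness of the operator in \eqref{Th corr abstr}.)
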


\begin{proof}
Estimate \eqref{prop sin tu corr abstract} follows from Proposition~\ref{Proposition corr sin tau abstr no hat} by recalculation.  According to \eqref{hat Z_Q=} and \eqref{3.13a},
\begin{align}
\label{3.14a}
\begin{split}
&t \widehat{Z}_Q M_0 (t^2 M_0\widehat{S}M_0)^{-1/2}\sin \bigl(\tau (t^2 M_0\widehat{S}M_0)^{1/2}\bigr)M_0^{-1}\widehat{P}\\
&=
tMZM^{-1}M_0 (t^2 M_0\widehat{S}M_0)^{-1/2}\sin \bigl(\tau (t^2 M_0\widehat{S}M_0)^{1/2}\bigr)M_0^{-1}\widehat{P}
\\
&=
tMZ(t^2S)^{-1/2}\sin (\tau (t^2S)^{1/2})PM^{-1}\widehat{P}.
\end{split}
\end{align}
Combining \eqref{3.14a} with \eqref{A(t) and hat A(t)} and \eqref{end of proof prop 3.1}, we obtain
\begin{equation*}
\begin{split}
&\Bigl\Vert
\widehat{A}(t)^{1/2}\Bigl(
MA(t)^{-1/2}\sin (\tau A(t)^{1/2})M^{-1}\widehat{P}\\
&-(I+t\widehat{Z}_Q)M_0(t^2 M_0\widehat{S}M_0)^{-1/2}\sin \bigl(\tau (t^2 M_0\widehat{S}M_0)^{1/2}\bigr)M_0^{-1}\widehat{P}
\Bigr)\Bigr\Vert _{\widehat{\mathfrak{H}}\rightarrow\widehat{\mathfrak{H}}}
\\
&=\Bigl\Vert A(t)^{1/2}\Bigl(
A(t)^{-1/2}\sin (\tau A(t)^{1/2})P
-(I+tZ)(t^2S)^{-1/2}\sin (\tau (t^2S)^{1/2})P\Bigr)M^{-1}\widehat{P}
\Bigr\Vert _{\widehat{\mathfrak{H}}\rightarrow\mathfrak{H}}.
\end{split}
\end{equation*}
Together with Proposition~\ref{Proposition corr sin tau abstr no hat}, this implies~\eqref{prop sin tu corr abstract}.
\end{proof}

\subsection{Approximation of the sandwiched operator $A(t)^{-1/2}\sin (\varepsilon ^{-1}\tau A(t)^{1/2})$}

Writing down  \eqref{prop hat sin principal abstr scheme} and \eqref{prop sin tu corr abstract} with $\tau$ replaced by $\varepsilon ^{-1}\tau$ and multiplying the corresponding inequalities by the ,,smoothing factors,'' we arrive at the following result.

\begin{theorem}
\label{Theorem sin sandwiched abstract}
Under the assumptions of Subsec.~\textnormal{\ref{Subsec hat famimy abstract}}, for $\tau\in\mathbb{R}$, $\varepsilon >0$, and $\vert t\vert \leqslant t_0$ we have
\begin{align}
\label{Th 3.3_1}
\begin{split}
\Bigl\Vert &\Bigl( MA(t)^{-1/2}\sin (\varepsilon ^{-1}\tau A(t)^{1/2})M^{-1}\widehat{P}
\\
&-M_0(t^2M_0\widehat{S}M_0)^{-1/2}\sin (\varepsilon ^{-1}\tau(t^2M_0\widehat{S}M_0)^{1/2})M_0^{-1}\widehat{P}\Bigr)
\varepsilon (t^2+\varepsilon ^2)^{-1/2}\widehat{P}\Bigr\Vert _{\widehat{\mathfrak{H}}\rightarrow\widehat{\mathfrak{H}}}
\\
&\leqslant C_7\Vert M\Vert \Vert M^{-1}\Vert  (1+\vert\tau\vert ),
\end{split}
\\
\label{Th 3.3_2}
\begin{split}
\Bigl\Vert & \widehat{A}(t)^{1/2}\Bigl(
MA(t)^{-1/2}\sin(\varepsilon ^{-1}\tau A(t)^{1/2})M^{-1}\widehat{P}
\\
&-(I+t\widehat{Z}_Q)M_0(t^2 M_0\widehat{S}M_0)^{-1/2}\sin (\varepsilon ^{-1}\tau (t^2 M_0\widehat{S}M_0)^{1/2})M_0^{-1}\widehat{P}\Bigr)
\varepsilon ^2(t^2+\varepsilon ^2)^{-1}\widehat{P}\Bigr\Vert _{\widehat{\mathfrak{H}}\rightarrow\widehat{\mathfrak{H}}}
\\
&\leqslant C_9 \Vert M^{-1}\Vert  \varepsilon(1 +\vert \tau\vert ).
\end{split}
\end{align}
The number $t_0$ is subject to \eqref{t_0(delta) abstact scheme}, the constants $C_7$ and $C_9$ are defined by \eqref{A-1/2sin F 2} and \eqref{Th corr exp}.
\end{theorem}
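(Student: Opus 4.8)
The plan is to obtain Theorem~\ref{Theorem sin sandwiched abstract} as a direct consequence of the two preceding propositions (inequalities \eqref{prop hat sin principal abstr scheme} and \eqref{prop sin tu corr abstract}) via the substitution $\tau\mapsto\varepsilon^{-1}\tau$ followed by multiplication by the appropriate ,,smoothing factor.'' This is the same mechanism already used to pass from Proposition~\ref{Proposition abstract sin tau principal} to Theorem~\ref{Theorem sin A(t) smoothed} and from Proposition~\ref{Proposition corr sin tau abstr no hat} to Theorem~\ref{Theorem sin A(t) smoothed corrector}; here we simply carry it out for the bordered (sandwiched) versions.

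For \eqref{Th 3.3_1}, I would write \eqref{prop hat sin principal abstr scheme} with $\tau$ replaced by $\varepsilon^{-1}\tau$, which gives the bound $C_7\Vert M\Vert\Vert M^{-1}\Vert(1+\varepsilon^{-1}\vert\tau\vert\vert t\vert)$ on the $\widehat{\mathfrak H}\to\widehat{\mathfrak H}$ norm of the corresponding difference of operators. Then I multiply on the right by $\varepsilon(t^2+\varepsilon^2)^{-1/2}\widehat P$. Since $\widehat P$ commutes with the operator in \eqref{prop hat sin principal abstr scheme} acting between $\widehat P\widehat{\mathfrak H}$ and itself, and since multiplication by the scalar $\varepsilon(t^2+\varepsilon^2)^{-1/2}\leqslant 1$ does not increase the norm, we multiply the right-hand side by $\varepsilon(t^2+\varepsilon^2)^{-1/2}$ and use the two elementary scalar inequalities $\varepsilon(t^2+\varepsilon^2)^{-1/2}\leqslant 1$ and $\varepsilon^{-1}\vert\tau\vert\vert t\vert\cdot\varepsilon(t^2+\varepsilon^2)^{-1/2}=\vert\tau\vert\vert t\vert(t^2+\varepsilon^2)^{-1/2}\leqslant\vert\tau\vert$. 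Hence $\varepsilon(t^2+\varepsilon^2)^{-1/2}(1+\varepsilon^{-1}\vert\tau\vert\vert t\vert)\leqslant 1+\vert\tau\vert$, which yields \eqref{Th 3.3_1}.

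For \eqref{Th 3.3_2} I proceed analogously from \eqref{prop sin tu corr abstract}: replacing $\tau$ by $\varepsilon^{-1}\tau$ gives the bound $C_9\Vert M^{-1}\Vert(\vert t\vert+\varepsilon^{-1}\vert\tau\vert t^2)$ on the norm of $\widehat A(t)^{1/2}$ applied to the corresponding corrected difference. Now multiply on the right by the stronger smoothing factor $\varepsilon^2(t^2+\varepsilon^2)^{-1}\widehat P$; this factor is used here instead of $\varepsilon(t^2+\varepsilon^2)^{-1/2}$ precisely because the left-hand side now carries the extra derivative-type factor $\widehat A(t)^{1/2}$, so the bound we are smoothing is of order $O(\vert t\vert)$ rather than $O(1)$. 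Using the scalar estimates $\varepsilon^2(t^2+\varepsilon^2)^{-1}\leqslant 1$, $\vert t\vert\cdot\varepsilon^2(t^2+\varepsilon^2)^{-1}\leqslant\varepsilon|t|(t^2+\varepsilon^2)^{-1/2}\cdot\varepsilon(t^2+\varepsilon^2)^{-1/2}\leqslant\varepsilon$ (since both $\varepsilon(t^2+\varepsilon^2)^{-1/2}\leqslant 1$ and $|t|(t^2+\varepsilon^2)^{-1/2}\leqslant 1$), and $\varepsilon^{-1}\vert\tau\vert t^2\cdot\varepsilon^2(t^2+\varepsilon^2)^{-1}=\varepsilon\vert\tau\vert t^2(t^2+\varepsilon^2)^{-1}\leqslant\varepsilon\vert\tau\vert$, we get $\varepsilon^2(t^2+\varepsilon^2)^{-1}(\vert t\vert+\varepsilon^{-1}\vert\tau\vert t^2)\leqslant\varepsilon(1+\vert\tau\vert)$, which is \eqref{Th 3.3_2}.

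There is essentially no hard part here: the statement is purely a bookkeeping consequence of the two propositions combined with elementary inequalities for the scalar smoothing multipliers. The only point requiring a little care is verifying that inserting the scalar factor $\varepsilon(t^2+\varepsilon^2)^{-1/2}$ (resp.\ $\varepsilon^2(t^2+\varepsilon^2)^{-1}$) on the right, sandwiched between $\widehat P$'s, really does multiply the operator-norm bound by that same scalar --- which is immediate since for fixed $t$ and $\varepsilon$ it is just a non-negative constant commuting with everything, bounded by $1$ --- and keeping track of which power of the smoothing factor matches which order ($O(1)$ versus $O(\vert t\vert)$) on the operator side. Both verifications are routine, so the proof is short.
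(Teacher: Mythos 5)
Your argument is correct and coincides with the paper's own (very brief) proof, which likewise obtains Theorem~\ref{Theorem sin sandwiched abstract} by substituting $\tau\mapsto\varepsilon^{-1}\tau$ in \eqref{prop hat sin principal abstr scheme} and \eqref{prop sin tu corr abstract} and then multiplying by the smoothing factors $\varepsilon(t^2+\varepsilon^2)^{-1/2}$ and $\varepsilon^2(t^2+\varepsilon^2)^{-1}$ respectively, exactly as was done to pass from Propositions~\ref{Proposition abstract sin tau principal} and~\ref{Proposition corr sin tau abstr no hat} to Theorems~\ref{Theorem sin A(t) smoothed} and~\ref{Theorem sin A(t) smoothed corrector}. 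You have in fact written out the scalar inequalities in more detail than the paper does.
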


\section*{Chapter II. Periodic differential operators in $L_2(\mathbb{R}^d;\mathbb{C}^n)$}

\label{Section Chapter 2}

In the present chapter, we describe the class of matrix second order differential operators admitting a factorization of the form 
 $\mathcal{A}=\mathcal{X}^*\mathcal{X}$, where $\mathcal{X}$ is a homogeneous first order DO. This class was introduced and studied in \cite[Chapter~2]{BSu}. 

\section{Factorized second order operators}
\label{Section Factorized families}

\subsection{Lattices $\Gamma$ and $\widetilde{\Gamma}$} Let $\Gamma$ be a lattice in $\mathbb{R}^d$ generated by the basis $\mathbf{a}_1,\dots, \mathbf{a}_d$:
\begin{equation*}
\Gamma:=\left\lbrace\mathbf{a}\in\mathbb{R}^d : \mathbf{a}=\sum _{j=1}^d \nu _j\mathbf{a}_j,\quad \nu _j\in\mathbb{Z}\right\rbrace ,
\end{equation*}
and let $\Omega$ be the elementary cell of the lattice $\Gamma$: 
$$
\Omega :=\left\lbrace \mathbf{x}\in\mathbb{R}^d :\mathbf{x}=\sum_{j=1}^d \zeta _j\mathbf{a}_j,\quad -\frac{1}{2}<\zeta _j <\frac{1}{2}\right\rbrace .$$

The basis $\mathbf{b}_1,\dots ,\mathbf{b}_d$ dual to $\mathbf{a}_1,\dots ,\mathbf{a}_d$ is defined by the relations $\langle \mathbf{b}_l,\mathbf{a}_j\rangle =2\pi\delta _{lj}$. This basis generates the lattice $\widetilde{\Gamma}$ dual to $\Gamma$: 
$
\widetilde{\Gamma}:=\left\lbrace \mathbf{b}\in\mathbb{R}^d : \mathbf{b}=\sum _{j=1}^d\mu_j\mathbf{b}_j,\quad \mu_j\in\mathbb{Z}\right\rbrace $. 
Let $\widetilde{\Omega}$ be the first Brillouin zone of the lattice  $\widetilde{\Gamma}$:
\begin{equation}
\label{Omega-tilda}
\widetilde{\Omega}:=\left\lbrace\mathbf{k}\in\mathbb{R}^d : \vert \mathbf{k}\vert <\vert \mathbf{k}-\mathbf{b}\vert,\quad 0\neq\mathbf{b}\in\widetilde{\Gamma}\right\rbrace .
\end{equation}
Let $\vert \Omega\vert$ be the Lebesgue measure of the cell $\Omega$: $\vert \Omega\vert =\mathrm{meas}\,\Omega$, and let $\vert \widetilde{\Omega}\vert =\mathrm{meas}\,\widetilde{\Omega}$.  We put $2r_1:=\mathrm{diam}\,\Omega$. 
The maximal radius of the ball containing in $\mathrm{clos}\,\widetilde{\Omega}$ is denoted by $r_0$.  Note that
\begin{equation}
\label{2r0=}
2r_0=\min _{0\neq\mathbf{b}\in\widetilde{\Gamma}} \vert \mathbf{b}\vert.
\end{equation}

With the lattice $\Gamma$, we associate the discrete Fourier transformation
\begin{equation}
\label{discr Fourier}
v(\mathbf{x})=\vert \Omega\vert ^{-1/2}\sum _{\mathbf{b}\in \widetilde{\Gamma}}\widehat{v}_\mathbf{b} e^{i\langle \mathbf{b},\mathbf{x}\rangle },\quad\mathbf{x}\in\Omega ,
\end{equation}
which is a unitary mapping of $l_2(\widetilde{\Gamma})$ onto $L_2(\Omega)$:
\begin{equation}
\label{Fourier unit}
\int _\Omega \vert v(\mathbf{x})\vert ^2\,d\mathbf{x}=\sum _{\mathbf{b}\in\widetilde{\Gamma}}\vert \widehat{v}_\mathbf{b}\vert ^2 .
\end{equation}

\textit{Below by $\widetilde{H}^1(\Omega;\mathbb{C}^n)$ we denote the subspace of functions from $H^1(\Omega;\mathbb{C}^n)$ whose $\Gamma$-periodic extension to $\mathbb{R}^d$ belongs to $H^1_{\mathrm{loc}}(\mathbb{R}^d;\mathbb{C}^n)$.} We have
\begin{equation}
\label{6.5 from DSu}
\Vert (\mathbf{D}+\mathbf{k})\mathbf{u}\Vert ^2_{L_2(\Omega)}  =\sum _{\mathbf{b}\in\widetilde{\Gamma}}\vert \mathbf{b}+\mathbf{k}\vert ^2\vert \widehat{\mathbf{u}}_\mathbf{b}\vert ^2,\quad \mathbf{u}\in \widetilde{H}^1(\Omega ;\mathbb{C}^n),\quad\mathbf{k}\in\mathbb{R}^d,
\end{equation}
and convergence of the series in the right-hand side of \eqref{6.5 from DSu} is equivalent to
the relation $\mathbf{u}\in\widetilde{H}^1(\Omega ;\mathbb{C}^n)$. From \eqref{Omega-tilda}, \eqref{Fourier unit}, and \eqref{6.5 from DSu} it follows that
\begin{equation}
\label{D+k >=}
 \Vert (\mathbf{D}+\mathbf{k})\mathbf{u}\Vert ^2 _{L_2(\Omega)}
\geqslant \sum _{\mathbf{b}\in\widetilde{\Gamma}}\vert \mathbf{k}\vert ^2\vert \widehat{\mathbf{u}}_\mathbf{b}\vert ^2
=\vert \mathbf{k}\vert ^2 \Vert \mathbf{u}\Vert ^2_{L_2(\Omega)} 
 ,\quad \mathbf{u}\in\widetilde{H}^1(\Omega ;\mathbb{C}^n),\quad\mathbf{k}\in\widetilde{\Omega}.
\end{equation}

If $\psi (\mathbf{x})$ is a $\Gamma$-periodic measurable matrix-valued function in $\mathbb{R}^d$, we put $\overline{\psi}:=\vert \Omega\vert ^{-1}\int _\Omega \psi (\mathbf{x})\,d\mathbf{x}$ and $\underline{\psi}:=\left(\vert \Omega\vert ^{-1}\int _\Omega \psi (\mathbf{x})^{-1}\,d\mathbf{x}\right)^{-1}$. Here, in the definition of $\overline{\psi}$ it is assumed that $\psi\in L_{1,\mathrm{loc}}(\mathbb{R}^d)$, and in the definition of $\underline{\psi}$ it is assumed that the matrix $\psi (\mathbf{x})$ is square and nondegenerate, and $\psi ^{-1}\in L_{1,\mathrm{loc}}(\mathbb{R}^d)$.

\subsection{The Gelfand transformation} 
\label{Subsec Gelfand}
Initially, the Gelfand transformation 
$\mathcal{U}$ is defined on the functions of the Schwartz class by the formula
\begin{equation*}
\begin{split}
\widetilde{\mathbf{v}}(\mathbf{k},\mathbf{x})=(\mathcal{U}\mathbf{v})(\mathbf{k},\mathbf{x})=\vert \widetilde{\Omega}\vert ^{-1/2}\sum _{\mathbf{a}\in\Gamma}e^{-i\langle\mathbf{k},\mathbf{x}+\mathbf{a}\rangle }\mathbf{v}(\mathbf{x}+\mathbf{a}),\quad
\mathbf{v}\in\mathcal{S}(\mathbb{R}^d;\mathbb{C}^n),\quad \mathbf{x}\in\Omega ,\quad \mathbf{k}\in\widetilde{\Omega}.
\end{split}
\end{equation*}
Since 
$
\int _{\widetilde{\Omega}}\int _\Omega \vert \widetilde{\mathbf{v}}(\mathbf{k},\mathbf{x})\vert ^2\,d\mathbf{x}\,d\mathbf{k}=\int _{\mathbb{R}^d}\vert\mathbf{v}(\mathbf{x})\vert ^2\,d\mathbf{x}$, 
the transformation $\mathcal{U}$  extends by continuity up to a unitary mapping 
$
\mathcal{U}: L_2(\mathbb{R}^d;\mathbb{C}^n)\rightarrow \int _{\widetilde{\Omega}}\oplus L_2(\Omega ;\mathbb{C}^n)\,d\mathbf{k}$. 
Relation $\mathbf{v}\in H^1(\mathbb{R}^d;\mathbb{C}^n)$ is equivalent to $\widetilde{\mathbf{v}}(\mathbf{k},\cdot)\in \widetilde{H}^1(\Omega ;\mathbb{C}^n)$ for a.~e. $\mathbf{k}\in\widetilde{\Omega}$ and 
$
\int _{\widetilde{\Omega}}\int _\Omega \left(\vert (\mathbf{D}+\mathbf{k})\widetilde{\mathbf{v}}(\mathbf{k},\mathbf{x})\vert ^2 +\vert \widetilde{\mathbf{v}}(\mathbf{k},\mathbf{x})\vert ^2\right)\,d\mathbf{x}\,d\mathbf{k}<\infty $. 
Under the Gelfand transformation, the operator of multiplication by a
bounded periodic function in $L_2(\mathbb{R}^d;\mathbb{C}^n)$ turns into multiplication by the
same function on the fibers of the direct integral. The operator $\mathbf{D}$ applied to $\mathbf{v}\in H^1(\mathbb{R}^d;\mathbb{C}^n)$ turns into the operator $\mathbf{D}+\mathbf{k}$ applied to $\widetilde{\mathbf{v}}(\mathbf{k},\cdot )\in \widetilde{H}^1(\Omega ;\mathbb{C}^n)$.

\subsection{Factorized second order operators}

\label{Subsubsection factorized famalies}

Let $b(\mathbf{D})$ be a matrix first order DO of the form $\sum _{j=1}^d b_j D_j$, where $b_j$, $j=1,\dots ,d$, are constant matrices of size $m\times n$ (in general, with complex entries). \textit{We always assume that} $m\geqslant n$. Suppose that the symbol $b(\boldsymbol{\xi})=\sum _{j=1}^d b_j \xi _j$, $\boldsymbol{\xi}\in\mathbb{R}^d$, of the operator $b(\mathbf{D})$ has maximal rank: $\mathrm{rank}\,b(\boldsymbol{\xi})=n$ for $0\neq\boldsymbol{\xi}\in\mathbb{R}^d$. This condition is equivalent to the existence of constants  $\alpha _0$, $\alpha _1>0$ such that 
\begin{equation}
\label{<b^*b<}
\alpha _0\mathbf{1}_n\leqslant b(\boldsymbol{\theta})^*b(\boldsymbol{\theta})\leqslant \alpha _1\mathbf{1}_n,\quad\boldsymbol{\theta}\in\mathbb{S}^{d-1},\quad 0<\alpha _0\leqslant \alpha _1 <\infty .
\end{equation}
From \eqref{<b^*b<} it follows that
\begin{equation}
\label{b_j <=}
\vert b_j\vert \leqslant \alpha _1^{1/2},\quad j=1,\dots ,d.
\end{equation}

Let $\Gamma$-periodic Hermitian $(m\times m)$-matrix-valued function $g(\mathbf{x})$ be positive definite and bounded together with the inverse matrix:
\begin{equation}
\label{g in}
g(\mathbf{x})>0;\quad g, g^{-1}\in L_\infty (\mathbb{R}^d).
\end{equation}
Suppose that $f(\mathbf{x})$ is a $\Gamma$-periodic $(n\times n)$-matrix-valued function such that $f,f^{-1}\in L_\infty (\mathbb{R}^d)$. 
In $L_2(\mathbb{R}^d;\mathbb{C}^n)$, consider DO $\mathcal{A}$ formally given by the differential expression
\begin{equation}
\label{A=}
\mathcal{A}=f(\mathbf{x})^*b(\mathbf{D})^*g(\mathbf{x})b(\mathbf{D})f(\mathbf{x}).
\end{equation} 
The precise definition of the operator $\mathcal{A}$ is given in terms of the quadratic form 
\begin{equation*}\mathfrak{a}[\mathbf{u},\mathbf{u}]:=\left(gb(\mathbf{D})(f\mathbf{u}),b(\mathbf{D})(f\mathbf{u})\right)_{L_2(\mathbb{R}^d)}, \quad\mathbf{u}\in \mathrm{Dom}\,\mathfrak{a}:=\lbrace \mathbf{u}\in L_2(\mathbb{R}^d;\mathbb{C}^n) : f\mathbf{u}\in H^1(\mathbb{R}^d;\mathbb{C}^n)\rbrace.
\end{equation*} 
Using the Fourier transformation and assumptions \eqref{<b^*b<}, \eqref{g in}, it is easily seen that
\begin{equation}
\label{<a<}
\alpha _0\Vert g^{-1}\Vert _{L_\infty}^{-1}\Vert \mathbf{D}(f\mathbf{u})\Vert ^2_{L_2(\mathbb{R}^d)}\leqslant\mathfrak{a}[\mathbf{u},\mathbf{u}] 
\leqslant \alpha _1\Vert g\Vert _{L_\infty} \Vert \mathbf{D}(f\mathbf{u})\Vert ^2_{L_2(\mathbb{R}^d)},\quad \mathbf{u}\in \mathrm{Dom}\,\mathfrak{a}.
\end{equation}
Thus, the form $\mathfrak{a}[\cdot,\cdot]$ is closed and non-negative.

The operator $\mathcal{A}$ admits a factorization of the form $\mathcal{A}=\mathcal{X}^*\mathcal{X}$, where 
\begin{equation*}\mathcal{X}:=g(\mathbf{x})^{1/2}b(\mathbf{D})f(\mathbf{x}) : L_2(\mathbb{R}^d;\mathbb{C}^n)\rightarrow L_2(\mathbb{R}^d;\mathbb{C}^m),\quad\mathrm{Dom}\,\mathcal{X}=\mathrm{Dom}\,\mathfrak{a}.
\end{equation*}

\section{Direct integral decomposition for the operator $\mathcal{A}$}

\subsection{The forms $\mathfrak{a}(\mathbf{k})$ and the operators $\mathcal{A}(\mathbf{k})$}

We put 
\begin{equation}
\label{frak H for L2}
\mathfrak{H}:=L_2(\Omega ;\mathbb{C}^n),\quad \mathfrak{H}_*:=L_2(\Omega;\mathbb{C}^m), 
\end{equation}
and consider the closed operator $\mathcal{X}(\mathbf{k}): \mathfrak{H}\rightarrow \mathfrak{H}_*$, $\mathbf{k}\in\mathbb{R}^d$, defined on the domain 
\begin{equation*}
\mathrm{Dom}\,\mathcal{X}(\mathbf{k})=\lbrace \mathbf{u}\in\mathfrak{H} : f\mathbf{u}\in \widetilde{H}^1(\Omega;\mathbb{C}^n)\rbrace =:\mathfrak{d}
\end{equation*}
by the expression $\mathcal{X}(\mathbf{k})=g(\mathbf{x})^{1/2}b(\mathbf{D}+\mathbf{k})f(\mathbf{x})$. 
The selfadjoint operator $\mathcal{A}(\mathbf{k}):=\mathcal{X}(\mathbf{k})^*\mathcal{X}(\mathbf{k})$ in $L_2(\Omega;\mathbb{C}^n)$ is formally given by the differential expression 
\begin{equation}
\label{A(k)=}
\mathcal{A}(\mathbf{k})=f(\mathbf{x})^*b(\mathbf{D}+\mathbf{k})^*g(\mathbf{x})b(\mathbf{D}+\mathbf{k})f(\mathbf{x})
\end{equation}
with the periodic boundary conditions. The precise definition of the operator $\mathcal{A}(\mathbf{k})$ is given in terms of the closed quadratic form   
$\mathfrak{a}(\mathbf{k})[\mathbf{u},\mathbf{u}]:=\Vert \mathcal{X}(\mathbf{k})\mathbf{u}\Vert ^2_{\mathfrak{H}_*}$, $\mathbf{u}\in\mathfrak{d}$. Using the discrete Fourier transformation \eqref{discr Fourier} and assumptions \eqref{<b^*b<}, \eqref{g in}, it is easily seen that
\begin{equation}
\label{a(k) estimates}
\begin{split}
\alpha _0\Vert g^{-1}\Vert ^{-1}_{L_\infty}\Vert (\mathbf{D}+\mathbf{k})(f\mathbf{u})\Vert ^2 _{L_2(\Omega)}\leqslant
\mathfrak{a}(\mathbf{k})[\mathbf{u},\mathbf{u}]\leqslant \alpha _1\Vert g\Vert _{L_\infty}\Vert (\mathbf{D}+\mathbf{k})(f\mathbf{u})\Vert ^2_{L_2(\Omega)},\quad\mathbf{u}\in\mathfrak{d}.
\end{split}
\end{equation}
So, by the compactness of the embedding $\widetilde{H}^1(\Omega ;\mathbb{C}^n)\hookrightarrow L_2(\Omega ;\mathbb{C}^n)$,  the spectrum of $\mathcal{A}(\mathbf{k})$ is discrete and the resolvent is compact. 

By \eqref{D+k >=} and the lower estimate \eqref{a(k) estimates},
\begin{equation}
\label{A(k)>=}
\mathcal{A}(\mathbf{k})\geqslant c_*\vert \mathbf{k}\vert  ^2 I,\quad \mathbf{k}\in\mathrm{clos}\,\widetilde{\Omega};\quad 
c_*:=\alpha _0\Vert g^{-1}\Vert ^{-1}_{L_\infty}\Vert f^{-1}\Vert ^{-2}_{L_\infty}
.
\end{equation}

We  put
\begin{equation}
\label{N in L2}
\mathfrak{N}:=\mathrm{Ker}\,\mathcal{A}(0)=\mathrm{Ker}\,\mathcal{X}(0).
\end{equation} 
Then 
\begin{equation}
\label{N= in L2}
\mathfrak{N}=\lbrace \mathbf{u}\in L_2(\Omega;\mathbb{C}^n) : f\mathbf{u}=\mathbf{c}\in\mathbb{C}^n\rbrace.
\end{equation}  

From \eqref{2r0=} and \eqref{6.5 from DSu} with $\mathbf{k}=0$ it follows that
\begin{equation*}
\Vert \mathbf{D}\mathbf{v}\Vert ^2 _{L_2(\Omega)}\geqslant 4 r_0^2\Vert \mathbf{v}\Vert ^2 _{L_2(\Omega)},\quad\mathbf{v}=f\mathbf{u}\in\widetilde{H}^1(\Omega ;\mathbb{C}^n),\quad \int _\Omega \mathbf{v} (\mathbf{x})\,d\mathbf{x}=0.
\end{equation*}
Combining this with the lower estimate 
\eqref{a(k) estimates} for $\mathbf{k}=0$, we see that the distance $d_0$ from the point zero to the rest of the spectrum of $\mathcal{A}(0)$ satisfies
\begin{equation}
\label{d^0<= in L2}
d_0\geqslant 4c_*r_0^2.
\end{equation}

\subsection{Direct integral decomposition for $\mathcal{A}$}
Using the Gelfand transformation, we decompose 
the operator $\mathcal{A}$ into the direct integral of the operators $\mathcal{A}(\mathbf{k})$:
\begin{equation}
\label{5.7a}
\mathcal{U}\mathcal{A}\mathcal{U}^{-1}=\int _{\widetilde{\Omega}} \oplus \mathcal{A}(\mathbf{k})\,d\mathbf{k}.
\end{equation}
This means the following. If $\mathbf{v}\in \mathrm{Dom}\,\mathfrak{a}$, then
\begin{align}
\label{v in dom a e k}
&\widetilde{\mathbf{v}}(\mathbf{k},\cdot)=(\mathcal{U}\mathbf{v})(\mathbf{k},\cdot)\in \mathfrak{d}\quad \mbox{for a. e.}\;\mathbf{k}\in\widetilde{\Omega},\\
\label{a(k)=oplus int}
&\mathfrak{a}[\mathbf{v},\mathbf{v}]=\int _{\widetilde{\Omega}}\mathfrak{a}(\mathbf{k})[\widetilde{\mathbf{v}}(\mathbf{k},\cdot),\widetilde{\mathbf{v}}(\mathbf{k},\cdot)]\,d\mathbf{k}.
\end{align}
Conversely, if $\widetilde{\mathbf{v}}\in \int _{\widetilde{\Omega}}\oplus L_2(\Omega;\mathbb{C}^n)\,d\mathbf{k}$ satisfies \eqref{v in dom a e k} and the integral in \eqref{a(k)=oplus int} is finite, then $\mathbf{v}\in \mathrm{Dom}\,\mathfrak{a}$ and \eqref{a(k)=oplus int} holds.

\subsection{Incorporation of the operators $\mathcal{A}(\mathbf{k})$ into the abstract scheme}

\label{Subsection Incorporation of the operators A(k) into the abstract scheme}

For $d>1$ the operators $\mathcal{A}(\mathbf{k})$ depend on the multidimensional parameter $\mathbf{k}$. According to \cite[Chapter~2]{BSu}, we consider the onedimensional parameter $t:=\vert\mathbf{k}\vert$. We will apply the scheme of Chapter~{I}. Herewith, all our considerations will depend on the additional parameter  $\boldsymbol{\theta}=\mathbf{k}/\vert\mathbf{k}\vert\in\mathbb{S}^{d-1}$, and we need to make our estimates uniform with respect to $\boldsymbol{\theta}$. 

The spaces $\mathfrak{H}$ and $\mathfrak{H}_*$ are defined by \eqref{frak H for L2}. Let $X(t)=X(t,\boldsymbol{\theta}):=\mathcal{X}(t\boldsymbol{\theta})$. Then $X(t,\boldsymbol{\theta})=X_0+tX_1(\boldsymbol{\theta})$, where $X_0=g(\mathbf{x})^{1/2}b(\mathbf{D})f(\mathbf{x})$, $\mathrm{Dom}\,X_0=\mathfrak{d}$, and $X_1(\boldsymbol{\theta})$ is a bounded operator of multiplication by the matrix-valued function $g(\mathbf{x})^{1/2}b(\boldsymbol{\theta})f(\mathbf{x})$. We put $A(t)=A(t,\boldsymbol{\theta}):=\mathcal{A}(t\boldsymbol{\theta})$. Then $A(t,\boldsymbol{\theta})=X(t,\boldsymbol{\theta})^*X(t,\boldsymbol{\theta})$. According to \eqref{N in L2} and \eqref{N= in L2}, $\mathfrak{N}=\mathrm{Ker}\,X_0=\mathrm{Ker}\,\mathcal{A}(0)$, $\mathrm{dim}\,\mathfrak{N}=n$. The distance $d_0$ 
from the point zero to the rest of the spectrum of $\mathcal{A}(0)$ satisfied 
estimate \eqref{d^0<= in L2}. As was shown in \cite[Chapter 2, Sec.~3]{BSu}, the condition $n\leqslant n_*=\mathrm{dim}\,\mathrm{Ker}\,X_0^*$ is also fulfilled. 
Thus, all the assumptions of Section~\ref{Section Preliminaries} are valid.

In Subsection \ref{Subsubsection operator pencils}, it was required to choose the number $\delta <d_0/8$. Taking \eqref{A(k)>=} and \eqref{d^0<= in L2} into account, we put
\begin{equation}
\label{delta L2}
\delta :=c_*r_0^2/4=(r_0/2)^2\alpha _0\Vert g^{-1}\Vert ^{-1}_{L_\infty}\Vert f^{-1}\Vert ^{-2}_{L_\infty}.
\end{equation}
Next, by \eqref{<b^*b<},  the operator  $X_1(\boldsymbol{\theta})=g(\mathbf{x})^{1/2}b(\boldsymbol{\theta})f(\mathbf{x})$ satisfies 
\begin{equation}
\label{X_1(theta)<=}
\Vert X_1(\boldsymbol{\theta})\Vert \leqslant \alpha _1^{1/2}\Vert g\Vert ^{1/2}_{L_\infty}\Vert f\Vert _{L_\infty}.
\end{equation}
This allows us to take 
the following number 
\begin{equation}
\label{t0 L2}
t_0:=\delta^{1/2}\alpha _1^{-1/2}\Vert g\Vert ^{-1/2}_{L_\infty}\Vert f\Vert ^{-1}_{L_\infty}
=(r_0/2)\alpha _0^{1/2}\alpha _1^{-1/2}\Vert g\Vert ^{-1/2}_{L_\infty}\Vert g^{-1}\Vert ^{-1/2}_{L_\infty}\Vert f\Vert ^{-1}_{L_\infty}\Vert f^{-1}\Vert ^{-1}_{L_\infty}
\end{equation}
in the role of 
$t_0$ (see \eqref{t_0(delta) abstact scheme}). 
Obviously, $t_0\leqslant r_0/2$, and the ball $\vert\mathbf{k}\vert\leqslant t_0$ lies in $\widetilde{\Omega}$. It is important
that $c_*$, $\delta$, and $t_0$ (see \eqref{A(k)>=}, \eqref{delta L2}, \eqref{t0 L2}) do not depend on the parameter $\boldsymbol{\theta}$.

From \eqref{A(k)>=} it follows that the spectral germ $S(\boldsymbol{\theta})$ (which now depends on $\boldsymbol{\theta}$) 
is nondegenerate:
\begin{equation}
\label{S(theta)>=}
S(\boldsymbol{\theta})\geqslant c_* I_\mathfrak{N}.
\end{equation}
It is important that the spectral germ is nondegenerate uniformly in $\boldsymbol{\theta}$.

\section{The operator $\widehat{\mathcal{A}}$. The effective matrix. The effective operator}
\label{Sec eff op}

\subsection{The operator $\widehat{\mathcal{A}}$}

In the case where $f=\mathbf{1}_n$, we agree to mark all the objects by the upper hat ,,$\widehat{\phantom{a}}$''. 
We have $\widehat{\mathfrak{H}}=\mathfrak{H}=L_2(\Omega ;\mathbb{C}^n)$. 
For the operator  
\begin{equation}
\label{hat A}
\widehat{\mathcal{A}}=b(\mathbf{D})^*g(\mathbf{x})b(\mathbf{D}),
\end{equation} 
 the family
\begin{equation}
\label{hat A(k)=}
\widehat{\mathcal{A}}(\mathbf{k})=b(\mathbf{D}+\mathbf{k})^*g(\mathbf{x})b(\mathbf{D}+\mathbf{k})
\end{equation}
is denoted by $\widehat{A}(t;\boldsymbol{\theta})$. If $f=\mathbf{1}_n$, the kernel \eqref{N= in L2} takes the form
\begin{equation}
\label{hat N= in L2}
\widehat{\mathfrak{N}}=\lbrace \mathbf{u}\in L_2(\Omega ;\mathbb{C}^n) :\mathbf{u}=\mathbf{c}\in\mathbb{C}^n\rbrace .
\end{equation}
Let $\widehat{P}$ be the orthogonal projection of $\mathfrak{H}$ onto the subspace $\widehat{\mathfrak{N}}$. Then $\widehat{P}$ is the operator of averaging over the cell:
\begin{equation}
\label{P L2}
\widehat{P}\mathbf{u}=\vert \Omega\vert ^{-1}\int _\Omega \mathbf{u}(\mathbf{x})\,d\mathbf{x},\quad\mathbf{u}\in L_2(\Omega ;\mathbb{C}^n).
\end{equation}

From \eqref{A(k)>=} with $f=\mathbf{1}_n$ it follows that
\begin{equation}
\label{hat A(k)>=}
\widehat{\mathcal{A}}(\mathbf{k})=\widehat{A}(t,\boldsymbol{\theta})
\geqslant \widehat{c}_* t^2 I,\quad \mathbf{k}=t\boldsymbol{\theta}\in\mathrm{clos}\, \widetilde{\Omega};\quad \widehat{c}_*:=\alpha _0\Vert g^{-1}\Vert ^{-1}_{L_\infty}.
\end{equation}

\subsection{The effective matrix}

In accordance with \cite[Chapter 3, Sec.~1]{BSu}, the spectral germ $\widehat{S}(\boldsymbol{\theta})$ of the operator family $\widehat{A}(t,\boldsymbol{\theta})$ acting in $\widehat{\mathfrak{N}}$ can be represented as
\begin{equation}
\label{S(theta)=}
\widehat{S}(\boldsymbol{\theta})=b(\boldsymbol{\theta})^*g^0b(\boldsymbol{\theta}),\quad\boldsymbol{\theta}\in\mathbb{S}^{d-1},
\end{equation}
where $b(\boldsymbol{\theta})$ is the symbol of the operator $b(\mathbf{D})$ and $g^0$ is the so-called \textit{effective matrix}. The constant positive $(m\times m)$-matrix $g^0$ is defined as follows. Assume that a $\Gamma$-periodic $(n\times m)$-matrix-valued function $\Lambda\in\widetilde{H}^1(\Omega)$ is the weak solution of the problem
\begin{equation}
\label{Lambda problem}
b(\mathbf{D})^*g(\mathbf{x})(b(\mathbf{D})\Lambda (\mathbf{x})+\mathbf{1}_m)=0,\quad \int _\Omega \Lambda (\mathbf{x})\,d\mathbf{x}=0.
\end{equation}
Denote
\begin{equation}
\label{tilde g}
\widetilde{g}(\mathbf{x}):=g(\mathbf{x})(b(\mathbf{D})\Lambda (\mathbf{x})+\mathbf{1}_m).
\end{equation}
Then the effective matrix $g^0$ is given by
\begin{equation}
\label{g0}
g^0=\vert \Omega\vert ^{-1}\int _\Omega \widetilde{g}(\mathbf{x})\,d\mathbf{x}.
\end{equation}
It turns out that the matrix $g^0$ is positive definite. In the case where $f=\mathbf{1}_n$, estimate \eqref{S(theta)>=} takes the form
\begin{equation}
\label{hat S(theta)>=}
\widehat{S}(\boldsymbol{\theta})\geqslant \widehat{c}_* I_{\widehat{\mathfrak{N}}}.
\end{equation}

From \eqref{Lambda problem} it is easy to derive that
\begin{equation}
\label{b(D)Lambda <=}
\Vert b(\mathbf{D})\Lambda \Vert _{L_2(\Omega)}\leqslant \vert \Omega\vert ^{1/2}m^{1/2}\Vert g\Vert ^{1/2}_{L_\infty}\Vert g^{-1}\Vert _{L_\infty}^{1/2}.
\end{equation}
We also need the following inequalities obtained in 
\cite[(6.28) and Subsec. 7.3]{BSu05}:
\begin{align}
\label{Lambda<=}
&\Vert \Lambda\Vert _{L_2(\Omega)}
\leqslant \vert \Omega\vert ^{1/2}M_1;\quad M_1:=m^{1/2}(2r_0)^{-1}\alpha_0^{-1/2}\Vert g\Vert ^{1/2}_{L_\infty}\Vert g^{-1}\Vert ^{1/2}_{L_\infty};
\\
\label{D Lambda <=}
&\Vert \mathbf{D}\Lambda\Vert _{L_2(\Omega)}
\leqslant \vert\Omega\vert ^{1/2}M_2;\quad M_2:=m^{1/2}\alpha_0^{-1/2}\Vert g\Vert ^{1/2}_{L_\infty}\Vert g^{-1}\Vert ^{1/2}_{L_\infty}.
\end{align}

\subsection{The effective operator $\widehat{\mathcal{A}}^0$}

By \eqref{S(theta)=} and the homogeneity of the symbol $b(\mathbf{k})$, we have
\begin{equation}
\label{S(k)=}
\widehat{S}(\mathbf{k}):=t^2 \widehat{S}(\boldsymbol{\theta})=b(\mathbf{k})^*g^0b(\mathbf{k}),\quad\mathbf{k}\in\mathbb{R}^d,\quad t=\vert\mathbf{k}\vert,\quad\boldsymbol{\theta}=\mathbf{k}/\vert\mathbf{k}\vert.
\end{equation}
The matrix $\widehat{S}(\mathbf{k})$ is the symbol of the differential operator 
\begin{equation}
\label{A^0 hat}
\widehat{\mathcal{A}}^0=b(\mathbf{D})^*g^0b(\mathbf{D})
\end{equation}
acting in $L_2(\mathbb{R}^d;\mathbb{C}^n)$ on the domain $H^2(\mathbb{R}^d;\mathbb{C}^n)$ and 
called the \textit{effective operator} for the operator~$\widehat{\mathcal{A}}$.

Let $\widehat{\mathcal{A}}^0(\mathbf{k})$ be the operator family in $L_2(\Omega;\mathbb{C}^n)$ corresponding to the effective operator  $\widehat{\mathcal{A}}^0$. Then 
$
\widehat{\mathcal{A}}^0(\mathbf{k})=b(\mathbf{D}+\mathbf{k})^*g^0b(\mathbf{D}+\mathbf{k})
$ 
with periodic boundary conditions: $\mathrm{Dom}\,\widehat{\mathcal{A}}^0(\mathbf{k})=\widetilde{H}^2(\Omega;\mathbb{C}^n)$. So, by \eqref{P L2} and \eqref{S(k)=},
\begin{equation}
\label{SP=A0P}
\widehat{S}(\mathbf{k})\widehat{P}=\widehat{\mathcal{A}}^0(\mathbf{k})\widehat{P}.
\end{equation}
From estimate \eqref{hat S(theta)>=} for the symbol of the operator $\widehat{\mathcal{A}}^0(\mathbf{k})$ it follows that
\begin{equation}
\label{A^0(k)>=}
\widehat{\mathcal{A}}^0(\mathbf{k})\geqslant \widehat{c}_* \vert \mathbf{k}\vert ^2 I,\quad\mathbf{k}\in\widetilde{\Omega}.
\end{equation}

\subsection{Properties of the effective matrix}
The effective matrix $g^0$ satisfies the estimates known in homogenization theory as
the Voigt-Reuss bracketing  (see, e. g., \cite[Chapter~3, Theorem~1.5]{BSu}).

\begin{proposition}
Let $g^0$ be the effective matrix \eqref{g0}. Then
\begin{equation}
\label{Voigt-Reuss}
\underline{g}
\leqslant g^0\leqslant\overline{g}
.
\end{equation}
If $m=n$, then $g^0=\underline{g}$.
\end{proposition}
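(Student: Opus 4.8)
The plan is to establish \eqref{Voigt-Reuss} through the classical pair of mutually dual variational characterizations of $g^0$, the case $m=n$ following by exhibiting an explicit solution of \eqref{Lambda problem}. Fix a constant vector $\mathbf{c}\in\mathbb{C}^m$, put $\mathbf{v}_{\mathbf{c}}:=b(\mathbf{D})\Lambda\mathbf{c}+\mathbf{c}\in L_2(\Omega;\mathbb{C}^m)$ and $\widetilde{\mathbf{g}}_{\mathbf{c}}:=g\mathbf{v}_{\mathbf{c}}=\widetilde g\,\mathbf{c}$ (cf. \eqref{tilde g}). By \eqref{Lambda problem}, $\int_\Omega\langle\widetilde{\mathbf{g}}_{\mathbf{c}},b(\mathbf{D})\zeta\rangle\,d\mathbf{x}=0$ for every $\zeta\in\widetilde{H}^1(\Omega;\mathbb{C}^n)$ (shorthand: $b(\mathbf{D})^*\widetilde{\mathbf{g}}_{\mathbf{c}}=0$). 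Testing against $\zeta=\Lambda\mathbf{c}$ and using $\int_\Omega b(\mathbf{D})\Lambda\mathbf{c}\,d\mathbf{x}=0$ together with \eqref{g0}, one gets
\[
\langle g^0\mathbf{c},\mathbf{c}\rangle=|\Omega|^{-1}\!\int_\Omega\langle g\mathbf{v}_{\mathbf{c}},\mathbf{v}_{\mathbf{c}}\rangle\,d\mathbf{x}=|\Omega|^{-1}\!\int_\Omega\langle g^{-1}\widetilde{\mathbf{g}}_{\mathbf{c}},\widetilde{\mathbf{g}}_{\mathbf{c}}\rangle\,d\mathbf{x},\qquad\overline{\mathbf{v}_{\mathbf{c}}}=\mathbf{c},\quad\overline{\widetilde{\mathbf{g}}_{\mathbf{c}}}=g^0\mathbf{c}.
\]
The same computation (write $\mathbf{w}=\mathbf{v}_{\mathbf{c}}+b(\mathbf{D})(\varphi-\Lambda\mathbf{c})$; the cross term drops by $b(\mathbf{D})^*\widetilde{\mathbf{g}}_{\mathbf{c}}=0$) shows that $\mathbf{v}_{\mathbf{c}}$ minimizes $|\Omega|^{-1}\int_\Omega\langle g\mathbf{w},\mathbf{w}\rangle\,d\mathbf{x}$ over all $\mathbf{w}=b(\mathbf{D})\varphi+\mathbf{c}$ with $\varphi\in\widetilde{H}^1(\Omega;\mathbb{C}^n)$. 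Taking $\varphi=0$ gives the upper bound $\langle g^0\mathbf{c},\mathbf{c}\rangle\leqslant|\Omega|^{-1}\int_\Omega\langle g\mathbf{c},\mathbf{c}\rangle\,d\mathbf{x}=\langle\overline g\,\mathbf{c},\mathbf{c}\rangle$ for all $\mathbf{c}$, i.e. $g^0\leqslant\overline g$.

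For the lower bound — the crux — I pass to the dual principle. Fix $\boldsymbol\eta\in\mathbb{C}^m$ and set $\mathbf{c}:=(g^0)^{-1}\boldsymbol\eta$. Consider the affine class $\mathcal{M}_{\boldsymbol\eta}$ of all $\mathbf{m}\in L_2(\Omega;\mathbb{C}^m)$ with $\overline{\mathbf{m}}=\boldsymbol\eta$ and $b(\mathbf{D})^*\mathbf{m}=0$ in the weak sense; both $\widetilde{\mathbf{g}}_{\mathbf{c}}$ (by $b(\mathbf{D})^*\widetilde{\mathbf{g}}_{\mathbf{c}}=0$ and $\overline{\widetilde{\mathbf{g}}_{\mathbf{c}}}=g^0\mathbf{c}=\boldsymbol\eta$) and the constant function $\mathbf{m}\equiv\boldsymbol\eta$ (since $b(\mathbf{D})^*$ annihilates constants) lie in $\mathcal{M}_{\boldsymbol\eta}$. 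For $\mathbf{m}\in\mathcal{M}_{\boldsymbol\eta}$ write $\mathbf{m}=\widetilde{\mathbf{g}}_{\mathbf{c}}+(\mathbf{m}-\widetilde{\mathbf{g}}_{\mathbf{c}})$ and use $g^{-1}\widetilde{\mathbf{g}}_{\mathbf{c}}=\mathbf{v}_{\mathbf{c}}=b(\mathbf{D})\Lambda\mathbf{c}+\mathbf{c}$: the cross term $|\Omega|^{-1}\int_\Omega\langle b(\mathbf{D})\Lambda\mathbf{c}+\mathbf{c},\,\mathbf{m}-\widetilde{\mathbf{g}}_{\mathbf{c}}\rangle\,d\mathbf{x}$ vanishes because $\overline{\mathbf{m}-\widetilde{\mathbf{g}}_{\mathbf{c}}}=0$ and $b(\mathbf{D})^*(\mathbf{m}-\widetilde{\mathbf{g}}_{\mathbf{c}})=0$ (integrate by parts against $\Lambda\mathbf{c}\in\widetilde{H}^1$). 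Hence $|\Omega|^{-1}\int_\Omega\langle g^{-1}\mathbf{m},\mathbf{m}\rangle\,d\mathbf{x}\geqslant|\Omega|^{-1}\int_\Omega\langle g^{-1}\widetilde{\mathbf{g}}_{\mathbf{c}},\widetilde{\mathbf{g}}_{\mathbf{c}}\rangle\,d\mathbf{x}=\langle g^0\mathbf{c},\mathbf{c}\rangle=\langle(g^0)^{-1}\boldsymbol\eta,\boldsymbol\eta\rangle$, and choosing $\mathbf{m}\equiv\boldsymbol\eta$ yields $\langle(g^0)^{-1}\boldsymbol\eta,\boldsymbol\eta\rangle\leqslant|\Omega|^{-1}\int_\Omega\langle g^{-1}\boldsymbol\eta,\boldsymbol\eta\rangle\,d\mathbf{x}=\langle\underline g^{-1}\boldsymbol\eta,\boldsymbol\eta\rangle$ for all $\boldsymbol\eta$. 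Thus $(g^0)^{-1}\leqslant\underline g^{-1}$, i.e. $g^0\geqslant\underline g$.

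Finally, if $m=n$ then $b(\boldsymbol\theta)$ is invertible with $|b(\boldsymbol\theta)^{-1}|\leqslant\alpha_0^{-1/2}$ by \eqref{<b^*b<}, so $|b(\mathbf{b})^{-1}|\leqslant\alpha_0^{-1/2}|\mathbf{b}|^{-1}$ for $0\neq\mathbf{b}\in\widetilde\Gamma$. Given a constant $\mathbf{c}\in\mathbb{C}^m$, the mean-zero function $\mathbf{h}_{\mathbf{c}}:=g^{-1}\underline g\,\mathbf{c}-\mathbf{c}$ equals $b(\mathbf{D})\varphi_{\mathbf{c}}$ for some $\varphi_{\mathbf{c}}\in\widetilde{H}^1(\Omega;\mathbb{C}^n)$: expand $\mathbf{h}_{\mathbf{c}}$ via \eqref{discr Fourier}, divide the $\mathbf{b}$-th Fourier coefficient by $b(\mathbf{b})$, and invoke \eqref{6.5 from DSu} with the above bound to see the resulting series converges in $\widetilde{H}^1(\Omega;\mathbb{C}^n)$. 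Then $g(b(\mathbf{D})\varphi_{\mathbf{c}}+\mathbf{c})=\underline g\,\mathbf{c}$ is constant, so $\varphi_{\mathbf{c}}$ solves \eqref{Lambda problem}; by uniqueness of the mean-zero periodic solution $\Lambda\mathbf{c}=\varphi_{\mathbf{c}}$, whence $\widetilde g\,\mathbf{c}=\widetilde{\mathbf{g}}_{\mathbf{c}}=\underline g\,\mathbf{c}$ for every $\mathbf{c}$, i.e. $\widetilde g\equiv\underline g$ and $g^0=\underline g$. I expect the only delicate point to be the bookkeeping in the lower bound: correctly defining $\mathcal{M}_{\boldsymbol\eta}$, recognizing that the constraint $b(\mathbf{D})^*\mathbf{m}=0$ is exactly what kills the cross term after integration by parts against $\Lambda\mathbf{c}$, and checking that both $\widetilde{\mathbf{g}}_{\mathbf{c}}$ and the constant $\boldsymbol\eta$ belong to this class; the rest is direct manipulation of \eqref{Lambda problem}--\eqref{g0}.
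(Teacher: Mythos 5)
Your proof is correct and complete. The paper itself gives no proof of this proposition, citing \cite[Chapter~3, Theorem~1.5]{BSu}; what you have written is exactly the classical Voigt--Reuss argument that the cited reference employs — the primal variational characterization $\langle g^0\mathbf{c},\mathbf{c}\rangle=\min\{|\Omega|^{-1}\int_\Omega\langle g\mathbf{w},\mathbf{w}\rangle\,d\mathbf{x}:\mathbf{w}=b(\mathbf{D})\varphi+\mathbf{c}\}$ for the upper bound, the dual characterization over divergence-free fields with prescribed mean for the lower bound, and for $m=n$ the explicit Fourier-series construction of the corrector making $\widetilde g\equiv\underline g$ — so no gaps and no new route.
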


From inequalities \eqref{Voigt-Reuss} it follows that
\begin{equation}
\label{g^0<=}
\vert g^0\vert \leqslant\Vert g\Vert _{L_\infty},\quad \vert (g^0)^{-1}\vert\leqslant\Vert g^{-1}\Vert _{L_\infty}.
\end{equation}

Now, we distinguish the cases where one of the
inequalities in \eqref{Voigt-Reuss} becomes an identity. 
See \cite[Chapter 3, Propositions 1.6 and 1.7]{BSu}. 

\begin{proposition}
The equality $g^0=\overline{g}$ is equivalent to the relations
\begin{equation}
\label{overline-g}
b(\mathbf{D})^* {\mathbf g}_k(\mathbf{x}) =0,\ \ k=1,\dots,m,
\end{equation}
where ${\mathbf g}_k(\mathbf{x})$, $k=1,\dots,m,$ are the columns of the matrix-valued function $g(\mathbf{x})$.
\end{proposition}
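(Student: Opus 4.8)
The plan is to derive an explicit formula for $g^0-\overline{g}$ exhibiting it as a non-positive Hermitian matrix that degenerates exactly when the corrector $\Lambda$ vanishes identically, and then to translate the condition ``$\Lambda\equiv 0$'' into the stated relations.

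First I would record, straight from \eqref{tilde g}, \eqref{g0}, and the definition of $\overline{g}$, the identity
\[
g^0-\overline{g}=\vert\Omega\vert^{-1}\int_\Omega g(\mathbf{x})\,b(\mathbf{D})\Lambda(\mathbf{x})\,d\mathbf{x}.
\]
Next I would exploit the weak formulation of problem \eqref{Lambda problem}, testing it against each column of $\Lambda$ itself (admissible since $\Lambda\in\widetilde{H}^1(\Omega)$); this gives the matrix identity $\int_\Omega(b(\mathbf{D})\Lambda)^*\widetilde{g}\,d\mathbf{x}=0$, i.e.
\[
\int_\Omega(b(\mathbf{D})\Lambda)^*g\,b(\mathbf{D})\Lambda\,d\mathbf{x}=-\int_\Omega(b(\mathbf{D})\Lambda)^*g\,d\mathbf{x}.
\]
Taking Hermitian conjugates (the left-hand side is Hermitian because $g=g^*$) and comparing with the previous display yields
\[
g^0-\overline{g}=-\vert\Omega\vert^{-1}\int_\Omega(b(\mathbf{D})\Lambda(\mathbf{x}))^*g(\mathbf{x})\,b(\mathbf{D})\Lambda(\mathbf{x})\,d\mathbf{x}\leqslant 0,
\]
which simultaneously re-proves the upper bound in \eqref{Voigt-Reuss}.

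Then, since $g(\mathbf{x})\geqslant\Vert g^{-1}\Vert_{L_\infty}^{-1}\mathbf{1}_m>0$, the integral above vanishes if and only if $b(\mathbf{D})\Lambda=0$ a.e.\ in $\Omega$. Hence $g^0=\overline{g}$ is equivalent to $b(\mathbf{D})\Lambda=0$; in that case $\widetilde{g}=g$ by \eqref{tilde g}, and substituting $b(\mathbf{D})\Lambda=0$ into \eqref{Lambda problem} gives $b(\mathbf{D})^*g(\mathbf{x})=0$ in the weak sense, that is, $b(\mathbf{D})^*\mathbf{g}_k(\mathbf{x})=0$ for $k=1,\dots,m$. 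Conversely, if these relations hold, then $\Lambda\equiv 0$ is a weak solution of \eqref{Lambda problem}, and by uniqueness of that solution --- which follows from the Lax--Milgram lemma using \eqref{<b^*b<}, \eqref{g in}, and the Poincar\'e-type inequality \eqref{D+k >=} at $\mathbf{k}=0$ on the subspace of functions with zero mean --- we get $\Lambda\equiv 0$, whence $\widetilde{g}=g$ and $g^0=\overline{g}$.

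I expect the only genuinely delicate point to be the bookkeeping in the middle step: checking that the matrix-valued $\Lambda$ may be inserted columnwise as a test function and that periodicity makes all boundary contributions disappear, so that $\int_\Omega(b(\mathbf{D})\Lambda)^*\widetilde{g}\,d\mathbf{x}=0$ holds as an honest $(m\times m)$ matrix identity; everything after that is linear algebra together with the standard uniqueness statement for \eqref{Lambda problem}.
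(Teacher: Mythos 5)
Your argument is correct and is essentially the standard proof, the one given in the cited reference \cite[Chapter 3, Proposition 1.6]{BSu} (the present paper itself only quotes the statement). The key identity $g^0-\overline{g}=-\vert\Omega\vert^{-1}\int_\Omega(b(\mathbf{D})\Lambda)^*\,g\,(b(\mathbf{D})\Lambda)\,d\mathbf{x}$, obtained from the definition \eqref{g0} together with the orthogonality $\int_\Omega(b(\mathbf{D})\Lambda)^*\widetilde{g}\,d\mathbf{x}=0$ coming from the weak form of \eqref{Lambda problem} tested against the columns of $\Lambda$, is exactly the mechanism used there; positive definiteness of $g$ then reduces equality to $b(\mathbf{D})\Lambda=0$, which is equivalent to \eqref{overline-g} in both directions (forward by substituting into the cell problem, backward by the Lax--Milgram uniqueness that forces $\Lambda\equiv 0$).
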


\begin{proposition} The identity $g^0 =\underline{g}$ is equivalent to the relations
\begin{equation}
\label{underline-g}
{\mathbf l}_k(\mathbf{x}) = {\mathbf l}_k^0 + b(\mathbf{D}) {\mathbf w}_k,\ \ {\mathbf l}_k^0\in \mathbb{C}^m,\ \
{\mathbf w}_k \in \widetilde{H}^1(\Omega;\mathbb{C}^m),\ \ k=1,\dots,m,
\end{equation}
where ${\mathbf l}_k(\mathbf{x})$, $k=1,\dots,m,$ are the columns of the matrix-valued function $g(\mathbf{x})^{-1}$.
\end{proposition}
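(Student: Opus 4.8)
The plan is to derive the equivalence from the complementary-energy (dual) variational characterization of the effective matrix, together with the explicit representation \eqref{g0}--\eqref{tilde g} of $g^0$ through the corrector $\Lambda$. Call $\boldsymbol{\eta}\in L_2(\Omega;\mathbb{C}^m)$ \emph{solenoidal} if $(\boldsymbol{\eta},b(\mathbf{D})\boldsymbol{\phi})_{L_2(\Omega)}=0$ for every $\boldsymbol{\phi}\in\widetilde{H}^1(\Omega;\mathbb{C}^n)$, and let $\mathfrak{G}^\perp$ be the set of such fields. Since the cell average of any $b(\mathbf{D})\boldsymbol{\phi}$ vanishes, every constant belongs to $\mathfrak{G}^\perp$, and a solenoidal field splits orthogonally as $\boldsymbol{\eta}=\overline{\boldsymbol{\eta}}+(\boldsymbol{\eta}-\overline{\boldsymbol{\eta}})$ into its constant mean $\overline{\boldsymbol{\eta}}:=|\Omega|^{-1}\int_\Omega\boldsymbol{\eta}\,d\mathbf{x}$ and a mean-zero solenoidal field. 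The starting observation is that, by \eqref{Lambda problem}--\eqref{tilde g}, for any $\mathbf{v}\in\mathbb{C}^m$ the field $\widetilde{g}\mathbf{v}$ is solenoidal with cell average $g^0\mathbf{v}$, and $g^{-1}\widetilde{g}\mathbf{v}=\mathbf{v}+b(\mathbf{D})(\Lambda\mathbf{v})$; in other words, \eqref{Lambda problem} is precisely the assertion that $\widetilde{g}\mathbf{v}$ is $L_2$-orthogonal to all gradients $b(\mathbf{D})\boldsymbol{\phi}$.

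The crucial step is to establish the dual variational principle: for every $\boldsymbol{\xi}\in\mathbb{C}^m$,
\begin{equation*}
\langle (g^0)^{-1}\boldsymbol{\xi},\boldsymbol{\xi}\rangle=\min\Bigl\{|\Omega|^{-1}\int_\Omega\langle g(\mathbf{x})^{-1}\boldsymbol{\eta}(\mathbf{x}),\boldsymbol{\eta}(\mathbf{x})\rangle\,d\mathbf{x}\ :\ \boldsymbol{\eta}\in\mathfrak{G}^\perp,\ \overline{\boldsymbol{\eta}}=\boldsymbol{\xi}\Bigr\},
\end{equation*}
with a \emph{unique} minimizer $\boldsymbol{\eta}_*:=\widetilde{g}\mathbf{v}$, where $\mathbf{v}:=(g^0)^{-1}\boldsymbol{\xi}$. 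Since $\widetilde{g}\mathbf{v}$ is solenoidal the gradient term in $\int_\Omega\langle g^{-1}\widetilde{g}\mathbf{v},\widetilde{g}\mathbf{v}\rangle=\int_\Omega\langle\mathbf{v}+b(\mathbf{D})(\Lambda\mathbf{v}),\widetilde{g}\mathbf{v}\rangle$ drops out, leaving $\langle\mathbf{v},|\Omega|g^0\mathbf{v}\rangle=|\Omega|\langle (g^0)^{-1}\boldsymbol{\xi},\boldsymbol{\xi}\rangle$; and for a competitor $\boldsymbol{\eta}$ the difference $\boldsymbol{\zeta}:=\boldsymbol{\eta}-\boldsymbol{\eta}_*$ is mean-zero and solenoidal, so the cross term $(g^{-1}\boldsymbol{\eta}_*,\boldsymbol{\zeta})_{L_2(\Omega)}=(\mathbf{v},\boldsymbol{\zeta})_{L_2(\Omega)}+(b(\mathbf{D})(\Lambda\mathbf{v}),\boldsymbol{\zeta})_{L_2(\Omega)}$ vanishes (first term because $\overline{\boldsymbol{\zeta}}=0$, second because $\boldsymbol{\zeta}\in\mathfrak{G}^\perp$), whence $|\Omega|^{-1}\int_\Omega\langle g^{-1}\boldsymbol{\eta},\boldsymbol{\eta}\rangle=\langle (g^0)^{-1}\boldsymbol{\xi},\boldsymbol{\xi}\rangle+|\Omega|^{-1}\int_\Omega\langle g^{-1}\boldsymbol{\zeta},\boldsymbol{\zeta}\rangle\geqslant\langle (g^0)^{-1}\boldsymbol{\xi},\boldsymbol{\xi}\rangle$, with equality iff $\boldsymbol{\zeta}=0$ since $g^{-1}>0$. (Testing with the constant $\boldsymbol{\eta}\equiv\boldsymbol{\xi}$ already recovers the lower bound $\underline{g}\leqslant g^0$ in \eqref{Voigt-Reuss}.) I expect this lemma --- specifically, isolating the solenoidal structure and seeing that \eqref{Lambda problem} is exactly the orthogonality $\widetilde{g}\mathbf{v}\perp b(\mathbf{D})\widetilde{H}^1(\Omega;\mathbb{C}^n)$ --- to be the only genuine obstacle; everything after it is the routine ``equality in a variational inequality'' argument.

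Granting the lemma, assume first $g^0=\underline{g}$, i.e.\ $\overline{g^{-1}}=(g^0)^{-1}$. For each $\mathbf{v}\in\mathbb{C}^m$ the constant field $\boldsymbol{\eta}\equiv g^0\mathbf{v}$ is admissible with $\boldsymbol{\xi}=g^0\mathbf{v}$, and its energy $\langle\overline{g^{-1}}g^0\mathbf{v},g^0\mathbf{v}\rangle=\langle\mathbf{v},g^0\mathbf{v}\rangle=\langle (g^0)^{-1}\boldsymbol{\xi},\boldsymbol{\xi}\rangle$ equals the minimum; by uniqueness of the minimizer, $\widetilde{g}\mathbf{v}=g^0\mathbf{v}$ for every $\mathbf{v}$, so $\widetilde{g}\equiv g^0$. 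Then $g^{-1}\widetilde{g}=\mathbf{1}_m+b(\mathbf{D})\Lambda$ (by \eqref{tilde g}) gives $g^{-1}=(g^0)^{-1}+b(\mathbf{D})\bigl(\Lambda(g^0)^{-1}\bigr)$, and reading off the $k$-th column produces \eqref{underline-g} with $\mathbf{l}_k^0=(g^0)^{-1}\mathbf{e}_k$ and $\mathbf{w}_k=\Lambda(g^0)^{-1}\mathbf{e}_k\in\widetilde{H}^1(\Omega;\mathbb{C}^n)$ (since $\Lambda\in\widetilde{H}^1(\Omega)$).

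Conversely, suppose $g^{-1}=L^0+b(\mathbf{D})W$, where $L^0$ is the constant matrix with columns $\mathbf{l}_k^0$ and $W$ the matrix-valued function with columns $\mathbf{w}_k$. Averaging gives $L^0=\overline{g^{-1}}=(\underline{g})^{-1}$, so the mean-zero function $\Lambda:=W(\overline{g^{-1}})^{-1}-\overline{W(\overline{g^{-1}})^{-1}}\in\widetilde{H}^1(\Omega)$ satisfies $b(\mathbf{D})\Lambda+\mathbf{1}_m=g^{-1}(\overline{g^{-1}})^{-1}$, hence $g\bigl(b(\mathbf{D})\Lambda+\mathbf{1}_m\bigr)=(\overline{g^{-1}})^{-1}$ is a constant matrix and $b(\mathbf{D})^*\bigl[g(b(\mathbf{D})\Lambda+\mathbf{1}_m)\bigr]=0$; by uniqueness of the solution of \eqref{Lambda problem} this $\Lambda$ is the corrector, so $\widetilde{g}\equiv(\overline{g^{-1}})^{-1}$ and $g^0=\overline{\widetilde{g}}=(\overline{g^{-1}})^{-1}=\underline{g}$. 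This is the exact dual of the treatment of the upper bound $g^0=\overline{g}$, where instead one uses the primal principle $\langle g^0\mathbf{v},\mathbf{v}\rangle=\min_{\boldsymbol{\phi}}|\Omega|^{-1}\int_\Omega\langle g(\mathbf{v}+b(\mathbf{D})\boldsymbol{\phi}),\mathbf{v}+b(\mathbf{D})\boldsymbol{\phi}\rangle\,d\mathbf{x}$.
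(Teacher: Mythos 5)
The paper does not prove this proposition; it reproduces it verbatim from \cite[Chapter 3, Propositions 1.6 and 1.7]{BSu} with a citation and no argument, so there is no in-paper proof to compare against. Your argument is correct and is the standard way to establish this Reuss-bound rigidity statement: the one nontrivial observation is precisely the one you isolate — that the corrector problem \eqref{Lambda problem} is exactly the statement that $\widetilde{g}\mathbf{v}$ is $L_2(\Omega)$-orthogonal to $b(\mathbf{D})\widetilde{H}^1(\Omega;\mathbb{C}^n)$ for every constant $\mathbf{v}$ — and once that is in hand the complementary-energy principle, the orthogonal decomposition of the energy, and the ``equality in a variational inequality'' bookkeeping go through as you describe, in both directions; the two uniqueness facts you invoke (of the dual energy minimizer, and of the mean-zero periodic solution of \eqref{Lambda problem}) both come down to coercivity of the form $\mathfrak{a}(0)$ on the orthogonal complement of constants, which is available from \eqref{a(k) estimates} at $\mathbf{k}=0$ together with the Poincar\'e inequality. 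One incidental point: your computation produces $\mathbf{w}_k = \Lambda(g^0)^{-1}\mathbf{e}_k \in\widetilde{H}^1(\Omega;\mathbb{C}^n)$, which is the dimensionally correct argument space for the $(m\times n)$ operator $b(\mathbf{D})$; the ``$\mathbb{C}^m$'' in the statement as reproduced in this paper must be a typo for $\mathbb{C}^n$, and you have silently corrected it.
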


\section{Approximation of the sandwiched operator $\mathcal{A}(\mathbf{k})^{-1/2}\sin (\varepsilon ^{-1}\tau \mathcal{A}(\mathbf{k})^{1/2})$}

\label{Section 7 appr A(k)}

Now, we consider the operator $\mathcal{A}(\mathbf{k})^{-1/2}\sin (\varepsilon ^{-1}\tau \mathcal{A}(\mathbf{k})^{1/2})$ in the general case where $f\neq\mathbf{1}_n$. Recall that $\mathcal{A}(\mathbf{k})$ is the operator \eqref{A(k)=}. Then
\begin{equation}
\label{A(k) and hat A(k)}
\mathcal{A}(\mathbf{k})=f (\mathbf{x})^*\widehat{\mathcal{A}}(\mathbf{k})f(\mathbf{x}).
\end{equation}

\subsection{Incorporation of $\mathcal{A}(\mathbf{k})$ in the framework of Section~\ref{Section sandwiched abstract}}

As was shown in Subsec.~\ref{Subsection Incorporation of the operators A(k) into the abstract scheme}, the operator $\mathcal{A}(\mathbf{k})$ satisfies the assumptions of Section~\ref{Section Preliminaries}. Now the assumptions of Subsec.~\ref{Subsec hat famimy abstract} are valid with $\mathfrak{H}=\widehat{\mathfrak{H}}=L_2(\Omega;\mathbb{C}^n)$ and $\mathfrak{H}_*=L_2(\Omega ;\mathbb{C}^m)$. The role of $\widehat{A}(t)$ is played by $\widehat{A}(t,\boldsymbol{\theta})=\widehat{\mathcal{A}}(t\boldsymbol{\theta})$, and the role of $A(t)$ is played by $A(t,\boldsymbol{\theta})=\mathcal{A}(t\boldsymbol{\theta})$. An isomorphism $M$ is the operator of multiplication by the function $f(\mathbf{x})$. Relation \eqref{A(t) and hat A(t)} corresponds to the identity \eqref{A(k) and hat A(k)}.

Next, the operator $Q$ (see \eqref{Q= abstract}) is the operator of multiplication by the matrix-valued function 
\begin{equation}
\label{8.1a}
Q(\mathbf{x}):=\left(f(\mathbf{x})f(\mathbf{x})^*\right)^{-1}. 
\end{equation}
The block $Q_{\widehat{\mathfrak{N}}}$ of $Q$ in the subspace $\widehat{\mathfrak{N}}$ (see \eqref{hat N= in L2}) is the operator of multiplication by the constant matrix 
$
\overline{Q}=\left(\underline{ff^*}\right)^{-1}=\vert \Omega\vert ^{-1}\int _\Omega \left( f(\mathbf{x})f(\mathbf{x})^*\right)^{-1}\,d\mathbf{x}$. 
The operator $M_0:=\left(Q_{\widehat{\mathfrak{N}}}\right)^{-1/2}$ acts in $\widehat{\mathfrak{N}}$ as multiplication by the matrix 
$
f_0:=\left(\overline{Q}\right)^{-1/2}=\left( \underline{ff^*}\right)^{1/2}$. 
Obviously,
\begin{equation}
\label{f0<=}
\vert f_0\vert \leqslant \Vert f\Vert _{L_\infty},\quad \vert f_0^{-1}\vert \leqslant \Vert f^{-1}\Vert _{L_\infty}.
\end{equation}

Now, we specify the operators from \eqref{Th 3.3_1} and \eqref{Th 3.3_2}. By \eqref{S(k)=},
\begin{equation}
\label{t2 M0 hat S M0=}
t^2M_0\widehat{S}(\boldsymbol{\theta})M_0=f_0b(\mathbf{k})^*g^0b(\mathbf{k})f_0,\quad t=\vert\mathbf{k}\vert,\quad \boldsymbol{\theta}=\mathbf{k}/\vert\mathbf{k}\vert.
\end{equation}

Let $\mathcal{A}^0$ be the following operator in $L_2(\mathbb{R}^d;\mathbb{C}^n)$:
\begin{equation}
\label{A0 no hat}
\mathcal{A}^0=f_0b(\mathbf{D})^*g^0b(\mathbf{D})f_0,\quad\mathrm{Dom}\,\mathcal{A}^0=H^2(\mathbb{R}^d;\mathbb{C}^n).
\end{equation}
Let $\mathcal{A}^0(\mathbf{k})$ be the corresponding operator family in $L_2(\Omega;\mathbb{C}^n)$ given by the expression
\begin{equation}
\label{A0(k)= no hat}
\mathcal{A}^0(\mathbf{k})=f_0b(\mathbf{D}+\mathbf{k})^*g^0b(\mathbf{D}+\mathbf{k})f_0
\end{equation}
with the periodic boundary conditions. By \eqref{SP=A0P}, \eqref{A^0(k)>=}, \eqref{f0<=}, and the idenity $c_*=\widehat{c}_*\Vert f^{-1}\Vert ^{-2}_{L_\infty}$, the symbol of the operator $\mathcal{A}^0$ satisfies the estimate
\begin{equation}
\label{f_0 dots >=}
f_0b(\mathbf{k})^*g^0b(\mathbf{k})f_0\geqslant c_*\vert \mathbf{k}\vert ^2 \mathbf{1}_n,\quad\mathbf{k}\in\mathbb{R}^d.
\end{equation}
Hence, using the Fourier series representation for the operator $\mathcal{A}^0(\mathbf{k})$ and \eqref{6.5 from DSu}, we deduce that
\begin{equation}
\label{8.8a}
\mathcal{A}^0(\mathbf{k})\geqslant c_*\vert \mathbf{k}\vert ^2 I,\quad \mathbf{k}\in\mathrm{clos}\,\widetilde{\Omega}.
\end{equation}
By \eqref{P L2}, \eqref{t2 M0 hat S M0=}, and \eqref{A0(k)= no hat}, we obtain $t^2M_0\widehat{S}(\boldsymbol{\theta})M_0\widehat{P}=\mathcal{A}^0(\mathbf{k})\widehat{P}$, whence
\begin{equation}
\label{M0 sin}
\begin{split}
M_0&(t^2 M_0 \widehat{S}(\boldsymbol{\theta})M_0)^{-1/2}\sin \left(\varepsilon^{-1}\tau (t^2 M_0\widehat{S}(\boldsymbol{\theta})M_0)^{1/2}\right)M_0^{-1}\widehat{P}
\\
&=f_0\mathcal{A}^0 (\mathbf{k})^{-1/2}\sin (\varepsilon ^{-1}\tau \mathcal{A}^0(\mathbf{k})^{1/2})f_0^{-1}\widehat{P}.
\end{split}
\end{equation}

In accordance with \cite[Sec.~5]{BSu05}, the role of $\widehat{Z}_Q$ is played by the operator 
\begin{equation}
\label{Z_Q(theta)=}
\widehat{Z}_Q(\boldsymbol{\theta})=\Lambda _Q b(\boldsymbol{\theta})\widehat{P}.
\end{equation}
Here $\Lambda _Q$ is the operator of multiplication by the $\Gamma$-periodic $(n\times m)$-matrix-valued solution $\Lambda _Q(\mathbf{x})$ of the problem 
\begin{equation*}
b(\mathbf{D})^*g(\mathbf{x})\left(b(\mathbf{D})\Lambda _Q(\mathbf{x})+\mathbf{1}_m\right)=0,\quad\int _\Omega Q(\mathbf{x})\Lambda_Q(\mathbf{x})\,d\mathbf{x}=0.
\end{equation*}
Note that
\begin{equation}
\label{Lambda_Q=Lambda +LambdaQ0}
\Lambda _Q(\mathbf{x})=\Lambda (\mathbf{x})+\Lambda _Q^0,\quad \Lambda _Q^0:=-\left(\overline{Q}\right)^{-1}\left(\overline{Q\Lambda}\right),
\end{equation}
where $\Lambda$ is the $\Gamma$-periodic solution of problem \eqref{Lambda problem}. From \eqref{Z_Q(theta)=} it follows that
\begin{equation*}
t\widehat{Z}_Q(\boldsymbol{\theta})\widehat{P}=\Lambda _Qb(\mathbf{k})\widehat{P}=\Lambda _Qb(\mathbf{D}+\mathbf{k})\widehat{P}.
\end{equation*}

\subsection{Estimates in the case where $\vert\mathbf{k}\vert \leqslant t_0$} 

Consider the operator $\mathcal{H}_0=-\Delta $ acting in $L_2(\mathbb{R}^d;\mathbb{C}^n)$. Under the Gelfand transformation, this operator is decomposed into the direct integral of the operators $\mathcal{H}_0(\mathbf{k})$ acting in $L_2(\Omega;\mathbb{C}^n)$ and given by the differential expression $\vert \mathbf{D}+\mathbf{k}\vert ^2$ with the periodic boundary conditions. Denote
\begin{equation}
\label{R(k,eps)}
\mathcal{R}(\mathbf{k},\varepsilon):=\varepsilon ^2 (\mathcal{H}_0(\mathbf{k})+\varepsilon ^2 I)^{-1}.
\end{equation}
Obviously,
\begin{equation}
\label{R(k)P}
\mathcal{R}(\mathbf{k},\varepsilon)\widehat{P}=\varepsilon ^2 (t^2+\varepsilon ^2 )^{-1}\widehat{P},\quad\vert \mathbf{k}\vert =t .
\end{equation}

In order to approximate the operator  $f\mathcal{A}(\mathbf{k})^{-1/2}\sin (\varepsilon ^{-1}\tau \mathcal{A}(\mathbf{k})^{1/2})f^{-1}$, we apply Theorem~\ref{Theorem sin sandwiched abstract}. We only need to specify the constants in estimates. The constants $c_*$, $\delta$, and $t_0$ are defined by \eqref{A(k)>=}, \eqref{delta L2}, and \eqref{t0 L2}. Using estimate \eqref{X_1(theta)<=}, 
we choose the following values of constants from \eqref{F-P}, \eqref{F(t)=P+tF_1+F_2(t)}, and \eqref{C_3 abstract}:
\begin{align*}
{C}_1:&=\beta _1\delta ^{-1/2}\alpha _1^{1/2}\Vert g\Vert ^{1/2}_{L_\infty}\Vert f\Vert _{L_\infty},
\quad
{C}_2:=\beta _2\delta ^{-1}\alpha_1\Vert g\Vert _{L_\infty}\Vert f\Vert ^2_{L_\infty},
\\
{C}_3:&=\beta _3\delta ^{-1/2}\alpha _1^{1/2}\Vert g\Vert ^{1/2}_{L_\infty}\Vert f\Vert _{L_\infty}(1+c_*^{-1}\alpha_1\Vert g\Vert _{L_\infty}\Vert f\Vert ^2_{L_\infty}).
\end{align*}
Similarly, in accordance with \eqref{C_4 abstract} and \eqref{A^1/2F_2} we define
\begin{align*}
{C}_4:&=\beta _4\delta ^{-1/2}\alpha_1\Vert g\Vert _{L_\infty}\Vert f\Vert ^2_{L_\infty}(1+c_*^{-1/2}\alpha_1^{1/2}\Vert g\Vert ^{1/2}_{L_\infty}\Vert f\Vert _{L_\infty}),
\\
{C}_5:&=\beta _5\delta ^{-1/2}\alpha _1\Vert g\Vert _{L_\infty}\Vert f\Vert ^2_{L_\infty}.
\end{align*}
Using these ${C}_1$, ${C}_3$, ${C}_4$, and ${C}_5$, according to \eqref{C_6 abstract}, \eqref{C_7 abstract}, \eqref{A-1/2sin F 2}, and \eqref{Th corr exp}, we put
\begin{align*}
{C}_6:&=4 \pi ^{-1}c_*^{-1/2}{C}_3,\quad
{C}_8:=\max\lbrace {C}_4c_*^{-1/2};{C}_6\rbrace,\\
{C}_7:&={C}_8+c_*^{-1/2}{C}_1,\quad
{C}_9:={C}_1+c_*^{-1/2}{C}_5+{C}_8(\alpha_1^{1/2}\Vert g\Vert ^{1/2}_{L_\infty}\Vert f\Vert _{L_\infty}+{C}_4t_0).
\end{align*}
By Theorem~\ref{Theorem sin sandwiched abstract}, taking \eqref{M0 sin}, \eqref{Z_Q(theta)=}, and \eqref{R(k)P}  into account, we have
\begin{align}
\label{I, k<t0-1}
\begin{split}
&\Bigl\Vert  \Bigl( f\mathcal{A}(\mathbf{k})^{-1/2}\sin (\varepsilon ^{-1}\tau \mathcal{A}(\mathbf{k})^{1/2})f^{-1}
-f_0 \mathcal{A}^0(\mathbf{k})^{-1/2}\sin (\varepsilon^{-1}\tau \mathcal{A}^0(\mathbf{k})^{1/2})f_0^{-1}\Bigr)
\\
&\times
\mathcal{R}(\mathbf{k},\varepsilon)^{1/2}\widehat{P}\Bigr\Vert _{L_2(\Omega )\rightarrow L_2(\Omega )}
\leqslant {C}_7\Vert f\Vert  _{L_\infty}\Vert f^{-1}\Vert _{L_\infty}(1+\vert\tau\vert ),\quad \tau \in\mathbb{R},\quad \varepsilon >0,\quad\vert \mathbf{k}\vert \leqslant t_0,
\end{split}
\\
\label{II, k<t0}
\begin{split}
&\Bigl\Vert \widehat{\mathcal{A}}(\mathbf{k})^{1/2}\Bigl(
f\mathcal{A}(\mathbf{k})^{-1/2}\sin (\varepsilon ^{-1}\tau \mathcal{A}(\mathbf{k})^{1/2})f^{-1}
\\
&-(I+\Lambda _Q b(\mathbf{D}+\mathbf{k}))f_0\mathcal{A}^0(\mathbf{k})^{-1/2}\sin (\varepsilon ^{-1}\tau \mathcal{A}^0(\mathbf{k})^{1/2})f_0^{-1}
\Bigr)\mathcal{R}(\mathbf{k},\varepsilon)\widehat{P}\Bigr\Vert _{L_2(\Omega)\rightarrow L_2(\Omega)}
\\
&\leqslant {C}_9 \Vert f^{-1}\Vert _{L_\infty}\varepsilon (1+\vert \tau\vert ),\quad \tau \in\mathbb{R},\quad \varepsilon >0,\quad\vert \mathbf{k}\vert \leqslant t_0.
\end{split}
\end{align} 

Using \eqref{Lambda_Q=Lambda +LambdaQ0}, we show that $\Lambda _Q$ can be replaced by $\Lambda$ in \eqref{II, k<t0}. Only the constant in the estimate will change under such replacement. Indeed, due to the presence of the projection $\widehat{P}$, taking \eqref{<b^*b<}, \eqref{hat A(k)=}, \eqref{f0<=}, \eqref{R(k)P},  and the inequality $\vert \sin x\vert/\vert x\vert \leqslant 1$ into account, we have
\begin{equation}
\label{A(k)1/2Lambda Q0<=}
\begin{split}
&\Bigl\Vert \widehat{\mathcal{A}}(\mathbf{k})^{1/2}\Lambda _Q^0 b(\mathbf{D}+\mathbf{k})f_0\mathcal{A}^0(\mathbf{k})^{-1/2}\sin (\varepsilon ^{-1}\tau \mathcal{A}^0(\mathbf{k})^{1/2})f_0^{-1}\mathcal{R}(\mathbf{k},\varepsilon)\widehat{P}\Bigr\Vert _{L_2(\Omega)\rightarrow L_2(\Omega)}
\\
&\leqslant \Vert g\Vert ^{1/2}_{L_\infty}\Vert b(\mathbf{k})\Lambda _Q^0b(\mathbf{k})f_0\mathcal{A}^0(\mathbf{k})^{-1/2}\sin (\varepsilon ^{-1}\tau \mathcal{A}^0(\mathbf{k})^{1/2})f_0^{-1}\mathcal{R}(\mathbf{k},\varepsilon)\widehat{P}\Vert _{L_2(\Omega)\rightarrow L_2(\Omega)}
\\
&\leqslant \alpha_1\Vert g\Vert ^{1/2}_{L_\infty}\vert \Lambda _Q^0\vert \vert \mathbf{k}\vert ^2\Vert f\Vert _{L_\infty}\Vert f^{-1}\Vert _{L_\infty}\vert \tau\vert \varepsilon  (\vert \mathbf{k}\vert ^2+\varepsilon ^2)^{-1},
\quad\varepsilon>0,\quad \tau\in\mathbb{R},\quad\mathbf{k}\in\mathrm{clos}\,\widetilde{\Omega}.
\end{split}
\end{equation}
Next, according to \cite[Sec.~7]{BSu05},
\begin{equation}
\label{Lambda Q0<=}
\vert \Lambda _Q^0\vert \leqslant m^{1/2}(2r_0)^{-1}\alpha _0^{-1/2}\Vert g\Vert ^{1/2}_{L_\infty}\Vert g^{-1}\Vert ^{1/2}_{L_\infty}\Vert f\Vert ^2_{L_\infty}\Vert f^{-1}\Vert ^2_{L_\infty}.
\end{equation}
Combining \eqref{Lambda_Q=Lambda +LambdaQ0} and \eqref{II, k<t0} --\eqref{Lambda Q0<=}, we arrive at the estimate
\begin{equation}
\label{II-2}
\begin{split}
&\Bigl\Vert
\widehat{\mathcal{A}}(\mathbf{k})^{1/2}\Bigl(
f\mathcal{A}(\mathbf{k})^{-1/2}\sin (\varepsilon ^{-1}\tau \mathcal{A}(\mathbf{k})^{1/2})f^{-1}\\
&-\left(I+\Lambda b(\mathbf{D}+\mathbf{k})\right)f_0\mathcal{A}^0(\mathbf{k})^{-1/2}\sin (\varepsilon ^{-1}\tau \mathcal{A}^0(\mathbf{k})^{1/2})f_0^{-1}\Bigr)\mathcal{R}(\mathbf{k},\varepsilon)\widehat{P}\Bigr\Vert _{L_2(\Omega)\rightarrow L_2(\Omega)}
\\
&\leqslant
{C}_{10}\varepsilon (1+\vert \tau\vert),\quad \varepsilon >0,\quad \tau\in\mathbb{R},\quad \mathbf{k}\in \mathrm{clos}\,\widetilde{\Omega},\quad \vert \mathbf{k}\vert \leqslant t_0,
\end{split}
\end{equation}
where ${C}_{10}:={C}_9\Vert f^{-1}\Vert _{L_\infty}+m^{1/2}(2r_0)^{-1}\alpha _0^{-1/2}\alpha _1\Vert g\Vert _{L_\infty}\Vert g^{-1}\Vert ^{1/2}_{L_\infty}\Vert f\Vert ^3_{L_\infty}\Vert f^{-1}\Vert ^3_{L_\infty}$. 

\subsection{Approximations for $\vert \mathbf{k}\vert >t_0$}

By \eqref{A(k)>=} and \eqref{8.8a},
\begin{equation}
\label{8.22a}
\begin{split}
\Vert \mathcal{A}(\mathbf{k})^{-1/2}\Vert _{L_2(\Omega)\rightarrow L_2(\Omega)}\leqslant c_*^{-1/2}t_0^{-1},\quad\Vert \mathcal{A}^0(\mathbf{k})^{-1/2}\Vert _{L_2(\Omega)\rightarrow L_2(\Omega)}\leqslant c_*^{-1/2}t_0^{-1},
\\ 
\mathbf{k}
\in\mathrm{clos}\,\widetilde{\Omega},\quad \vert \mathbf{k}\vert > t_0.
\end{split}
\end{equation}
By \eqref{R(k)P},
\begin{equation}
\label{8.22b}
\Vert \mathcal{R}(\mathbf{k},\varepsilon)\widehat{P}\Vert _{L_2(\Omega)\rightarrow L_2(\Omega)}
\leqslant 1,\quad\mathbf{k}\in\mathrm{clos}\,\widetilde{\Omega}.
\end{equation}
Combining  \eqref{f0<=} and \eqref{8.22a}, \eqref{8.22b},  we obtain
\begin{equation}
\label{I, k>t0}
\begin{split}
\Bigl\Vert& \Bigl( f \mathcal{A}(\mathbf{k})^{-1/2}\sin (\varepsilon ^{-1}\tau \mathcal{A}(\mathbf{k})^{1/2})f^{-1}
-f_0\mathcal{A}^0(\mathbf{k})^{-1/2}\sin (\varepsilon ^{-1}\tau \mathcal{A}^0(\mathbf{k})^{1/2})f_0^{-1}
\Bigr)
\\
&\times \mathcal{R}(\mathbf{k},\varepsilon)^{1/2}\widehat{P}\Bigr\Vert _{L_2(\Omega )\rightarrow L_2(\Omega)}
\leqslant 2c_*^{-1/2}t_0^{-1}\Vert f\Vert _{L_\infty}\Vert f^{-1}\Vert _{L_\infty},
\end{split}
\end{equation}
$\varepsilon >0$, $\tau\in\mathbb{R}$, $\mathbf{k}\in\mathrm{clos}\,\widetilde{\Omega}$, $\vert \mathbf{k}\vert > t_0$.
Bringing together \eqref{I, k<t0-1} and \eqref{I, k>t0}, we conclude that
\begin{equation}
\label{8.3_I}
\begin{split}
\Bigl\Vert &\Bigl(
f\mathcal{A}(\mathbf{k})^{-1/2}\sin (\varepsilon ^{-1}\tau \mathcal{A}(\mathbf{k})^{1/2})f^{-1}
-f_0\mathcal{A}^0(\mathbf{k})^{-1/2}\sin (\varepsilon ^{-1}\tau \mathcal{A}^0(\mathbf{k})^{1/2})f_0^{-1}
\Bigr)\\
&\times
 \mathcal{R}(\mathbf{k},\varepsilon)^{1/2}\widehat{P}\Bigr\Vert _{L_2(\Omega )\rightarrow L_2(\Omega)}
\leqslant \max\lbrace C_7;
2c_*^{-1/2}t_0^{-1}\rbrace\Vert f\Vert _{L_\infty}\Vert f^{-1}\Vert _{L_\infty}(1+\vert \tau\vert),
\end{split}
\end{equation}
$\varepsilon >0$, $\tau\in\mathbb{R}$, $\mathbf{k}\in\mathrm{clos}\,\widetilde{\Omega}$.

Now, we proceed to estimation of the operator under the norm sign in \eqref{II-2} for $\vert\mathbf{k}\vert >t_0$. By \eqref{R(k)P} and the elementary inequality $t^2+\varepsilon ^2 \geqslant 2\varepsilon  {t}_0$, $t >{t}_0$, we have
\begin{equation}
\label{analog of 7.9}
\Vert \mathcal{R}(\mathbf{k},\varepsilon)\widehat{P}\Vert _{L_2(\Omega)\rightarrow L_2(\Omega)}
\leqslant(2t_0)^{-1}\varepsilon ,\quad\varepsilon >0,\quad \mathbf{k}\in\mathrm{clos}\,\widetilde{\Omega},\quad\vert \mathbf{k}\vert >t_0.
\end{equation}
By \eqref{A(k) and hat A(k)} and \eqref{analog of 7.9},
\begin{equation}
\label{8.3 A}
\begin{split}
\Vert &\widehat{\mathcal{A}}(\mathbf{k})^{1/2}f\mathcal{A}(\mathbf{k})^{-1/2}\sin (\varepsilon ^{-1}\tau \mathcal{A}(\mathbf{k})^{1/2})f^{-1}\mathcal{R}(\mathbf{k},\varepsilon)\widehat{P}\Vert _{L_2(\Omega)\rightarrow L_2(\Omega)}
\\
&=\Vert \sin (\varepsilon ^{-1}\tau \mathcal{A}(\mathbf{k})^{1/2})f^{-1}\mathcal{R}(\mathbf{k},\varepsilon)\widehat{P}\Vert _{L_2(\Omega)\rightarrow L_2(\Omega)}\\
&\leqslant \varepsilon (2t_0)^{-1}\Vert f ^{-1}\Vert _{L_\infty},
\quad
\varepsilon >0,\quad\tau\in\mathbb{R},\quad\mathbf{k}\in\mathrm{clos}\,\widetilde{\Omega},\quad \vert \mathbf{k}\vert > t_0.
\end{split}
\end{equation}

From \eqref{g^0<=}, \eqref{f0<=}, \eqref{A0(k)= no hat}, and \eqref{analog of 7.9} it follows that
\begin{equation}
\label{8.3 B}
\begin{split}
\Vert &\widehat{\mathcal{A}}(\mathbf{k})^{1/2}f_0\mathcal{A}^0(\mathbf{k})^{-1/2}\sin (\varepsilon ^{-1}\tau \mathcal{A}^0(\mathbf{k})^{1/2})f_0 ^{-1}\mathcal{R}(\mathbf{k},\varepsilon)\widehat{P}\Vert _{L_2(\Omega)\rightarrow L_2(\Omega)}
\\
&\leqslant\varepsilon (2t_0)^{-1}\Vert g^{1/2}b(\mathbf{D}+\mathbf{k})f_0\sin (\varepsilon ^{-1}\tau \mathcal{A}^0(\mathbf{k})^{1/2})\mathcal{A}^0(\mathbf{k})^{-1/2}f_0 ^{-1}\widehat{P}\Vert _{L_2(\Omega)\rightarrow L_2(\Omega)}
\\
&\leqslant \varepsilon (2 t_0)^{-1}\Vert g\Vert ^{1/2}_{L_\infty}\Vert g^{-1}\Vert ^{1/2}_{L_\infty}\Vert \mathcal{A}^0(\mathbf{k})^{1/2}\sin (\varepsilon ^{-1}\tau \mathcal{A}^0(\mathbf{k})^{1/2})\mathcal{A}^0(\mathbf{k})^{-1/2}f_0^{-1} \widehat{P}\Vert _{L_2(\Omega)\rightarrow L_2(\Omega)}
\\
&\leqslant
\varepsilon (2 t_0)^{-1}\Vert g\Vert ^{1/2}_{L_\infty}\Vert g^{-1}\Vert ^{1/2}_{L_\infty}\Vert f ^{-1}\Vert _{L_\infty},
\quad\varepsilon >0,\quad\tau\in\mathbb{R},\quad \mathbf{k}\in\mathrm{clos}\,\widetilde{\Omega},\quad\vert \mathbf{k}\vert >t_0.
\end{split}
\end{equation}

Next, we have
\begin{equation*}
\begin{split}
&\widehat{\mathcal{A}}(\mathbf{k})^{1/2}\Lambda b(\mathbf{D}+\mathbf{k})f_0\mathcal{A}^0(\mathbf{k})^{-1/2}\sin (\varepsilon ^{-1}\tau \mathcal{A}^0(\mathbf{k})^{1/2})f_0 ^{-1}\mathcal{R}(\mathbf{k},\varepsilon)\widehat{P}
\\
&=\left(\widehat{\mathcal{A}}(\mathbf{k})^{1/2}\Lambda \widehat{P}_m\right)b(\mathbf{D}+\mathbf{k})f_0\mathcal{A}^0(\mathbf{k})^{-1/2}\sin (\varepsilon ^{-1}\tau \mathcal{A}^0(\mathbf{k})^{1/2})f_0 ^{-1}\mathcal{R}(\mathbf{k},\varepsilon)\widehat{P},
\end{split}
\end{equation*}
where $\widehat{P}_m$ is the orthogonal projection of the space $\mathfrak{H}_*=L_2(\Omega;\mathbb{C}^m)$ onto the subspace of constants. According to \cite[(6.22)]{BSu06}, 
\begin{equation}
\label{7.11 c}
\Vert \widehat{\mathcal{A}}(\mathbf{k})^{1/2}\Lambda \widehat{P}_m\Vert _{L_2(\Omega)\rightarrow L_2(\Omega)}\leqslant C_\Lambda ,\quad\mathbf{k}\in\widetilde{\Omega},
\end{equation}
where the constant $C_\Lambda$ depends only on $m$, $\alpha _0$, $\alpha _1$, $\Vert g\Vert _{L_\infty}$, $\Vert g^{-1}\Vert _{L_\infty}$, and the parameters of the lattice $\Gamma$.

By \eqref{g^0<=},  \eqref{f0<=}, \eqref{A0(k)= no hat},  \eqref{analog of 7.9}, and \eqref{7.11 c},
\begin{equation}
\label{8.3 C}
\begin{split}
\Vert &\widehat{\mathcal{A}}(\mathbf{k})^{1/2}\Lambda b(\mathbf{D}+\mathbf{k})f_0\mathcal{A}^0(\mathbf{k})^{-1/2}\sin (\varepsilon ^{-1}\tau \mathcal{A}^0(\mathbf{k})^{1/2})f_0 ^{-1}\mathcal{R}(\mathbf{k},\varepsilon)\widehat{P}\Vert _{L_2(\Omega)\rightarrow L_2(\Omega)}
\\
&\leqslant C_\Lambda \Vert g^{-1}\Vert ^{1/2}_{L_\infty}\Vert f  ^{-1}\Vert _{L_\infty} (2t_0)^{-1}\varepsilon,
\quad\varepsilon >0,\quad\tau\in\mathbb{R},\quad\mathbf{k}\in\mathrm{clos}\,\widetilde{\Omega},\quad \vert \mathbf{k}\vert > t_0.
\end{split}
\end{equation}

Combining \eqref{II-2}, \eqref{8.3 A}, \eqref{8.3 B}, and \eqref{8.3 C}, we conclude that
\begin{equation}
\label{8.3 with C11}
\begin{split}
\Bigl\Vert &\widehat{\mathcal{A}}(\mathbf{k})^{1/2}\Bigl(
f\mathcal{A}(\mathbf{k})^{-1/2}\sin (\varepsilon ^{-1}\tau \mathcal{A}(\mathbf{k})^{1/2})f^{-1}
\\
&-
(I+\Lambda b(\mathbf{D}+\mathbf{k}))f_0\mathcal{A}^0(\mathbf{k})^{-1/2}\sin (\varepsilon ^{-1}\tau \mathcal{A}^0(\mathbf{k})^{1/2})f_0^{-1}
\Bigr)\mathcal{R}(\mathbf{k},\varepsilon)\widehat{P}\Bigr\Vert _{L_2(\Omega)\rightarrow L_2(\Omega)}
\\
&\leqslant
{C}_{11}\varepsilon (1+\vert \tau\vert ),\quad \varepsilon >0,\quad\tau\in\mathbb{R},\quad\mathbf{k}\in\mathrm{clos}\,\widetilde{\Omega}.
\end{split}
\end{equation}
Here ${C}_{11}:=\max\left\lbrace {C}_{10};
(2t_0)^{-1}\Vert f ^{-1}\Vert _{L_\infty}\left(1+\Vert g\Vert ^{1/2}_{L_\infty}\Vert g^{-1}\Vert ^{1/2}_{L_\infty}+C_\Lambda \Vert g^{-1}\Vert ^{1/2}_{L_\infty}\right)\right\rbrace$. 

\subsection{Removal of the operator $\widehat{P}$}
Now, we show that, in the operator under the norm sign in 
\eqref{8.3_I} the projection $\widehat{P}$ can be replaced by the identity operator. 
After such replacement, only the constant in the estimate will be different. 
To show this, we estimate the norm of the operator $\mathcal{R}(\mathbf{k},\varepsilon)^{1/2}(I-\widehat{P})$ by using the discrete Fourier transform:
\begin{equation}
\label{R(I-P)}
\Vert \mathcal{R}(\mathbf{k},\varepsilon)^{1/2}(I-\widehat{P})\Vert _{L_2(\Omega)\rightarrow L_2(\Omega )}
=\max _{0\neq \mathbf{b}\in\widetilde{\Gamma}}\varepsilon (\vert\mathbf{b}+\mathbf{k}\vert ^2+\varepsilon ^2)^{-1/2}\leqslant \varepsilon r_0^{-1},\quad \varepsilon >0,\quad\mathbf{k}\in\mathrm{clos}\,\widetilde{\Omega}.
\end{equation}
Next, applying the spectral theorem and the elementary inequality 
$\vert \sin x\vert/\vert x\vert \leqslant 1$, $x\in\mathbb{R}$, we conclude that
\begin{equation}
\label{A^-1-2sin <= grubo}
\Vert   {\mathcal{A}}(\mathbf{k})^{-1/2}\sin(\varepsilon ^{-1}\tau  {\mathcal{A}}(\mathbf{k})^{1/2})\Vert _{L_2(\Omega)\rightarrow L_2(\Omega)}
\leqslant \varepsilon ^{-1}\vert\tau\vert .
\end{equation}
Similarly,
\begin{equation}
\label{sin eff grubo}
\Vert   {\mathcal{A}}^0(\mathbf{k})^{-1/2}\sin(\varepsilon ^{-1}\tau  {\mathcal{A}}^0(\mathbf{k})^{1/2})\Vert _{L_2(\Omega)\rightarrow L_2(\Omega)}
\leqslant \varepsilon ^{-1}\vert\tau\vert .
\end{equation}
Bringing together \eqref{f0<=}, \eqref{R(I-P)}--\eqref{sin eff grubo}, we arrive at the estimate
\begin{equation*}
\begin{split}
\Bigl\Vert&\Bigl(
f\mathcal{A}(\mathbf{k})^{-1/2}\sin (\varepsilon ^{-1}\tau \mathcal{A}(\mathbf{k})^{1/2})f^{-1}
-f_0\mathcal{A}^0(\mathbf{k})^{-1/2}\sin (\varepsilon ^{-1}\tau \mathcal{A}^0(\mathbf{k})^{1/2})f_0^{-1} 
\Bigr)
\\
&\times
\mathcal{R}(\mathbf{k},\varepsilon)^{1/2}(I-\widehat{P})\Bigr\Vert _{L_2(\Omega)\rightarrow L_2(\Omega)}
\leqslant 2 r_0^{-1}\Vert f\Vert_{L_\infty}\Vert f^{-1}\Vert _{L_\infty}\vert\tau\vert .
\end{split}
\end{equation*}
Combining this with \eqref{8.3_I}, we see that
\begin{equation}
\label{8.4_0}
\begin{split}
\Bigl\Vert&\Bigl(
f\mathcal{A}(\mathbf{k})^{-1/2}\sin (\varepsilon ^{-1}\tau \mathcal{A}(\mathbf{k})^{1/2})f^{-1}
-f_0\mathcal{A}^0(\mathbf{k})^{-1/2}\sin (\varepsilon ^{-1}\tau \mathcal{A}^0(\mathbf{k})^{1/2})f_0^{-1} 
\Bigr)
\\
&\times
\mathcal{R}(\mathbf{k},\varepsilon)^{1/2}\Bigr\Vert _{L_2(\Omega)\rightarrow L_2(\Omega)}
\leqslant  {C}_{12}(1+\vert \tau\vert ),\quad \varepsilon >0,\quad \tau\in\mathbb{R},\quad \mathbf{k}\in\mathrm{clos}\,\widetilde{\Omega},
\end{split}
\end{equation}
where 
$
 {C}_{12}:=\left(2r_0^{-1}+\max\lbrace C_7;2c_*^{-1/2}t_0^{-1}\rbrace\right)\Vert f\Vert _{L_\infty}\Vert f^{-1}\Vert _{L_\infty}$. 
 
Now, we show that the operator $\widehat{P}$ in the principal terms of approximation \eqref{8.3 with C11} can be removed. 
Let us estimate the operator $\mathcal{R}(\mathbf{k},\varepsilon)(I-\widehat{P})$ using the discrete Fourier transform:
\begin{equation}
\label{R(I-P) for corr}
\Vert \mathcal{R}(\mathbf{k},\varepsilon)(I-\widehat{P})\Vert _{L_2(\Omega)\rightarrow L_2(\Omega)}
=\max _{0\neq\mathbf{b}\in\widetilde{\Gamma}}\varepsilon ^2 (\vert \mathbf{b}+\mathbf{k}\vert ^2+\varepsilon ^2)^{-1}\leqslant \varepsilon r_0^{-1},\quad \varepsilon >0,\quad\mathbf{k}\in\mathrm{clos}\,\widetilde{\Omega}.
\end{equation} 
By 
 \eqref{A(k) and hat A(k)} and \eqref{R(I-P) for corr},
\begin{equation}
\label{8.4 I}
\begin{split}
\Vert &\widehat{\mathcal{A}}(\mathbf{k})^{1/2}f \mathcal{A}(\mathbf{k})^{-1/2}\sin (\varepsilon^{-1}\tau \mathcal{A}(\mathbf{k})^{1/2})f^{-1}\mathcal{R}(\mathbf{k},\varepsilon)(I-\widehat{P})\Vert _{L_2(\Omega)\rightarrow L_2(\Omega)}
\\
&=\Vert \sin (\varepsilon^{-1}\tau \mathcal{A}(\mathbf{k})^{1/2})f^{-1}\mathcal{R}(\mathbf{k},\varepsilon)(I-\widehat{P})\Vert _{L_2(\Omega)\rightarrow L_2(\Omega)}
\\
&\leqslant \Vert f^{-1}\Vert _{L_\infty}\varepsilon r_0^{-1},\quad \varepsilon >0,\quad \tau\in\mathbb{R},\quad \mathbf{k}\in\mathrm{clos}\,\widetilde{\Omega}.
\end{split}
\end{equation}
Next, by \eqref{hat A(k)=}, \eqref{g^0<=},  \eqref{f0<=},  \eqref{A0(k)= no hat}, and \eqref{R(I-P) for corr},
\begin{equation}
\label{8.4 II}
\begin{split}
\Vert &\widehat{\mathcal{A}}(\mathbf{k})^{1/2}f_0\mathcal{A}^0(\mathbf{k})^{-1/2}\sin (\varepsilon ^{-1}\tau \mathcal{A}^0(\mathbf{k})^{1/2})f_0^{-1}\mathcal{R}(\mathbf{k},\varepsilon)(I-\widehat{P})\Vert_{L_2(\Omega)\rightarrow L_2(\Omega)}
\\
&\leqslant \Vert g\Vert ^{1/2}_{L_\infty}\Vert g^{-1}\Vert^{1/2}_{L_\infty}\Vert f ^{-1}\Vert_{L_\infty}\varepsilon r_0^{-1},
\quad\varepsilon >0,\quad\tau\in\mathbb{R},\quad\mathbf{k}\in\mathrm{clos}\,\widetilde{\Omega}.
\end{split}
\end{equation}
Combining \eqref{8.3 with C11}, \eqref{8.4 I}, and \eqref{8.4 II}, we have
\begin{equation}
\label{8.4 III}
\begin{split}
\Bigl\Vert &\widehat{\mathcal{A}}(\mathbf{k})^{1/2}\Bigl(
f\mathcal{A}(\mathbf{k})^{-1/2}\sin (\varepsilon ^{-1}\tau \mathcal{A}(\mathbf{k})^{1/2})f^{-1}
\\
&-
(I+\Lambda b(\mathbf{D}+\mathbf{k})\widehat{P})
f_0 \mathcal{A}^0(\mathbf{k})^{-1/2}\sin (\varepsilon ^{-1}\tau \mathcal{A}^0(\mathbf{k})^{1/2})f_0^{-1}
\Bigr)
\mathcal{R}(\mathbf{k},\varepsilon)\Bigr\Vert _{L_2(\Omega)\rightarrow L_2(\Omega)}
\\
&\leqslant
 {C}_{13}\varepsilon (1+\vert \tau\vert ),
\quad\varepsilon>0,\quad \tau\in\mathbb{R},\quad \mathbf{k}\in\mathrm{clos}\,\widetilde{\Omega},
\end{split}
\end{equation}
where $ {C}_{13}:= {C}_{11}+r_0^{-1}\Vert f ^{-1}\Vert _{L_\infty}(1+\Vert g\Vert^{1/2}_{L_\infty}\Vert g^{-1}\Vert ^{1/2}_{L_\infty})$. 

\section{Approximation of the sandwiched operator $\mathcal{A}^{-1/2}\sin (\varepsilon ^{-1}\tau\mathcal{A}^{1/2})$}

\label{Section 10}

\subsection{}
\label{Subsection 8/1 NEW}

Let $\mathcal{A}$ and $\mathcal{A}^0$ be the operators \eqref{A=} and \eqref{A0 no hat}, respectively, acting in $L_2(\mathbb{R}^d;\mathbb{C}^n)$. Recall the notation $\mathcal{H}_0=-\Delta$ and put  
$
\mathcal{R}(\varepsilon):=\varepsilon ^2(\mathcal{H}_0+\varepsilon ^2 I)^{-1}$. 
Using the Gelfand transformation, we decompose this operator into the direct integral of the operators 
 \eqref{R(k,eps)}:
\begin{equation}
\label{9.0}
\mathcal{R}(\varepsilon)=\mathcal{U}^{-1}\left(\int _{\widetilde{\Omega}}\oplus \mathcal{R}(\mathbf{k},\varepsilon)\,d\mathbf{k}\right)\mathcal{U}.
\end{equation}

In $L_2(\mathbb{R}^d;\mathbb{C}^n)$, we introduce the operator 
$\Pi :=\mathcal{U}^{-1}[\widehat{P}]\mathcal{U}$. Here $[\widehat{P}]$ is the projection in $\int _{\widetilde{\Omega}}\oplus L_2(\Omega;\mathbb{C}^n)\,d\mathbf{k}$ acting on fibers as the operator $\widehat{P}$ (see \eqref{P L2}). As was shown in \cite[(6.8)]{BSu05}, $\Pi$ is the pseudodifferential operator in $L_2(\mathbb{R}^d;\mathbb{C}^n)$ with the symbol $\chi _{\widetilde{\Omega}}(\boldsymbol{\xi})$, where $\chi _{\widetilde{\Omega}}$ is the characteristic function of the set~$\widetilde{\Omega}$. That is 
$
(\Pi \mathbf{u})(\mathbf{x})=(2\pi )^{-d/2}\int _{\widetilde{\Omega}}e^{i\langle\mathbf{x},\boldsymbol{\xi}\rangle}\widehat{\mathbf{u}}(\boldsymbol{\xi})\,d\boldsymbol{\xi}$. 
Here $\widehat{\mathbf{u}}(\boldsymbol{\xi})$ is the Fourier image of the function $\mathbf{u}\in L_2(\mathbb{R}^d;\mathbb{C}^n)$.

\begin{theorem}
Under the assumptions of Subsection~\textnormal{\ref{Subsection 8/1 NEW}}, for $\varepsilon >0$ and $\tau\in\mathbb{R}$ we have
\begin{align}
\label{10.I}
\begin{split}
\Bigl\Vert &\Bigl(
f \mathcal{A}^{-1/2}\sin (\varepsilon ^{-1}\tau\mathcal{A}^{1/2})f^{-1}
-f_0(\mathcal{A}^0)^{-1/2}\sin (\varepsilon ^{-1}\tau(\mathcal{A}^0)^{1/2})f_0^{-1}
\Bigr)
\mathcal{R}(\varepsilon )^{1/2}\Bigr\Vert _{L_2(\mathbb{R}^d)\rightarrow L_2(\mathbb{R}^d)}
\\
&\leqslant {C}_{12}(1+\vert \tau\vert),
\end{split}
\\
\label{10.II}
\begin{split}
\Bigl\Vert &
\widehat{\mathcal{A}}^{1/2}
\Bigl(
f \mathcal{A}^{-1/2}\sin (\varepsilon ^{-1}\tau\mathcal{A}^{1/2})f^{-1}\\
&-(I+\Lambda b(\mathbf{D})\Pi )f_0(\mathcal{A}^0)^{-1/2}\sin (\varepsilon ^{-1}\tau (\mathcal{A}^0)^{1/2})f_0^{-1}
\Bigr)
\mathcal{R}(\varepsilon)
\Bigr\Vert _{L_2(\mathbb{R}^d)\rightarrow L_2(\mathbb{R}^d)}
\leqslant {C}_{13}\varepsilon (1+\vert \tau\vert ).
\end{split}
\end{align}
The constants $C_{12}$ and $C_{13}$ depend only on $m$, $\alpha_0$, $\alpha_1$, $\Vert g\Vert _{L_\infty}$, $\Vert g^{-1}\Vert _{L_\infty}$, $\Vert f\Vert _{L_\infty}$, $\Vert f^{-1}\Vert _{L_\infty}$, and the parameters of the lattice $\Gamma$.
\end{theorem}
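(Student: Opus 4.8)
The plan is to derive estimates \eqref{10.I} and \eqref{10.II} from the fiberwise bounds \eqref{8.4_0} and \eqref{8.4 III} by applying the Gelfand transformation $\mathcal{U}$. First I would record the direct integral decompositions of all operators occurring under the norm signs. By \eqref{5.7a}, the operators $\mathcal{A}$, $\widehat{\mathcal{A}}$, and $\mathcal{A}^0$ decompose into $\int _{\widetilde{\Omega}}\oplus\,\mathcal{A}(\mathbf{k})\,d\mathbf{k}$, $\int _{\widetilde{\Omega}}\oplus\,\widehat{\mathcal{A}}(\mathbf{k})\,d\mathbf{k}$, and $\int _{\widetilde{\Omega}}\oplus\,\mathcal{A}^0(\mathbf{k})\,d\mathbf{k}$, respectively. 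The multiplication operators by the $\Gamma$-periodic matrix functions $f$ and $f^{-1}$ act on the fibers as multiplication by the same functions; the operators $f_0$, $f_0^{-1}$ are constant matrices and act on each fiber as themselves. By \eqref{9.0}, $\mathcal{R}(\varepsilon)$ decomposes into $\int _{\widetilde{\Omega}}\oplus\,\mathcal{R}(\mathbf{k},\varepsilon)\,d\mathbf{k}$. Finally, by the definition of $\Pi$ the operator $[\widehat{P}]$ acts on fibers as $\widehat{P}$, and under $\mathcal{U}$ the operator $b(\mathbf{D})$ turns into $b(\mathbf{D}+\mathbf{k})$; hence $\Lambda b(\mathbf{D})\Pi$ decomposes into $\Lambda b(\mathbf{D}+\mathbf{k})\widehat{P}$ on the fibers.

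Next, by the functional calculus for direct integrals, the bounded operator functions $\mathcal{A}^{-1/2}\sin(\varepsilon^{-1}\tau\mathcal{A}^{1/2})$ and $(\mathcal{A}^0)^{-1/2}\sin(\varepsilon^{-1}\tau(\mathcal{A}^0)^{1/2})$ decompose into the direct integrals of the corresponding fiber operators, and the same holds, on the natural domain, after composing with the selfadjoint operator $\widehat{\mathcal{A}}^{1/2}$; the exceptional value $\mathbf{k}=0$, at which $\mathcal{A}(0)$ and $\widehat{\mathcal{A}}(0)$ have nontrivial kernels, forms a set of measure zero and does not affect the direct integral. Combining these observations, the operator under the norm sign in \eqref{10.I} is unitarily equivalent, via $\mathcal{U}$, to $\int _{\widetilde{\Omega}}\oplus\,T_{\mathrm{I}}(\mathbf{k},\varepsilon,\tau)\,d\mathbf{k}$, where $T_{\mathrm{I}}(\mathbf{k},\varepsilon,\tau)$ is precisely the operator estimated in \eqref{8.4_0}; likewise, the operator under the norm sign in \eqref{10.II} is unitarily equivalent to the direct integral of the operators estimated in \eqref{8.4 III}. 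Since the norm of an operator acting as a direct integral equals the essential supremum over $\mathbf{k}$ of the fiber norms, and the right-hand sides of \eqref{8.4_0} and \eqref{8.4 III} are uniform in $\mathbf{k}\in\mathrm{clos}\,\widetilde{\Omega}$, we obtain \eqref{10.I} with the constant $C_{12}$ and \eqref{10.II} with the constant $C_{13}$. The asserted dependence of $C_{12}$ and $C_{13}$ on $m$, $\alpha_0$, $\alpha_1$, $\Vert g\Vert_{L_\infty}$, $\Vert g^{-1}\Vert_{L_\infty}$, $\Vert f\Vert_{L_\infty}$, $\Vert f^{-1}\Vert_{L_\infty}$, and the lattice parameters follows by tracing their definitions back through $C_1,\dots,C_{11}$, $C_\Lambda$, $\delta$, $t_0$, $c_*$.

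The routine part is this bookkeeping; the only point requiring genuine care is the commutation of the operator functions with the Gelfand decomposition — in particular, that composing with the unbounded operator $\widehat{\mathcal{A}}^{1/2}$ and with the multiplications by $f$, $f^{-1}$ preserves the fiber structure. This is handled by first checking the decomposition for bounded Borel functions of $\mathcal{A}$, then using that $\widehat{\mathcal{A}}^{1/2}f\mathcal{A}^{-1/2}\sin(\varepsilon^{-1}\tau\mathcal{A}^{1/2})f^{-1}\mathcal{R}(\varepsilon)$, together with its effective analogue and the corrector term, are in fact bounded — which is exactly the content of \eqref{8.4 III} read fiberwise together with \eqref{7.11 c} — so that the fiberwise identities extend from a core to all of $L_2(\mathbb{R}^d;\mathbb{C}^n)$.
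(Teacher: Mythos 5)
Your proposal takes exactly the paper's approach: the theorem follows from the fiberwise estimates \eqref{8.4_0} and \eqref{8.4 III}, uniform in $\mathbf{k}\in\mathrm{clos}\,\widetilde{\Omega}$, by applying the Gelfand transform and the direct integral decompositions \eqref{5.7a} and \eqref{9.0}. The paper states this in two sentences; you have fleshed out the bookkeeping (decomposition of each factor, essential supremum of fiber norms, and the caveat about $\widehat{\mathcal{A}}^{1/2}$ being unbounded but composing to a bounded operator), but the argument is the same.
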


\begin{proof}
By \eqref{5.7a}, the similar identity for $\mathcal{A}^0$, and \eqref{9.0}, 
from \eqref{8.4_0} we deduce estimate \eqref{10.I}.

From  \eqref{8.4 III} via the Gelfand transform we derive inequality \eqref{10.II}.
\end{proof}

\subsection{Removal of the operator $\Pi$ in the corrector for $d\leqslant 4$}

Now, we show that the operator $\Pi$ in estimate \eqref{10.II} can be removed for $d\leqslant 4$.

\begin{theorem}
\label{Theorem 8/1 NO PI}
Under the assumptions of Subsection~\textnormal{\ref{Subsection 8/1 NEW}}, let $d\leqslant 4$. Then for $0<\varepsilon\leqslant 1$ and $\tau\in\mathbb{R}$ we have
\begin{equation}
\label{Th 8/1 NO PI}
\begin{split}
\Vert &
\widehat{\mathcal{A}}^{1/2}
\Bigl(
f \mathcal{A}^{-1/2}\sin (\varepsilon ^{-1}\tau\mathcal{A}^{1/2})f^{-1}\\
&-(I+\Lambda b(\mathbf{D}) )f_0(\mathcal{A}^0)^{-1/2}\sin (\varepsilon ^{-1}\tau (\mathcal{A}^0)^{1/2})f_0^{-1}
\Bigr)
\mathcal{R}(\varepsilon)
\Bigr\Vert _{L_2(\mathbb{R}^d)\rightarrow L_2(\mathbb{R}^d)}
\leqslant {C}_{14}\varepsilon (1+\vert \tau\vert ).
\end{split}
\end{equation}
The constant $C_{14}$ depends only on $m$, $n$, $d$, $\alpha _0$, $\alpha _1$, $\Vert g\Vert _{L_\infty}$, $\Vert g^{-1}\Vert _{L_\infty}$, $\Vert f\Vert _{L_\infty}$, $\Vert f^{-1}\Vert _{L_\infty}$, and the parameters of the lattice $\Gamma$.
\end{theorem}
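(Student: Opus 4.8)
The plan is to deduce \eqref{Th 8/1 NO PI} from the already proven estimate \eqref{10.II}. The operator under the norm sign in \eqref{Th 8/1 NO PI} differs from the one in \eqref{10.II} only in that $\Lambda b(\mathbf{D})\Pi$ is replaced by $\Lambda b(\mathbf{D})$. Since $\Pi$, $\mathcal{R}(\varepsilon)$, the constant matrices $f_0,f_0^{-1}$, and every function of the constant-coefficient operator $\mathcal{A}^0$ are Fourier multipliers (hence commute), the difference of the two left-hand sides is bounded by $\Vert T(\varepsilon,\tau)\Vert_{L_2(\mathbb{R}^d)\to L_2(\mathbb{R}^d)}$, where
\[
T(\varepsilon,\tau)=\widehat{\mathcal{A}}^{1/2}[\Lambda]\,\Psi(\varepsilon,\tau),\qquad
\Psi(\varepsilon,\tau):=b(\mathbf{D})(I-\Pi)f_0(\mathcal{A}^0)^{-1/2}\sin\bigl(\varepsilon^{-1}\tau(\mathcal{A}^0)^{1/2}\bigr)f_0^{-1}\mathcal{R}(\varepsilon)
\]
and $[\Lambda]$ denotes multiplication by the $\Gamma$-periodic matrix $\Lambda$ from \eqref{Lambda problem}. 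Thus, by the triangle inequality, it suffices to show $\Vert T(\varepsilon,\tau)\Vert_{L_2\to L_2}\leqslant C\varepsilon(1+\vert\tau\vert)$ for $0<\varepsilon\leqslant1$; in fact I expect to get $\leqslant C\varepsilon$ uniformly in $\tau$, so that $C_{14}=C_{13}+C$.

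First I would estimate the Fourier multiplier $\Psi(\varepsilon,\tau)$ directly on the symbol side. Using $\vert\sin x\vert\leqslant1$, the bound $\mathcal{A}^0\geqslant c_*(-\Delta)$ (i.e. the symbol of $\mathcal{A}^0$ is $\geqslant c_*\vert\boldsymbol{\xi}\vert^2\mathbf{1}_n$, cf. \eqref{f_0 dots >=}), $\vert b(\boldsymbol{\xi})\vert\leqslant\alpha_1^{1/2}\vert\boldsymbol{\xi}\vert$, the bounds \eqref{f0<=}, the identity $\mathcal{R}(\varepsilon)=\varepsilon^2(\vert\boldsymbol{\xi}\vert^2+\varepsilon^2)^{-1}$ on the Fourier side, and the fact that $I-\Pi$ is multiplication by $\chi_{\mathbb{R}^d\setminus\widetilde{\Omega}}$, which vanishes for $\vert\boldsymbol{\xi}\vert<r_0$, one sees that the symbol of $\Psi(\varepsilon,\tau)$ has norm $\leqslant C_0\,\varepsilon^2(\vert\boldsymbol{\xi}\vert^2+\varepsilon^2)^{-1}$ on $\{\vert\boldsymbol{\xi}\vert\geqslant r_0\}$ (and vanishes otherwise), with $C_0$ depending only on $\alpha_1,c_*,\Vert f\Vert_{L_\infty},\Vert f^{-1}\Vert_{L_\infty}$. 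Since $\vert\boldsymbol{\xi}\vert^s\varepsilon^2(\vert\boldsymbol{\xi}\vert^2+\varepsilon^2)^{-1}\leqslant\varepsilon^2\vert\boldsymbol{\xi}\vert^{s-2}\leqslant r_0^{s-2}\varepsilon^2\leqslant r_0^{s-2}\varepsilon$ for $0\leqslant s\leqslant2$, $\vert\boldsymbol{\xi}\vert\geqslant r_0$, $0<\varepsilon\leqslant1$, this gives $\Vert\Psi(\varepsilon,\tau)\Vert_{L_2(\mathbb{R}^d)\to H^s(\mathbb{R}^d)}\leqslant C(s)\varepsilon$ for $0\leqslant s\leqslant2$, uniformly in $\tau$. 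In particular $\mathbf{w}:=\Psi(\varepsilon,\tau)u$ satisfies $\Vert\mathbf{w}\Vert_{H^2(\mathbb{R}^d;\mathbb{C}^m)}\leqslant C\varepsilon\Vert u\Vert_{L_2}$.

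The main step — and the place where the restriction $d\leqslant4$ is used — is to show that $\widehat{\mathcal{A}}^{1/2}[\Lambda]$ is bounded from $H^2(\mathbb{R}^d;\mathbb{C}^m)$ to $L_2(\mathbb{R}^d;\mathbb{C}^n)$, with constant depending only on $m,n,d$, the lattice parameters, $\alpha_0,\alpha_1,\Vert g\Vert_{L_\infty},\Vert g^{-1}\Vert_{L_\infty}$; granting this, $\Vert T(\varepsilon,\tau)u\Vert=\Vert\widehat{\mathcal{A}}^{1/2}[\Lambda]\mathbf{w}\Vert\leqslant C\Vert\mathbf{w}\Vert_{H^2}\leqslant C\varepsilon\Vert u\Vert$ and the proof is finished. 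For $\mathbf{w}\in H^2(\mathbb{R}^d;\mathbb{C}^m)$ I would write $\Vert\widehat{\mathcal{A}}^{1/2}(\Lambda\mathbf{w})\Vert^2_{L_2}=\Vert g^{1/2}b(\mathbf{D})(\Lambda\mathbf{w})\Vert^2_{L_2}\leqslant\Vert g\Vert_{L_\infty}\Vert b(\mathbf{D})(\Lambda\mathbf{w})\Vert^2_{L_2}$ (the estimates below also show $\Lambda\mathbf{w}\in H^1(\mathbb{R}^d;\mathbb{C}^n)=\mathrm{Dom}\,\widehat{\mathcal{A}}^{1/2}$) and use the Leibniz rule $b(\mathbf{D})(\Lambda\mathbf{w})=(b(\mathbf{D})\Lambda)\mathbf{w}+\sum_{j=1}^d b_j\Lambda D_j\mathbf{w}$. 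The term $\sum_j b_j\Lambda D_j\mathbf{w}$ is handled by Hölder and Sobolev: $\Lambda\in\widetilde{H}^1(\Omega)\subset L_{2d/(d-2)}(\Omega)$ for $d\geqslant3$ (resp. $L_\infty(\Omega)$ for $d\leqslant2$) and $\nabla\mathbf{w}\in H^1(\mathbb{R}^d)\hookrightarrow L_d(\mathbb{R}^d)$, an embedding valid precisely for $d\leqslant4$, so the product lies in $L_2$ with the right norm after summing over the translated cells $\Omega+\mathbf{a}$, $\mathbf{a}\in\Gamma$ (using $\sum_{\mathbf{a}}\Vert\mathbf{w}\Vert^2_{H^k(\Omega+\mathbf{a})}=\Vert\mathbf{w}\Vert^2_{H^k(\mathbb{R}^d)}$ for integer $k$ and local Sobolev on a single cell). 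For the term $(b(\mathbf{D})\Lambda)\mathbf{w}$ one knows a priori only that $b(\mathbf{D})\Lambda$ is $\Gamma$-periodic of class $L_2(\Omega)$; for $d\leqslant3$ this is enough since $H^2(\mathbb{R}^d)\hookrightarrow L_\infty(\mathbb{R}^d)$, but for $d=4$ I would invoke the higher integrability of the cell problem — the Meyers-type estimate $\Lambda\in W^{1,p}(\Omega)$ for some $p>2$ depending only on $\alpha_0,\alpha_1,\Vert g\Vert_{L_\infty},\Vert g^{-1}\Vert_{L_\infty}$, hence $b(\mathbf{D})\Lambda\in L_p(\Omega)$ — together with $H^2(\mathbb{R}^4)\hookrightarrow L_q(\mathbb{R}^4)$ for every $q<\infty$; taking $q=2p/(p-2)$ and applying Hölder on each cell yields $\Vert(b(\mathbf{D})\Lambda)\mathbf{w}\Vert_{L_2}\leqslant C\Vert b(\mathbf{D})\Lambda\Vert_{L_p(\Omega)}\Vert\mathbf{w}\Vert_{H^2(\mathbb{R}^d)}$.

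I expect the only genuine obstacle to be this last multiplicative inequality in the borderline dimension $d=4$: there $b(\mathbf{D})\Lambda$ is not known to be better than $L_2$ on the cell, while $H^2(\mathbb{R}^4)$ just misses $L_\infty$, so the argument really has to use the self-improving higher integrability of the solution of the cell problem; this, together with the embedding $H^1(\mathbb{R}^d)\hookrightarrow L_d(\mathbb{R}^d)$ needed for the gradient term, is exactly what pins the result down to $d\leqslant4$. The reduction via \eqref{10.II} and the symbol estimate for $\Psi(\varepsilon,\tau)$ are routine.
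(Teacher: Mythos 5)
Your reduction is the paper's: subtract \eqref{10.II} (which carries the projection $\Pi$) from the claimed estimate, leaving the operator $\widehat{\mathcal{A}}^{1/2}[\Lambda]\Psi(\varepsilon,\tau)$, and bound it by a Fourier-symbol estimate for $\Psi(\varepsilon,\tau)$ together with an $(H^2\to L_2)$-bound for $\widehat{\mathcal{A}}^{1/2}[\Lambda]$. (The paper gets $O(\varepsilon^2)$ in the $L_2\to H^2$ norm for the symbol; your uniform $O(\varepsilon)$ over $s\in[0,2]$ is cruder but suffices.) The genuine difference is in the multiplier bound: the paper simply cites Proposition~\ref{Proposition MMNP 9/3} (boundedness from $H^l$ to $L_2$ with $l=d/2$ for $d\geqslant3$, so that $l\leqslant2$ precisely when $d\leqslant4$), whereas you reprove the $H^2\to L_2$ case from scratch via Leibniz. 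Your treatment of the gradient term $\sum_l b_l\Lambda D_l\mathbf{w}$ coincides with the calculation in \eqref{v_ju tozd second summand}--\eqref{Du^2vj^2 <=}. For the zeroth-order term $(b(\mathbf{D})\Lambda)\mathbf{w}$ at $d=4$ you invoke a Meyers-type higher-integrability estimate for the cell problem; that is a legitimate route, but your assertion that the argument ``really has to use'' higher integrability in the borderline dimension is not right. The Caccioppoli-type trick in the proof of Lemma~\ref{Proposition d>=3 Lambda} -- testing the cell problem \eqref{v_j columns Lambda problem} against $|u|^2\mathbf{v}_j$ and bounding $J_0=\int|g^{1/2}b(\mathbf{D})\mathbf{v}_j|^2|u|^2\,d\mathbf{x}$ -- avoids estimating $(b(\mathbf{D})\Lambda)\mathbf{w}$ directly, and for $d\leqslant4$ it needs only $\Lambda\in\widetilde{H}^1(\Omega)$, since Condition~\ref{Condition Lambda in Ld} is then automatic from $\widetilde{H}^1(\Omega)\hookrightarrow L_d(\Omega)$. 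This is the technique behind Proposition~\ref{Proposition MMNP 9/3}, which indeed holds in every dimension; the restriction $d\leqslant4$ in the theorem comes only from the second-order smoothing $\mathcal{R}(\varepsilon)$, which caps the usable $l$ at $2$, not from any failure of the multiplier estimate itself.
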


To prove Theorem~\ref{Theorem 8/1 NO PI}, we need the following result, see \cite[Proposition 9.3]{Su_MMNP}.

\begin{proposition}
\label{Proposition MMNP 9/3}
Let $l=1$ for $d=1$, $l>1$ for $d=2$, and $l=d/2$ for $d\geqslant 3$. Then the operator $\widehat{\mathcal{A}}^{1/2}[\Lambda]$ is a continuous mapping of $H^l(\mathbb{R}^d;\mathbb{C}^m)$ to $L_2(\mathbb{R}^d;\mathbb{C}^n)$, and
\begin{equation}
\label{A1/2Lambda <=}
\Vert \widehat{\mathcal{A}}^{1/2}[\Lambda]\Vert _{H^l(\mathbb{R}^d)\rightarrow L_2(\mathbb{R}^d)}
\leqslant \mathcal{C}_d.
\end{equation}
Here the constant $\mathcal{C}_d$ depends only on $m$, $n$, $d$, $\alpha _0$, $\alpha _1$, $\Vert g\Vert _{L_\infty}$, $\Vert g^{-1}\Vert _{L_\infty}$, and the parameters of the lattice $\Gamma$\textnormal{;} for $d=2$ it depends also on $l$.
\end{proposition}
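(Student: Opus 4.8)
The plan is to reduce estimate~\eqref{A1/2Lambda <=} to a multiplier bound for the \emph{fixed} periodic matrix-valued function $\Lambda$, to prove that bound by H\"older inequalities on the cell $\Omega$ together with Sobolev embeddings, and to treat the single genuinely borderline term on the Fourier side. It suffices to argue for $\mathbf{u}\in\mathcal{S}(\mathbb{R}^d;\mathbb{C}^m)$ (dense in $H^l$) and then pass to the $H^l$-limit.

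\emph{Reduction.} Since $\widehat{\mathcal{A}}=\mathcal{X}^*\mathcal{X}$ with $\mathcal{X}=g(\mathbf{x})^{1/2}b(\mathbf{D})$ (see~\eqref{hat A}), for $\mathbf{v}\in H^1(\mathbb{R}^d;\mathbb{C}^n)$ one has $\Vert\widehat{\mathcal{A}}^{1/2}\mathbf{v}\Vert_{L_2}=\Vert g^{1/2}b(\mathbf{D})\mathbf{v}\Vert_{L_2}\leqslant(\alpha_1\Vert g\Vert_{L_\infty})^{1/2}\Vert\mathbf{D}\mathbf{v}\Vert_{L_2}$ by~\eqref{<b^*b<}. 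For $\mathbf{u}\in\mathcal{S}$ one checks that $\Lambda\mathbf{u}\in H^1(\mathbb{R}^d;\mathbb{C}^n)$ (because $\Lambda$ and $\mathbf{D}\Lambda$ are bounded in $L_2$ over each translate $\Omega+\mathbf{a}$ uniformly in $\mathbf{a}\in\Gamma$, while $\mathbf{u}$ is rapidly decreasing) and $\mathbf{D}(\Lambda\mathbf{u})=(\mathbf{D}\Lambda)\mathbf{u}+\Lambda\,\mathbf{D}\mathbf{u}$; hence it is enough to prove $\Vert(\mathbf{D}\Lambda)\mathbf{u}\Vert_{L_2}+\Vert\Lambda\,\mathbf{D}\mathbf{u}\Vert_{L_2}\leqslant C\Vert\mathbf{u}\Vert_{H^l}$. (Equivalently, and more efficiently for the sharp exponent, one may write $\widehat{\mathcal{A}}^{1/2}(\Lambda\mathbf{u})=\widehat{\mathcal{A}}^{-1/2}b(\mathbf{D})^*\bigl(g\,b(\mathbf{D})(\Lambda\mathbf{u})\bigr)$, use $\Vert\widehat{\mathcal{A}}^{-1/2}b(\mathbf{D})^*g^{1/2}\Vert_{L_2\to L_2}\leqslant1$ coming from the polar decomposition of $\mathcal{X}$, and exploit the equation $b(\mathbf{D})^*\widetilde g=0$ for $\Lambda$, see~\eqref{Lambda problem}--\eqref{tilde g}, to rewrite $g\,b(\mathbf{D})(\Lambda\mathbf{u})=(\widetilde g-g)\mathbf{u}+\sum_j(gb_j\Lambda)(D_j\mathbf{u})$; the resulting summands are of exactly the same two types as below.)

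\emph{The regular terms.} The summands carrying a derivative of $\mathbf{u}$ — namely $\Lambda\,\mathbf{D}\mathbf{u}$, and also $\Vert\Lambda\mathbf{u}\Vert_{L_2}$ — are \textbf{not} at an endpoint. For $d\geqslant3$ one uses $\widetilde H^1(\Omega)\hookrightarrow L_{2d/(d-2)}(\Omega)$ (with the bound on $\Vert\Lambda\Vert_{\widetilde H^1(\Omega)}$ from~\eqref{Lambda<=},~\eqref{D Lambda <=}) and H\"older on each $\Omega+\mathbf{a}$ with $\tfrac12=\tfrac{d-2}{2d}+\tfrac1d$ to get $\Vert\Lambda\mathbf{w}\Vert_{L_2(\Omega+\mathbf{a})}\leqslant C\Vert\mathbf{w}\Vert_{L_d(\Omega+\mathbf{a})}$; summing over $\mathbf{a}$ and invoking the (non-endpoint) Sobolev embedding $H^{d/2-1}(\mathbb{R}^d)\hookrightarrow L_d(\mathbb{R}^d)$ gives $\Vert\Lambda\,\mathbf{D}\mathbf{u}\Vert_{L_2(\mathbb{R}^d)}\leqslant C\Vert\mathbf{D}\mathbf{u}\Vert_{H^{d/2-1}}\leqslant C\Vert\mathbf{u}\Vert_{H^{d/2}}$ and, likewise, $\Vert\Lambda\mathbf{u}\Vert_{L_2}\leqslant C\Vert\mathbf{u}\Vert_{H^{d/2-1}}$. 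For $d=1$ one has $\widetilde H^1(\Omega)\hookrightarrow L_\infty$, so multiplication by $\Lambda$ is plainly bounded on $H^1$; for $d=2$, $\widetilde H^1(\Omega)\hookrightarrow L_q$ for all $q<\infty$, and the hypothesis $l>1$ makes these terms go through in the same way.

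\emph{The borderline term — the main obstacle.} There remains $\Vert(\mathbf{D}\Lambda)\mathbf{u}\Vert_{L_2(\mathbb{R}^d)}$ with $\mathbf{D}\Lambda$ only in $L_2(\Omega)$ (elliptic regularity gives nothing more, since $g$ is merely bounded); here the gain is \emph{exactly} borderline, and this is where the restriction on $l$ is forced. I would carry this out on the Fourier side. Writing $\mathbf{D}\Lambda(\mathbf{x})=\sum_{\mathbf{b}\in\widetilde\Gamma}(\mathbf{b}\widehat\Lambda_{\mathbf{b}})e^{i\langle\mathbf{b},\mathbf{x}\rangle}$ with $\sum_{\mathbf{b}}|\mathbf{b}|^2|\widehat\Lambda_{\mathbf{b}}|^2<\infty$, one has $\widehat{(\mathbf{D}\Lambda)\mathbf{u}}(\boldsymbol\xi)=\sum_{\mathbf{b}}(\mathbf{b}\widehat\Lambda_{\mathbf{b}})\widehat{\mathbf{u}}(\boldsymbol\xi-\mathbf{b})$, and the splitting $\mathbb{R}^d=\bigsqcup_{\mathbf{b}\in\widetilde\Gamma}(\widetilde\Omega+\mathbf{b})$ yields $\Vert(\mathbf{D}\Lambda)\mathbf{u}\Vert_{L_2}^2=\int_{\widetilde\Omega}\Vert c*a(\boldsymbol\zeta)\Vert_{\ell_2(\widetilde\Gamma)}^2\,d\boldsymbol\zeta$, where $c=(\mathbf{b}\widehat\Lambda_{\mathbf{b}})_{\mathbf{b}}\in\ell_2$ and $a(\boldsymbol\zeta)=\bigl(\widehat{\mathbf{u}}(\boldsymbol\zeta+\mathbf{b})\bigr)_{\mathbf{b}}$. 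For $l>d/2$ the discrete Young inequality $\Vert c*a\Vert_{\ell_2}\leqslant\Vert c\Vert_{\ell_2}\Vert a\Vert_{\ell_1}$, followed by Cauchy--Schwarz against the convergent series $\sum_{\mathbf{b}}\langle\boldsymbol\zeta+\mathbf{b}\rangle^{-2l}$ (uniformly in $\boldsymbol\zeta\in\widetilde\Omega$) and integration in $\boldsymbol\zeta$, bounds this by $C\Vert\mathbf{u}\Vert_{H^l}^2$. The hard part is $d\geqslant3$, $l=d/2$, when that series diverges only logarithmically: one then has to refine the Young step (replacing $\ell_1$ by a Lorentz sequence space, using that $(\langle\boldsymbol\zeta+\mathbf{b}\rangle^{-d/2})_{\mathbf{b}}\in\ell_{2,\infty}$ uniformly in $\boldsymbol\zeta$), which amounts exactly to the endpoint Sobolev input $H^{d/2}(\mathbb{R}^d)$ into a Lorentz/BMO-type space — this is the one technically delicate ingredient of the proof. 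For $d=2$ the genuine failure of this endpoint is precisely why the hypothesis must read $l>1$. Finally the estimate passes from $\mathcal{S}$ to $H^l$ by density, with $\mathcal{C}_d$ depending only on the quantities listed in the statement (and additionally on $l$ when $d=2$).
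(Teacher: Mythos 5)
The paper does not prove this proposition itself; it is quoted verbatim from \cite[Proposition 9.3]{Su_MMNP}. So there is no in-paper proof to compare against, but the paper's proof of Lemma~\ref{Proposition d>=3 Lambda} is the relevant model for the technique, and comparing your sketch to it reveals a genuine gap.

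Your reduction to $\Vert g^{1/2}b(\mathbf{D})(\Lambda\mathbf{u})\Vert_{L_2}$ and the treatment of the regular terms $\Lambda\,\mathbf{D}\mathbf{u}$, $\Lambda\mathbf{u}$ are essentially fine (to justify the summation over cells you should invoke the local embedding $H^{d/2-1}(\Omega+\mathbf{a})\hookrightarrow L_d(\Omega+\mathbf{a})$ together with $\sum_\mathbf{a}\Vert\cdot\Vert^2_{H^{d/2-1}(\Omega+\mathbf{a})}\lesssim\Vert\cdot\Vert^2_{H^{d/2-1}(\mathbb{R}^d)}$, not the global embedding, but that is a presentational point). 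The problem is the ``borderline'' term. You propose to bound $\Vert(\mathbf{D}\Lambda)\mathbf{u}\Vert_{L_2(\mathbb{R}^d)}$ using only $\mathbf{D}\Lambda\in L_2(\Omega)$ and $\mathbf{u}\in H^{d/2}(\mathbb{R}^d)$. That estimate is simply false: multiplication by a fixed $\Gamma$-periodic function whose cell restriction is merely in $L_2(\Omega)$ does \emph{not} map $H^{d/2}(\mathbb{R}^d)$ into $L_2(\mathbb{R}^d)$. For $d\geqslant 3$ take $\phi$ periodic with $\phi|_\Omega(\mathbf{x})\sim|\mathbf{x}|^{-d/2}(\log\frac{1}{|\mathbf{x}|})^{-\beta}$, $\beta>\tfrac12$, and $\mathbf{u}$ with a $(\log\frac{1}{|\mathbf{x}|})^{\gamma}$ singularity so that $\widehat{\mathbf{u}}(\boldsymbol\xi)\sim|\boldsymbol\xi|^{-d}(\log|\boldsymbol\xi|)^{-\delta}$, $\delta>\tfrac12$; one can pick $\beta,\delta$ close to $\tfrac12$ so that $\phi\in L_2(\Omega)$ and $\mathbf{u}\in H^{d/2}$ yet $\int|\phi\mathbf{u}|^2=\infty$. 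Correspondingly, the ``refined Young step'' you appeal to on the Fourier side does not exist: $\ell^2*\ell^{1,q}\subset\ell^2$ fails for every $q>1$, i.e. the $\ell^1$ endpoint of Young's inequality is not improvable in the Lorentz scale, so writing $a\in\ell^{1,2}$ instead of $\ell^1$ does not rescue the argument. The issue is the same one you flagged as ``the one technically delicate ingredient,'' but it is not a delicate ingredient to be supplied later — the bound you want at that step is false.

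What actually makes the proposition true is a structural property of $\Lambda$ that your sketch never exploits: it is a periodic solution of the cell problem \eqref{Lambda problem}, equivalently $b(\mathbf{D})^*\widetilde g=0$ with $\widetilde g=g(b(\mathbf{D})\Lambda+\mathbf{1}_m)$. Your parenthetical remark gestures at this, but after rewriting $g\,b(\mathbf{D})(\Lambda\mathbf{u})=(\widetilde g-g)\mathbf{u}+\sum_j(gb_j\Lambda)D_j\mathbf{u}$ you assert the summands ``are of exactly the same two types as below,'' which puts you back where you started: $\widetilde g\in L_2(\Omega)$ only, so $(\widetilde g-g)\mathbf{u}$ is precisely the forbidden product again. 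The equation must be used dynamically, via the test-function/absorption device of the paper's Lemma~\ref{Proposition d>=3 Lambda}: substitute $\mathbf{w}=|u|^2\mathbf{v}_j$ (columns $\mathbf{v}_j$ of $\Lambda$) into the weak form of the cell equation, obtain $J_0=J_1+J_2$ with $J_0=\int|g^{1/2}b(\mathbf{D})\mathbf{v}_j|^2|u|^2\,d\mathbf{x}$, then absorb $\tfrac12 J_0$ from $J_1$ and a small multiple of $\Vert\mathbf{D}(\mathbf{v}_ju)\Vert^2_{L_2}$ from $J_2$ back into the left-hand side. Only this self-referential inequality — not any form of H\"older/Young/Lorentz applied to $\mathbf{D}\Lambda$ as a bare $L_2$ multiplier — closes the endpoint. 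The actual proof in \cite{Su_MMNP} is along these lines, and your argument needs the corresponding replacement to be a proof.
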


\begin{proof}[Proof of Theorem \textnormal{\ref{Theorem 8/1 NO PI}}.]
Taking into account that the matrix-valued function~\eqref{t2 M0 hat S M0=} is the symbol of the operator $\mathcal{A}^0$ and the function $\chi _{\widetilde{\Omega}}(\boldsymbol{\xi})$ is the symbol of $\Pi$, using \eqref{<b^*b<}, \eqref{f0<=}, and \eqref{f_0 dots >=} we have
\begin{equation}
\label{proof Th 8/1 no PI 1}
\begin{split}
\Vert &b(\mathbf{D})(I-\Pi)f_0(\mathcal{A}^0)^{-1/2}\sin (\varepsilon ^{-1}\tau (\mathcal{A}^0)^{1/2})f_0^{-1}\mathcal{R}(\varepsilon)\Vert _{L_2(\mathbb{R}^d)\rightarrow H^2(\mathbb{R}^d)}
\\
&\leqslant
\sup _{\boldsymbol{\xi}\in\mathbb{R}^d}(1+\vert \boldsymbol{\xi}\vert ^2)\vert b(\boldsymbol{\xi})\vert (1-\chi _{\widetilde{\Omega}}(\boldsymbol{\xi}))
\vert f_0\vert \vert (f_0b(\boldsymbol{\xi})^*g^0b(\boldsymbol{\xi})f_0)^{-1/2}\vert \vert f_0^{-1}\vert\varepsilon ^2(\vert \boldsymbol{\xi}\vert ^2+\varepsilon ^2)^{-1}
\\
&\leqslant
\sup _{\vert \boldsymbol{\xi}\vert \geqslant r_0}(1+\vert \boldsymbol{\xi}\vert ^2)\alpha _1^{1/2}\vert \boldsymbol{\xi}\vert \Vert f\Vert _{L_\infty}c_*^{-1/2}\vert \boldsymbol{\xi}\vert ^{-1}\Vert f^{-1}\Vert _{L_\infty}\varepsilon ^2 (\vert \boldsymbol{\xi}\vert ^2 +\varepsilon ^2)^{-1}
\\
&\leqslant\alpha _1^{1/2}c_*^{-1/2}\Vert f\Vert _{L_\infty}\Vert f^{-1}\Vert _{L_\infty}\varepsilon ^2 \sup _{\vert \boldsymbol{\xi}\vert \geqslant r_0}(1+\vert \boldsymbol{\xi}\vert ^2)\vert \boldsymbol{\xi}\vert ^{-2}
\\
&\leqslant
\alpha _1^{1/2}c_*^{-1/2}\Vert f\Vert _{L_\infty}\Vert f^{-1}\Vert _{L_\infty}(r_0^{-2}+1)\varepsilon ^2.
\end{split}
\end{equation}
For $d\leqslant 4$, we can take $l\leqslant 2$ in Proposition~\ref{Proposition MMNP 9/3}. So, combining \eqref{A1/2Lambda <=} and \eqref{proof Th 8/1 no PI 1}, we have
\begin{equation*}
\Vert \widehat{\mathcal{A}}^{1/2}[\Lambda]b(\mathbf{D})(I-\Pi)f_0(\mathcal{A}^0)^{-1/2}\sin (\varepsilon ^{-1}\tau(\mathcal{A}^0)^{1/2})f_0^{-1}\mathcal{R}(\varepsilon)\Vert _{L_2(\mathbb{R}^d)\rightarrow L_2 (\mathbb{R}^d)}
\leqslant \varepsilon ^2C_{14}',
\end{equation*}
$C_{14}':=\alpha _1 ^{1/2}c_*^{-1/2}(r_0^{-2}+1)\mathcal{C}_d\Vert f\Vert _{L_\infty}\Vert f^{-1}\Vert _{L_\infty}$. 
Combining this with \eqref{10.II}, we arrive at estimate \eqref{Th 8/1 NO PI} with $C_{14}=C_{13}+C_{14}'$.
\end{proof}

\subsection{On the possibility of removal of the operator $\Pi$ from the corrector. Sufficient conditions on $\Lambda$}

It is possible to eliminate the operator $\Pi$ for $d\geqslant 5$  by imposing the following assumption on the matrix-valued function $\Lambda$.

\begin{condition}
\label{Condition Lambda multiplier}
The operator $[\Lambda ]$ is continuous from $H^2(\mathbb{R}^d;\mathbb{C}^m)$ to $H^1(\mathbb{R}^d;\mathbb{C}^n)$.
\end{condition}

Actually, it is sufficient to impose the following condition to remove $\Pi$ for $d\geqslant 5$.

\begin{condition}
\label{Condition Lambda in Ld}
 Assume that the periodic solution $\Lambda$ of problem \eqref{Lambda problem} belongs to $L_d(\Omega)$.
\end{condition}

\begin{proposition}
\label{Proposition Ld implies multiplier}
For $d\geqslant 3$, Condition~\textnormal{\ref{Condition Lambda in Ld}} implies Condition~\textnormal{\ref{Condition Lambda multiplier}}.
\end{proposition}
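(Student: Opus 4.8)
The plan is to prove the pointwise-multiplier bound $\|\Lambda\mathbf{u}\|_{H^1(\mathbb{R}^d)}\leqslant C\|\mathbf{u}\|_{H^2(\mathbb{R}^d)}$ for $\mathbf{u}\in C_0^\infty(\mathbb{R}^d;\mathbb{C}^m)$ and then pass to the closure. Since $b(\boldsymbol{\xi})^*b(\boldsymbol{\xi})\geqslant\alpha_0|\boldsymbol{\xi}|^2\mathbf{1}_n$ by \eqref{<b^*b<} and homogeneity, Plancherel gives $\|\mathbf{D}\mathbf{w}\|_{L_2}^2\leqslant\alpha_0^{-1}\|b(\mathbf{D})\mathbf{w}\|_{L_2}^2$, so it is enough to estimate $\|\Lambda\mathbf{u}\|_{L_2}$ and $\|b(\mathbf{D})(\Lambda\mathbf{u})\|_{L_2}$. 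The Leibniz rule (legitimate since $\Lambda\in\widetilde{H}^1(\Omega)\subset H^1_{\mathrm{loc}}$ and $\mathbf{u}\in C_0^\infty$) gives $b(\mathbf{D})(\Lambda\mathbf{u})=(b(\mathbf{D})\Lambda)\mathbf{u}+\sum_{j=1}^d b_j\Lambda D_j\mathbf{u}$, so the problem reduces to bounding the three quantities $\|\Lambda\mathbf{u}\|_{L_2}$, $\|\Lambda D_j\mathbf{u}\|_{L_2}$, and $\|(b(\mathbf{D})\Lambda)\mathbf{u}\|_{L_2}$ by $C\|\mathbf{u}\|_{H^2}$.

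For the first two terms I would decompose $\mathbb{R}^d=\bigcup_{\mathbf{a}\in\Gamma}(\Omega+\mathbf{a})$ and use periodicity of $\Lambda$. On each cell, Hölder's inequality with exponents $d$ and $2d/(d-2)$ (this is where $d\geqslant 3$ enters) gives $\|\Lambda\mathbf{u}\|_{L_2(\Omega+\mathbf{a})}\leqslant\|\Lambda\|_{L_d(\Omega)}\|\mathbf{u}\|_{L_{2d/(d-2)}(\Omega+\mathbf{a})}$; since $2d/(d-2)\geqslant 2$, the embedding $\ell^{(d-2)/d}\hookrightarrow\ell^1$ yields $\sum_{\mathbf{a}}\|\mathbf{u}\|_{L_{2d/(d-2)}(\Omega+\mathbf{a})}^2\leqslant\|\mathbf{u}\|_{L_{2d/(d-2)}(\mathbb{R}^d)}^2$. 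Together with the Sobolev embedding $H^1(\mathbb{R}^d)\hookrightarrow L_{2d/(d-2)}(\mathbb{R}^d)$ and $\|\Lambda\|_{L_d(\Omega)}<\infty$ (Condition~\ref{Condition Lambda in Ld}), this gives $\|\Lambda\mathbf{u}\|_{L_2(\mathbb{R}^d)}\leqslant C\|\Lambda\|_{L_d(\Omega)}\|\mathbf{u}\|_{H^1(\mathbb{R}^d)}$; the term $\|\Lambda D_j\mathbf{u}\|_{L_2(\mathbb{R}^d)}\leqslant C\|\Lambda\|_{L_d(\Omega)}\|D_j\mathbf{u}\|_{H^1(\mathbb{R}^d)}\leqslant C\|\Lambda\|_{L_d(\Omega)}\|\mathbf{u}\|_{H^2(\mathbb{R}^d)}$ is identical with $\mathbf{u}$ replaced by $D_j\mathbf{u}$.

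The main obstacle is the last term $\|(b(\mathbf{D})\Lambda)\mathbf{u}\|_{L_2}$, because a priori $b(\mathbf{D})\Lambda$ belongs only to $L_2(\Omega)$, which does not suffice to multiply $H^2$ into $L_2$ when $d\geqslant 4$. For $d\leqslant 3$ one is done at once: $H^2(\mathbb{R}^d)\hookrightarrow L_\infty$, so $\|(b(\mathbf{D})\Lambda)\mathbf{u}\|_{L_2(\mathbb{R}^d)}^2\leqslant\|b(\mathbf{D})\Lambda\|_{L_2(\Omega)}^2\sum_{\mathbf{a}}\|\mathbf{u}\|_{L_\infty(\Omega+\mathbf{a})}^2\leqslant C\|b(\mathbf{D})\Lambda\|_{L_2(\Omega)}^2\|\mathbf{u}\|_{H^2(\mathbb{R}^d)}^2$. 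For $d\geqslant 4$ the extra integrability of $\Lambda$ from Condition~\ref{Condition Lambda in Ld} must be injected into equation \eqref{Lambda problem}. I would use the Caccioppoli inequality for \eqref{Lambda problem}: testing its weak form with $\varphi^2\Lambda$, where $\varphi$ is a standard cut-off supported in $B_{2\rho}(\mathbf{x}_0)$ with $\varphi=1$ on $B_\rho(\mathbf{x}_0)$ and $|\nabla\varphi|\leqslant C\rho^{-1}$, $\rho\leqslant 1$, and using \eqref{<b^*b<}, \eqref{g in}, Young's inequality, and then Hölder's inequality with $\Lambda\in L_d$, one obtains
\begin{equation*}
\int_{B_\rho(\mathbf{x}_0)}|b(\mathbf{D})\Lambda|^2\,d\mathbf{x}\leqslant C\Bigl(\rho^{-2}\int_{B_{2\rho}(\mathbf{x}_0)}|\Lambda|^2\,d\mathbf{x}+\rho^d\Bigr)\leqslant C'\bigl(\|\Lambda\|_{L_d(\Omega)}^2+1\bigr)\rho^{d-4},
\end{equation*}
that is, $b(\mathbf{D})\Lambda$ lies in the Morrey space $L^{2,d-4}(\mathbb{R}^d)$ uniformly. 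Then I would invoke the Fefferman--Phong trace inequality at the $H^2$-level: a non-negative weight $V$ with $\sup_{\rho\leqslant 1,\,\mathbf{x}_0}\rho^{4-d}\int_{B_\rho(\mathbf{x}_0)}V\,d\mathbf{x}<\infty$ satisfies $\int_{\mathbb{R}^d}V|\mathbf{u}|^2\,d\mathbf{x}\leqslant C\|\mathbf{u}\|_{H^2(\mathbb{R}^d)}^2$; applying this with $V=|b(\mathbf{D})\Lambda|^2$ gives $\|(b(\mathbf{D})\Lambda)\mathbf{u}\|_{L_2(\mathbb{R}^d)}\leqslant C\|\mathbf{u}\|_{H^2(\mathbb{R}^d)}$.

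Assembling the three bounds yields $\|\Lambda\mathbf{u}\|_{H^1(\mathbb{R}^d)}\leqslant C\|\mathbf{u}\|_{H^2(\mathbb{R}^d)}$ with $C$ depending only on $m$, $d$, $\alpha_0$, $\alpha_1$, $\|g\|_{L_\infty}$, $\|g^{-1}\|_{L_\infty}$, $\|\Lambda\|_{L_d(\Omega)}$, and the parameters of $\Gamma$, and passing to the closure of $C_0^\infty$ in $H^2$ proves Condition~\ref{Condition Lambda multiplier}. The crux is thus the $d\geqslant 4$ term $(b(\mathbf{D})\Lambda)\mathbf{u}$: the Caccioppoli estimate is what converts the hypothesis $\Lambda\in L_d(\Omega)$ into the Morrey bound on $b(\mathbf{D})\Lambda$ that the $H^2$-trace inequality can consume, whereas the naive cellwise Hölder argument only works for $d\leqslant 3$.
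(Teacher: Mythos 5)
Your Leibniz decomposition, the cellwise H\"older argument for $\|\Lambda\mathbf{u}\|_{L_2}$ and $\|\Lambda D_j\mathbf{u}\|_{L_2}$, and the reduction $\|\mathbf{D}\mathbf{w}\|^2_{L_2}\leqslant\alpha_0^{-1}\|g^{-1}\|_{L_\infty}\|g^{1/2}b(\mathbf{D})\mathbf{w}\|^2_{L_2}$ are sound and mirror what the paper does via Lemma~\ref{Proposition d>=3 Lambda}. The divergence --- and the gap --- is in the treatment of $\|(b(\mathbf{D})\Lambda)\mathbf{u}\|_{L_2}$ for $d\geqslant 4$. Your ball-scale Caccioppoli estimate $\int_{B_\rho}|b(\mathbf{D})\Lambda|^2\,d\mathbf{x}\leqslant C\rho^{d-4}$, $\rho\leqslant 1$, is correct, but the trace inequality you then invoke is stated too strongly. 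The condition $\sup_{\rho\leqslant 1,\mathbf{x}_0}\rho^{4-d}\int_{B_\rho}V\,d\mathbf{x}<\infty$ on a weight $V\geqslant 0$ (an $L^1$-Morrey condition) is \emph{necessary} for $\int V|u|^2\,d\mathbf{x}\leqslant C\|u\|^2_{H^2}$, being the capacitary criterion tested on balls, but it is \emph{not sufficient}. The Fefferman--Phong sufficient condition requires an $L^p$-Morrey bound with some $p>1$, namely $\sup\rho^{4p-d}\int_{B_\rho}V^p\,d\mathbf{x}<\infty$; the case $p=1$ fails for general weights. Since you only know $V=|b(\mathbf{D})\Lambda|^2\in L^1_{\mathrm{loc}}$, this is a genuine hole. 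It can be repaired: $\Lambda$ solves a uniformly elliptic system with $L_\infty$ right-hand side, so Meyers/Gehring higher integrability yields $b(\mathbf{D})\Lambda\in L^{2+\delta}_{\mathrm{loc}}$ with a local reverse H\"older estimate, and combining that with your Morrey bound gives the $L^p$-Morrey condition with $p=1+\delta/2>1$; but this step is essential and you omit it.

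The paper sidesteps the trace inequality entirely. Instead of localizing on balls it tests the weak form of \eqref{v_j columns Lambda problem} with the single global test function $\mathbf{w}=|u|^2\mathbf{v}_j$, so that $u$ itself plays the role of the cut-off; it writes $J_0:=\int|g^{1/2}b(\mathbf{D})\mathbf{v}_j|^2|u|^2\,d\mathbf{x}=J_1+J_2$ and absorbs the error terms by Young's inequality together with the same cellwise H\"older/Sobolev bound you already have, finally closing the absorption of $\mu\|\mathbf{D}(\mathbf{v}_ju)\|^2_{L_2}$ via \eqref{<a<}. This is self-contained, treats all $d\geqslant 3$ at once, and needs no Morrey-space or capacity machinery. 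Your route would also work once the Meyers step is supplied, but as written it is incomplete for $d\geqslant 4$.
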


To prove Proposition~\ref{Proposition Ld implies multiplier} we need the following statement. 

\begin{lemma}
\label{Proposition d>=3 Lambda}
 Let $d\geqslant 3$. 
Assume that Condition~\textnormal{\ref{Condition Lambda in Ld}} is satisfied.  
 Then the operator $g^{1/2}b(\mathbf{D})[\Lambda ]$ is a continuous mapping of $H^2(\mathbb{R}^d;\mathbb{C}^m)$ to $L_2(\mathbb{R}^d;\mathbb{C}^m)$ and
 \begin{equation}
 \label{g1/2b(D)Lambda H2 to L2 Lm}
\Vert g^{1/2}b(\mathbf{D})[\Lambda ]\Vert _{H^2(\mathbb{R}^d)\rightarrow L_2(\mathbb{R}^d)}\leqslant \mathfrak{C}_\Lambda .
 \end{equation}
 The constant $\mathfrak{C}_\Lambda$ depends only on $d$, $\alpha _0$, $\alpha _1$, $\Vert g\Vert _{L_\infty}$, $\Vert g^{-1}\Vert _{L_\infty}$, $\Vert \Lambda\Vert _{L_d(\Omega)}$, and the parameters of the lattice $\Gamma$.
\end{lemma}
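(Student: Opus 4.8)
The plan is to reduce to a dense class of test functions, split $b(\mathbf{D})(\Lambda\mathbf{v})$ by the Leibniz rule, and treat the two resulting terms by different devices: the term carrying the derivative on $\mathbf{v}$ is routine (Hölder $+$ Sobolev embedding $+$ summation over the lattice, using $\Lambda\in L_d(\Omega)$), while the term carrying the derivative on $\Lambda$ is the real issue and must be handled through the solenoidality of the matrix $\widetilde{g}$ from \eqref{tilde g}, \eqref{Lambda problem}. First I would fix $\mathbf{v}\in\mathcal{S}(\mathbb{R}^d;\mathbb{C}^m)$; since $\Lambda\in\widetilde{H}^1(\Omega)$, the product $\Lambda\mathbf{v}$ belongs to $H^1(\mathbb{R}^d;\mathbb{C}^n)$ and $b(\mathbf{D})(\Lambda\mathbf{v})\in L_2(\mathbb{R}^d;\mathbb{C}^m)$, so that the left-hand side of \eqref{g1/2b(D)Lambda H2 to L2 Lm} is finite for such $\mathbf{v}$, and the density of $\mathcal{S}$ in $H^2$ will give the general case once the uniform bound is proven. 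By the Leibniz rule,
\begin{equation*}
b(\mathbf{D})(\Lambda\mathbf{v})=(b(\mathbf{D})\Lambda)\mathbf{v}+\mathbf{r},\qquad \mathbf{r}:=\sum_{j=1}^d b_j\Lambda(D_j\mathbf{v}).
\end{equation*}

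For the term $\mathbf{r}$ I would argue cell by cell. On $\Omega+\mathbf{a}$, $\mathbf{a}\in\Gamma$, Hölder's inequality together with \eqref{b_j <=} and periodicity of $\Lambda$ gives $\Vert b_j\Lambda(D_j\mathbf{v})\Vert_{L_2(\Omega+\mathbf{a})}\leqslant\alpha_1^{1/2}\Vert\Lambda\Vert_{L_d(\Omega)}\Vert D_j\mathbf{v}\Vert_{L_{2d/(d-2)}(\Omega+\mathbf{a})}$, the exponents being conjugate since $\frac1d+\frac{d-2}{2d}=\frac12$ (this is where $d\geqslant3$ is used, so that $\frac{2d}{d-2}<\infty$). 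The Sobolev embedding $H^1(\Omega+\mathbf{a})\hookrightarrow L_{2d/(d-2)}(\Omega+\mathbf{a})$ on the fixed shape $\Omega$, followed by summation over $\mathbf{a}$, yields $\sum_{\mathbf{a}\in\Gamma}\Vert D_j\mathbf{v}\Vert^2_{L_{2d/(d-2)}(\Omega+\mathbf{a})}\leqslant C\Vert D_j\mathbf{v}\Vert^2_{H^1(\mathbb{R}^d)}\leqslant C\Vert\mathbf{v}\Vert^2_{H^2(\mathbb{R}^d)}$, whence $\Vert g^{1/2}\mathbf{r}\Vert_{L_2(\mathbb{R}^d)}\leqslant C\alpha_1^{1/2}\Vert g\Vert^{1/2}_{L_\infty}\Vert\Lambda\Vert_{L_d(\Omega)}\Vert\mathbf{v}\Vert_{H^2(\mathbb{R}^d)}$. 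The same cell-by-cell scheme also gives $\Vert\Lambda\mathbf{v}\Vert_{L_2(\mathbb{R}^d)}\leqslant C\Vert\Lambda\Vert_{L_d(\Omega)}\Vert\mathbf{v}\Vert_{H^2(\mathbb{R}^d)}$, which I would record for use below and which, incidentally, is exactly the ingredient needed afterwards to read off Condition~\ref{Condition Lambda multiplier} in Proposition~\ref{Proposition Ld implies multiplier} once the Lemma is available (since $b(\boldsymbol{\xi})$ has maximal rank, $\Vert\mathbf{D}(\Lambda\mathbf{v})\Vert_{L_2}$ is equivalent to $\Vert b(\mathbf{D})(\Lambda\mathbf{v})\Vert_{L_2}$ by \eqref{<b^*b<}, and to $\Vert g^{1/2}b(\mathbf{D})(\Lambda\mathbf{v})\Vert_{L_2}$ by \eqref{g in}).

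The term $(b(\mathbf{D})\Lambda)\mathbf{v}$ is the heart of the matter: here $b(\mathbf{D})\Lambda$ lies only in $\widetilde{L}_2(\Omega)$, so a naive Hölder estimate against $\mathbf{v}$ would demand $\mathbf{v}\in L_\infty$ locally and fails for $d\geqslant4$; one must instead use the equation. By \eqref{tilde g}, $g^{1/2}(b(\mathbf{D})\Lambda)\mathbf{v}=g^{-1/2}\widetilde{g}\mathbf{v}-g^{1/2}\mathbf{v}$, and $\Vert g^{1/2}\mathbf{v}\Vert_{L_2}\leqslant\Vert g\Vert^{1/2}_{L_\infty}\Vert\mathbf{v}\Vert_{L_2}$ is harmless, so it suffices to bound $\Vert\widetilde{g}\mathbf{v}\Vert_{L_2(\mathbb{R}^d)}$. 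The crucial structural fact is that, by \eqref{Lambda problem}, $b(\mathbf{D})^*\widetilde{g}=0$, i.e. $\widetilde{g}$ is solenoidal; combined with the Leibniz rule this gives, for $\mathbf{v}\in\mathcal{S}$,
\begin{equation*}
b(\mathbf{D})^*(\widetilde{g}\mathbf{v})=\sum_{j=1}^d b_j^*\widetilde{g}(D_j\mathbf{v}),
\end{equation*}
a genuine $L_{2,\mathrm{loc}}$ function in which \emph{no} derivative of $\widetilde{g}$ (hence no second derivative of $\Lambda$) appears. My plan is to use this in the identity $\Vert g^{1/2}b(\mathbf{D})(\Lambda\mathbf{v})\Vert^2_{L_2}=(g\,b(\mathbf{D})(\Lambda\mathbf{v}),b(\mathbf{D})(\Lambda\mathbf{v}))$ after substituting $g\,b(\mathbf{D})(\Lambda\mathbf{v})=\widetilde{g}\mathbf{v}+g(\mathbf{r}-\mathbf{v})$: the $g(\mathbf{r}-\mathbf{v})$-part is absorbed into the already-controlled quantities by Cauchy--Schwarz, and the remaining scalar product $(\widetilde{g}\mathbf{v},b(\mathbf{D})(\Lambda\mathbf{v}))$ is integrated by parts, using $b(\mathbf{D})^*\widetilde{g}=0$, into $\sum_j(b_j^*\widetilde{g}\,D_j\mathbf{v},\Lambda\mathbf{v})$. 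This last expression involves only one derivative of $\mathbf{v}$ on the left, $\Lambda$ (in $L_d$) together with $\mathbf{v}$ on the right, and $\widetilde{g}$ in $\widetilde{L}_2(\Omega)$; splitting $\widetilde{g}=g(b(\mathbf{D})\Lambda)+g$ via \eqref{tilde g}, the piece coming from the bounded factor $g$ closes by a four-factor Hölder inequality (the exponents balance: $\frac1d+\frac1{q}+\frac1{q'}=1$ with $q=q'=\frac{2d}{d-1}$, admissible since $H^1(\Omega)\hookrightarrow L_{2d/(d-1)}(\Omega)$), plus lattice summation against \eqref{b(D)Lambda <=}, \eqref{D Lambda <=} and the bound on $\Vert\Lambda\mathbf{v}\Vert_{L_2}$ from the previous paragraph, and the bookkeeping is arranged so that the whole thing is dominated by $C\Vert\Lambda\Vert_{L_d(\Omega)}\Vert\mathbf{v}\Vert^2_{H^2}$ together with $C\Vert\Lambda\Vert_{L_d(\Omega)}\Vert\mathbf{v}\Vert_{H^2}\,\Vert g^{1/2}b(\mathbf{D})(\Lambda\mathbf{v})\Vert_{L_2}$, which is then closed by the elementary absorption $X^2\leqslant AX+B$, $A,B\geqslant0$, $\Rightarrow X\leqslant A+B^{1/2}$.

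Collecting the estimates and passing from $\mathcal{S}$ to $H^2$ by density yields \eqref{g1/2b(D)Lambda H2 to L2 Lm}, with $\mathfrak{C}_\Lambda$ built from $d$, $\alpha_0$, $\alpha_1$ (entering through \eqref{<b^*b<}, \eqref{b_j <=}, \eqref{b(D)Lambda <=}, \eqref{D Lambda <=}), $\Vert g\Vert_{L_\infty}$, $\Vert g^{-1}\Vert_{L_\infty}$, $\Vert\Lambda\Vert_{L_d(\Omega)}$, and the parameters of $\Gamma$ (through the Sobolev constant on the fixed cell $\Omega$ and the summation over translates). I expect the genuine difficulty to lie entirely in the third paragraph: one must use the divergence-free structure of $\widetilde{g}$ — not the pointwise size of $b(\mathbf{D})\Lambda$, which is too large for $d\geqslant5$ — and organize the integration(s) by parts so that the derivatives that fall on $\mathbf{v}$ are precisely those that $\mathbf{v}\in H^2$ can afford while the remaining Hölder bookkeeping against $\Lambda\in L_d(\Omega)$ is balanced for every $d\geqslant3$; justifying those integrations by parts for $\widetilde{g}$ merely in $\widetilde{L}_2(\Omega)$ (working on a dense class and passing to the limit) and running the absorption argument cleanly is the delicate part, everything else being standard.
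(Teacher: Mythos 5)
You get the two peripheral estimates right: the Leibniz remainder $\mathbf{r}$ and the bound on $\Vert\Lambda\mathbf{v}\Vert_{L_2}$ match \eqref{v_ju tozd second summand est start}--\eqref{Du^2vj^2 <=} and \eqref{8.13a new}, and you correctly single out the Leibniz rule and the solenoidality $b(\mathbf{D})^*\widetilde{g}=0$ as the two ingredients. But the central step in your third paragraph has a genuine gap. After you integrate by parts in $(\widetilde{g}\mathbf{v},\,b(\mathbf{D})(\Lambda\mathbf{v}))$ and split $\widetilde g=g\,b(\mathbf{D})\Lambda+g$, the piece $\sum_j\bigl(b_j^*g\,(b(\mathbf{D})\Lambda)\,D_j\mathbf{v},\,\Lambda\mathbf{v}\bigr)$ cannot be closed by the announced bookkeeping. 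Cell by cell the four factors are $g\,b(\mathbf{D})\Lambda\in L_2(\Omega)$, $D_j\mathbf{v}$ from $H^1$, $\Lambda\in L_d(\Omega)$, and $\mathbf{v}$ from $H^2$; the H\"older relation $1/2+1/p+1/d+1/q=1$ together with the Sobolev constraints $1/p\geqslant (d-2)/(2d)$ and, for $d>4$, $1/q\geqslant(d-4)/(2d)$, has no admissible solution for any $d\geqslant 4$ --- precisely the range where Condition~\ref{Condition Lambda in Ld} is not automatic (for $d\leqslant4$ Proposition~\ref{Proposition MMNP 9/3} already suffices). Absorption does not rescue it either: Cauchy--Schwarz turns the term into $\Vert g^{1/2}(b(\mathbf{D})\Lambda)D_j\mathbf{v}\Vert_{L_2}\,\Vert g^{1/2}b_j\Lambda\mathbf{v}\Vert_{L_2}$, and the first factor is your unknown applied to $D_j\mathbf{v}\in H^1$ rather than to $\mathbf{v}\in H^2$; it is not $X$ and is not otherwise controlled, so the scheme $X^2\leqslant AX+B$ never materializes.

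The root cause is the choice of pairing. The paper does not integrate by parts in $(\widetilde{g}\mathbf{v},\,b(\mathbf{D})(\Lambda\mathbf{v}))$; instead, working columnwise with a scalar $u$, it tests the weak cell identity \eqref{vj int tozd} against the \emph{quadratic} test function $\mathbf{w}=\vert u\vert^2\mathbf{v}_j$. That choice accomplishes two things. First, the main term in the resulting identity is $J_0=\int\vert g^{1/2}b(\mathbf{D})\mathbf{v}_j\vert^2\vert u\vert^2\,d\mathbf{x}$, a nonnegative quantity. Second --- and this is the crux --- because $D_l\vert u\vert^2=2\,\mathrm{Re}(\bar u\,D_lu)$ still carries an explicit factor $\vert u\vert$, the only other appearance of the unknown $g^{1/2}b(\mathbf{D})\mathbf{v}_j$, inside $J_1$, comes multiplied by $\vert u\vert$; AM--GM therefore yields $\frac12 J_0$ plus $\int\vert\mathbf{D}u\vert^2\vert\mathbf{v}_j\vert^2\,d\mathbf{x}$, the latter being \eqref{Du^2vj^2 <=}. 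In your pairing the unknown $b(\mathbf{D})\Lambda$ is attached to $D_j\mathbf{v}$ (one derivative already spent) while the undifferentiated $\mathbf{v}$ hides inside $\Lambda\mathbf{v}$; AM--GM then produces $\int\vert g^{1/2}b(\mathbf{D})\Lambda\vert^2\vert D_j\mathbf{v}\vert^2\,d\mathbf{x}$, which is not your main quantity and is not bounded. To repair the argument, switch to the paper's test function $\vert u\vert^2\mathbf{v}_j$, derive $J_0=J_1+J_2$ as in \eqref{J0=}, absorb $J_1$ as just described, and in $J_2$ pay for $\Vert\mathbf{D}(\mathbf{v}_ju)\Vert^2$ via the coercivity \eqref{<a<} with a small parameter $\mu$, as in \eqref{1/2J0<=}--\eqref{g^1/2b(D)(v_ju)<=}.
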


\begin{proof}
The proof is quite similar to the proof of Proposition 8.8 from \cite{SuAA10}.  

Let $\mathbf{v}_j(\mathbf{x})$, $j=1,\dots,m$, be the columns of the matrix $\Lambda (\mathbf{x})$. In other words, $\mathbf{v}_j$ is the $\Gamma$-periodic solution of the problem
\begin{equation}
\label{v_j columns Lambda problem}
b(\mathbf{D})^*g(\mathbf{x})\left( b(\mathbf{D})\mathbf{v}_j(\mathbf{x})+\mathbf{e}_j\right)=0,\quad
\int _\Omega \mathbf{v}_j(\mathbf{x})\,d\mathbf{x}=0.
\end{equation}
Here $\lbrace \mathbf{e}_j\rbrace _{j=1}^m$ is the standard orthonormal basis in $\mathbb{C}^m$. Let ${u}\in H^2 (\mathbb{R}^d)$. Then
\begin{equation}
\label{v_ju tozd}
g^{1/2}b(\mathbf{D})(\mathbf{v}_j u)=g^{1/2}\left(b(\mathbf{D})\mathbf{v}_j\right)u
+\sum _{l=1}^d g^{1/2}b_l (D_lu)\mathbf{v}_j.
\end{equation}
We estimate the second term on the right-hand side of \eqref{v_ju tozd}:
\begin{equation}
\label{v_ju tozd second summand}
\left\Vert \sum _{l=1}^d g^{1/2}b_l (D_lu)\mathbf{v}_j\right\Vert _{L_2(\mathbb{R}^d)}
\leqslant\Vert g\Vert ^{1/2}_{L_\infty}\alpha _1^{1/2}d^{1/2}\left(\int _{\mathbb{R}^d}\vert \mathbf{D}u\vert ^2\vert \mathbf{v}_j\vert ^2\,d\mathbf{x}\right)^{1/2}.
\end{equation}
Next,
\begin{equation}
\label{v_ju tozd second summand est start}
\int _{\mathbb{R}^d}\vert \mathbf{D}u\vert ^2\vert \mathbf{v}_j\vert ^2\,d\mathbf{x}
=\sum _{\mathbf{a}\in\Gamma}\int _{\Omega +\mathbf{a}}\vert \mathbf{D}u\vert ^2\vert \mathbf{v}_j\vert ^2\,d\mathbf{x}.
\end{equation}
By the H\"older inequality with indices $s=d/2$ and $s'=d/(d-2)$,
\begin{equation}
\int _{\Omega +\mathbf{a}}\vert \mathbf{D}u\vert ^2\vert \mathbf{v}_j\vert ^2\,d\mathbf{x}
\leqslant
\left(\int _\Omega \vert \mathbf{v}_j\vert ^d\,d\mathbf{x}\right)^{2/d}
\left(\int _{\Omega +\mathbf{a}}\vert \mathbf{D}u\vert ^{2d/(d-2)}\,d\mathbf{x}\right)^{(d-2)/d}.
\end{equation}
By the continuous embedding $H^1(\Omega)\hookrightarrow L_{2d/(d-2)}(\Omega)$,
\begin{equation}
\label{v_ju tozd second summand est fin}
\left(\int _{\Omega +\mathbf{a}}\vert \mathbf{D}u\vert ^{2d/(d-2)}\,d\mathbf{x}\right)^{(d-2)/2d}
\leqslant C_\Omega \Vert \mathbf{D}u\Vert _{H^1(\Omega +\mathbf{a})}.
\end{equation}
The embedding constant $C_\Omega$ depends only on $d$ and $\Omega$ (i. e., on the lattice $\Gamma$). From \eqref{v_ju tozd second summand est start}--\eqref{v_ju tozd second summand est fin} it follows that
\begin{equation}
\label{Du^2vj^2 <=}
\int _{\mathbb{R}^d}\vert \mathbf{D}u\vert ^2\vert \mathbf{v}_j\vert ^2\,d\mathbf{x}
\leqslant C_\Omega ^2 \Vert \mathbf{v}_j\Vert ^2 _{L_d(\Omega)}\Vert u\Vert ^2_{H^2(\mathbb{R}^d)}.
\end{equation}
Using \eqref{v_ju tozd second summand}, from \eqref{Du^2vj^2 <=} we derive the estimate
\begin{equation}
\label{8.13a new}
\left\Vert \sum _{l=1}^d g^{1/2}b_l (D_lu)\mathbf{v}_j\right\Vert _{L_2(\mathbb{R}^d)}
\leqslant
\Vert g\Vert ^{1/2}_{L_\infty}\alpha _1^{1/2}d^{1/2}C_\Omega\Vert \mathbf{v}_j\Vert _{L_d(\Omega)}\Vert u\Vert _{H^2(\mathbb{R}^d)}.
\end{equation}

Next, equation \eqref{v_j columns Lambda problem} implies that
\begin{equation}
\label{vj int tozd}
\int _{\mathbb{R}^d}\Bigl(\langle g(\mathbf{x})b(\mathbf{D})\mathbf{v}_j,b(\mathbf{D})\mathbf{w}\rangle
+\sum _{l=1}^d\langle b_l^* g(\mathbf{x})\mathbf{e}_j,D_l\mathbf{w}\rangle\Bigr)\,d\mathbf{x}=0
\end{equation}
for any $\mathbf{w}\in H^1(\mathbb{R}^d;\mathbb{C}^n)$ such that $\mathbf{w}(\mathbf{x})=0$ for $\vert \mathbf{x}\vert >R$ (with some $R>0$).

Let $u\in C_0^\infty (\mathbb{R}^d)$. We put $\mathbf{w}(\mathbf{x})=\vert u(\mathbf{x})\vert ^2\mathbf{v}_j(\mathbf{x})$. Then
\begin{equation*}
b(\mathbf{D})\mathbf{w}=\vert u\vert ^2 b(\mathbf{D})\mathbf{v}_j+\sum _{l=1}^d b_l (D_l \vert u\vert ^2)\mathbf{v}_j.
\end{equation*}
Substituting this expression into \eqref{vj int tozd}, we obtain
\begin{equation*}
\int _{\mathbb{R}^d}\Bigl(\langle g(\mathbf{x})b(\mathbf{D})\mathbf{v}_j,\vert u\vert ^2b(\mathbf{D})\mathbf{v}_j+\sum _{l=1}^d b_l (D_l\vert u\vert ^2)\mathbf{v}_j\rangle
+\sum _{l=1}^d\langle b_l^* g(\mathbf{x})\mathbf{e}_j,D_l(\vert u\vert ^2 \mathbf{v}_j)\rangle\Bigr)\,d\mathbf{x}=0.
\end{equation*}
Hence,
\begin{equation}
\label{J0=}
J_0:=\int _{\mathbb{R}^d}\vert g^{1/2}b(\mathbf{D})\mathbf{v}_j\vert ^2 \vert u\vert ^2\,d\mathbf{x}=J_1+J_2,
\end{equation}
where
\begin{align*}
J_1&=-\int _{\mathbb{R}^d}\langle g^{1/2}b(\mathbf{D})\mathbf{v}_j,\sum _{l=1}^d g^{1/2}b_l(D_l\vert u\vert ^2)\mathbf{v}_j\rangle\,d\mathbf{x},
\\
J_2&=-\int _{\mathbb{R}^d}\sum _{l=1}^d\langle b_l^*g(\mathbf{x})\mathbf{e}_j,D_l(\vert u\vert ^2\mathbf{v}_j)\rangle\,d\mathbf{x}
=
-\int _{\mathbb{R}^d}\sum _{l=1}^d\langle b_l^*g(\mathbf{x})\mathbf{e}_j,D_l(\mathbf{v}_ju)u^*+\mathbf{v}_ju(D_lu^*)\rangle\,d\mathbf{x}.
\end{align*}
By \eqref{b_j <=},
\begin{equation*}
\begin{split}
\vert J_1\vert &\leqslant \Vert g\Vert _{L_\infty}^{1/2}\alpha _1 ^{1/2}d^{1/2}\int _{\mathbb{R}^d} 2\vert g^{1/2}b(\mathbf{D})\mathbf{v}_j\vert \vert {u}\vert \vert \mathbf{D}u\vert \vert \mathbf{v}_j\vert \,d\mathbf{x}
\\
&\leqslant
\frac{1}{2}\int _{\mathbb{R}^d}\vert g^{1/2}b(\mathbf{D})\mathbf{v}_j\vert ^2\vert u\vert ^2\,d\mathbf{x}
+2\Vert g\Vert _{L_\infty}\alpha _1d\int _{\mathbb{R}^d}\vert \mathbf{D}u\vert ^2\vert \mathbf{v}_j\vert ^2\,d\mathbf{x}.
\end{split}
\end{equation*}
Combining this with \eqref{Du^2vj^2 <=}, we see that
\begin{equation}
\label{J1 estimate}
\vert J_1\vert \leqslant\frac{1}{2}J_0+2\Vert g\Vert _{L_\infty}\alpha _1 dC_\Omega^2\Vert \mathbf{v}_j\Vert ^2 _{L_d(\Omega)}\Vert u\Vert ^2_{H^2(\mathbb{R}^d)}.
\end{equation}

Now we proceed to estimating the term $J_2$. By \eqref{b_j <=},
\begin{equation*}
\int _{\mathbb{R}^d}\vert b_l^* g(\mathbf{x})\mathbf{e}_j\vert ^2\vert u\vert^2\,d\mathbf{x}
\leqslant \alpha _1\Vert g\Vert ^2_{L_\infty}\Vert u\Vert ^2_{L_2(\mathbb{R}^d)}.
\end{equation*}
Then
\begin{equation*}
\begin{split}
\vert J_2\vert &\leqslant \sum _{l=1}^d\Vert ub_l^*g\mathbf{e}_j\Vert _{L_2(\mathbb{R}^d)}\left(\Vert D_l(\mathbf{v}_ju)\Vert _{L_2(\mathbb{R}^d)}
+\Vert \mathbf{v}_j(D_lu^*)\Vert _{L_2(\mathbb{R}^d)}\right)
\\
&\leqslant \mu \Vert \mathbf{D}(\mathbf{v}_ju)\Vert ^2 _{L_2(\mathbb{R}^d)} +(4^{-1}+(4\mu)^{-1})d\alpha _1\Vert g\Vert ^2 _{L_\infty}\Vert u\Vert ^2 _{L_2(\mathbb{R}^d)}
+ \int _{\mathbb{R}^d}\vert \mathbf{v}_j\vert ^2\vert\mathbf{D}u^*\vert ^2\,d\mathbf{x}
\end{split}
\end{equation*}
for any $\mu >0$. By \eqref{Du^2vj^2 <=},
\begin{equation}
\label{J2 estimate}
\vert J_2\vert 
\leqslant \mu \Vert \mathbf{D}(\mathbf{v}_ju)\Vert ^2 _{L_2(\mathbb{R}^d)}+
\left( (4^{-1}+ (4\mu)^{-1})d\alpha _1\Vert g\Vert ^2_{L_\infty}
+ C_\Omega ^2 \Vert \mathbf{v}_j\Vert ^2_{L_d(\Omega)}\right)\Vert u\Vert ^2_{H^2(\mathbb{R}^d)}.
\end{equation}

Now, relations \eqref{J0=}, \eqref{J1 estimate}, and \eqref{J2 estimate} imply that
\begin{equation}
\label{1/2J0<=}
\frac{1}{2}J_0\leqslant \mu \Vert \mathbf{D}(\mathbf{v}_ju)\Vert ^2 _{L_2(\mathbb{R}^d)}
+\left((2\Vert g\Vert _{L_\infty}\alpha _1d+1)C_\Omega ^2 \Vert \mathbf{v}_j\Vert ^2_{L_d(\Omega)}
+\left(\frac{1}{4}+\frac{1}{4\mu}\right)d\alpha _1\Vert g\Vert ^2_{L_\infty}\right)\Vert u\Vert ^2_{H^2(\mathbb{R}^d)}.
\end{equation}

Comparing \eqref{v_ju tozd}, \eqref{8.13a new}, \eqref{J0=}, and \eqref{1/2J0<=}, we obtain
\begin{equation}
\label{g^1/2b(D)(v_ju)<=}
\begin{split}
\Vert & g^{1/2}b(\mathbf{D})(\mathbf{v}_ju)\Vert ^2 _{L_2(\mathbb{R}^d)}
\leqslant
2J_0+2\Vert g\Vert _{L_\infty}\alpha_1 d C_\Omega ^2 \Vert \mathbf{v}_j\Vert ^2_{L_d(\Omega)}\Vert u\Vert ^2 _{H^2(\mathbb{R}^d)}
\\
&\leqslant 4\mu \Vert \mathbf{D}(\mathbf{v}_ju)\Vert ^2 _{L_2(\mathbb{R}^d)}
\\
&+\left( (10\Vert g\Vert _{L_\infty}\alpha_1 d +4)C_\Omega ^2 \Vert \mathbf{v}_j\Vert ^2 _{L_d(\Omega)}+(1+\mu ^{-1})d\alpha _1\Vert g\Vert ^2_{L_\infty}\right)\Vert u\Vert ^2_{H^2(\mathbb{R}^d)}.
\end{split}
\end{equation}
By \eqref{<a<} (with $f=\mathbf{1}_n$),
\begin{equation*}
\begin{split}
4\mu \Vert \mathbf{D}(\mathbf{v}_ju)\Vert ^2 _{L_2(\mathbb{R}^d)}
&\leqslant 4\mu \alpha _0^{-1}\Vert g^{-1}\Vert _{L_\infty}\Vert g^{1/2}b(\mathbf{D})(\mathbf{v}_ju)\Vert ^2_{L_2(\mathbb{R}^d)}
\\
&=\frac{1}{2}\Vert g^{1/2}b(\mathbf{D})(\mathbf{v}_ju)\Vert ^2 _{L_2(\mathbb{R}^d)}\quad\mbox{for}\;\mu=\frac{1}{8}\alpha _0\Vert g^{-1}\Vert ^{-1}_{L_\infty}.
\end{split}
\end{equation*}
Together with \eqref{g^1/2b(D)(v_ju)<=} this implies
\begin{equation*}
\Vert g^{1/2}b(\mathbf{D})(\mathbf{v}_ju)\Vert ^2 _{L_2(\mathbb{R}^d)}\leqslant\mathcal{C}_j^2\Vert u\Vert ^2_{H^2(\mathbb{R}^d)},
\end{equation*}
where
\begin{equation*}
\mathcal{C}_j^2=(20\Vert g\Vert _{L_\infty}\alpha _1d+8)C_\Omega ^2 \Vert \mathbf{v}_j\Vert ^2_{L_d(\Omega)}
+(2+16\alpha _0^{-1}\Vert g^{-1}\Vert _{L_\infty})d\alpha _1\Vert g\Vert ^2_{L_\infty}.
\end{equation*}
Thus, 
\begin{equation*}
\Vert g^{1/2}b(\mathbf{D})[\mathbf{v}_j]\Vert _{H^2(\mathbb{R}^d)\rightarrow L_2(\mathbb{R}^d)}\leqslant \mathcal{C}_j,\quad j=1,\dots,m,
\end{equation*}
whence
\begin{equation*}
\Vert g^{1/2}b(\mathbf{D})[\Lambda ]\Vert _{H^2(\mathbb{R}^d)\rightarrow L_2(\mathbb{R}^d)}\leqslant\left(\sum _{j=1}^m \mathcal{C}_j^2\right)^{1/2}=:\mathfrak{C}_\Lambda;
\end{equation*}
i.~e., \eqref{g1/2b(D)Lambda H2 to L2 Lm} is true.
\end{proof}

\begin{proof}[Proof of Proposition~\textnormal{\ref{Proposition Ld implies multiplier}}.]
Let $u\in H^2(\mathbb{R}^d)$. Similarly to \eqref{v_ju tozd second summand est start}--\eqref{Du^2vj^2 <=},
\begin{equation*}
\Vert\mathbf{v}_ju\Vert ^2 _{L_2(\mathbb{R}^d)}\leqslant {C}_\Omega ^2 \Vert \mathbf{v}_j\Vert ^2_{L_d(\Omega)}\Vert  u\Vert ^2_{H^1(\mathbb{R}^d)}.
\end{equation*}
Here $\mathbf{v}_j(\mathbf{x})$, $j=1,\dots, m$, are the columns of the matrix $\Lambda (\mathbf{x})$. Thus,
\begin{equation}
\label{Proof of Proposition Ld implies multiplier 1}
\Vert [\Lambda ]u\Vert ^2_{L_2(\mathbb{R}^d)}\leqslant {C}_\Omega ^2\sum _{j=1}^m\Vert \mathbf{v}_j\Vert ^2_{L_d(\Omega)}\Vert u\Vert ^2_{H^1(\mathbb{R}^d)}.
\end{equation}

By \eqref{<a<} with $f=\mathbf{1}_n$, and Lemma~\ref{Proposition d>=3 Lambda},
\begin{equation}
\label{Proof of Proposition Ld implies multiplier 2}
\Vert \mathbf{D}[\Lambda ]u\Vert ^2_{L_2(\mathbb{R}^d)}\leqslant \alpha _0^{-1}\Vert g^{-1}\Vert _{L_\infty}\Vert g^{1/2}b(\mathbf{D})[\Lambda ] u\Vert ^2 _{L_2(\mathbb{R}^d)}
\leqslant \alpha _0^{-1}\Vert g^{-1}\Vert _{L_\infty}\mathfrak{C}_\Lambda ^2\Vert u\Vert ^2_{H^2(\mathbb{R}^d)}.
\end{equation}

Combining \eqref{Proof of Proposition Ld implies multiplier 1} and \eqref{Proof of Proposition Ld implies multiplier 2}, we obtain
\begin{equation*}
\Vert [\Lambda ]u\Vert ^2 _{H^1(\mathbb{R}^d)}\leqslant
\left( {C}_\Omega ^2\sum _{j=1}^m \Vert \mathbf{v}_j\Vert ^2_{L_d(\Omega)}
+\alpha _0^{-1}\Vert g^{-1}\Vert _{L_\infty}\mathfrak{C}_\Lambda ^2
\right)\Vert u\Vert ^2_{H^2(\mathbb{R}^d)},\quad u\in H^2(\mathbb{R}^d).
\end{equation*} 
\end{proof}

\begin{theorem}
\label{Theorem d>4 chapter 2}
Let $d\geqslant 5$. Under Condition~\textnormal{\ref{Condition Lambda multiplier}}, for $0<\varepsilon\leqslant 1$ and $\tau\in\mathbb{R}$ we have
\begin{equation}
\label{Th d>4 Lambda multiplier}
\begin{split}
\Bigl\Vert &
\widehat{\mathcal{A}}^{1/2}
\Bigl(
f \mathcal{A}^{-1/2}\sin (\varepsilon ^{-1}\tau\mathcal{A}^{1/2})f^{-1}\\
&-(I+\Lambda b(\mathbf{D}) )f_0(\mathcal{A}^0)^{-1/2}\sin (\varepsilon ^{-1}\tau (\mathcal{A}^0)^{1/2})f_0^{-1}
\Bigr)
\mathcal{R}(\varepsilon)
\Bigr\Vert _{L_2(\mathbb{R}^d)\rightarrow L_2(\mathbb{R}^d)}
\leqslant {C}_{15}\varepsilon (1+\vert \tau\vert ).
\end{split}
\end{equation}
The constant ${C}_{15}$ depends only on $m$, $\alpha _0$, $\alpha _1$, $\Vert g\Vert _{L_\infty}$, $\Vert g^{-1}\Vert _{L_\infty}$, $\Vert f\Vert _{L_\infty}$, $\Vert f^{-1}\Vert _{L_\infty}$, the parameters of the lattice $\Gamma$, and the norm $\Vert [\Lambda ]\Vert _{H^2(\mathbb{R}^d)\rightarrow H^1(\mathbb{R}^d)}$.
\end{theorem}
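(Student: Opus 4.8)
The plan is to deduce the claimed estimate~\eqref{Th d>4 Lambda multiplier} from estimate~\eqref{10.II}, which already gives the required bound but with the smoothing operator $\Pi$ inserted into the corrector; removing $\Pi$ is exactly the passage carried out in the proof of Theorem~\ref{Theorem 8/1 NO PI}, the only new point being that Proposition~\ref{Proposition MMNP 9/3} --- which for $d\geqslant 5$ forces the index $l=d/2>2$ and is therefore useless --- must be replaced by Condition~\ref{Condition Lambda multiplier}. Comparing the operators under the norm signs in~\eqref{10.II} and in~\eqref{Th d>4 Lambda multiplier}, their difference equals $\pm\,\mathcal{T}(\varepsilon,\tau)$, where
\begin{equation*}
\mathcal{T}(\varepsilon,\tau):=\widehat{\mathcal{A}}^{1/2}\,[\Lambda]\,b(\mathbf{D})\,(I-\Pi)\,f_0(\mathcal{A}^0)^{-1/2}\sin\bigl(\varepsilon^{-1}\tau(\mathcal{A}^0)^{1/2}\bigr)f_0^{-1}\,\mathcal{R}(\varepsilon),
\end{equation*}
so, by the triangle inequality, it suffices to show that $\Vert\mathcal{T}(\varepsilon,\tau)\Vert_{L_2(\mathbb{R}^d)\rightarrow L_2(\mathbb{R}^d)}\leqslant \mathrm{const}\cdot\varepsilon(1+\vert\tau\vert)$ for $0<\varepsilon\leqslant 1$; then~\eqref{Th d>4 Lambda multiplier} follows with $C_{15}=C_{13}+\mathrm{const}$. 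The restriction $d\geqslant 5$ plays no role in the argument below: it is only the case where Theorem~\ref{Theorem 8/1 NO PI} does not already apply.

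To bound $\mathcal{T}(\varepsilon,\tau)$ I would factor it through $H^2(\mathbb{R}^d;\mathbb{C}^m)$, writing $\mathcal{T}(\varepsilon,\tau)=\bigl(\widehat{\mathcal{A}}^{1/2}[\Lambda]\bigr)\,\mathcal{B}(\varepsilon,\tau)$ with $\mathcal{B}(\varepsilon,\tau):=b(\mathbf{D})(I-\Pi)f_0(\mathcal{A}^0)^{-1/2}\sin(\varepsilon^{-1}\tau(\mathcal{A}^0)^{1/2})f_0^{-1}\mathcal{R}(\varepsilon)$. The symbol computation already performed in~\eqref{proof Th 8/1 no PI 1}, which does not use the dimension at all, gives $\Vert\mathcal{B}(\varepsilon,\tau)\Vert_{L_2(\mathbb{R}^d)\rightarrow H^2(\mathbb{R}^d)}\leqslant\alpha_1^{1/2}c_*^{-1/2}\Vert f\Vert_{L_\infty}\Vert f^{-1}\Vert_{L_\infty}(r_0^{-2}+1)\,\varepsilon^2$. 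For the first factor, Condition~\ref{Condition Lambda multiplier} states precisely that $[\Lambda]$ is bounded from $H^2(\mathbb{R}^d;\mathbb{C}^m)$ to $H^1(\mathbb{R}^d;\mathbb{C}^n)=\mathrm{Dom}\,\widehat{\mathcal{A}}^{1/2}$, so $\widehat{\mathcal{A}}^{1/2}[\Lambda]$ is well defined; moreover the form bound $\Vert\widehat{\mathcal{A}}^{1/2}\mathbf{w}\Vert_{L_2}^2=\Vert g^{1/2}b(\mathbf{D})\mathbf{w}\Vert_{L_2}^2\leqslant\alpha_1\Vert g\Vert_{L_\infty}\Vert\mathbf{D}\mathbf{w}\Vert_{L_2}^2$ (see~\eqref{<a<} with $f=\mathbf{1}_n$) shows that $\widehat{\mathcal{A}}^{1/2}$ maps $H^1(\mathbb{R}^d;\mathbb{C}^n)$ boundedly into $L_2(\mathbb{R}^d;\mathbb{C}^n)$, whence $\Vert\widehat{\mathcal{A}}^{1/2}[\Lambda]\Vert_{H^2(\mathbb{R}^d)\rightarrow L_2(\mathbb{R}^d)}\leqslant\alpha_1^{1/2}\Vert g\Vert_{L_\infty}^{1/2}\Vert[\Lambda]\Vert_{H^2(\mathbb{R}^d)\rightarrow H^1(\mathbb{R}^d)}$. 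Multiplying the two bounds and using $\varepsilon^2\leqslant\varepsilon\leqslant\varepsilon(1+\vert\tau\vert)$ for $0<\varepsilon\leqslant 1$ yields the desired estimate for $\mathcal{T}(\varepsilon,\tau)$, and hence~\eqref{Th d>4 Lambda multiplier} with a constant $C_{15}$ depending only on the quantities listed together with $\Vert[\Lambda]\Vert_{H^2(\mathbb{R}^d)\rightarrow H^1(\mathbb{R}^d)}$.

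Since every ingredient is already available --- estimate~\eqref{10.II}, the dimension-free symbol bound~\eqref{proof Th 8/1 no PI 1}, and the hypothesis Condition~\ref{Condition Lambda multiplier} --- there is no essential difficulty. The only point that deserves attention, and the one where the present proof genuinely departs from that of Theorem~\ref{Theorem 8/1 NO PI}, is the replacement of the Sobolev-type mapping $H^{d/2}(\mathbb{R}^d)\rightarrow L_2(\mathbb{R}^d)$ of $\widehat{\mathcal{A}}^{1/2}[\Lambda]$ furnished by Proposition~\ref{Proposition MMNP 9/3} (insufficient for $d\geqslant 5$) by the mapping $H^2(\mathbb{R}^d)\rightarrow L_2(\mathbb{R}^d)$, which Condition~\ref{Condition Lambda multiplier} supplies because $\widehat{\mathcal{A}}^{1/2}$ is continuous from $H^1$ to $L_2$. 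Everything else repeats the argument of Theorem~\ref{Theorem 8/1 NO PI} verbatim.
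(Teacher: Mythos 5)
Your proof is correct and follows essentially the same route as the paper's: you subtract the smoothing operator $\Pi$ from the corrector in~\eqref{10.II}, factor the resulting error term $\widehat{\mathcal{A}}^{1/2}[\Lambda]b(\mathbf{D})(I-\Pi)f_0(\mathcal{A}^0)^{-1/2}\sin(\varepsilon^{-1}\tau(\mathcal{A}^0)^{1/2})f_0^{-1}\mathcal{R}(\varepsilon)$ through $H^2$, invoke the symbol bound~\eqref{proof Th 8/1 no PI 1} for the inner factor and Condition~\ref{Condition Lambda multiplier} together with the form bound $\Vert\widehat{\mathcal{A}}^{1/2}\mathbf{w}\Vert_{L_2}\leqslant\alpha_1^{1/2}\Vert g\Vert_{L_\infty}^{1/2}\Vert\mathbf{D}\mathbf{w}\Vert_{L_2}$ for the outer factor, then multiply to obtain the extra $O(\varepsilon^2)$ contribution and add it to $C_{13}$. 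This is exactly the paper's argument, and your resulting constant coincides with the paper's $C_{15}=C_{13}+C_{15}'$.
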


\begin{proof}
Under Condition~\ref{Condition Lambda multiplier}, by \eqref{<b^*b<}, \eqref{hat A}, and \eqref{proof Th 8/1 no PI 1}, we have
\begin{equation*}
\begin{split}
\Vert& \widehat{\mathcal{A}}^{1/2}[\Lambda]b(\mathbf{D})(I-\Pi)f_0(\mathcal{A}^0)^{-1/2}\sin (\varepsilon ^{-1}\tau(\mathcal{A}^0)^{1/2})f_0^{-1}\mathcal{R}(\varepsilon)\Vert _{L_2(\mathbb{R}^d)\rightarrow L_2 (\mathbb{R}^d)}
\\
&\leqslant
\Vert g\Vert ^{1/2}_{L_\infty}\alpha _1^{1/2}\Vert \mathbf{D}[\Lambda ]\Vert _{H^2(\mathbb{R}^d)\rightarrow L_2(\mathbb{R}^d)}
\alpha _1^{1/2}c_*^{-1/2}\Vert f\Vert _{L_\infty}\Vert f^{-1}\Vert _{L_\infty}(r_0^{-2}+1)\varepsilon ^2
\\
&\leqslant
C_{15}'\varepsilon ^2;\quad
C_{15}':=\alpha _1 c_*^{-1/2} \Vert g\Vert ^{1/2}_{L_\infty}\Vert f\Vert _{L_\infty}\Vert f^{-1}\Vert _{L_\infty}\Vert [\Lambda ]\Vert _{H^2(\mathbb{R}^d)\rightarrow H^1(\mathbb{R}^d)}(r_0^{-2}+1).
\end{split}
\end{equation*}
Combining this with \eqref{10.II}, we arrive at estimate \eqref{Th d>4 Lambda multiplier} with the constant $C_{15}:=C_{13}+C_{15}'$.

\end{proof}

For $d\geqslant 5$, removal of the operator $\Pi$ in the corrector also can be achieved by increasing the degree of the operator $\mathcal{R}(\varepsilon)$. In the application to homogenization of the hyperbolic Cauchy problem, this corresponds to more restrictive assumptions on the regularity of the initial data.

The proof of the following result is quite similar to that of Theorem~\ref{Theorem 8/1 NO PI}.

\begin{proposition}
\label{Proposition d>5 from H-kappa chapter 2}
Let $d\geqslant 5$. Then for $\tau \in\mathbb{R}$, $0<\varepsilon\leqslant 1$, we have
\begin{equation*}
\begin{split}
\Vert &
\widehat{\mathcal{A}}^{1/2}
\Bigl(
f \mathcal{A}^{-1/2}\sin (\varepsilon ^{-1}\tau\mathcal{A}^{1/2})f^{-1}\\
&-(I+\Lambda b(\mathbf{D}) )f_0(\mathcal{A}^0)^{-1/2}\sin (\varepsilon ^{-1}\tau (\mathcal{A}^0)^{1/2})f_0^{-1}
\Bigr)
\mathcal{R}(\varepsilon)^{d/4}
\Bigr\Vert _{L_2(\mathbb{R}^d)\rightarrow L_2(\mathbb{R}^d)}
\leqslant {C}_{16}\varepsilon (1+\vert \tau\vert ).
\end{split}
\end{equation*}
The constant $C_{16}$ depends only on  $m$, $n$, $d$, $\alpha _0$, $\alpha _1$, $\Vert g\Vert _{L_\infty}$, $\Vert g^{-1}\Vert _{L_\infty}$, $\Vert f\Vert _{L_\infty}$, $\Vert f^{-1}\Vert _{L_\infty}$, and the parameters of the lattice $\Gamma$.
\end{proposition}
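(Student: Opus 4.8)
The plan is to follow the proof of Theorem~\ref{Theorem 8/1 NO PI} almost verbatim; the only modification is forced by the range of dimensions. Theorem~\ref{Theorem 8/1 NO PI} is confined to $d\leqslant 4$ because the removal of the smoothing operator $\Pi$ from the corrector rests on Proposition~\ref{Proposition MMNP 9/3}, which controls $\widehat{\mathcal{A}}^{1/2}[\Lambda]$ only as a map from $H^l(\mathbb{R}^d;\mathbb{C}^m)$ with $l=d/2$: for $d\leqslant 4$ one may take $l\leqslant 2$ and the single factor $\mathcal{R}(\varepsilon)$, which gains two derivatives, already suffices, whereas for $d\geqslant 5$ a gain of $d/2$ derivatives is needed. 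Since $\mathcal{R}(\varepsilon)^{d/4}=\varepsilon^{d/2}(\mathcal{H}_0+\varepsilon^2 I)^{-d/4}$ behaves like $\varepsilon^{d/2}(-\Delta)^{-d/4}$ at high frequencies, replacing $\mathcal{R}(\varepsilon)$ by $\mathcal{R}(\varepsilon)^{d/4}$ provides precisely this gain, with a factor $\varepsilon^{d/2}$ to spare; this is the reason for the higher power in the statement.

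The first step is to record the analogue of \eqref{10.II} with $\mathcal{R}(\varepsilon)$ replaced by $\mathcal{R}(\varepsilon)^{d/4}$. For $d\geqslant 5$ we have $d/4>1$; since $\mathcal{R}(\varepsilon)$ is a non-negative contraction on $L_2(\mathbb{R}^d;\mathbb{C}^n)$, so is $\mathcal{R}(\varepsilon)^{d/4-1}$, and as the smoothing factor stands on the far right of \eqref{10.II}, multiplying that estimate on the right by $\mathcal{R}(\varepsilon)^{d/4-1}$ (and using $\mathcal{R}(\varepsilon)\,\mathcal{R}(\varepsilon)^{d/4-1}=\mathcal{R}(\varepsilon)^{d/4}$) gives
\[
\Bigl\Vert \widehat{\mathcal{A}}^{1/2}\Bigl( f\mathcal{A}^{-1/2}\sin(\varepsilon^{-1}\tau\mathcal{A}^{1/2})f^{-1}-(I+\Lambda b(\mathbf{D})\Pi)f_0(\mathcal{A}^0)^{-1/2}\sin(\varepsilon^{-1}\tau(\mathcal{A}^0)^{1/2})f_0^{-1}\Bigr)\mathcal{R}(\varepsilon)^{d/4}\Bigr\Vert_{L_2(\mathbb{R}^d)\rightarrow L_2(\mathbb{R}^d)}\leqslant C_{13}\,\varepsilon(1+\vert\tau\vert).
\]
(Equivalently, one re-runs the fiberwise argument behind \eqref{8.4 III}, multiplying the fiber operator there on the right by the contraction $\mathcal{R}(\mathbf{k},\varepsilon)^{d/4-1}$ and then applying the Gelfand transform.)

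The second step is to remove $\Pi$ from the corrector, imitating \eqref{proof Th 8/1 no PI 1} with the exponents $2$ upgraded to $d/2$. The discrepancy term is $\widehat{\mathcal{A}}^{1/2}[\Lambda]\,b(\mathbf{D})(I-\Pi)f_0(\mathcal{A}^0)^{-1/2}\sin(\varepsilon^{-1}\tau(\mathcal{A}^0)^{1/2})f_0^{-1}\mathcal{R}(\varepsilon)^{d/4}$, which I would factor as $\widehat{\mathcal{A}}^{1/2}[\Lambda]$ composed with a constant-coefficient (Fourier-multiplier) operator. By Proposition~\ref{Proposition MMNP 9/3} with $l=d/2$ (admissible since $d\geqslant 5\geqslant 3$) and estimate \eqref{A1/2Lambda <=}, the factor $\widehat{\mathcal{A}}^{1/2}[\Lambda]$ maps $H^{d/2}(\mathbb{R}^d;\mathbb{C}^m)$ into $L_2(\mathbb{R}^d;\mathbb{C}^n)$ with norm at most $\mathcal{C}_d$. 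The symbol of the multiplier factor is supported in $\{\vert\boldsymbol{\xi}\vert\geqslant r_0\}$ thanks to $I-\Pi$, and, using $\vert\sin(\cdot)\vert\leqslant 1$, the bound $\vert b(\boldsymbol{\xi})\vert\leqslant\alpha_1^{1/2}\vert\boldsymbol{\xi}\vert$ from \eqref{<b^*b<}, the lower bound \eqref{f_0 dots >=}, and \eqref{f0<=}, its $L_2(\mathbb{R}^d)\rightarrow H^{d/2}(\mathbb{R}^d)$ norm is at most
\[
\alpha_1^{1/2}c_*^{-1/2}\Vert f\Vert_{L_\infty}\Vert f^{-1}\Vert_{L_\infty}\sup_{\vert\boldsymbol{\xi}\vert\geqslant r_0}(1+\vert\boldsymbol{\xi}\vert^2)^{d/4}\varepsilon^{d/2}(\vert\boldsymbol{\xi}\vert^2+\varepsilon^2)^{-d/4}\leqslant\alpha_1^{1/2}c_*^{-1/2}\Vert f\Vert_{L_\infty}\Vert f^{-1}\Vert_{L_\infty}(1+r_0^{-2})^{d/4}\varepsilon^{d/2}.
\]
Since $0<\varepsilon\leqslant 1$ and $d\geqslant 5$ we have $\varepsilon^{d/2}\leqslant\varepsilon$, so the discrepancy term has $L_2\rightarrow L_2$ norm $O(\varepsilon)$, a fortiori $\leqslant C\varepsilon(1+\vert\tau\vert)$; combining this with the first step yields the assertion, with $C_{16}:=C_{13}+\alpha_1^{1/2}c_*^{-1/2}(1+r_0^{-2})^{d/4}\mathcal{C}_d\Vert f\Vert_{L_\infty}\Vert f^{-1}\Vert_{L_\infty}$.

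I do not anticipate a genuine obstacle: the argument is bookkeeping built on the already-established \eqref{10.II} and Proposition~\ref{Proposition MMNP 9/3}. The two mild points to watch are (i) that the Sobolev index $l=d/2$ demanded by Proposition~\ref{Proposition MMNP 9/3} is matched exactly by the $d/2$-derivative gain of $\mathcal{R}(\varepsilon)^{d/4}$, so that the factor $(1+\vert\boldsymbol{\xi}\vert^2)^{d/4}\varepsilon^{d/2}(\vert\boldsymbol{\xi}\vert^2+\varepsilon^2)^{-d/4}$ is uniformly bounded (indeed by $(1+r_0^{-2})^{d/4}$) on $\{\vert\boldsymbol{\xi}\vert\geqslant r_0\}$; and (ii) that for odd $d$ the operator $\mathcal{R}(\varepsilon)^{d/4}$ and the space $H^{d/2}$ are fractional but unambiguously defined through the functional calculus and the Fourier transform, so nothing beyond routine interpretation is required.
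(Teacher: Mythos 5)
Your proposal is correct and matches the paper's intended argument: the paper simply states that the proof is ``quite similar to that of Theorem~\ref{Theorem 8/1 NO PI},'' and you have carried out exactly that similar argument, replacing $\mathcal{R}(\varepsilon)$ by $\mathcal{R}(\varepsilon)^{d/4}$ and the $H^2$-gain by an $H^{d/2}$-gain to match Proposition~\ref{Proposition MMNP 9/3} with $l=d/2$. Both steps (multiplying \eqref{10.II} on the right by the contraction $\mathcal{R}(\varepsilon)^{d/4-1}$, and the symbol estimate $(1+\vert\boldsymbol{\xi}\vert^2)^{d/4}\varepsilon^{d/2}(\vert\boldsymbol{\xi}\vert^2+\varepsilon^2)^{-d/4}\leqslant (1+r_0^{-2})^{d/4}\varepsilon^{d/2}\leqslant(1+r_0^{-2})^{d/4}\varepsilon$ on $\lbrace\vert\boldsymbol{\xi}\vert\geqslant r_0\rbrace$) are sound, and your constant $C_{16}$ has the stated dependence.
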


\section*{Chapter III. Homogenization problem for hyperbolic systems} 
\label{Chapter 3}

\section{Approximation of the sandwiched operator ${\mathcal{A}}_\varepsilon ^{-1/2}\sin(\tau {\mathcal{A}}_\varepsilon^{1/2})$}

\label{Section main results in general case}

For a $\Gamma$-periodic measurable function $\psi(\mathbf{x})$ in $\mathbb{R}^d$ we denote $\psi^\varepsilon (\mathbf{x}):=\psi (\varepsilon ^{-1}\mathbf{x})$, $\varepsilon >0$. Let $[\psi^\varepsilon]$ be the operator of multiplication by the function  $\psi^\varepsilon (\mathbf{x})$. 
\textit{Our main object} is the operator  $\mathcal{A}_\varepsilon$, $\varepsilon >0$,  acting in $L_2(\mathbb{R}^d;\mathbb{C}^n)$ and formally given by the differential expression
\begin{equation}
\label{A_eps no hat}
\mathcal{A}_\varepsilon =f^\varepsilon (\mathbf{x})^*
b(\mathbf{D})^*g^\varepsilon (\mathbf{x})b(\mathbf{D})f^\varepsilon (\mathbf{x}).
\end{equation}
Denote
\begin{equation}
\label{A_eps}
\widehat{\mathcal{A}}_\varepsilon =b(\mathbf{D})^*g^\varepsilon (\mathbf{x})b(\mathbf{D}).
\end{equation}
The precise definitions of these operators are given in terms of the corresponding quadratic forms. The coefficients of the operators \eqref{A_eps no hat} and \eqref{A_eps}  oscillate rapidly as $\varepsilon \rightarrow 0$.

\textit{Our goal} is to approximate the sandwiched operator   $\mathcal{A}_\varepsilon ^{-1/2}\sin(\tau  \mathcal{A}_\varepsilon ^{1/2})
$.  The results are applied to homogenization of the solutions of the Cauchy problem for hyperbolic systems. 

\subsection{The principal term of approximation}

 Let $T_\varepsilon$ be the \textit{unitary scaling transformation} in $L_2(\mathbb{R}^d;\mathbb{C}^n)$: $(T_\varepsilon\mathbf{u})(\mathbf{x}):=\varepsilon ^{d/2}\mathbf{u}(\varepsilon \mathbf{x})$, $\varepsilon >0$. Then $\mathcal{A}_\varepsilon =\varepsilon ^{-2}T_\varepsilon ^* \mathcal{A}T_\varepsilon $. Thus,
\begin{equation*}
\mathcal{A}_\varepsilon ^{-1/2}\sin (\tau \mathcal{A}_\varepsilon ^{1/2})=\varepsilon T_\varepsilon ^* \mathcal{A}  ^{-1/2}\sin (\varepsilon ^{-1}\tau \mathcal{A}^{1/2})T_\varepsilon .
\end{equation*}
The operator $\mathcal{A}^0$ satisfies a similar identity.  Next,
\begin{equation*}
(\mathcal{H}_0+I)^{-1/2}=\varepsilon T_\varepsilon ^*(\mathcal{H}_0 +\varepsilon ^2I)^{-1/2}T_\varepsilon =T_\varepsilon ^*\mathcal{R}(\varepsilon)^{1/2}T_\varepsilon .
\end{equation*}
Note that for any $s$ 
the operator $(\mathcal{H}_0+I)^{s/2}$ is an isometric isomorphism of the Sobolev space $H^s(\mathbb{R}^d;\mathbb{C}^n)$ onto $L_2(\mathbb{R}^d;\mathbb{C}^n)$. Indeed, for $\mathbf{u}\in H^s(\mathbb{R}^d;\mathbb{C}^n)$ we have
\begin{equation}
\label{isometria}
\Vert (\mathcal{H}_0+I)^{s/2}\mathbf{u}\Vert ^2_{L_2(\mathbb{R}^d)}
=\int _{\mathbb{R}^d}(\vert \boldsymbol{\xi}\vert ^2 +1)^s\vert \widehat{\mathbf{u}}(\boldsymbol{\xi})\vert ^2\,d\boldsymbol{\xi}=\Vert \mathbf{u}\Vert ^2_{H^s (\mathbb{R}^d)}.
\end{equation}
Using these arguments, from  \eqref{10.I} we deduce the following result.

\begin{theorem}
\label{Theorem 12.1}
Let $\mathcal{A}_\varepsilon $ be the operator \eqref{A_eps no hat} and let $\mathcal{A}^0$ be the operator \eqref{A0 no hat}. Then for $\varepsilon >0$ and $\tau \in\mathbb{R}$ we have
\begin{equation}
\label{12.1}
\begin{split}
\Vert & 
f^\varepsilon\mathcal{A}_\varepsilon ^{-1/2}\sin (\tau \mathcal{A}_\varepsilon ^{1/2})(f^\varepsilon)^{-1}-f_0(\mathcal{A}^0)^{-1/2}\sin (\tau (\mathcal{A}^0)^{1/2})f_0^{-1}
\Vert _{H^1(\mathbb{R}^d)\rightarrow L_2(\mathbb{R}^d)}
\leqslant
{C}_{12}\varepsilon (1+\vert \tau\vert ).
\end{split}
\end{equation}
The constant ${C}_{12}$ is controlled in terms of $r_0$, $\alpha _0$, $\alpha _1$, $\Vert g\Vert _{L_\infty}$, $\Vert g^{-1}\Vert _{L_\infty}$, $\Vert f\Vert _{L_\infty}$, and $\Vert f^{-1}\Vert _{L_\infty}$.
\end{theorem}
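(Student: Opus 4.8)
The plan is to derive Theorem~\ref{Theorem 12.1} from the fixed-period estimate~\eqref{10.I} by the scaling transformation $T_\varepsilon$, which is exactly the mechanism sketched right before the statement. First I would record the unitary conjugation identity
\begin{equation*}
\mathcal{A}_\varepsilon ^{-1/2}\sin (\tau \mathcal{A}_\varepsilon ^{1/2})=\varepsilon T_\varepsilon ^*\, \mathcal{A}^{-1/2}\sin (\varepsilon ^{-1}\tau \mathcal{A}^{1/2})\, T_\varepsilon,
\end{equation*}
together with the analogous identity for $\mathcal{A}^0$; here one uses $\mathcal{A}_\varepsilon =\varepsilon ^{-2}T_\varepsilon ^*\mathcal{A}T_\varepsilon$ and the spectral theorem. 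Since $f^\varepsilon$ is the scaled coefficient, one also has $[f^\varepsilon]=T_\varepsilon ^*[f]T_\varepsilon$ (multiplication operators commute with $T_\varepsilon$ up to this conjugation), and likewise for $(f^\varepsilon)^{-1}$ and for the constant matrices $f_0$, $f_0^{-1}$ (which are unaffected). Therefore the operator difference inside the norm in~\eqref{12.1} equals $\varepsilon T_\varepsilon ^*\mathcal{G}(\varepsilon)T_\varepsilon$, where $\mathcal{G}(\varepsilon)$ is the operator difference appearing in~\eqref{10.I} \emph{before} it is multiplied by $\mathcal{R}(\varepsilon)^{1/2}$.

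Next I would insert the smoothing factor. Write the target operator as $\varepsilon T_\varepsilon ^*\mathcal{G}(\varepsilon)T_\varepsilon=\varepsilon T_\varepsilon ^*\bigl(\mathcal{G}(\varepsilon)\mathcal{R}(\varepsilon)^{1/2}\bigr)\mathcal{R}(\varepsilon)^{-1/2}T_\varepsilon$. By~\eqref{10.I}, $\Vert \mathcal{G}(\varepsilon)\mathcal{R}(\varepsilon)^{1/2}\Vert _{L_2\to L_2}\leqslant C_{12}(1+|\tau|)$. It remains to handle the two outer factors. On the one hand $T_\varepsilon$ is unitary on $L_2$, so it contributes nothing. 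On the other hand, $\mathcal{R}(\varepsilon)^{-1/2}T_\varepsilon$ must be read as acting from $H^1$: using $\mathcal{R}(\varepsilon)^{1/2}=\varepsilon(\mathcal{H}_0+\varepsilon^2 I)^{-1/2}$ and the scaling identity $(\mathcal{H}_0+I)^{-1/2}=T_\varepsilon ^*\mathcal{R}(\varepsilon)^{1/2}T_\varepsilon$ already recorded in the excerpt, one gets $\mathcal{R}(\varepsilon)^{-1/2}T_\varepsilon=\varepsilon^{-1}T_\varepsilon(\mathcal{H}_0+I)^{1/2}$. The factor $(\mathcal{H}_0+I)^{1/2}$ is, by~\eqref{isometria}, an isometric isomorphism $H^1(\mathbb{R}^d;\mathbb{C}^n)\to L_2(\mathbb{R}^d;\mathbb{C}^n)$, and $T_\varepsilon$ is again unitary. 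Assembling the three bounds, the $\varepsilon$ out front and the $\varepsilon^{-1}$ from $\mathcal{R}(\varepsilon)^{-1/2}T_\varepsilon$ cancel, and what survives is precisely $C_{12}(1+|\tau|)$ times an extra $\varepsilon$—wait: tracking the powers, the prefactor $\varepsilon$ and $\varepsilon^{-1}$ cancel but~\eqref{10.I} already carries no $\varepsilon$, so one must be careful. In fact the correct bookkeeping is that~\eqref{12.1} has a genuine $\varepsilon$ on the right because $\mathcal{R}(\varepsilon)^{1/2}=\varepsilon(\mathcal{H}_0+\varepsilon^2 I)^{-1/2}$ already contains one power of $\varepsilon$; unwinding the scaling exactly reproduces the factor $\varepsilon$ in~\eqref{12.1}. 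I would present this power-counting explicitly and carefully, since it is the one place where a slip is easy.

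The main obstacle—though it is more bookkeeping than conceptual—is keeping the conjugations and the powers of $\varepsilon$ straight simultaneously: one must verify that the scaled multiplication operators $[f^\varepsilon]$, $[(f^\varepsilon)^{-1}]$ conjugate correctly, that $\sin(\tau\mathcal A_\varepsilon^{1/2})$ conjugates to $\sin(\varepsilon^{-1}\tau\mathcal A^{1/2})$ and not to something with a different argument, and that the smoothing operator $\mathcal R(\varepsilon)^{1/2}$ from~\eqref{10.I} matches $(\mathcal H_0+I)^{-1/2}$ after scaling exactly as stated in the excerpt. Once those identifications are in hand, the proof is a two-line chain of inequalities: unitary invariance of the $L_2$-norm under $T_\varepsilon$, the isometry~\eqref{isometria}, and the $L_2\to L_2$ bound~\eqref{10.I}. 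Finally I would note that the constant $C_{12}$ is the one from~\eqref{8.4_0}/\eqref{10.I}, hence depends only on the data $r_0,\alpha_0,\alpha_1,\Vert g\Vert_{L_\infty},\Vert g^{-1}\Vert_{L_\infty},\Vert f\Vert_{L_\infty},\Vert f^{-1}\Vert_{L_\infty}$, which is exactly the asserted dependence.
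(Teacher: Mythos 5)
Your approach is the same as the paper's: scale by $T_\varepsilon$, reduce to estimate~\eqref{10.I}, and convert the $(L_2\to L_2)$ bound (with the smoothing factor $\mathcal{R}(\varepsilon)^{1/2}$) into an $(H^1\to L_2)$ bound via the isometry~\eqref{isometria}. However, there is an arithmetic slip that you notice at the end but do not actually repair, and your closing explanation misattributes where the factor $\varepsilon$ comes from.

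The slip is the identity $\mathcal{R}(\varepsilon)^{-1/2}T_\varepsilon=\varepsilon^{-1}T_\varepsilon(\mathcal{H}_0+I)^{1/2}$. The extraneous $\varepsilon^{-1}$ should not be there. From the paper's stated identity $(\mathcal{H}_0+I)^{-1/2}=T_\varepsilon^*\mathcal{R}(\varepsilon)^{1/2}T_\varepsilon$ (the $\varepsilon$ in $\mathcal{R}(\varepsilon)^{1/2}=\varepsilon(\mathcal{H}_0+\varepsilon^2 I)^{-1/2}$ is already absorbed into making that conjugation identity exact), taking inverses gives $(\mathcal{H}_0+I)^{1/2}=T_\varepsilon^*\mathcal{R}(\varepsilon)^{-1/2}T_\varepsilon$, hence $\mathcal{R}(\varepsilon)^{-1/2}T_\varepsilon=T_\varepsilon(\mathcal{H}_0+I)^{1/2}$ with no power of $\varepsilon$. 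With this corrected the chain reads
\begin{equation*}
\varepsilon\, T_\varepsilon^*\mathcal{G}(\varepsilon)T_\varepsilon
=\varepsilon\, T_\varepsilon^*\bigl(\mathcal{G}(\varepsilon)\mathcal{R}(\varepsilon)^{1/2}\bigr)T_\varepsilon\,(\mathcal{H}_0+I)^{1/2},
\end{equation*}
where $T_\varepsilon, T_\varepsilon^*$ are $L_2$-unitary, $(\mathcal{H}_0+I)^{1/2}:H^1\to L_2$ is an isometry by~\eqref{isometria}, and $\Vert\mathcal{G}(\varepsilon)\mathcal{R}(\varepsilon)^{1/2}\Vert_{L_2\to L_2}\leqslant C_{12}(1+|\tau|)$ by~\eqref{10.I}. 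The factor $\varepsilon$ surviving into~\eqref{12.1} is therefore the prefactor coming from the spectral identity $\mathcal{A}_\varepsilon^{-1/2}\sin(\tau\mathcal{A}_\varepsilon^{1/2})=\varepsilon\,T_\varepsilon^*\mathcal{A}^{-1/2}\sin(\varepsilon^{-1}\tau\mathcal{A}^{1/2})T_\varepsilon$, not, as you suggest, a residual power of $\varepsilon$ hiding in $\mathcal{R}(\varepsilon)^{1/2}$; that $\varepsilon$ is consumed in making the conjugation with $(\mathcal{H}_0+I)^{-1/2}$ exact. Once this is fixed the proof is complete and the dependence of $C_{12}$ on the data is as you state.
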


By \eqref{f0<=} and the elementary inequality $\vert \sin x\vert /\vert x \vert \leqslant 1$, $x\in\mathbb{R}$,
\begin{equation}
\label{12.2}
\begin{split}
\Vert &
f^\varepsilon\mathcal{A}_\varepsilon ^{-1/2}\sin (\tau \mathcal{A}_\varepsilon ^{1/2})(f^\varepsilon)^{-1}-f_0(\mathcal{A}^0)^{-1/2}\sin (\tau (\mathcal{A}^0)^{1/2})f_0^{-1}
\Vert _{L_2(\mathbb{R}^d)\rightarrow L_2(\mathbb{R}^d)}
\\
&\leqslant 2\Vert f\Vert _{L_\infty}\Vert f^{-1}\Vert _{L_\infty}\vert \tau\vert
.
\end{split}
\end{equation}
Interpolating between  \eqref{12.2} and \eqref{12.1}, we obtain the following result.

\begin{theorem}
\label{Theorem 12.2}
Under the assumptions of Theorem~\textnormal{\ref{Theorem 12.1}}, for $0\leqslant s\leqslant 1$, $\tau\in\mathbb{R}$, and $\varepsilon >0$ we have
\begin{equation*}
\Vert f^\varepsilon \mathcal{A}_\varepsilon ^{-1/2}\sin (\tau \mathcal{A}_\varepsilon ^{1/2})(f^\varepsilon )^{-1}-f_0(\mathcal{A}^0)^{-1/2}\sin (\tau (\mathcal{A}^0)^{1/2})f_0^{-1}\Vert _{H^s(\mathbb{R}^d)\rightarrow L_2(\mathbb{R}^d)}
\leqslant \mathfrak{C}_1(s) (1+\vert \tau\vert )\varepsilon ^s,
\end{equation*}
where $\mathfrak{C}_1(s):=(2\Vert f\Vert _{L_\infty}\Vert f^{-1}\Vert _{L_\infty})^{1-s} {C}_{12}^s$. 
\end{theorem}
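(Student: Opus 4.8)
The plan is to derive Theorem~\ref{Theorem 12.2} by complex interpolation between the two estimates already established: the crude $(L_2\to L_2)$-bound \eqref{12.2}, which is of order $O(1)$ but grows linearly in $\vert\tau\vert$, and the sharp $(H^1\to L_2)$-bound \eqref{12.1} of Theorem~\ref{Theorem 12.1}, which is of order $O(\varepsilon)$. Write $\mathcal{D}=\mathcal{D}(\tau,\varepsilon):=f^\varepsilon\mathcal{A}_\varepsilon^{-1/2}\sin(\tau\mathcal{A}_\varepsilon^{1/2})(f^\varepsilon)^{-1}-f_0(\mathcal{A}^0)^{-1/2}\sin(\tau(\mathcal{A}^0)^{1/2})f_0^{-1}$ for the difference operator, so that \eqref{12.2} reads $\Vert\mathcal{D}\Vert_{L_2\to L_2}\leqslant 2\Vert f\Vert_{L_\infty}\Vert f^{-1}\Vert_{L_\infty}\vert\tau\vert$ and \eqref{12.1} reads $\Vert\mathcal{D}\Vert_{H^1\to L_2}\leqslant C_{12}\varepsilon(1+\vert\tau\vert)$.

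First I would reduce the $(H^s\to L_2)$-norm to an $(L_2\to L_2)$-norm by means of the isometry recorded in \eqref{isometria}: since $(\mathcal{H}_0+I)^{s/2}$ is an isometric isomorphism of $H^s(\mathbb{R}^d;\mathbb{C}^n)$ onto $L_2(\mathbb{R}^d;\mathbb{C}^n)$, one has $\Vert\mathcal{D}\Vert_{H^s\to L_2}=\Vert\mathcal{D}(\mathcal{H}_0+I)^{-s/2}\Vert_{L_2\to L_2}$, and in particular $\Vert\mathcal{D}\Vert_{H^1\to L_2}=\Vert\mathcal{D}(\mathcal{H}_0+I)^{-1/2}\Vert_{L_2\to L_2}$. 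Next, for fixed $\tau$ and $\varepsilon$, I would consider the operator-valued function $z\mapsto T(z):=\mathcal{D}(\mathcal{H}_0+I)^{-z/2}$ on the closed strip $\{0\leqslant\mathrm{Re}\,z\leqslant 1\}$; it is analytic there and uniformly bounded, because $(\mathcal{H}_0+I)^{-z/2}$ is a Fourier multiplier with symbol $(\vert\boldsymbol{\xi}\vert^2+1)^{-z/2}$ of modulus $(\vert\boldsymbol{\xi}\vert^2+1)^{-\mathrm{Re}\,z/2}\leqslant 1$, so $\Vert T(z)\Vert_{L_2\to L_2}\leqslant\Vert\mathcal{D}\Vert_{L_2\to L_2}$. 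On the line $\mathrm{Re}\,z=0$ this multiplier is unitary, whence $\Vert T(iy)\Vert_{L_2\to L_2}=\Vert\mathcal{D}\Vert_{L_2\to L_2}\leqslant 2\Vert f\Vert_{L_\infty}\Vert f^{-1}\Vert_{L_\infty}\vert\tau\vert$; on the line $\mathrm{Re}\,z=1$ we get $\Vert T(1+iy)\Vert_{L_2\to L_2}=\Vert\mathcal{D}(\mathcal{H}_0+I)^{-1/2}\Vert_{L_2\to L_2}\leqslant C_{12}\varepsilon(1+\vert\tau\vert)$. Applying the Hadamard three-lines theorem to $z\mapsto\langle T(z)u,v\rangle$ for unit vectors $u,v$ (equivalently, complex interpolation with $[L_2,H^1]_s=H^s$) yields
\[
\Vert\mathcal{D}\Vert_{H^s\to L_2}=\Vert T(s)\Vert_{L_2\to L_2}\leqslant\bigl(2\Vert f\Vert_{L_\infty}\Vert f^{-1}\Vert_{L_\infty}\vert\tau\vert\bigr)^{1-s}\bigl(C_{12}\varepsilon(1+\vert\tau\vert)\bigr)^{s}.
\]

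Finally I would tidy up the $\tau$-dependence: since $\vert\tau\vert^{1-s}(1+\vert\tau\vert)^{s}\leqslant(1+\vert\tau\vert)^{1-s}(1+\vert\tau\vert)^{s}=1+\vert\tau\vert$, the right-hand side is bounded by $(2\Vert f\Vert_{L_\infty}\Vert f^{-1}\Vert_{L_\infty})^{1-s}C_{12}^{s}\,\varepsilon^{s}(1+\vert\tau\vert)$, which is exactly the asserted estimate with $\mathfrak{C}_1(s)=(2\Vert f\Vert_{L_\infty}\Vert f^{-1}\Vert_{L_\infty})^{1-s}C_{12}^{s}$. The proof is essentially routine; the only point deserving attention is to keep the resulting constant independent of $\tau$ and $\varepsilon$, which is automatic here since both endpoint bounds have already been separated into a multiplier-free constant times an explicit $(\tau,\varepsilon)$-factor. (For $\tau=0$ the operator $\mathcal{D}$ vanishes identically and the statement is trivial.)
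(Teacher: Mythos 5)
Your argument is correct and is precisely what the paper's terse phrase ``interpolating between \eqref{12.2} and \eqref{12.1}'' means: you reduce to an $(L_2\to L_2)$-bound via the isometry \eqref{isometria}, set up the analytic operator family $T(z)=\mathcal{D}(\mathcal{H}_0+I)^{-z/2}$ on the strip, and apply the three-lines theorem, then absorb $\vert\tau\vert^{1-s}(1+\vert\tau\vert)^s\leqslant 1+\vert\tau\vert$. No discrepancy; you have simply spelled out the standard complex-interpolation step the paper leaves implicit.
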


\subsection{Approximation with corrector}

Now, we obtain an approximation with the correction term taken into account. 
We put $\Pi _\varepsilon :=T_\varepsilon ^*\Pi T_\varepsilon$. Then $\Pi _\varepsilon $ is the pseudodifferential operator in  $L_2(\mathbb{R}^d;\mathbb{C}^n)$  with the symbol $\chi _{\widetilde{\Omega}/\varepsilon }(\boldsymbol{\xi})$, i.~e.,
\begin{equation}
\label{Pi eps}
(\Pi _\varepsilon \mathbf{u})(\mathbf{x})=(2\pi )^{-d/2}\int _{\widetilde{\Omega}/\varepsilon}e^{i\langle\mathbf{x},\boldsymbol{\xi}\rangle}\widehat{\mathbf{u}}(\boldsymbol{\xi})\,d\boldsymbol{\xi}.
\end{equation}
Obviously, $\Pi_\varepsilon \mathbf{D}^\sigma\mathbf{u}=\mathbf{D}^\sigma \Pi _\varepsilon \mathbf{u}$ for $\mathbf{u}\in H^\kappa(\mathbb{R}^d;\mathbb{C}^n)$ and any multiindex $\sigma$ of length $\vert\sigma\vert \leqslant \kappa$. Note that 
$
\Vert \Pi_\varepsilon\Vert _{H^\kappa(\mathbb{R}^d)\rightarrow H^\kappa(\mathbb{R}^d)}\leqslant 1, \quad\kappa\in\mathbb{Z}_+$.

The following results were obtained in \cite[Proposition 1.4]{PSu} and \cite[Subsec. 10.2]{BSu06}.

\begin{proposition}
\label{Proposition Pi eps -I}
For any function $\mathbf{u}\in H^1(\mathbb{R}^d;\mathbb{C}^n)$ we have
\begin{equation*}
\Vert \Pi _\varepsilon \mathbf{u}-\mathbf{u}\Vert _{L_2(\mathbb{R}^d)}\leqslant \varepsilon r_0^{-1}\Vert \mathbf{D}\mathbf{u}\Vert _{L_2(\mathbb{R}^d)},\quad\varepsilon >0.
\end{equation*}
\end{proposition}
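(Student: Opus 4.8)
The plan is to reduce everything to the Plancherel identity on the Fourier side, where $\Pi_\varepsilon$ acts diagonally. First I would recall from \eqref{Pi eps} that $\Pi_\varepsilon$ is the Fourier multiplier with symbol $\chi_{\widetilde{\Omega}/\varepsilon}(\boldsymbol{\xi})$, so that the Fourier transform of $\Pi_\varepsilon\mathbf{u}-\mathbf{u}$ equals $\bigl(\chi_{\widetilde{\Omega}/\varepsilon}(\boldsymbol{\xi})-1\bigr)\widehat{\mathbf{u}}(\boldsymbol{\xi})$, which vanishes for $\boldsymbol{\xi}\in\widetilde{\Omega}/\varepsilon$. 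Hence, by Plancherel,
\[
\Vert\Pi_\varepsilon\mathbf{u}-\mathbf{u}\Vert^2_{L_2(\mathbb{R}^d)}=\int_{\mathbb{R}^d\setminus(\widetilde{\Omega}/\varepsilon)}|\widehat{\mathbf{u}}(\boldsymbol{\xi})|^2\,d\boldsymbol{\xi}.
\]

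The key geometric input is that the first Brillouin zone contains a ball of radius $r_0$ about the origin. Since $\mathrm{clos}\,\widetilde{\Omega}$ contains the closed ball of radius $r_0$ centred at $0$ — this is precisely how $r_0$ is defined, cf. \eqref{2r0=} — the relation $\boldsymbol{\xi}\notin\widetilde{\Omega}/\varepsilon$ means $\varepsilon\boldsymbol{\xi}\notin\widetilde{\Omega}$ and therefore $|\varepsilon\boldsymbol{\xi}|\geqslant r_0$ for a.e.\ such $\boldsymbol{\xi}$. Consequently $1\leqslant\varepsilon^2 r_0^{-2}|\boldsymbol{\xi}|^2$ on the domain of integration, so
\[
\int_{\mathbb{R}^d\setminus(\widetilde{\Omega}/\varepsilon)}|\widehat{\mathbf{u}}(\boldsymbol{\xi})|^2\,d\boldsymbol{\xi}\leqslant\frac{\varepsilon^2}{r_0^2}\int_{\mathbb{R}^d}|\boldsymbol{\xi}|^2|\widehat{\mathbf{u}}(\boldsymbol{\xi})|^2\,d\boldsymbol{\xi}=\frac{\varepsilon^2}{r_0^2}\Vert\mathbf{D}\mathbf{u}\Vert^2_{L_2(\mathbb{R}^d)},
\]
the last equality being Plancherel applied to $\mathbf{D}\mathbf{u}$, whose Fourier image is $\boldsymbol{\xi}\mapsto\boldsymbol{\xi}\,\widehat{\mathbf{u}}(\boldsymbol{\xi})$. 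Taking square roots yields the claimed estimate.

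I do not expect a genuine obstacle here: the statement is a soft consequence of Plancherel together with the defining property of $r_0$. The only point requiring a little care is the bookkeeping of the scaling, namely that $\Pi_\varepsilon$ carries the symbol $\chi_{\widetilde{\Omega}/\varepsilon}$ rather than $\chi_{\widetilde{\Omega}}$, and that the boundary $\partial\widetilde{\Omega}$ is a null set, so $\chi_{\widetilde{\Omega}/\varepsilon}$ and $\chi_{\mathrm{clos}\,\widetilde{\Omega}/\varepsilon}$ coincide a.e. An equivalent route, bypassing the explicit Fourier computation, is to start from the case $\varepsilon=1$, i.e.\ $\Vert\Pi\mathbf{u}-\mathbf{u}\Vert_{L_2(\mathbb{R}^d)}\leqslant r_0^{-1}\Vert\mathbf{D}\mathbf{u}\Vert_{L_2(\mathbb{R}^d)}$ from \cite{PSu,BSu06}, and then apply the unitary scaling $T_\varepsilon$ using $\Pi_\varepsilon=T_\varepsilon^*\Pi T_\varepsilon$ and $\mathbf{D}(T_\varepsilon\mathbf{u})=\varepsilon\,T_\varepsilon(\mathbf{D}\mathbf{u})$ to read off the factor $\varepsilon$.
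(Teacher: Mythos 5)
Your proof is correct. The paper does not give its own proof of this proposition but only cites it from \cite{PSu} (Proposition 1.4) and \cite{BSu06} (Subsec. 10.2); the Plancherel argument you present is precisely the standard one from those sources. One small simplification: since the open ball $\{|\boldsymbol{\xi}|<r_0\}$ is contained in the open set $\widetilde{\Omega}$, the inclusion $\boldsymbol{\xi}\notin\widetilde{\Omega}/\varepsilon$ already gives $|\varepsilon\boldsymbol{\xi}|\geqslant r_0$ pointwise, so no ``a.e.'' qualifier or appeal to the null boundary is needed.
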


\begin{proposition}
\label{Proposition Pi eps f eps}
Let $\Phi(\mathbf{x})$  be a $\Gamma$-periodic function in $\mathbb{R}^d$ such that $\Phi \in L_2(\Omega)$. Then the operator $[\Phi ^\varepsilon]\Pi_\varepsilon$ is bounded in $L_2(\mathbb{R}^d;\mathbb{C}^n)$, and
\begin{equation*}
\Vert [\Phi ^\varepsilon]\Pi_\varepsilon\Vert _{L_2(\mathbb{R}^d)\rightarrow L_2(\mathbb{R}^d)}\leqslant \vert \Omega\vert ^{-1/2}\Vert \Phi \Vert _{L_2(\Omega)},\quad\varepsilon >0.
\end{equation*}
\end{proposition}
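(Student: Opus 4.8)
The plan is to reduce the estimate to the case $\varepsilon=1$ by the scaling transformation and then to compute the norm of $[\Phi]\Pi$ directly on the Fourier side. Since $\Pi_\varepsilon=T_\varepsilon^*\Pi T_\varepsilon$ by definition and $[\Phi^\varepsilon]=T_\varepsilon^*[\Phi]T_\varepsilon$, we have $[\Phi^\varepsilon]\Pi_\varepsilon=T_\varepsilon^*[\Phi]\Pi T_\varepsilon$, and the unitarity of $T_\varepsilon$ reduces the claim to the bound $\|[\Phi]\Pi\|_{L_2(\mathbb{R}^d)\to L_2(\mathbb{R}^d)}\leqslant|\Omega|^{-1/2}\|\Phi\|_{L_2(\Omega)}$.

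Next I would first prove this (in fact as an identity) for $\mathbf{v}$ in the dense subspace of functions with $\widehat{\mathbf{v}}\in C_0^\infty(\widetilde{\Omega};\mathbb{C}^n)$; such $\mathbf{v}$ satisfy $\Pi\mathbf{v}=\mathbf{v}$, are Schwartz functions, and $\Phi\mathbf{v}$ is genuinely square-integrable (on each translated cell $\Phi$ is $L_2$ while $\mathbf{v}$ is rapidly decaying). Writing the Fourier series $\Phi(\mathbf{x})=|\Omega|^{-1/2}\sum_{\mathbf{b}\in\widetilde{\Gamma}}\widehat{\Phi}_{\mathbf{b}}e^{i\langle\mathbf{b},\mathbf{x}\rangle}$ as in \eqref{discr Fourier} gives
\[
\widehat{\Phi\mathbf{v}}(\boldsymbol{\xi})=|\Omega|^{-1/2}\sum_{\mathbf{b}\in\widetilde{\Gamma}}\widehat{\Phi}_{\mathbf{b}}\,\widehat{\mathbf{v}}(\boldsymbol{\xi}-\mathbf{b}).
\]
The key observation is that the translates $\widetilde{\Omega}+\mathbf{b}$, $\mathbf{b}\in\widetilde{\Gamma}$, tile $\mathbb{R}^d$ up to a null set, since the first Brillouin zone is a fundamental domain of $\widetilde{\Gamma}$; as $\mathrm{supp}\,\widehat{\mathbf{v}}\subset\widetilde{\Omega}$, for almost every $\boldsymbol{\xi}$ exactly one term in the sum is nonzero, namely the one with $\mathbf{b}=\mathbf{b}(\boldsymbol{\xi})$ characterised by $\boldsymbol{\xi}-\mathbf{b}(\boldsymbol{\xi})\in\widetilde{\Omega}$. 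Hence, by Plancherel, splitting $\mathbb{R}^d$ into the tiles $\widetilde{\Omega}+\mathbf{b}$ and using the Parseval identity \eqref{Fourier unit} in the form $\sum_{\mathbf{b}\in\widetilde{\Gamma}}|\widehat{\Phi}_{\mathbf{b}}|^2=\|\Phi\|_{L_2(\Omega)}^2$,
\[
\|\Phi\mathbf{v}\|_{L_2(\mathbb{R}^d)}^2=|\Omega|^{-1}\sum_{\mathbf{b}\in\widetilde{\Gamma}}|\widehat{\Phi}_{\mathbf{b}}|^2\int_{\widetilde{\Omega}}|\widehat{\mathbf{v}}(\boldsymbol{\eta})|^2\,d\boldsymbol{\eta}=|\Omega|^{-1}\|\Phi\|_{L_2(\Omega)}^2\|\mathbf{v}\|_{L_2(\mathbb{R}^d)}^2.
\]

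To finish, for arbitrary $\mathbf{u}\in L_2(\mathbb{R}^d;\mathbb{C}^n)$ I would set $\mathbf{v}:=\Pi\mathbf{u}$; then $\widehat{\mathbf{v}}$ is supported in $\mathrm{clos}\,\widetilde{\Omega}$ and $\|\mathbf{v}\|_{L_2}\leqslant\|\mathbf{u}\|_{L_2}$, because $\Pi$ is an orthogonal projection. Approximating $\mathbf{v}$ in $L_2$ by $\mathbf{v}_k$ with $\widehat{\mathbf{v}}_k\in C_0^\infty(\widetilde{\Omega})$, passing to a subsequence converging almost everywhere, and applying Fatou's lemma together with the identity above yields $\|\Phi\mathbf{v}\|_{L_2}^2\leqslant|\Omega|^{-1}\|\Phi\|_{L_2(\Omega)}^2\|\mathbf{v}\|_{L_2}^2$, and therefore $\|[\Phi]\Pi\mathbf{u}\|_{L_2}\leqslant|\Omega|^{-1/2}\|\Phi\|_{L_2(\Omega)}\|\mathbf{u}\|_{L_2}$, as required.

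I expect the only slightly delicate point to be the rigour of the Fourier-series computation for the product of the merely $L_2$ periodic function $\Phi$ with the band-limited function $\mathbf{v}$; this can be handled by first taking $\Phi$ a trigonometric polynomial, where the computation is finite and elementary, and then passing to the limit $\Phi_N\to\Phi$ in $L_2(\Omega)$ (using that $\sum_{\mathbf{a}\in\Gamma}\sup_{\Omega+\mathbf{a}}|\mathbf{v}|^2<\infty$ for Schwartz $\mathbf{v}$). Everything else --- the measure-zero overlap of the translated Brillouin zones, Plancherel, and the bound $\|\Pi\|=1$ --- is routine. An equivalent real-space route, if preferred, is to observe that for band-limited $\mathbf{v}$ the $\Gamma$-periodization $\sum_{\mathbf{a}\in\Gamma}|\mathbf{v}(\cdot+\mathbf{a})|^2$ has all its Fourier coefficients at nonzero points of $\widetilde{\Gamma}$ equal to zero, hence is the constant $|\Omega|^{-1}\|\mathbf{v}\|_{L_2(\mathbb{R}^d)}^2$, and then $\|\Phi\mathbf{v}\|_{L_2(\mathbb{R}^d)}^2=\int_\Omega|\Phi(\mathbf{y})|^2\sum_{\mathbf{a}\in\Gamma}|\mathbf{v}(\mathbf{y}+\mathbf{a})|^2\,d\mathbf{y}$ immediately.
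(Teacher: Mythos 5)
The paper itself does not prove this proposition: it is stated with a citation to \cite[Proposition~1.4]{PSu} and \cite[Subsec.~10.2]{BSu06}. The proof in those references proceeds via the Gelfand transformation: since $\Pi=\mathcal{U}^{-1}[\widehat P]\,\mathcal{U}$ and multiplication by a periodic function commutes with $\mathcal{U}$ fiberwise, one has $[\Phi]\Pi=\mathcal{U}^{-1}[\Phi]\widehat P\,\mathcal{U}$, and the bound reduces to the one-line Cauchy--Schwarz estimate $\|\Phi\,\widehat P\mathbf{u}\|_{L_2(\Omega)}\leqslant|\Omega|^{-1/2}\|\Phi\|_{L_2(\Omega)}\|\mathbf{u}\|_{L_2(\Omega)}$, since $\widehat P$ is the cell average. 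Your proof is correct but takes a genuinely different route: after the same scaling reduction to $\varepsilon=1$, you work directly with the Fourier transform on $\mathbb{R}^d$, using the tiling of $\mathbb{R}^d$ by the translates $\widetilde\Omega+\mathbf{b}$ to show that the convolution $\widehat{\Phi\mathbf{v}}$ has a single nonzero summand a.e., and then Plancherel together with the Parseval identity for $\Phi$ yields the exact identity $\|\Phi\mathbf{v}\|^2_{L_2(\mathbb{R}^d)}=|\Omega|^{-1}\|\Phi\|^2_{L_2(\Omega)}\|\mathbf{v}\|^2_{L_2(\mathbb{R}^d)}$ on band-limited $\mathbf{v}$. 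Both arguments rest on the same geometric fact (the Brillouin zone is a fundamental domain of $\widetilde\Gamma$), but yours avoids the direct-integral machinery and in addition makes visible that the stated constant is the exact operator norm for $\Phi\neq 0$. The density step ($\mathbf{v}_k\to\Pi\mathbf{u}$, a.e.\ subsequence, Fatou) and the justification of the term-by-term Fourier computation (trigonometric polynomial approximation of $\Phi$, or, more cleanly, your real-space observation that $\sum_{\mathbf{a}\in\Gamma}|\mathbf{v}(\cdot+\mathbf{a})|^2$ is constant when $\operatorname{supp}\widehat{\mathbf{v}}\subset\widetilde\Omega$) are handled adequately, so the argument is complete.
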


\begin{theorem}
\label{Theorem 12.3}
Let $\Lambda (\mathbf{x})$ be the $\Gamma$-periodic solution of problem \eqref{Lambda problem}. Let $\Pi _\varepsilon$ be the operator \eqref{Pi eps}. Then, under the assumptions of Theorem~\textnormal{\ref{Theorem 12.1}}, for $\varepsilon >0$ and $\tau\in\mathbb{R}$ we have
\begin{equation}
\label{12.3}
\begin{split}
\bigl\Vert &
f^\varepsilon \mathcal{A}_\varepsilon ^{-1/2}\sin (\tau \mathcal{A}_\varepsilon ^{1/2})(f^\varepsilon )^{-1}
-(I+\varepsilon\Lambda ^\varepsilon b(\mathbf{D})\Pi _\varepsilon) f_0(\mathcal{A}^0)^{-1/2}\sin (\tau (\mathcal{A}^0)^{1/2})f_0^{-1}\bigr \Vert  _{H^2(\mathbb{R}^d)\rightarrow H^1(\mathbb{R}^d)}
\\
&\leqslant
{C}_{17}\varepsilon (1+\vert \tau\vert).
\end{split}
\end{equation}
The constant ${C}_{17}$ depends only on $m$, $\alpha _0$, $\alpha _1$, $\Vert g\Vert _{L_\infty}$, $\Vert g ^{-1}\Vert _{L_\infty}$, $\Vert f\Vert _{L_\infty}$, $\Vert f^{-1}\Vert _{L_\infty}$, and the parameters of the lattice $\Gamma$.
\end{theorem}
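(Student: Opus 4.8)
The plan is to deduce \eqref{12.3} from the fiberwise estimate \eqref{10.II} by means of the unitary scaling transformation $T_\varepsilon$, exactly as Theorem~\ref{Theorem 12.1} was deduced from \eqref{10.I}. First I would record the scaling identities: $\mathcal{A}_\varepsilon^{-1/2}\sin(\tau\mathcal{A}_\varepsilon^{1/2})=\varepsilon T_\varepsilon^*\mathcal{A}^{-1/2}\sin(\varepsilon^{-1}\tau\mathcal{A}^{1/2})T_\varepsilon$, the analogous identity for $\mathcal{A}^0$, and $\widehat{\mathcal{A}}_\varepsilon^{1/2}=\varepsilon^{-1}T_\varepsilon^*\widehat{\mathcal{A}}^{1/2}T_\varepsilon$. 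For the corrector term one uses $T_\varepsilon^*[\Lambda]T_\varepsilon=[\Lambda^\varepsilon]$, $T_\varepsilon^* b(\mathbf{D})T_\varepsilon=\varepsilon^{-1}b(\mathbf{D})$, and $T_\varepsilon^*\Pi T_\varepsilon=\Pi_\varepsilon$; hence $T_\varepsilon^*(I+\Lambda b(\mathbf{D})\Pi)T_\varepsilon = I+\varepsilon^{-1}\Lambda^\varepsilon b(\mathbf{D})\Pi_\varepsilon$, and the factor $\varepsilon^{-1}$ is compensated by the $\varepsilon$ coming from the sine operator, producing the term $\varepsilon\Lambda^\varepsilon b(\mathbf{D})\Pi_\varepsilon$ in \eqref{12.3}. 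Finally, $\mathcal{R}(\varepsilon)=T_\varepsilon^*(\mathcal{H}_0+I)^{-1}T_\varepsilon$ up to the already-noted identity, so conjugating \eqref{10.II} by $T_\varepsilon$ turns the left-hand side into
\[
\bigl\Vert \widehat{\mathcal{A}}_\varepsilon^{1/2}\bigl(f^\varepsilon\mathcal{A}_\varepsilon^{-1/2}\sin(\tau\mathcal{A}_\varepsilon^{1/2})(f^\varepsilon)^{-1}-(I+\varepsilon\Lambda^\varepsilon b(\mathbf{D})\Pi_\varepsilon)f_0(\mathcal{A}^0)^{-1/2}\sin(\tau(\mathcal{A}^0)^{1/2})f_0^{-1}\bigr)(\mathcal{H}_0+I)^{-1}\bigr\Vert_{L_2\to L_2}
\]
bounded by $C_{13}\varepsilon(1+|\tau|)$, where the extra $\varepsilon$ on the right comes from the scaling of $\widehat{\mathcal{A}}_\varepsilon^{1/2}$ against $\mathcal{R}(\varepsilon)$ as in \eqref{0.8a introduction}.

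The next step is to convert this into the $(H^2\to H^1)$-operator-norm statement. Since $(\mathcal{H}_0+I)^{1/2}$ is an isometric isomorphism of $H^s$ onto $L_2$ for every $s$ (cf.\ \eqref{isometria}), one may insert the factors $(\mathcal{H}_0+I)^{1/2}$ on the right and $(\mathcal{H}_0+I)^{-1/2}$ on the left of the operator difference; the left insertion costs nothing in the $L_2\to L_2$ norm once we show $\widehat{\mathcal{A}}_\varepsilon^{1/2}$ controls the $H^1$-norm, and the $(\mathcal{H}_0+I)^{-1}$ already present combines with $(\mathcal{H}_0+I)^{1/2}$ on the right to give $(\mathcal{H}_0+I)^{-1/2}$, i.e.\ an isomorphism of $L_2$ onto $H^1$—equivalently the statement that the operator difference maps $H^2$ to $H^1$. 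Concretely: let $\mathcal{D}(\varepsilon,\tau)$ denote the operator difference in \eqref{12.3}. One needs $\Vert\mathcal{D}(\varepsilon,\tau)\Vert_{H^2\to H^1}\le C\varepsilon(1+|\tau|)$, i.e.\ $\Vert(\mathcal{H}_0+I)^{1/2}\mathcal{D}(\varepsilon,\tau)(\mathcal{H}_0+I)^{-1}\Vert_{L_2\to L_2}\le C\varepsilon(1+|\tau|)$. The $H^1$-norm of $(\mathcal{H}_0+I)^{1/2}\mathbf{w}$ is comparable to $\Vert\mathbf{w}\Vert_{L_2}+\Vert\mathbf{D}\mathbf{w}\Vert_{L_2}$, and $\Vert\mathbf{D}\mathbf{w}\Vert_{L_2}$ is controlled by $\Vert\widehat{\mathcal{A}}_\varepsilon^{1/2}\mathbf{w}\Vert_{L_2}$ via the ellipticity estimate \eqref{<a<} applied to $\widehat{\mathcal{A}}_\varepsilon$ (equivalently $\alpha_0\Vert g^{-1}\Vert_{L_\infty}^{-1}\Vert\mathbf{D}\mathbf{w}\Vert^2\le\Vert\widehat{\mathcal{A}}_\varepsilon^{1/2}\mathbf{w}\Vert^2$), so the $\mathbf{D}$-part of the $H^1$-norm of $\mathcal{D}(\varepsilon,\tau)(\mathcal{H}_0+I)^{-1}u$ is dominated by the already-estimated quantity.

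It remains to handle the $L_2$-part, i.e.\ to bound $\Vert\mathcal{D}(\varepsilon,\tau)(\mathcal{H}_0+I)^{-1}\Vert_{L_2\to L_2}$ (without the leading $\widehat{\mathcal{A}}_\varepsilon^{1/2}$). This splits into three pieces: the two sine operators bordered by $f^\varepsilon,(f^\varepsilon)^{-1}$ and by $f_0,f_0^{-1}$, and the corrector piece $\varepsilon\Lambda^\varepsilon b(\mathbf{D})\Pi_\varepsilon f_0(\mathcal{A}^0)^{-1/2}\sin(\cdot)f_0^{-1}$. The first two, bordered with $(\mathcal{H}_0+I)^{-1}$, are $O(\varepsilon(1+|\tau|))$ in $L_2\to L_2$ by Theorem~\ref{Theorem 12.1} itself (which gives $H^1\to L_2$, hence also $L_2\to L_2$ after composing with $(\mathcal{H}_0+I)^{-1/2}\le(\mathcal{H}_0+I)^{-1}$ crudely, or more efficiently by the $H^s$-version in Theorem~\ref{Theorem 12.2}). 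For the corrector term, use $\Vert[\Lambda^\varepsilon]\Pi_\varepsilon\Vert_{L_2\to L_2}\le|\Omega|^{-1/2}\Vert\Lambda\Vert_{L_2(\Omega)}$ from Proposition~\ref{Proposition Pi eps f eps} together with \eqref{Lambda<=}, then $\Vert b(\mathbf{D})(\mathcal{H}_0+I)^{-1}\Vert_{L_2\to L_2}\le\alpha_1^{1/2}$ and the crude bound $\Vert(\mathcal{A}^0)^{-1/2}\sin(\tau(\mathcal{A}^0)^{1/2})\Vert_{L_2\to L_2}\le|\tau|$; this yields an $O(\varepsilon|\tau|)$ bound for the $L_2$-part of the corrector term. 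Combining the $\mathbf{D}$-estimate and the $L_2$-estimate gives $\Vert\mathcal{D}(\varepsilon,\tau)(\mathcal{H}_0+I)^{-1}\Vert_{L_2\to H^1}\le C\varepsilon(1+|\tau|)$, which is \eqref{12.3}. The main technical obstacle is the bookkeeping of which smoothing factor—$\mathcal{R}(\varepsilon)$ versus $\mathcal{R}(\varepsilon)^{1/2}$—survives after the scaling: the $H^1$-norm on the target costs one extra power of $\mathbf{D}$, which is precisely why \eqref{10.II} (with the squared resolvent $\mathcal{R}(\varepsilon)$ rather than $\mathcal{R}(\varepsilon)^{1/2}$) is the correct input, and one must check that the corrector $I+\Lambda b(\mathbf{D})\Pi$ is compatible with differentiation, using $\Pi_\varepsilon\mathbf{D}^\sigma=\mathbf{D}^\sigma\Pi_\varepsilon$ and Condition-free boundedness of $[\Lambda^\varepsilon]\Pi_\varepsilon$; everything else is routine.
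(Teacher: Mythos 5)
Your proposal is correct and follows essentially the same route as the paper: scale \eqref{10.II} to get the bordered estimate with $\widehat{\mathcal{A}}_\varepsilon^{1/2}(\cdot)(\mathcal{H}_0+I)^{-1}$, use $\widehat{c}_*\Vert\mathbf{D}\mathbf{w}\Vert^2\le\Vert\widehat{\mathcal{A}}_\varepsilon^{1/2}\mathbf{w}\Vert^2$ for the derivative part of the $H^1$-norm, and supply the $L_2$-part from Theorem~\ref{Theorem 12.1} (for the two principal terms) plus Proposition~\ref{Proposition Pi eps f eps} with \eqref{Lambda<=} (for the corrector). The only cosmetic difference is in the $L_2$-bound of the corrector term: you invoke the crude $\Vert(\mathcal{A}^0)^{-1/2}\sin(\tau(\mathcal{A}^0)^{1/2})\Vert\le|\tau|$, whereas the paper uses the factorization of $\mathcal{A}^0$ to absorb $b(\mathbf{D})f_0(\mathcal{A}^0)^{-1/2}$ into a $\tau$-independent bound; both yield $O(\varepsilon(1+|\tau|))$.
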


\begin{proof} 
By the scaling transformation, \eqref{10.II} implies that
\begin{equation}
\label{12.3a-2}
\begin{split}
\Bigl\Vert& \widehat{\mathcal{A}}_\varepsilon ^{1/2}
\Bigl(
f^\varepsilon \mathcal{A}_\varepsilon ^{-1/2}\sin (\tau\mathcal{A}_\varepsilon ^{1/2})(f^\varepsilon )^{-1}
-(I+\varepsilon \Lambda ^\varepsilon b(\mathbf{D})\Pi _\varepsilon )f_0 (\mathcal{A}^0)^{-1/2}\sin (\tau (\mathcal{A}^0)^{1/2})f_0^{-1} \Bigr)
\\
&\times
(\mathcal{H}_0 +I)^{-1}\Bigr\Vert _{L_2(\mathbb{R}^d)\rightarrow L_2(\mathbb{R}^d)}
\leqslant
{C}_{13}\varepsilon (1+\vert \tau\vert ).
\end{split}
\end{equation}
Note that, by \eqref{<b^*b<}, \eqref{g in}, and \eqref{A_eps},
\begin{equation}
\label{A_eps^1/2>=}
\widehat{c}_*\Vert \mathbf{D}\mathbf{u}\Vert ^2 _{L_2(\mathbb{R}^d)}\leqslant\Vert \widehat{\mathcal{A}}_\varepsilon ^{1/2}\mathbf{u}\Vert ^2 _{L_2(\mathbb{R}^d)},\quad\mathbf{u}\in H^1(\mathbb{R}^d;\mathbb{C}^n),
\end{equation}
where the constant $\widehat{c}_*$ is defined by \eqref{hat A(k)>=}. 
From \eqref{12.3a-2} and \eqref{A_eps^1/2>=} it follows that
\begin{equation}
\label{12.4}
\begin{split}
\Bigl\Vert &\mathbf{D}
\Bigl(
f^\varepsilon \mathcal{A}_\varepsilon ^{-1/2}\sin (\tau\mathcal{A}_\varepsilon ^{1/2})(f^\varepsilon )^{-1}
-(I+\varepsilon \Lambda ^\varepsilon b(\mathbf{D})\Pi _\varepsilon )f_0 (\mathcal{A}^0)^{-1/2}\sin (\tau (\mathcal{A}^0)^{1/2})f_0 ^{-1} \Bigr)
\\
&\times
(\mathcal{H}_0 +I)^{-1}\Bigr\Vert _{L_2(\mathbb{R}^d)\rightarrow L_2(\mathbb{R}^d)}
\leqslant
\widehat{c}_*^{-1/2}{C}_{13}\varepsilon (1+\vert \tau\vert ).
\end{split}
\end{equation}

Now, we estimate the $(L_2\rightarrow L_2)$-norm of the correction  term. Let $\Pi _\varepsilon ^{(m)}$ be the pseudodifferential operator in $L_2(\mathbb{R}^d;\mathbb{C}^m)$ with the symbol $\chi _{\widetilde{\Omega}/\varepsilon}(\boldsymbol{\xi})$. By Proposition \ref{Proposition Pi eps f eps} and \eqref{Lambda<=},
\begin{equation}
\label{proof fluxes 6}
\Vert \Lambda ^\varepsilon \Pi _\varepsilon ^{(m)}\Vert _{L_2(\mathbb{R}^d)\rightarrow L_2(\mathbb{R}^d)}\leqslant M_1.
\end{equation}
Using \eqref{g^0<=}, \eqref{f0<=}, \eqref{A0 no hat}, and \eqref{proof fluxes 6}, we have
\begin{equation}
\label{12.5}
\begin{split}
\Vert &\varepsilon \Lambda ^\varepsilon b(\mathbf{D})\Pi _\varepsilon  f_0(\mathcal{A}^0)^{-1/2}\sin (\tau (\mathcal{A}^0)^{1/2})f_0^{-1} (\mathcal{H}_0+I)^{-1}\Vert _{L_2(\mathbb{R}^d)\rightarrow L_2(\mathbb{R}^d)}
\\
&\leqslant
\varepsilon \Vert \Lambda ^\varepsilon \Pi _\varepsilon ^{(m)}\Vert  _{L_2(\mathbb{R}^d)\rightarrow L_2(\mathbb{R}^d)}
\Vert b(\mathbf{D})f_0(\mathcal{A}^0)^{-1/2}\sin (\tau (\mathcal{A}^0)^{1/2})f_0 ^{-1}(\mathcal{H}_0+I)^{-1}\Vert _{L_2(\mathbb{R}^d)\rightarrow L_2(\mathbb{R}^d)}
\\
&\leqslant\varepsilon M_1\Vert g^{-1}\Vert ^{1/2}_{L_\infty}
\Vert \sin (\tau (\mathcal{A}^0)^{1/2})f_0^{-1} (\mathcal{H}_0+I)^{-1}\Vert _{L_2(\mathbb{R}^d)\rightarrow L_2(\mathbb{R}^d)}
\leqslant
\varepsilon  M_1\Vert g^{-1}\Vert ^{1/2}_{L_\infty}\Vert f ^{-1}\Vert _{L_\infty}.
\end{split}
\end{equation}
Combining \eqref{isometria}, \eqref{12.1}, \eqref{12.4}, and \eqref{12.5}, we arrive at estimate \eqref{12.3} with the constant  
$
{C}_{17}:=\widehat{c}_*^{-1/2}{C}_{13}+
{C}_{12}+M_1\Vert g^{-1}\Vert ^{1/2}_{L_\infty}\Vert f ^{-1}\Vert _{L_\infty}$. 
\end{proof}

By interpolation, from Theorem~\ref{Theorem 12.3} we derive the following result.

\begin{theorem}
\label{Theorem 12.4}
Under the assumptions of Theorem~\textnormal{\ref{Theorem 12.3}}, for $0\leqslant s\leqslant 1$, $\tau\in\mathbb{R}$, and $0<\varepsilon\leqslant 1$ we have
\begin{equation}
\label{12.6}
\begin{split}
\Vert &f^\varepsilon \mathcal{A}_\varepsilon ^{-1/2}\sin (\tau \mathcal{A}_\varepsilon ^{1/2})(f^\varepsilon )^{-1}
-(I+\varepsilon\Lambda ^\varepsilon b(\mathbf{D})\Pi _\varepsilon )
f_0 (\mathcal{A}^0)^{-1/2}\sin (\tau (\mathcal{A}^0)^{1/2})f_0^{-1}\Vert _{H^{s+1}(\mathbb{R}^d)\rightarrow H^1(\mathbb{R}^d)}
\\
&\leqslant \mathfrak{C}_2(s) (1+\vert \tau\vert )\varepsilon ^s.
\end{split}
\end{equation}
Here the constant $\mathfrak{C}_2(s)$ depends only on $s$, $m$, $\alpha _0$, $\Vert g\Vert _{L_\infty}$, $\Vert g^{-1}\Vert _{L_\infty}$, $\Vert f\Vert _{L_\infty}$, $\Vert f^{-1}\Vert _{L_\infty}$, and the parameters of the lattice $\Gamma$. 
\end{theorem}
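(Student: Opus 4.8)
The strategy is to deduce \eqref{12.6} from Theorem~\ref{Theorem 12.3} by interpolation in the smoothness index of the source space. Write $\mathcal{B}_\varepsilon(\tau)$ for the operator appearing under the norm in \eqref{12.3}, i.e. the difference of $f^\varepsilon\mathcal{A}_\varepsilon^{-1/2}\sin(\tau\mathcal{A}_\varepsilon^{1/2})(f^\varepsilon)^{-1}$ and $(I+\varepsilon\Lambda^\varepsilon b(\mathbf{D})\Pi_\varepsilon)f_0(\mathcal{A}^0)^{-1/2}\sin(\tau(\mathcal{A}^0)^{1/2})f_0^{-1}$. Theorem~\ref{Theorem 12.3} gives $\Vert\mathcal{B}_\varepsilon(\tau)\Vert_{H^2\to H^1}\leqslant C_{17}\varepsilon(1+\vert\tau\vert)$. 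The plan is first to prove the complementary endpoint bound $\Vert\mathcal{B}_\varepsilon(\tau)\Vert_{H^1\to H^1}\leqslant\mathfrak{C}_0(1+\vert\tau\vert)$ with a constant $\mathfrak{C}_0$ \emph{independent of} $\varepsilon$ (for $0<\varepsilon\leqslant 1$). Granting this, since $(H^1(\mathbb{R}^d;\mathbb{C}^n),H^2(\mathbb{R}^d;\mathbb{C}^n))_{[s]}=H^{s+1}(\mathbb{R}^d;\mathbb{C}^n)$ and the target $H^1$ is fixed, the interpolation inequality for operator norms yields $\Vert\mathcal{B}_\varepsilon(\tau)\Vert_{H^{s+1}\to H^1}\leqslant(\mathfrak{C}_0(1+\vert\tau\vert))^{1-s}(C_{17}\varepsilon(1+\vert\tau\vert))^{s}=\mathfrak{C}_0^{1-s}C_{17}^{s}(1+\vert\tau\vert)\varepsilon^{s}$, which is \eqref{12.6} with $\mathfrak{C}_2(s)=\mathfrak{C}_0^{1-s}C_{17}^{s}$; tracking the dependences of $\mathfrak{C}_0$ and $C_{17}$ produces the asserted list of parameters.

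So the real content is the $\varepsilon$-uniform bound $\Vert\mathcal{B}_\varepsilon(\tau)\Vert_{H^1\to H^1}\leqslant\mathfrak{C}_0(1+\vert\tau\vert)$, which I would obtain by bounding the two operators forming $\mathcal{B}_\varepsilon(\tau)$ separately, in fact as maps $L_2(\mathbb{R}^d;\mathbb{C}^n)\to H^1(\mathbb{R}^d;\mathbb{C}^n)$. For the first one, $f^\varepsilon$ is only an $L_\infty$-multiplier, so I cannot estimate it in $H^1$ termwise; instead I route the $H^1$-norm through $\widehat{\mathcal{A}}_\varepsilon^{1/2}$ using the lower bound \eqref{A_eps^1/2>=} (so $\Vert\mathbf{D}\,\cdot\,\Vert_{L_2}\leqslant\widehat c_*^{-1/2}\Vert\widehat{\mathcal{A}}_\varepsilon^{1/2}\,\cdot\,\Vert_{L_2}$), and I exploit the factorization $\mathcal{A}_\varepsilon=(f^\varepsilon)^*\widehat{\mathcal{A}}_\varepsilon f^\varepsilon$, which makes $\widehat{\mathcal{A}}_\varepsilon^{1/2}f^\varepsilon\mathcal{A}_\varepsilon^{-1/2}$ a partial isometry; hence $\Vert\widehat{\mathcal{A}}_\varepsilon^{1/2}f^\varepsilon\mathcal{A}_\varepsilon^{-1/2}\sin(\tau\mathcal{A}_\varepsilon^{1/2})(f^\varepsilon)^{-1}\mathbf{u}\Vert_{L_2}\leqslant\Vert(f^\varepsilon)^{-1}\mathbf{u}\Vert_{L_2}\leqslant\Vert f^{-1}\Vert_{L_\infty}\Vert\mathbf{u}\Vert_{L_2}$, while the $L_2$-part is controlled by the spectral bound $\Vert\mathcal{A}_\varepsilon^{-1/2}\sin(\tau\mathcal{A}_\varepsilon^{1/2})\Vert_{L_2\to L_2}\leqslant\vert\tau\vert$. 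For the second operator, $\mathcal{W}(\tau):=f_0(\mathcal{A}^0)^{-1/2}\sin(\tau(\mathcal{A}^0)^{1/2})f_0^{-1}$ is a constant-coefficient Fourier multiplier; from \eqref{f_0 dots >=} (the symbol of $\mathcal{A}^0$ is $\geqslant c_*\vert\boldsymbol\xi\vert^2\mathbf{1}_n$) and \eqref{f0<=} one reads off $\Vert\mathcal{W}(\tau)\Vert_{L_2\to L_2}\leqslant\Vert f\Vert_{L_\infty}\Vert f^{-1}\Vert_{L_\infty}\vert\tau\vert$ and $\Vert\mathbf{D}\mathcal{W}(\tau)\Vert_{L_2\to L_2}\leqslant c_*^{-1/2}\Vert f\Vert_{L_\infty}\Vert f^{-1}\Vert_{L_\infty}$ (because $\vert\boldsymbol\xi\vert\,\Vert(\mathcal{A}^0(\boldsymbol\xi))^{-1/2}\sin(\cdot)\Vert\leqslant c_*^{-1/2}$), so the identity part maps $L_2\to H^1$ with norm $O(1+\vert\tau\vert)$. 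The corrector $\varepsilon\Lambda^\varepsilon b(\mathbf{D})\Pi_\varepsilon\mathcal{W}(\tau)$ I write as $\varepsilon[\Lambda^\varepsilon]\Pi_\varepsilon^{(m)}\bigl(b(\mathbf{D})\mathcal{W}(\tau)\bigr)$, where by the same symbol estimate and \eqref{<b^*b<} the multiplier $b(\mathbf{D})\mathcal{W}(\tau)$ is $L_2$-bounded with norm $\leqslant\alpha_1^{1/2}c_*^{-1/2}\Vert f\Vert_{L_\infty}\Vert f^{-1}\Vert_{L_\infty}$, and Proposition~\ref{Proposition Pi eps f eps} with \eqref{Lambda<=} gives $\Vert[\Lambda^\varepsilon]\Pi_\varepsilon^{(m)}\Vert_{L_2\to L_2}\leqslant M_1$; for the $\mathbf{D}$-derivative of this corrector, the term in which $\mathbf{D}$ lands on $\Lambda^\varepsilon$ becomes $(\mathbf{D}\Lambda)^\varepsilon b(\mathbf{D})\Pi_\varepsilon\mathcal{W}(\tau)$, handled by Proposition~\ref{Proposition Pi eps f eps} with \eqref{D Lambda <=} (the $\varepsilon$ from $\varepsilon\mathbf{D}\Lambda^\varepsilon=(\mathbf{D}\Lambda)^\varepsilon$ being absorbed), while the term in which $\mathbf{D}$ passes through to $b(\mathbf{D})\Pi_\varepsilon$ is handled by the frequency truncation of $\Pi_\varepsilon$ (which cuts frequencies off at $\vert\boldsymbol\xi\vert\lesssim\varepsilon^{-1}$, the constant being a parameter of the lattice $\Gamma$), again absorbing the prefactor $\varepsilon$. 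Hence the corrector is $L_2\to H^1$ bounded uniformly in $0<\varepsilon\leqslant 1$ and in $\tau$. Adding up, $\Vert\mathcal{B}_\varepsilon(\tau)\Vert_{H^1\to H^1}\leqslant\mathfrak{C}_0(1+\vert\tau\vert)$.

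I expect the main obstacle to be precisely the $\mathbf{D}$-derivative of the corrector term: one must check that \emph{both} ways the derivative can act still give an $\varepsilon$-uniform bound — one through the $L_2(\Omega)$-boundedness of $\mathbf{D}\Lambda$ via Proposition~\ref{Proposition Pi eps f eps}, the other through the $\varepsilon^{-1}$-scaled frequency cutoff of $\Pi_\varepsilon$ compensating exactly the $\varepsilon$ in front of the corrector. A secondary care point is the non-multiplier nature of $f^\varepsilon$ on $H^1$, circumvented by passing through $\widehat{\mathcal{A}}_\varepsilon^{1/2}$ and the identity $\mathcal{A}_\varepsilon=(f^\varepsilon)^*\widehat{\mathcal{A}}_\varepsilon f^\varepsilon$. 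Once the $H^1\to H^1$ endpoint is established, combining it with Theorem~\ref{Theorem 12.3} by interpolation is routine, exactly as in the passage from \eqref{12.1}, \eqref{12.2} to Theorem~\ref{Theorem 12.2}.
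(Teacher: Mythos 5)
Your proposal matches the paper's proof: both derive the $\varepsilon$-uniform endpoint bound $\Vert\mathcal{B}_\varepsilon(\tau)\Vert_{H^1\to H^1}\leqslant C(1+\vert\tau\vert)$ (paper's \eqref{12.10}) by routing the $\mathbf{D}$-derivative of $f^\varepsilon\mathcal{A}_\varepsilon^{-1/2}\sin(\tau\mathcal{A}_\varepsilon^{1/2})(f^\varepsilon)^{-1}$ through the partial isometry $\widehat{\mathcal{A}}_\varepsilon^{1/2}f^\varepsilon\mathcal{A}_\varepsilon^{-1/2}$ and \eqref{A_eps^1/2>=}, using the constant-coefficient symbol of $\mathcal{A}^0$ for $\mathcal{W}(\tau)$, splitting the corrector derivative into the $(\mathbf{D}\Lambda)^\varepsilon$ and $\mathbf{D}b(\mathbf{D})\Pi_\varepsilon$ pieces with Proposition~\ref{Proposition Pi eps f eps} and \eqref{Lambda<=}, \eqref{D Lambda <=}, and then interpolating with Theorem~\ref{Theorem 12.3}. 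The only cosmetic difference is the last corrector piece: the paper (in \eqref{proof interpol corr e1}) keeps the factor $\varepsilon\leqslant 1$ and places the extra $\vert\boldsymbol\xi\vert$ on the $H^1$-norm of the source, whereas you absorb $\varepsilon$ via the $\widetilde{\Omega}/\varepsilon$ cutoff of $\Pi_\varepsilon$ — both yield the same $O(1)$ bound.
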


\begin{proof}
Let us  estimate the left-hand side of \eqref{12.6} for $s=0$. By \eqref{<b^*b<}, \eqref{A_eps no hat}, and the elementary inequality $\vert \sin x\vert /\vert x\vert \leqslant 1$, $x\in\mathbb{R}$,
\begin{equation}
\label{12.7}
\begin{split}
\Vert & f^\varepsilon \mathcal{A}_\varepsilon ^{-1/2}\sin (\tau\mathcal{A}_\varepsilon ^{1/2})(f^\varepsilon )^{-1}\Vert _{H^1(\mathbb{R}^d)\rightarrow H^1(\mathbb{R}^d)}
\\
&\leqslant
\Vert f\Vert _{L_\infty}\Vert f^{-1}\Vert _{L_\infty}\vert \tau\vert 
+\Vert \mathbf{D}f^\varepsilon \mathcal{A}_\varepsilon ^{-1/2}\sin (\tau \mathcal{A}_\varepsilon ^{1/2})(f^\varepsilon )^{-1}\Vert _{H^1(\mathbb{R}^d)\rightarrow L_2(\mathbb{R}^d)}
\\
&\leqslant
\Vert f\Vert _{L_\infty}\Vert f^{-1}\Vert _{L_\infty}\vert \tau\vert 
+\alpha _0^{-1/2}\Vert g^{-1}\Vert ^{1/2}_{L_\infty}\Vert f^{-1}\Vert _{L_\infty}.
\end{split}
\end{equation}
Similarly, by \eqref{<b^*b<}, \eqref{f0<=}, and \eqref{A0 no hat},
\begin{equation}
\label{12.8}
\Vert f_0(\mathcal{A}^0)^{-1/2}\sin (\tau (\mathcal{A}^0)^{1/2})f_0 ^{-1}\Vert _{H^1(\mathbb{R}^d)\rightarrow H^1(\mathbb{R}^d)}
\leqslant
\Vert f\Vert _{L_\infty}\Vert f^{-1}\Vert _{L_\infty}\vert\tau\vert 
+\alpha _0^{-1/2}\Vert g^{-1}\Vert ^{1/2}_{L_\infty}\Vert f^{-1} \Vert _{L_\infty}.
\end{equation}

From \eqref{g^0<=}, \eqref{f0<=}, and \eqref{proof fluxes 6} it follows that
\begin{equation}
\label{proof interpol corr 3}
\begin{split}
\Vert &\varepsilon \Lambda ^\varepsilon b(\mathbf{D})\Pi _\varepsilon  f_0({\mathcal{A}}^0) ^{-1/2}\sin (\tau ( {\mathcal{A}}^0) ^{1/2})f_0^{-1}\Vert _{H^1(\mathbb{R}^d)\rightarrow H^1(\mathbb{R}^d)}\\
&\leqslant \varepsilon M_1\Vert g^{-1}\Vert ^{1/2}_{L_\infty}\Vert f^{-1}\Vert _{L_\infty}
+\varepsilon\Vert \mathbf{D}\Lambda ^\varepsilon b(\mathbf{D})\Pi _\varepsilon f_0 ( {\mathcal{A}}^0) ^{-1/2}\sin (\tau ( {\mathcal{A}}^0) ^{1/2})f_0^{-1}\Vert _{H^1(\mathbb{R}^d)\rightarrow L_2(\mathbb{R}^d)}
\\
&\leqslant \varepsilon M_1\Vert g^{-1}\Vert ^{1/2}_{L_\infty}\Vert f^{-1}\Vert _{L_\infty}
+\Vert (\mathbf{D}\Lambda)^\varepsilon b(\mathbf{D})\Pi _\varepsilon f_0 ( {\mathcal{A}}^0) ^{-1/2}\sin (\tau ( {\mathcal{A}}^0) ^{1/2})f_0^{-1}\Vert _{H^1(\mathbb{R}^d)\rightarrow L_2(\mathbb{R}^d)}
\\
&+\varepsilon\Vert \Lambda ^\varepsilon \mathbf{D}b(\mathbf{D})\Pi _\varepsilon f_0 ( {\mathcal{A}}^0) ^{-1/2}\sin (\tau ( {\mathcal{A}}^0) ^{1/2})f_0^{-1}\Vert _{H^1(\mathbb{R}^d)\rightarrow L_2(\mathbb{R}^d)}.
\end{split}
\end{equation}
By Proposition \ref{Proposition Pi eps f eps}, \eqref{D Lambda <=}, \eqref{g^0<=}, \eqref{f0<=}, and \eqref{A0 no hat}, 
\begin{equation}
\label{proof interpol corr 8.18}
\Vert (\mathbf{D}\Lambda)^\varepsilon b(\mathbf{D})\Pi _\varepsilon f_0( {\mathcal{A}}^0) ^{-1/2}\sin (\tau ( {\mathcal{A}}^0) ^{1/2})f_0^{-1}\Vert _{H^1(\mathbb{R}^d)\rightarrow L_2(\mathbb{R}^d)}
\leqslant M_2\Vert g^{-1}\Vert ^{1/2}_{L_\infty}\Vert f^{-1}\Vert _{L_\infty}.
\end{equation}
Next, according to \eqref{g^0<=},  \eqref{A0 no hat}, and \eqref{proof fluxes 6},
\begin{equation}
\label{proof interpol corr e1}
\begin{split}
\varepsilon\Vert& \Lambda ^\varepsilon \mathbf{D}b(\mathbf{D})\Pi _\varepsilon f_0( {\mathcal{A}}^0) ^{-1/2}\sin (\tau ( {\mathcal{A}}^0) ^{1/2})f_0^{-1}\Vert _{H^1(\mathbb{R}^d)\rightarrow L_2(\mathbb{R}^d)}
\\
&\leqslant \varepsilon M_1\Vert g^{-1}\Vert _{L_\infty}^{1/2}\Vert \mathbf{D}\sin (\tau ( {\mathcal{A}}^0) ^{1/2})f_0^{-1}\Vert _{H^1(\mathbb{R}^d)\rightarrow L_2(\mathbb{R}^d)}.
\end{split}
\end{equation}
Since the operator ${\mathcal{A}}^0$ with constant coefficients commutes with the differentiation $\mathbf{D}$, we have
\begin{equation*}
\Vert \mathbf{D}\sin (\tau ( {\mathcal{A}}^0) ^{1/2})\Vert _{H^1(\mathbb{R}^d)\rightarrow L_2(\mathbb{R}^d)}\leqslant 1.
\end{equation*}
Together with \eqref{f0<=} and \eqref{proof interpol corr 3}--\eqref{proof interpol corr e1} this yields
\begin{equation}
\label{12.9}
\Vert\varepsilon \Lambda ^\varepsilon b(\mathbf{D})\Pi _\varepsilon f_0 (\mathcal{A}^0)^{-1/2}\sin (\tau (\mathcal{A}^0)^{1/2})f_0 ^{-1}\Vert _{H^1(\mathbb{R}^d)\rightarrow H^1(\mathbb{R}^d)}
\leqslant
(2\varepsilon M_1+M_2)\Vert g^{-1}\Vert ^{1/2}_{L_\infty}\Vert f ^{-1}\Vert _{L_\infty}.
\end{equation}
Relations \eqref{12.7}, \eqref{12.8}, and \eqref{12.9} imply that
\begin{equation}
\label{12.10}
\begin{split}
\Vert &f^\varepsilon \mathcal{A}_\varepsilon ^{-1/2}\sin (\tau \mathcal{A}_\varepsilon ^{1/2})(f^\varepsilon)^{-1}
-(I+\varepsilon \Lambda ^\varepsilon b(\mathbf{D})\Pi _\varepsilon)
f_0(\mathcal{A}^0)^{-1/2}\sin (\tau (\mathcal{A}^0)^{1/2})f_0 ^{-1}\Vert _{H^1(\mathbb{R}^d)\rightarrow H^1(\mathbb{R}^d)}
\\
&\leqslant { {C}}_{18}(1+\vert \tau\vert ),\quad\tau\in\mathbb{R},\quad 0<\varepsilon\leqslant 1,
\end{split}
\end{equation}
where ${ {C}}_{18}:=\max\lbrace2\Vert f\Vert _{L_\infty}\Vert f^{-1}\Vert _{L_\infty} ;(2\alpha _0 ^{-1/2}
+2M_1+M_2)\Vert g^{-1}\Vert ^{1/2}_{L_\infty}\Vert f ^{-1}\Vert _{L_\infty}\rbrace$. 
Interpolating between \eqref{12.10}  and \eqref{12.3}, we deduce estimate \eqref{12.6} with $\mathfrak{C}_2(s):={ {C}}_{18}^{\,1-s}{ {C}}_{17}^s$.
\end{proof}

\subsection{The case where $d\leqslant 4$}

Now we apply Theorem~\ref{Theorem 8/1 NO PI}. By the scaling transformation, \eqref{Th 8/1 NO PI} implies that
\begin{equation}
\label{9.20a NEW}
\begin{split}
\Bigl\Vert &\widehat{\mathcal{A}}_\varepsilon ^{1/2}\left(
f^\varepsilon\mathcal{A}_\varepsilon ^{-1/2}\sin (\tau \mathcal{A}_\varepsilon ^{1/2})(f^\varepsilon)^{-1}
-(I+\varepsilon \Lambda ^\varepsilon b(\mathbf{D}))f_0(\mathcal{A}^0)^{-1/2}\sin (\tau (\mathcal{A}^0)^{1/2})f_0^{-1}
\right)
\\
&\times
(\mathcal{H}_0+I)^{-1}
\Bigr\Vert _{L_2(\mathbb{R}^d)\rightarrow L_2(\mathbb{R}^d)}
\leqslant C_{14}\varepsilon (1+\vert \tau\vert),\quad 0<\varepsilon\leqslant 1,\quad \tau\in\mathbb{R}.
\end{split}
\end{equation}
Combining this with \eqref{A_eps^1/2>=}, we obtain
\begin{equation}
\label{1 p.9.3}
\begin{split}
\Bigl\Vert&\mathbf{D}\left(
f^\varepsilon\mathcal{A}_\varepsilon ^{-1/2}\sin (\tau \mathcal{A}_\varepsilon ^{1/2})(f^\varepsilon)^{-1}
-(I+\varepsilon \Lambda ^\varepsilon b(\mathbf{D}))f_0(\mathcal{A}^0)^{-1/2}\sin (\tau (\mathcal{A}^0)^{1/2})f_0^{-1}
\right)
\\
&\times
(\mathcal{H}_0+I)^{-1}
\Bigr\Vert _{L_2(\mathbb{R}^d)\rightarrow L_2(\mathbb{R}^d)}
\leqslant \widehat{c}_*^{-1/2} C_{14}\varepsilon (1+\vert \tau\vert),\quad 0<\varepsilon\leqslant 1,\quad\tau\in\mathbb{R}.
\end{split}
\end{equation}

Let us estimate the $(L_2\rightarrow L_2)$-norm of the corrector. By the scaling transformation,
\begin{equation}
\label{L2L2 corr norm}
\begin{split}
\varepsilon \Vert \Lambda ^\varepsilon b(\mathbf{D})f_0 (\mathcal{A}^0)^{-1/2}\sin (\tau (\mathcal{A}^0)^{1/2})f_0^{-1}(\mathcal{H}_0 +I)^{-1}\Vert _{L_2(\mathbb{R}^d)\rightarrow L_2(\mathbb{R}^d)}
\\
=
\varepsilon \Vert \Lambda b(\mathbf{D})f_0(\mathcal{A}^0)^{-1/2}\sin (\varepsilon ^{-1}\tau (\mathcal{A}^0)^{1/2})f_0^{-1}\mathcal{R}(\varepsilon)\Vert _{L_2(\mathbb{R}^d)\rightarrow L_2(\mathbb{R}^d)}.
\end{split}
\end{equation}

The $(H^s\rightarrow L_2)$-norm of the operator $[\Lambda ]$ was estimated in \cite[Proposition 11.3]{Su_MMNP}.

\begin{proposition}
\label{Proposition Lambda Hs to L2}
Let $s=0$ for $d=1$, $s>0$ for $d=2$, $s=d/2-1$ for $d\geqslant 3$. Then the operator $[\Lambda]$ is a continuous mapping of $H^s(\mathbb{R}^d;\mathbb{C}^m)$ to $L_2(\mathbb{R}^d;\mathbb{C}^m)$, and 
\begin{equation*}
\Vert [\Lambda ]\Vert _{H^s(\mathbb{R}^d)\rightarrow L_2(\mathbb{R}^d)}\leqslant \mathfrak{C}_d,
\end{equation*}
there the constant $\mathfrak{C}_d$ depends only on $d$, $m$, $n$, $\alpha _0$, $\Vert g\Vert _{L_\infty}$, $\Vert g^{-1}\Vert _{L_\infty}$, and the parameters of the lattice $\Gamma$\textnormal{;} in the case $d=2$ it depends also on $s$.
\end{proposition}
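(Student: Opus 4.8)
The plan is to reduce the statement to the multiplication bound $\Vert \Lambda u\Vert_{L_2(\mathbb{R}^d)}\leqslant \mathfrak{C}_d\Vert u\Vert_{H^s(\mathbb{R}^d)}$, $u\in H^s(\mathbb{R}^d;\mathbb{C}^m)$, and to prove it by a cell-by-cell decomposition over the lattice $\Gamma$, in the spirit of the proof of Lemma~\ref{Proposition d>=3 Lambda}. The point is that $\Lambda$ is not merely square-integrable: it belongs to $\widetilde{H}^1(\Omega)$, with $\Vert\Lambda\Vert_{L_2(\Omega)}$ and $\Vert\mathbf{D}\Lambda\Vert_{L_2(\Omega)}$ controlled by \eqref{Lambda<=} and \eqref{D Lambda <=}. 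Hence, by the Sobolev embedding on the bounded cell $\Omega$, for $d\geqslant 3$ one has $\Lambda\in L_p(\Omega)$ with $p=2d/(d-2)$ and $\Vert\Lambda\Vert_{L_p(\Omega)}\leqslant \mathrm{c}_{\Gamma}\bigl(\Vert\Lambda\Vert_{L_2(\Omega)}+\Vert\mathbf{D}\Lambda\Vert_{L_2(\Omega)}\bigr)$; for $d=2$ the same holds for any $p<\infty$ (with $\mathrm{c}_\Gamma$ depending also on $p$), and for $d=1$ one simply has $\Lambda\in L_\infty(\Omega)$ with $\Vert\Lambda\Vert_{L_\infty(\Omega)}\leqslant \mathrm{c}_\Gamma\Vert\Lambda\Vert_{H^1(\Omega)}$.

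First I would split $\mathbb{R}^d=\bigsqcup_{\mathbf{a}\in\Gamma}(\Omega+\mathbf{a})$, use the periodicity of $\Lambda$, and on each translated cell apply the H\"older inequality, distributing $\vert\Lambda\vert^2$ into $L_{p/2}(\Omega+\mathbf{a})$ and $\vert u\vert^2$ into the conjugate $L_{(p/2)'}(\Omega+\mathbf{a})$, so that with $q:=2(p/2)'$
\begin{equation*}
\int_{\Omega+\mathbf{a}}\vert\Lambda\vert^2\vert u\vert^2\,d\mathbf{x}\leqslant \Vert\Lambda\Vert_{L_p(\Omega)}^2\,\Vert u\Vert_{L_q(\Omega+\mathbf{a})}^2 ,\qquad \frac{2}{p}+\frac{2}{q}=1 .
\end{equation*}
For $d\geqslant 3$ and $p=2d/(d-2)$ this forces $q=d$; for $d=2$ a large $p$ makes $q$ as close to $2$ as desired; for $d=1$ one takes $q=2$. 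Summing over $\mathbf{a}\in\Gamma$, it then remains to bound $\sum_{\mathbf{a}\in\Gamma}\Vert u\Vert_{L_q(\Omega+\mathbf{a})}^2$ by $C\Vert u\Vert_{H^s(\mathbb{R}^d)}^2$ with $s=d/2-1$ for $d\geqslant 3$ (any $s>0$ for $d=2$, $s=0$ for $d=1$). This is a localized Sobolev embedding: the local embedding $H^s(\Omega+\mathbf{a})\hookrightarrow L_q(\Omega+\mathbf{a})$ holds precisely for this pair $(s,q)$, with a translation-independent constant, and to sum it over $\Gamma$ I would insert a fixed finite-overlap partition of unity $\{\varphi(\cdot-\mathbf{a})\}_{\mathbf{a}\in\Gamma}$ subordinate to slightly dilated cells, using $\Vert u\Vert_{L_q(\Omega+\mathbf{a})}\leqslant C\Vert\varphi(\cdot-\mathbf{a})u\Vert_{H^s(\mathbb{R}^d)}$ together with the standard estimate $\sum_{\mathbf{a}}\Vert\varphi(\cdot-\mathbf{a})u\Vert_{H^s(\mathbb{R}^d)}^2\leqslant C\Vert u\Vert_{H^s(\mathbb{R}^d)}^2$.

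Tracking constants: the Sobolev and partition-of-unity constants depend only on $d$ and the lattice $\Gamma$ (and on $s$ when $d=2$), while $\Vert\Lambda\Vert_{L_p(\Omega)}$ is controlled through \eqref{Lambda<=}, \eqref{D Lambda <=} by $m$, $\alpha_0$, $\Vert g\Vert_{L_\infty}$, $\Vert g^{-1}\Vert_{L_\infty}$ and $\Gamma$; carrying the matrix dimensions $m$, $n$ through the finite sum over the columns of $\Lambda$ then yields $\mathfrak{C}_d$ with the asserted dependence. I expect the main obstacle to be this last step, the passage from the cell-wise estimates to a global bound for a \emph{fractional} index $s=d/2-1$ (a half-integer when $d$ is odd, and an arbitrarily small exponent when $d=2$), for which the $H^s$-norm is not additive over the cells; this is exactly what forces the finite-overlap partition-of-unity argument, and in the borderline dimension $d=2$ it is the $s$-dependent choice of the H\"older exponent $p$ that makes $\mathfrak{C}_2$ depend on $s$ as well. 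For even $d\geqslant 4$, where $s$ is a positive integer, the summation is trivial and the argument simplifies.
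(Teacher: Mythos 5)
Your plan is sound and structurally the same as what the paper does for the companion estimates (the paper itself does not reproduce a proof here, it cites \cite[Proposition~11.3]{Su_MMNP}; but the technique there and in the paper's own proof of Lemma~\ref{Proposition d>=3 Lambda} is exactly the cell decomposition over $\Gamma$, H\"older on each cell using periodicity of $\Lambda$, and a Sobolev embedding on $\Omega$). The numerology is right: for $d\geqslant 3$, $\Lambda\in H^1(\Omega)\hookrightarrow L_{2d/(d-2)}(\Omega)$ by \eqref{Lambda<=}, \eqref{D Lambda <=}, so H\"older with $p=2d/(d-2)$ gives $q=d$, and $H^{d/2-1}(\mathbb{R}^d)\hookrightarrow L^d(\mathbb{R}^d)$ is precisely the (non-endpoint) Sobolev embedding; in $d=2$ the $s$-dependent choice of $p$ is what makes $\mathfrak{C}_2$ depend on $s$; and $d=1$ is trivial via $H^1(\Omega)\hookrightarrow L_\infty(\Omega)$. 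The constant you track has the stated dependence.

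Two small technical remarks on the summation step, which you correctly identify as the only delicate point. First, as written your partition-of-unity inequality $\Vert u\Vert_{L_q(\Omega+\mathbf{a})}\leqslant C\Vert\varphi(\cdot-\mathbf{a})u\Vert_{H^s(\mathbb{R}^d)}$ needs $\varphi(\cdot-\mathbf{a})\equiv1$ on $\Omega+\mathbf{a}$, which is incompatible with $\sum_\mathbf{a}\varphi(\cdot-\mathbf{a})=1$; you should instead use a fixed cutoff $\chi\in C_0^\infty$ with $\chi\equiv1$ on $\overline{\Omega}$ and support in a slightly enlarged cell, forming a bounded-overlap (not unit-sum) family $\{\chi(\cdot-\mathbf{a})\}$, for which the standard almost-orthogonality estimate $\sum_\mathbf{a}\Vert\chi(\cdot-\mathbf{a})u\Vert_{H^s(\mathbb{R}^d)}^2\leqslant C\Vert u\Vert_{H^s(\mathbb{R}^d)}^2$ still holds. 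Second, you can avoid cutoffs altogether by using the intrinsic Gagliardo norm on each translated cell: since $\sum_\mathbf{a}[u]_{H^\sigma(\Omega+\mathbf{a})}^2$ (with $\sigma\in(0,1)$, resp.\ applied to $D^\alpha u$ for the higher-order pieces when $s>1$) only integrates over the diagonal blocks $(\Omega+\mathbf{a})\times(\Omega+\mathbf{a})$ of the full double integral defining $[u]_{H^\sigma(\mathbb{R}^d)}^2$, one has $\sum_\mathbf{a}\Vert u\Vert_{H^s(\Omega+\mathbf{a})}^2\leqslant C\Vert u\Vert_{H^s(\mathbb{R}^d)}^2$ directly, and the local Sobolev embedding $H^s(\Omega+\mathbf{a})\hookrightarrow L^q(\Omega+\mathbf{a})$ has a translation-independent constant. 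Either way the proof closes; the Gagliardo route is perhaps cleaner because it sidesteps the fractional Leibniz rule implicit in the cutoff estimate.
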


Now we consider only the case $d\leqslant 4$. So, by Proposition~\ref{Proposition Lambda Hs to L2},
\begin{equation}
\label{Lambda H1->L2}
\Vert [\Lambda]\Vert _{H^1(\mathbb{R}^d)\rightarrow L_2(\mathbb{R}^d)}\leqslant\mathfrak{C}_d,\quad d\leqslant 4.
\end{equation}
Thus, we need to estimate the operator $b(\mathbf{D})f_0(\mathcal{A}^0)^{-1/2}\sin (\varepsilon ^{-1}\tau (\mathcal{A}^0)^{1/2})f_0^{-1}\mathcal{R}(\varepsilon)$ in the\break $(L_2\rightarrow H^1)$-norm. By \eqref{g^0<=}, \eqref{f0<=}, and \eqref{A0 no hat}, for any $d$ we have
\begin{equation}
\label{4 in p.9.3}
\begin{split}
\Vert &
b(\mathbf{D})f_0(\mathcal{A}^0)^{-1/2}\sin (\varepsilon ^{-1}\tau (\mathcal{A}^0)^{1/2})f_0^{-1}\mathcal{R}(\varepsilon)\Vert _{L_2(\mathbb{R}^d)\rightarrow H^1(\mathbb{R}^d)}
\\
&\leqslant
\Vert b(\mathbf{D})f_0(\mathcal{A}^0)^{-1/2}\sin (\varepsilon ^{-1}\tau (\mathcal{A}^0)^{1/2})f_0^{-1}\mathcal{R}(\varepsilon)\Vert _{L_2(\mathbb{R}^d)\rightarrow L_2(\mathbb{R}^d)}
\\
&+\Vert \mathbf{D}b(\mathbf{D})f_0(\mathcal{A}^0)^{-1/2}\sin (\varepsilon ^{-1}\tau (\mathcal{A}^0)^{1/2})f_0^{-1}\mathcal{R}(\varepsilon)\Vert _{L_2(\mathbb{R}^d)\rightarrow L_2(\mathbb{R}^d)}
\\
&\leqslant 2\Vert g^{-1}\Vert ^{1/2}_{L_\infty}\Vert f^{-1}\Vert _{L_\infty},\quad \tau\in\mathbb{R},\quad 0<\varepsilon\leqslant 1.
\end{split}
\end{equation}

The following result is a direct consequence of \eqref{12.1} and \eqref{1 p.9.3}--\eqref{4 in p.9.3}.

\begin{theorem}
\label{Theorem d<=4 chapter 3}
Let $d\leqslant 4$. Under the assumptions of Theorem~\textnormal{\ref{Theorem 12.3}}, we have
\begin{equation}
\label{Th d<=4}
\begin{split}
\Vert &f^\varepsilon \mathcal{A}_\varepsilon ^{-1/2}\sin (\tau \mathcal{A}_\varepsilon ^{1/2})(f^\varepsilon)^{-1}
-(I+\varepsilon \Lambda ^\varepsilon b(\mathbf{D}))f_0(\mathcal{A}^0)^{-1/2}\sin (\tau (\mathcal{A}^0)^{1/2})f_0^{-1}\Vert _{H^2(\mathbb{R}^d)\rightarrow H^1(\mathbb{R}^d)}
\\
&\leqslant C_{19}\varepsilon (1+\vert \tau\vert),\quad 0<\varepsilon\leqslant 1,\quad \tau\in\mathbb{R}.
\end{split}
\end{equation}
The constant $C_{19}:=\widehat{c}_*^{-1/2}C_{14}+C_{12}+2\mathfrak{C}_d\Vert g^{-1}\Vert ^{1/2}_{L_\infty}\Vert f^{-1}\Vert _{L_\infty}$ depends only on $d$, $m$, $n$, $\alpha _0$, $\alpha _1$, $\Vert g\Vert _{L_\infty}$, $\Vert g^{-1}\Vert _{L_\infty}$, $\Vert f\Vert _{L_\infty}$, $\Vert f^{-1}\Vert _{L_\infty}$, and the parameters of the lattice $\Gamma$.
\end{theorem}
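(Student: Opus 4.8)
The plan is to estimate the $(H^2\to H^1)$-operator norm in \eqref{Th d<=4} by splitting the $H^1$-norm of the output into its $L_2$-part and the $L_2$-norm of its gradient, and to assemble the bound from results already established. I will use repeatedly the observation in \eqref{isometria} that $(\mathcal{H}_0+I)^{-1}$ is an isometric isomorphism of $L_2(\mathbb{R}^d;\mathbb{C}^n)$ onto $H^2(\mathbb{R}^d;\mathbb{C}^n)$, so that for any bounded operator $\mathcal{B}$ one has $\Vert\mathcal{B}\Vert_{H^2\to L_2}=\Vert\mathcal{B}(\mathcal{H}_0+I)^{-1}\Vert_{L_2\to L_2}$, together with $\Vert w\Vert_{H^1}\le\Vert w\Vert_{L_2}+\Vert\mathbf{D}w\Vert_{L_2}$.

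For the gradient part I would apply \eqref{1 p.9.3} directly: it asserts exactly that $\mathbf{D}$ applied to the full difference with corrector, composed on the right with $(\mathcal{H}_0+I)^{-1}$, has $(L_2\to L_2)$-norm at most $\widehat{c}_*^{-1/2}C_{14}\varepsilon(1+|\tau|)$. By the isometry remark this is the same as $\Vert\mathbf{D}(\,\cdot\,)\Vert_{H^2\to L_2}\le\widehat{c}_*^{-1/2}C_{14}\varepsilon(1+|\tau|)$. Recall that \eqref{1 p.9.3} came from the scaled form \eqref{9.20a NEW} of Theorem~\ref{Theorem 8/1 NO PI} together with the lower bound \eqref{A_eps^1/2>=}; the restriction $d\le4$ enters here precisely because Theorem~\ref{Theorem 8/1 NO PI} relies on Proposition~\ref{Proposition MMNP 9/3} with $l\le2$.

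For the $L_2$-part I would write the difference with corrector as the difference of principal terms — bounded by Theorem~\ref{Theorem 12.1}, i.e. by \eqref{12.1}, in the $(H^1\to L_2)$-norm, hence a fortiori in the $(H^2\to L_2)$-norm, by $C_{12}\varepsilon(1+|\tau|)$ — minus the single corrector summand $\varepsilon\Lambda^\varepsilon b(\mathbf{D})f_0(\mathcal{A}^0)^{-1/2}\sin(\tau(\mathcal{A}^0)^{1/2})f_0^{-1}$. For the latter I would pass to the $(L_2\to L_2)$-norm of its composition with $(\mathcal{H}_0+I)^{-1}$, which by scaling equals the quantity in \eqref{L2L2 corr norm}, and factor it as $[\Lambda]$ followed by $b(\mathbf{D})f_0(\mathcal{A}^0)^{-1/2}\sin(\varepsilon^{-1}\tau(\mathcal{A}^0)^{1/2})f_0^{-1}\mathcal{R}(\varepsilon)$. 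The second factor is bounded in the $(L_2\to H^1)$-norm by $2\Vert g^{-1}\Vert_{L_\infty}^{1/2}\Vert f^{-1}\Vert_{L_\infty}$ by \eqref{4 in p.9.3}, and the first from $H^1$ to $L_2$ by $\mathfrak{C}_d$ via \eqref{Lambda H1->L2} (Proposition~\ref{Proposition Lambda Hs to L2}), which again requires $d\le4$ so that $s=d/2-1\le1$. Thus the corrector summand contributes $O(\varepsilon)$ in the $(H^2\to L_2)$-norm.

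Adding the two parts and collecting constants yields \eqref{Th d<=4} with $C_{19}=\widehat{c}_*^{-1/2}C_{14}+C_{12}+2\mathfrak{C}_d\Vert g^{-1}\Vert_{L_\infty}^{1/2}\Vert f^{-1}\Vert_{L_\infty}$. I do not expect a genuinely hard step to remain here: the substantive work was already carried out in Theorem~\ref{Theorem 8/1 NO PI} (removal of $\Pi$ for $d\le4$) and in Proposition~\ref{Proposition Lambda Hs to L2}. The one point to keep in mind is that both of these force $d\le4$ — which is exactly the range of the present theorem — and that for $d\ge5$ one would substitute Theorem~\ref{Theorem d>4 chapter 2} or Proposition~\ref{Proposition d>5 from H-kappa chapter 2} instead.
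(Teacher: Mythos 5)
Your proof is correct and follows the paper's route exactly: the paper states the theorem as a "direct consequence of \eqref{12.1} and \eqref{1 p.9.3}--\eqref{4 in p.9.3}," and your argument is precisely the assembly of those pieces — \eqref{1 p.9.3} for the gradient part, then \eqref{12.1}, \eqref{L2L2 corr norm}, \eqref{Lambda H1->L2}, and \eqref{4 in p.9.3} for the $L_2$-part — yielding the stated constant $C_{19}$.
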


\subsection{Removal of $\Pi _\varepsilon$ from the corrector for $d\geqslant 5$}

The following result can be deduced from Theorem~\ref{Theorem d>4 chapter 2}.

\begin{theorem}
\label{Theorem d>=5 eps}
Let $d\geqslant 5$. Let Condition~\textnormal{\ref{Condition Lambda multiplier}} be satisfied. Then, under the assumptions of Theorem~\textnormal{\ref{Theorem 12.3}}, for $0<\varepsilon\leqslant 1$ and $\tau\in\mathbb{R}$ we have
\begin{equation}
\label{Th d>=5, chapter 3}
\begin{split}
\Vert &f^\varepsilon \mathcal{A}_\varepsilon ^{-1/2}\sin (\tau \mathcal{A}_\varepsilon ^{1/2})(f^\varepsilon)^{-1}
-(I+\varepsilon \Lambda ^\varepsilon b(\mathbf{D}))f_0(\mathcal{A}^0)^{-1/2}\sin (\tau (\mathcal{A}^0)^{1/2})f_0^{-1}\Vert _{H^2(\mathbb{R}^d)\rightarrow H^1(\mathbb{R}^d)}
\\
&\leqslant C_{20}\varepsilon (1+\vert \tau\vert).
\end{split}
\end{equation}
The constant $C_{20}$ depends only on  $m$, $\alpha _0$, $\alpha _1$, $\Vert g\Vert _{L_\infty}$, $\Vert g^{-1}\Vert _{L_\infty}$, $\Vert f\Vert _{L_\infty}$, $\Vert f^{-1}\Vert _{L_\infty}$,  the parameters of the lattice $\Gamma$, and the norm $\Vert [\Lambda ]\Vert _{H^2(\mathbb{R}^d)\rightarrow H^1(\mathbb{R}^d)}$.
\end{theorem}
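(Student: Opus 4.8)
\emph{Plan.} I would deduce \eqref{Th d>=5, chapter 3} from Theorem~\ref{Theorem d>4 chapter 2} by the unitary scaling transformation $T_\varepsilon$, reproducing the scheme by which Theorem~\ref{Theorem d<=4 chapter 3} was obtained from Theorem~\ref{Theorem 8/1 NO PI}; the only genuinely new ingredient is that for $d\geqslant 5$ the $(L_2\to L_2)$-bound for the corrector is extracted from Condition~\ref{Condition Lambda multiplier} instead of from Proposition~\ref{Proposition Lambda Hs to L2}, which is unavailable once $d/2-1>1$.

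First I would conjugate \eqref{Th d>4 Lambda multiplier} by $T_\varepsilon$, using the identities $T_\varepsilon^*\widehat{\mathcal{A}}^{1/2}T_\varepsilon=\varepsilon\widehat{\mathcal{A}}_\varepsilon^{1/2}$, $T_\varepsilon^*\mathcal{A}^{-1/2}\sin(\varepsilon^{-1}\tau\mathcal{A}^{1/2})T_\varepsilon=\varepsilon^{-1}\mathcal{A}_\varepsilon^{-1/2}\sin(\tau\mathcal{A}_\varepsilon^{1/2})$ and the analogous identity for the constant-coefficient $\mathcal{A}^0$, $T_\varepsilon^*[f]T_\varepsilon=[f^\varepsilon]$, $T_\varepsilon^*[\Lambda]b(\mathbf{D})T_\varepsilon=\varepsilon[\Lambda^\varepsilon]b(\mathbf{D})$ (homogeneity of $b(\mathbf{D})$), $T_\varepsilon^*\mathcal{R}(\varepsilon)T_\varepsilon=(\mathcal{H}_0+I)^{-1}$, together with the unitarity of $T_\varepsilon$; the two powers of $\varepsilon$ cancel and I arrive at
\begin{equation*}
\begin{split}
\Bigl\Vert &\widehat{\mathcal{A}}_\varepsilon^{1/2}\Bigl(f^\varepsilon\mathcal{A}_\varepsilon^{-1/2}\sin(\tau\mathcal{A}_\varepsilon^{1/2})(f^\varepsilon)^{-1}\\
&-(I+\varepsilon\Lambda^\varepsilon b(\mathbf{D}))f_0(\mathcal{A}^0)^{-1/2}\sin(\tau(\mathcal{A}^0)^{1/2})f_0^{-1}\Bigr)(\mathcal{H}_0+I)^{-1}\Bigr\Vert_{L_2(\mathbb{R}^d)\rightarrow L_2(\mathbb{R}^d)}\leqslant C_{15}\varepsilon(1+\vert\tau\vert),
\end{split}
\end{equation*}
which is exactly \eqref{9.20a NEW} with $C_{14}$ replaced by $C_{15}$. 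Applying the lower bound \eqref{A_eps^1/2>=} precisely as in the passage from \eqref{9.20a NEW} to \eqref{1 p.9.3} gives the same estimate with $\widehat{\mathcal{A}}_\varepsilon^{1/2}$ replaced by $\mathbf{D}$ and $C_{15}$ by $\widehat{c}_*^{-1/2}C_{15}$. Since by \eqref{isometria} the operator $\mathcal{H}_0+I$ is an isometric isomorphism of $H^2(\mathbb{R}^d;\mathbb{C}^n)$ onto $L_2(\mathbb{R}^d;\mathbb{C}^n)$, denoting by $\mathcal{E}_\varepsilon$ the operator under the norm sign in \eqref{Th d>=5, chapter 3}, this yields $\Vert\mathbf{D}\mathcal{E}_\varepsilon\Vert_{H^2\rightarrow L_2}\leqslant\widehat{c}_*^{-1/2}C_{15}\varepsilon(1+\vert\tau\vert)$.

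For the $L_2$-part of the $(H^2\to H^1)$-norm I would split $\mathcal{E}_\varepsilon=\mathcal{E}_\varepsilon'-\varepsilon\Lambda^\varepsilon b(\mathbf{D})f_0(\mathcal{A}^0)^{-1/2}\sin(\tau(\mathcal{A}^0)^{1/2})f_0^{-1}$, where $\mathcal{E}_\varepsilon'=f^\varepsilon\mathcal{A}_\varepsilon^{-1/2}\sin(\tau\mathcal{A}_\varepsilon^{1/2})(f^\varepsilon)^{-1}-f_0(\mathcal{A}^0)^{-1/2}\sin(\tau(\mathcal{A}^0)^{1/2})f_0^{-1}$: by Theorem~\ref{Theorem 12.1} and $H^2\hookrightarrow H^1$ one has $\Vert\mathcal{E}_\varepsilon'\Vert_{H^2\rightarrow L_2}\leqslant C_{12}\varepsilon(1+\vert\tau\vert)$. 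It remains to bound the corrector term in the $(H^2\to L_2)$-norm. The factor $b(\mathbf{D})f_0(\mathcal{A}^0)^{-1/2}\sin(\tau(\mathcal{A}^0)^{1/2})f_0^{-1}$ is a Fourier multiplier whose matrix symbol has norm $\leqslant\alpha_1^{1/2}c_*^{-1/2}\Vert f\Vert_{L_\infty}\Vert f^{-1}\Vert_{L_\infty}$ by \eqref{<b^*b<}, \eqref{f0<=}, \eqref{f_0 dots >=} and $\vert\sin x\vert/\vert x\vert\leqslant 1$, hence it is bounded in $H^2$ with the same bound. For the oscillating factor I would write $[\Lambda^\varepsilon]=T_\varepsilon^*[\Lambda]T_\varepsilon$ and observe that for $0<\varepsilon\leqslant 1$ the map $T_\varepsilon$ sends $H^2$ into $H^2$ with norm $\leqslant 1$ (on the Fourier side $(1+\varepsilon^2\vert\boldsymbol{\eta}\vert^2)^2\leqslant(1+\vert\boldsymbol{\eta}\vert^2)^2$); by Condition~\ref{Condition Lambda multiplier} this gives $\Vert[\Lambda^\varepsilon]\Vert_{H^2\rightarrow L_2}\leqslant\Vert[\Lambda]\Vert_{H^2\rightarrow H^1}$ uniformly in $\varepsilon$. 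Composing, the corrector has $(H^2\to L_2)$-norm $\leqslant\varepsilon\,\alpha_1^{1/2}c_*^{-1/2}\Vert f\Vert_{L_\infty}\Vert f^{-1}\Vert_{L_\infty}\Vert[\Lambda]\Vert_{H^2\rightarrow H^1}$. Adding the three contributions yields \eqref{Th d>=5, chapter 3} with $C_{20}:=\widehat{c}_*^{-1/2}C_{15}+C_{12}+\alpha_1^{1/2}c_*^{-1/2}\Vert f\Vert_{L_\infty}\Vert f^{-1}\Vert_{L_\infty}\Vert[\Lambda]\Vert_{H^2\rightarrow H^1}$, which depends only on the quantities listed in the statement.

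The bulk is routine bookkeeping on top of Theorem~\ref{Theorem d>4 chapter 2}; the single step that really uses the hypothesis $d\geqslant 5$ with Condition~\ref{Condition Lambda multiplier} is the uniform (in $0<\varepsilon\leqslant 1$) boundedness of the rapidly oscillating multiplication operator $[\Lambda^\varepsilon]$ from $H^2$ to $L_2$. Without the smoothing $\Pi_\varepsilon$ this cannot be derived from mere integrability of $\Lambda$ over the cell; Condition~\ref{Condition Lambda multiplier} is precisely what compensates, through the non-expansiveness of $T_\varepsilon$ on $H^2$, playing here the role that the embedding estimate of Proposition~\ref{Proposition Lambda Hs to L2} played for $d\leqslant 4$.
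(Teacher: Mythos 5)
Your proof is correct and follows the same route as the paper's (very terse) argument: scale \eqref{Th d>4 Lambda multiplier} by $T_\varepsilon$, pass from $\widehat{\mathcal{A}}_\varepsilon^{1/2}$ to $\mathbf{D}$ via \eqref{A_eps^1/2>=}, control the $L_2$-part of the principal terms by Theorem~\ref{Theorem 12.1}, and bound the corrector in the $(H^2\to L_2)$-norm using Condition~\ref{Condition Lambda multiplier}. Your explicit factorization $[\Lambda^\varepsilon]=T_\varepsilon^*[\Lambda]T_\varepsilon$ together with the observation that $T_\varepsilon$ is non-expansive on $H^2$ for $0<\varepsilon\leqslant 1$ is exactly what is needed (and left implicit in the paper) to turn the $\varepsilon=1$ hypothesis on $[\Lambda]$ into a uniform bound for $[\Lambda^\varepsilon]$. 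The only cosmetic difference is in the constant: you bound the Fourier multiplier $b(\mathbf{D})f_0(\mathcal{A}^0)^{-1/2}\sin(\tau(\mathcal{A}^0)^{1/2})f_0^{-1}$ on $H^2$ by $\alpha_1^{1/2}c_*^{-1/2}\Vert f\Vert_{L_\infty}\Vert f^{-1}\Vert_{L_\infty}$ from the symbol estimates \eqref{<b^*b<}, \eqref{f_0 dots >=}, whereas the paper uses the sharper factorization $b(\mathbf{D})f_0(\mathcal{A}^0)^{-1/2}=(g^0)^{-1/2}\bigl((g^0)^{1/2}b(\mathbf{D})f_0(\mathcal{A}^0)^{-1/2}\bigr)$ (the parenthesized operator being a partial isometry by \eqref{A0 no hat}) to get $\Vert g^{-1}\Vert_{L_\infty}^{1/2}\Vert f^{-1}\Vert_{L_\infty}$, leading to $C_{20}=\widehat{c}_*^{-1/2}C_{15}+C_{12}+\Vert[\Lambda]\Vert_{H^2\to H^1}\Vert g^{-1}\Vert_{L_\infty}^{1/2}\Vert f^{-1}\Vert_{L_\infty}$; both constants depend only on the quantities listed in the theorem.
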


\begin{proof}
The proof is similar to that of Theorem~\ref{Theorem 12.3}. Combining \eqref{g^0<=}, \eqref{f0<=}, \eqref{A0 no hat}, \eqref{Th d>4 Lambda multiplier}, \eqref{12.1}, and \eqref{A_eps^1/2>=}, we arrive at the estimate \eqref{Th d>=5, chapter 3} with $C_{20}:=\widehat{c}_*^{-1/2}C_{15}+C_{12}+\Vert [\Lambda ]\Vert _{H^2(\mathbb{R}^d)\rightarrow H^1(\mathbb{R}^d)}\Vert g^{-1}\Vert ^{1/2}_{L_\infty}\Vert f^{-1}\Vert _{L_\infty}$.
\end{proof}

\begin{theorem}
\label{Theorem D>=5 smooth date sec 9}
Let $d\geqslant 5$. Under the assumptions of Theorem~\textnormal{\ref{Theorem 12.3}}, for $0<\varepsilon\leqslant 1$ and $\tau\in\mathbb{R}$ we have
\begin{equation}
\label{Th d>5 smooth data ch 3}
\begin{split}
\Vert &f^\varepsilon \mathcal{A}_\varepsilon ^{-1/2}\sin (\tau \mathcal{A}_\varepsilon ^{1/2})(f^\varepsilon)^{-1}
-(I+\varepsilon \Lambda ^\varepsilon b(\mathbf{D}))f_0(\mathcal{A}^0)^{-1/2}\sin (\tau (\mathcal{A}^0)^{1/2})f_0^{-1}\Vert _{H^{d/2}(\mathbb{R}^d)\rightarrow H^1(\mathbb{R}^d)}
\\
&\leqslant C_{21}\varepsilon (1+\vert \tau\vert).
\end{split}
\end{equation}
The constant $C_{21}$ depends only on $d$, $m$, $n$, $\alpha _0$, $\alpha _1$, $\Vert g\Vert _{L_\infty}$, $\Vert g^{-1}\Vert _{L_\infty}$, $\Vert f\Vert _{L_\infty}$, $\Vert f^{-1}\Vert _{L_\infty}$, and the parameters of the lattice $\Gamma$.
\end{theorem}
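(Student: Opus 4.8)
The plan is to deduce the statement from Proposition~\ref{Proposition d>5 from H-kappa chapter 2} by the scaling transformation, following the pattern by which Theorem~\ref{Theorem 12.3} was obtained from \eqref{10.II}. Write $\mathcal{J}_\varepsilon(\tau)$ for the operator difference $f^\varepsilon\mathcal{A}_\varepsilon^{-1/2}\sin(\tau\mathcal{A}_\varepsilon^{1/2})(f^\varepsilon)^{-1}-(I+\varepsilon\Lambda^\varepsilon b(\mathbf{D}))f_0(\mathcal{A}^0)^{-1/2}\sin(\tau(\mathcal{A}^0)^{1/2})f_0^{-1}$ appearing in \eqref{Th d>5 smooth data ch 3}. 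Since $(\mathcal{H}_0+I)^{d/4}$ is an isometric isomorphism of $H^{d/2}(\mathbb{R}^d;\mathbb{C}^n)$ onto $L_2(\mathbb{R}^d;\mathbb{C}^n)$ (cf. \eqref{isometria}) and $\Vert\mathbf{u}\Vert^2_{H^1}=\Vert\mathbf{u}\Vert^2_{L_2}+\Vert\mathbf{D}\mathbf{u}\Vert^2_{L_2}$, it suffices to establish the two bounds $\Vert\mathcal{J}_\varepsilon(\tau)(\mathcal{H}_0+I)^{-d/4}\Vert_{L_2\to L_2}\leqslant C\varepsilon(1+\vert\tau\vert)$ and $\Vert\mathbf{D}\mathcal{J}_\varepsilon(\tau)(\mathcal{H}_0+I)^{-d/4}\Vert_{L_2\to L_2}\leqslant C\varepsilon(1+\vert\tau\vert)$ for $0<\varepsilon\leqslant1$.

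For the gradient bound I would pass to the non-scaled variables. Using $\mathcal{A}_\varepsilon=\varepsilon^{-2}T_\varepsilon^*\mathcal{A}T_\varepsilon$ and the analogous identities for $\widehat{\mathcal{A}}_\varepsilon$ and $\mathcal{A}^0$, together with $[f^\varepsilon]=T_\varepsilon^*[f]T_\varepsilon$, $[\Lambda^\varepsilon]=T_\varepsilon^*[\Lambda]T_\varepsilon$, $b(\mathbf{D})=\varepsilon^{-1}T_\varepsilon^*b(\mathbf{D})T_\varepsilon$, $[f_0]=T_\varepsilon^*[f_0]T_\varepsilon$, and $(\mathcal{H}_0+I)^{-d/4}=T_\varepsilon^*\mathcal{R}(\varepsilon)^{d/4}T_\varepsilon$, a direct computation shows (the $\varepsilon^{-1}$ from $\widehat{\mathcal{A}}_\varepsilon^{1/2}$ and the $\varepsilon$ from $\mathcal{A}_\varepsilon^{-1/2}\sin$ cancelling) that $\widehat{\mathcal{A}}_\varepsilon^{1/2}\mathcal{J}_\varepsilon(\tau)(\mathcal{H}_0+I)^{-d/4}=T_\varepsilon^*\,\widehat{\mathcal{A}}^{1/2}\bigl(f\mathcal{A}^{-1/2}\sin(\varepsilon^{-1}\tau\mathcal{A}^{1/2})f^{-1}-(I+\Lambda b(\mathbf{D}))f_0(\mathcal{A}^0)^{-1/2}\sin(\varepsilon^{-1}\tau(\mathcal{A}^0)^{1/2})f_0^{-1}\bigr)\mathcal{R}(\varepsilon)^{d/4}\,T_\varepsilon$; the bracketed operator is exactly the one estimated in Proposition~\ref{Proposition d>5 from H-kappa chapter 2}. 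By unitarity of $T_\varepsilon$ and that proposition, $\Vert\widehat{\mathcal{A}}_\varepsilon^{1/2}\mathcal{J}_\varepsilon(\tau)(\mathcal{H}_0+I)^{-d/4}\Vert_{L_2\to L_2}\leqslant C_{16}\varepsilon(1+\vert\tau\vert)$, and combining with the lower bound \eqref{A_eps^1/2>=} gives the gradient bound with constant $\widehat{c}_*^{-1/2}C_{16}$.

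For the $(L_2\to L_2)$-bound of $\mathcal{J}_\varepsilon(\tau)(\mathcal{H}_0+I)^{-d/4}$, equivalently of $\mathcal{J}_\varepsilon(\tau)$ in the $(H^{d/2}\to L_2)$-norm, I would split $\mathcal{J}_\varepsilon(\tau)$ into the difference $f^\varepsilon\mathcal{A}_\varepsilon^{-1/2}\sin(\tau\mathcal{A}_\varepsilon^{1/2})(f^\varepsilon)^{-1}-f_0(\mathcal{A}^0)^{-1/2}\sin(\tau(\mathcal{A}^0)^{1/2})f_0^{-1}$ and the corrector $\varepsilon\Lambda^\varepsilon b(\mathbf{D})f_0(\mathcal{A}^0)^{-1/2}\sin(\tau(\mathcal{A}^0)^{1/2})f_0^{-1}$. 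The first is controlled by Theorem~\ref{Theorem 12.1} together with the embedding $H^{d/2}\hookrightarrow H^1$ (of norm $1$ since $d\geqslant5$), giving the bound $C_{12}\varepsilon(1+\vert\tau\vert)$. For the corrector I would note, arguing as in the second half of \eqref{4 in p.9.3}, that $b(\mathbf{D})f_0(\mathcal{A}^0)^{-1/2}\sin(\tau(\mathcal{A}^0)^{1/2})f_0^{-1}$ is a Fourier multiplier with symbol bounded in norm (uniformly in $\boldsymbol{\xi}$ and $\tau$) by $\Vert g^{-1}\Vert^{1/2}_{L_\infty}\Vert f^{-1}\Vert_{L_\infty}$, hence bounded on every $H^s$; then route the composition through $H^{d/2-1}$, invoking Proposition~\ref{Proposition Lambda Hs to L2} (applicable with $d\geqslant3$, $s=d/2-1$) for $[\Lambda]$ and the scaling inequality $\Vert T_\varepsilon\mathbf{w}\Vert_{H^{d/2-1}}\leqslant\Vert\mathbf{w}\Vert_{H^{d/2-1}}$ ($0<\varepsilon\leqslant1$) to transfer the bound to $[\Lambda^\varepsilon]$. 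This yields $\Vert\varepsilon\Lambda^\varepsilon b(\mathbf{D})f_0(\mathcal{A}^0)^{-1/2}\sin(\tau(\mathcal{A}^0)^{1/2})f_0^{-1}\Vert_{H^{d/2}\to L_2}\leqslant\mathfrak{C}_d\Vert g^{-1}\Vert^{1/2}_{L_\infty}\Vert f^{-1}\Vert_{L_\infty}\varepsilon$, where $\mathfrak{C}_d$ is the constant of Proposition~\ref{Proposition Lambda Hs to L2}. Adding the three contributions and using $\Vert\mathcal{J}_\varepsilon(\tau)\Vert_{H^{d/2}\to H^1}\leqslant\Vert\mathcal{J}_\varepsilon(\tau)\Vert_{H^{d/2}\to L_2}+\Vert\mathbf{D}\mathcal{J}_\varepsilon(\tau)\Vert_{H^{d/2}\to L_2}$ gives \eqref{Th d>5 smooth data ch 3} with $C_{21}:=C_{12}+\mathfrak{C}_d\Vert g^{-1}\Vert^{1/2}_{L_\infty}\Vert f^{-1}\Vert_{L_\infty}+\widehat{c}_*^{-1/2}C_{16}$, which depends only on the parameters listed in the statement.

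The step requiring care is the $(H^{d/2}\to L_2)$-estimate of the corrector: for $d\geqslant5$ the operator $[\Lambda^\varepsilon]$ is in general not bounded on $L_2$, so the corrector cannot be split off naively, and one must exploit the $L_d$-type regularity of $\Lambda$ (Proposition~\ref{Proposition Lambda Hs to L2}) against the derivatives gained from the smoothing factor $\mathcal{R}(\varepsilon)^{d/4}$, i.e., from measuring the input in $H^{d/2}$. The rest — the scaling bookkeeping and the symbol estimates — is routine.
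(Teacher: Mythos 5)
Your proposal is correct and follows essentially the same route as the paper's proof: you scale Proposition~\ref{Proposition d>5 from H-kappa chapter 2} to obtain the gradient bound via \eqref{A_eps^1/2>=}, you control the principal difference by Theorem~\ref{Theorem 12.1} together with $H^{d/2}\hookrightarrow H^1$, and you handle the corrector term by routing through $H^{d/2-1}$ via Proposition~\ref{Proposition Lambda Hs to L2}, arriving at the same constant $C_{21}=\widehat{c}_*^{-1/2}C_{16}+C_{12}+\mathfrak{C}_d\Vert g^{-1}\Vert^{1/2}_{L_\infty}\Vert f^{-1}\Vert_{L_\infty}$. One small point in your favour: the paper's display \eqref{d>5 final proof ch 3} silently uses $\Vert[\Lambda^\varepsilon]\Vert_{H^{d/2-1}\to L_2}\leqslant\mathfrak{C}_d$ (not just the bound for $[\Lambda]$ from Proposition~\ref{Proposition Lambda Hs to L2}); your explicit scaling argument $\Vert T_\varepsilon\mathbf{w}\Vert_{H^{d/2-1}}\leqslant\Vert\mathbf{w}\Vert_{H^{d/2-1}}$ for $0<\varepsilon\leqslant 1$ is exactly what is needed to justify that step, so you have filled in a detail the paper leaves implicit.
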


\begin{proof}
By the scaling transformation, from Proposition~\ref{Proposition d>5 from H-kappa chapter 2} it follows that
\begin{equation*}
\begin{split}
\Bigl\Vert &\widehat{\mathcal{A}}_\varepsilon ^{1/2}\Bigl(f^\varepsilon \mathcal{A}_\varepsilon ^{-1/2}\sin(\tau \mathcal{A}_\varepsilon ^{1/2})(f^\varepsilon)^{-1}
-(I+\varepsilon \Lambda ^\varepsilon b(\mathbf{D}))f_0 (\mathcal{A}^0)^{-1/2}\sin(\tau (\mathcal{A}^0)^{1/2})f_0^{-1}\Bigr)
\\
&\times
(\mathcal{H}_0+I)^{-d/4}\Bigr\Vert _{L_2(\mathbb{R}^d)\rightarrow L_2(\mathbb{R}^d)}
\leqslant C_{16}\varepsilon (1+\vert \tau\vert),
\quad 0<\varepsilon \leqslant 1,\quad\tau\in\mathbb{R}.
\end{split}
\end{equation*}
By \eqref{A_eps^1/2>=},
\begin{equation}
\label{D... d>=5 proof chapter 3}
\begin{split}
\Bigl\Vert&\mathbf{D}\Bigl(f^\varepsilon \mathcal{A}_\varepsilon ^{-1/2}\sin(\tau \mathcal{A}_\varepsilon ^{1/2})(f^\varepsilon)^{-1}
-(I+\varepsilon \Lambda ^\varepsilon b(\mathbf{D}))f_0 (\mathcal{A}^0)^{-1/2}\sin(\tau (\mathcal{A}^0)^{1/2})f_0^{-1}\Bigr)
\\
&\times(\mathcal{H}_0+I)^{-d/4}\Bigr\Vert _{L_2(\mathbb{R}^d)\rightarrow L_2(\mathbb{R}^d)}
\leqslant \widehat{c}_*^{-1/2} C_{16}\varepsilon (1+\vert \tau\vert),
\quad 0<\varepsilon \leqslant 1,\quad\tau\in\mathbb{R}.
\end{split}
\end{equation}
By Proposition~\ref{Proposition Lambda Hs to L2}, and \eqref{g^0<=}, \eqref{f0<=}, \eqref{A0 no hat},
\begin{equation}
\label{d>5 final proof ch 3}
\begin{split}
\Vert &
\Lambda ^\varepsilon b(\mathbf{D})f_0 (\mathcal{A}^0)^{-1/2}\sin(\tau (\mathcal{A}^0)^{1/2})f_0^{-1}(\mathcal{H}_0+I)^{-d/4}\Vert _{L_2(\mathbb{R}^d)\rightarrow L_2(\mathbb{R}^d)}
\\
&\leqslant\mathfrak{C}_d\Vert b(\mathbf{D})f_0 (\mathcal{A}^0)^{-1/2}\sin(\tau (\mathcal{A}^0)^{1/2})f_0^{-1}(\mathcal{H}_0+I)^{-d/4}\Vert _{L_2(\mathbb{R}^d)\rightarrow H^{d/2-1}(\mathbb{R}^d)}
\\
&\leqslant\mathfrak{C}_d\Vert g^{-1}\Vert ^{1/2}_{L_\infty}\Vert f^{-1}\Vert _{L_\infty}\Vert (\mathcal{H}_0+I)^{-d/4}\Vert _{L_2(\mathbb{R}^d)\rightarrow H^{d/2-1}(\mathbb{R}^d)}
\leqslant
\mathfrak{C}_d\Vert g^{-1}\Vert ^{1/2}_{L_\infty}\Vert f^{-1}\Vert _{L_\infty}.
\end{split}
\end{equation}
Combining \eqref{12.1}, \eqref{D... d>=5 proof chapter 3}, and \eqref{d>5 final proof ch 3}, we arrive at estimate~\eqref{Th d>5 smooth data ch 3} with the constant $C_{21}:=\widehat{c}_*^{-1/2}C_{16}+C_{12}+\mathfrak{C}_d\Vert g^{-1}\Vert ^{1/2}_{L_\infty}\Vert f^{-1}\Vert _{L_\infty}$. 
\end{proof}

\subsection{Removal of $\Pi _\varepsilon$. Interpolational results}

To obtain the analogue of Theorem~\ref{Theorem 12.4} with $\Pi_\varepsilon$ replaced by $I$ 
we need the continuity of the operator $\varepsilon[\Lambda^\varepsilon]b(\mathbf{D})f_0(\mathcal{A}^0)^{-1/2}\sin(\tau (\mathcal{A}^0)^{1/2})f_0^{-1}$ in $H^1(\mathbb{R}^d;\mathbb{C}^n)$, i.~e., we need the boundedness of the norms $\Vert [(\mathbf{D}\Lambda)^\varepsilon ]\Vert _{H^1(\mathbb{R}^d)\rightarrow L_2(\mathbb{R}^d)}$ and $\Vert[\Lambda ^\varepsilon ]\Vert _{L_2(\mathbb{R}^d)\rightarrow L_2(\mathbb{R}^d)}$. Due to Parseval's theorem, the assumption $\Vert[\Lambda ^\varepsilon ]\Vert _{L_2(\mathbb{R}^d)\rightarrow L_2(\mathbb{R}^d)}<\infty$ holds if and only if the matrix-valued function $\Lambda$ is subject to the following condition.

\begin{condition}
\label{Condition Lambda in L infty}
Assume that the $\Gamma$-periodic solution $\Lambda (\mathbf{x})$ of problem \eqref{Lambda problem} is bounded, i.~e., $\Lambda\in L_\infty (\mathbb{R}^d)$.
\end{condition}

Under Condition~\ref{Condition Lambda in L infty}, the operator $[(\mathbf{D}\Lambda)^\varepsilon ]$ is bounded from $H^1$ to $L_2$ due to the following result obtained in \cite[Corollary 2.4]{PSu}.

\begin{lemma}
\label{Lemma PSu Lambda in L-infty}
Under Condition \textnormal{\ref{Condition Lambda in L infty}}, for any function $u\in H^1(\mathbb{R}^d)$ and $\varepsilon >0$ we have
\begin{equation*}
\int _{\mathbb{R}^d}\vert (\mathbf{D}\Lambda) ^\varepsilon(\mathbf{x})\vert ^2\vert u(\mathbf{x})\vert ^2\,d\mathbf{x}
\leqslant\mathfrak{c}_1\Vert u\Vert ^2_{L_2(\mathbb{R}^d)}+\mathfrak{c}_2\varepsilon ^2\Vert \Lambda\Vert ^2_{L_\infty}\Vert \mathbf{D}u\Vert ^2_{L_2(\mathbb{R}^d)}.
\end{equation*}
The constants $\mathfrak{c}_1$ and $\mathfrak{c}_2$ depend on $m$, $d$, $\alpha _0$, $\alpha _1$, $\Vert g\Vert _{L_\infty}$, and $\Vert g^{-1}\Vert _{L_\infty}$.
\end{lemma}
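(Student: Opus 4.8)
The plan is to prove a weighted Caccioppoli (reverse Poincar\'e) inequality for the rescaled corrector and then recover the full gradient by means of the maximal rank condition on $b$. \emph{Step 1 (reduction and rescaling).} I would first pass to the columns $\mathbf{v}_1,\dots,\mathbf{v}_m$ of $\Lambda$, which solve \eqref{v_j columns Lambda problem}, and introduce $\phi_j:=\varepsilon\mathbf{v}_j^\varepsilon$. Then $b(\mathbf{D})\phi_j=(b(\mathbf{D})\mathbf{v}_j)^\varepsilon$, $\mathbf{D}\phi_j=(\mathbf{D}\mathbf{v}_j)^\varepsilon$, $\Vert\phi_j\Vert_{L_\infty}=\varepsilon\Vert\mathbf{v}_j\Vert_{L_\infty}$, and the change of variables $\mathbf{x}\mapsto\varepsilon^{-1}\mathbf{x}$ turns the weak form of \eqref{v_j columns Lambda problem} (cf.\ \eqref{vj int tozd}) into
\[
\int_{\mathbb{R}^d}\bigl\langle g^\varepsilon\bigl(b(\mathbf{D})\phi_j+\mathbf{e}_j\bigr),\,b(\mathbf{D})\boldsymbol\eta\bigr\rangle\,d\mathbf{x}=0
\]
for every compactly supported $\boldsymbol\eta\in H^1(\mathbb{R}^d;\mathbb{C}^n)$. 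Since $\int|(\mathbf{D}\Lambda)^\varepsilon|^2|u|^2=\sum_j\int|\mathbf{D}\phi_j|^2|u|^2$ and $\sum_j\Vert\mathbf{v}_j\Vert_{L_\infty}^2\leqslant m\Vert\Lambda\Vert_{L_\infty}^2$, it suffices to establish $\int|\mathbf{D}\phi_j|^2|u|^2\leqslant\mathfrak{c}_1\Vert u\Vert_{L_2}^2+\mathfrak{c}_2\Vert\phi_j\Vert_{L_\infty}^2\Vert\mathbf{D}u\Vert_{L_2}^2$ for each $j$.

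\emph{Step 2 (Caccioppoli for $b(\mathbf{D})\phi_j$).} For $u\in C_0^\infty(\mathbb{R}^d)$ I would take $\boldsymbol\eta=|u|^2\phi_j$ in the weak identity (admissible since $(\mathbf{D}\mathbf{v}_j)^\varepsilon\in L_{2,\mathrm{loc}}$), expand $b(\mathbf{D})(|u|^2\phi_j)=|u|^2b(\mathbf{D})\phi_j+\sum_l(D_l|u|^2)\,b_l\phi_j$, and use $g^\varepsilon\geqslant\Vert g^{-1}\Vert_{L_\infty}^{-1}\mathbf{1}_m$ for the leading term. The remaining three terms are handled by the Cauchy--Schwarz and Young inequalities, using $|D_l|u|^2|\leqslant 2|u|\,|\mathbf{D}u|$, $|b_l|\leqslant\alpha_1^{1/2}$, $|g^\varepsilon\mathbf{e}_j|\leqslant\Vert g\Vert_{L_\infty}$ and $|\phi_j|\leqslant\Vert\phi_j\Vert_{L_\infty}$; the contributions containing $\int|u|^2|b(\mathbf{D})\phi_j|^2$ are absorbed into the left-hand side. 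This yields $\int|u|^2|b(\mathbf{D})\phi_j|^2\leqslant\mathfrak{c}_1'\Vert u\Vert_{L_2}^2+\mathfrak{c}_2'\Vert\phi_j\Vert_{L_\infty}^2\Vert\mathbf{D}u\Vert_{L_2}^2$ with constants depending only on $d$, $\alpha_1$, $\Vert g\Vert_{L_\infty}$, $\Vert g^{-1}\Vert_{L_\infty}$.

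\emph{Step 3 (from $b(\mathbf{D})\phi_j$ to $\mathbf{D}\phi_j$, and conclusion).} Writing $u\,\mathbf{D}\phi_j=\mathbf{D}(u\phi_j)-(\mathbf{D}u)\phi_j$ gives $\int|u|^2|\mathbf{D}\phi_j|^2\leqslant 2\Vert\mathbf{D}(u\phi_j)\Vert_{L_2}^2+2\Vert\phi_j\Vert_{L_\infty}^2\Vert\mathbf{D}u\Vert_{L_2}^2$. Since \eqref{<b^*b<} and the homogeneity of $b$ give $\Vert\mathbf{D}w\Vert_{L_2}\leqslant\alpha_0^{-1/2}\Vert b(\mathbf{D})w\Vert_{L_2}$ for all $w\in H^1(\mathbb{R}^d;\mathbb{C}^n)$ (Plancherel), I would apply this to $w=u\phi_j$ and combine $b(\mathbf{D})(u\phi_j)=u\,b(\mathbf{D})\phi_j+\sum_l(D_l u)\,b_l\phi_j$ with Step 2 to bound $\Vert\mathbf{D}(u\phi_j)\Vert_{L_2}^2$ by $\mathfrak{c}_1\Vert u\Vert_{L_2}^2+\mathfrak{c}_2\Vert\phi_j\Vert_{L_\infty}^2\Vert\mathbf{D}u\Vert_{L_2}^2$. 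A density argument removes the restriction $u\in C_0^\infty$, and summing over $j$ with $\Vert\phi_j\Vert_{L_\infty}=\varepsilon\Vert\mathbf{v}_j\Vert_{L_\infty}$ finishes the proof; tracking constants shows that $\mathfrak{c}_1,\mathfrak{c}_2$ depend only on $m$, $d$, $\alpha_0$, $\alpha_1$, $\Vert g\Vert_{L_\infty}$, $\Vert g^{-1}\Vert_{L_\infty}$.

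The main obstacle is conceptual rather than technical: $(\mathbf{D}\Lambda)^\varepsilon$ is \emph{not} bounded (only $\Lambda\in L_\infty$ is assumed), so no direct pointwise estimate is available; the idea is to write $(\mathbf{D}\Lambda)^\varepsilon=\varepsilon\,\mathbf{D}(\Lambda^\varepsilon)$ and use the corrector equation to trade the derivative on $\Lambda^\varepsilon$ for one on $u$, which produces the crucial factor $\varepsilon$ in front of $\Vert\mathbf{D}u\Vert_{L_2}^2$. The points that require some care are the detour through $b(\mathbf{D})$ (forced, since the equation directly controls only $b(\mathbf{D})\phi_j$) followed by the use of the \emph{global} rank inequality to return to $\mathbf{D}\phi_j$, and the routine justification of the test function $|u|^2\phi_j$ by approximation.
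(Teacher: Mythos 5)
Your proof is correct and takes essentially the same route as the cited \cite[Corollary~2.4]{PSu}: it is the Caccioppoli-type argument (test the rescaled corrector equation against $|u|^2\phi_j$, absorb the top-order term $\int|u|^2\langle g^\varepsilon b(\mathbf{D})\phi_j,b(\mathbf{D})\phi_j\rangle$ via ellipticity, then return from $b(\mathbf{D})$ to $\mathbf{D}$ through the Plancherel lower bound coming from \eqref{<b^*b<}) that the paper itself reproduces, under the $L_d$-condition in place of $L_\infty$, in the proof of Lemma~\ref{Proposition d>=3 Lambda}. Folding the scaling into $\phi_j=\varepsilon\mathbf{v}_j^\varepsilon$ from the outset rather than proving the $\varepsilon=1$ estimate and dilating is a harmless stylistic choice, and replacing the H\"older--Sobolev step of that proof by the pointwise bound $|\mathbf{v}_j|\leqslant\Vert\Lambda\Vert_{L_\infty}$ is exactly the simplification the $L_\infty$ hypothesis affords; all constants come out depending only on $m,d,\alpha_0,\alpha_1,\Vert g\Vert_{L_\infty},\Vert g^{-1}\Vert_{L_\infty}$, as required.
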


Some cases where Condition~\ref{Condition Lambda in L infty} is fulfilled automatically were distinguished in \cite[Lemma~8.7]{BSu06}.

\begin{proposition}
\label{Proposition Lambda in L infty <=}
Suppose that at least one of the following assumptions is satisfied\textnormal{:}

\noindent
$1^\circ )$ $d\leqslant 2${\rm ;}

\noindent
$2^\circ )$ the dimension $d\geqslant 1$ is arbitrary and the operator $\mathcal{A}_\varepsilon$ has the form $\mathcal{A}_\varepsilon =\mathbf{D}^* g^\varepsilon (\mathbf{x})\mathbf{D}$, where $g(\mathbf{x})$ is symmetric matrix with real entries{\rm ;}

\noindent
$3^\circ )$ the dimension  $d$ is arbitrary and $g^0=\underline{g}$, i.~e., relations \eqref{underline-g} are true.

Then Condition~\textnormal{\ref{Condition Lambda in L infty}}  is fulfilled.
\end{proposition}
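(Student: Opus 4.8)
The plan is to verify Condition~\ref{Condition Lambda in L infty} separately in each of the three cases, reducing every time to a higher‑regularity statement for the cell problem \eqref{Lambda problem} followed by a Sobolev embedding past the dimension.

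In case $1^\circ$ ($d\leqslant 2$): for $d=1$ the embedding $\widetilde{H}^1(\Omega)\hookrightarrow C(\mathrm{clos}\,\Omega)$ already gives $\Lambda\in L_\infty$. For $d=2$ this embedding fails, so I would invoke the Meyers--Gehring higher integrability estimate for the strongly elliptic system $b(\mathbf{D})^*g(b(\mathbf{D})\Lambda+\mathbf{1}_m)=0$ with bounded measurable coefficients $g,g^{-1}\in L_\infty$: a reverse H\"older / self‑improvement argument produces an exponent $p>2$, depending only on $d$, $\alpha_0$, $\alpha_1$, $\Vert g\Vert_{L_\infty}$, $\Vert g^{-1}\Vert_{L_\infty}$ and the lattice, with $\mathbf{D}\Lambda\in L_p(\Omega)$. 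Since $p>d=2$, the embedding $W^{1,p}(\Omega)\hookrightarrow C(\mathrm{clos}\,\Omega)$ yields $\Lambda\in L_\infty(\Omega)$.

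In case $2^\circ$ ($b(\mathbf{D})=\mathbf{D}$ acting on scalar functions, $g$ real symmetric): the cell problem splits into scalar problems, the $j$‑th column $\mathbf{v}_j$ of $\Lambda$ solving $\mathbf{D}^*g(\mathbf{D}\mathbf{v}_j+\mathbf{e}_j)=0$, a scalar second order equation in divergence form with bounded measurable real symmetric uniformly elliptic coefficients and right‑hand side the divergence of the bounded field $g\mathbf{e}_j$. I would then apply the De Giorgi--Nash--Moser local boundedness theorem (this is the precise point where realness and symmetry of $g$ are used), obtaining $\mathbf{v}_j\in L_\infty^{\mathrm{loc}}(\mathbb{R}^d)$ and hence, by $\Gamma$‑periodicity, $\Lambda\in L_\infty$. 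In case $3^\circ$ ($g^0=\underline{g}$): I would first recall that equality in the lower Voigt--Reuss bound forces the flux \eqref{tilde g} to be the \emph{constant} matrix $\underline{g}$ (this lies behind the characterization \eqref{underline-g}; it follows from comparing the minimizer defining $g^0$ with the constant competitor $g^{-1}\underline{g}$ in the unconstrained minimization). Consequently $b(\mathbf{D})\Lambda+\mathbf{1}_m=g^{-1}\widetilde{g}=g^{-1}\underline{g}$, so $b(\mathbf{D})\Lambda=g^{-1}\underline{g}-\mathbf{1}_m\in L_\infty(\Omega)\subset L_p(\Omega)$ for every $p<\infty$. Hence $\Lambda$ solves $b(\mathbf{D})^*b(\mathbf{D})\Lambda=b(\mathbf{D})^*(g^{-1}\underline{g})$ with zero mean; by $L_p$ regularity for the constant‑coefficient strongly elliptic periodic operator $b(\mathbf{D})^*b(\mathbf{D})$ (Mikhlin multiplier theorem on the torus) we get $\Lambda\in W^{1,p}(\Omega)$ for all $p<\infty$, and choosing $p>d$ and embedding gives $\Lambda\in L_\infty(\Omega)$.

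The main obstacle is the two‑dimensional instance of case $1^\circ$: everything there rests on having a Meyers‑type self‑improvement of integrability valid for strongly elliptic \emph{systems} with merely bounded measurable coefficients, with gained exponent strictly above the dimension. Cases $2^\circ$ and $3^\circ$ are, by contrast, routine once the structural reductions (scalar equation, resp. constant flux) are in place, since they reduce to classical De Giorgi--Nash--Moser theory, resp. $L_p$‑elliptic regularity for a constant‑coefficient operator, followed in each instance by a Sobolev embedding above the dimension.
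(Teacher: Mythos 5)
The paper states this proposition only as a citation of \cite[Lemma~8.7]{BSu06}, without an internal proof, so there is no step-by-step argument here to compare with. Your reconstruction is correct and uses the expected tools. Cases $2^\circ$ and $3^\circ$ in fact match facts that the paper itself records elsewhere: the invocation of De~Giorgi--Nash for the scalar divergence-form cell problem with real symmetric measurable $g$ is exactly the appeal to \cite[Chapter~III, Theorem 13.1]{LaU} made in Subsection~\ref{Subsection acoustics}, and the constancy $\widetilde g=g^0=\underline g$ that you use in case $3^\circ$ is stated, with reference to \cite[Remark 3.5]{BSu05}, in Subsection~\ref{Subsection special case}. Your handling of the two-dimensional instance of $1^\circ$ via a Meyers-type reverse H\"older estimate is the standard route; the gained exponent $p=2+\delta>d$ suffices for the Morrey embedding, and the self-improvement argument does extend to systems with merely bounded measurable coefficients, so there is no gap there.

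A small streamlining is available in case $3^\circ$: the passage through the second-order equation $b(\mathbf D)^* b(\mathbf D)\Lambda=b(\mathbf D)^*(g^{-1}\underline g)$ is not needed. From $b(\mathbf D)\Lambda=g^{-1}\underline g-\mathbf 1_m\in L_\infty(\Omega)$ one can recover $\mathbf D\Lambda$ directly by applying, on the Fourier side, the multiplier $\xi_j\bigl(b(\boldsymbol\xi)^*b(\boldsymbol\xi)\bigr)^{-1}b(\boldsymbol\xi)^*$, which is homogeneous of degree zero and smooth away from the origin thanks to \eqref{<b^*b<}; Mikhlin's theorem then gives $\mathbf D\Lambda\in L_p(\Omega)$ for every finite $p$, and choosing $p>d$ and applying the Sobolev embedding finishes as you say. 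This is the same estimate in disguise, but it avoids invoking $L_p$ elliptic regularity for $b(\mathbf D)^*b(\mathbf D)$ as a separate black box.
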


Surely, if $\Lambda \in L_\infty$, then Condition~\ref{Condition Lambda in Ld} holds automatically. Then, by Proposition~\ref{Proposition Ld implies multiplier}, for $d\geqslant 5$, the assumptions of Theorem~\ref{Theorem d>=5 eps} are satisfied.

We are going to check that under Condition \ref{Condition Lambda in L infty} the analog of Theorem \ref{Theorem 12.4} is valid without any smoothing operator in the corrector. To do this, we estimate the $(H^1\rightarrow H^1)$-norm of the operators under the norm sign in  \eqref{Th d<=4} (or \eqref{Th d>=5, chapter 3}). 
By \eqref{g^0<=}, \eqref{f0<=}, \eqref{A0 no hat}, and Lemma~\ref{Lemma PSu Lambda in L-infty}, we obtain
\begin{equation}
\label{12.15}
\begin{split}
\Vert &\varepsilon \Lambda ^\varepsilon b(\mathbf{D})f_0(\mathcal{A}^0)^{-1/2}\sin(\tau (\mathcal{A}^0)^{1/2})f_0 ^{-1} \Vert _{H^1(\mathbb{R}^d)\rightarrow H^1(\mathbb{R}^d)}
\leqslant
2\varepsilon\Vert \Lambda\Vert _{L_\infty}\Vert g^{-1}\Vert ^{1/2}_{L_\infty}\Vert f ^{-1}\Vert _{L_\infty}\\
&+\Vert g^{-1}\Vert ^{1/2}_{L_\infty}\Vert f^{-1} \Vert _{L_\infty}(\mathfrak{c}_1+\mathfrak{c}_2\Vert \Lambda \Vert ^2_{L_\infty})^{1/2},
\quad 
0<\varepsilon\leqslant 1,\quad \tau\in\mathbb{R}.
\end{split}
\end{equation}
Combining \eqref{12.7}, \eqref{12.8}, and \eqref{12.15}, we deduce that
\begin{equation}
\label{12.16}
\begin{split}
\Vert &f^\varepsilon \mathcal{A}_\varepsilon ^{-1/2}\sin (\tau \mathcal{A}_\varepsilon ^{1/2})(f^\varepsilon)^{-1}
-(I+\varepsilon \Lambda ^\varepsilon b(\mathbf{D}))
f_0 (\mathcal{A}^0)^{-1/2}\sin (\tau (\mathcal{A}^0)^{1/2})f_0 ^{-1}\Vert _{H^1(\mathbb{R}^d)\rightarrow H^1(\mathbb{R}^d)}
\\
&\leqslant
{ {C}}_{22}(1+\vert \tau\vert),
\quad 
0<\varepsilon\leqslant 1,\quad\tau\in\mathbb{R},
\end{split}
\end{equation}
where 
\begin{equation*}
{ {C}}_{22}:=\Vert f^{-1}\Vert _{L_\infty}
\max\left\lbrace 2\Vert f\Vert _{L_\infty};
\Vert g^{-1}\Vert ^{1/2}_{L_\infty}\left(2\alpha _0^{-1/2}+2\Vert \Lambda\Vert _{L_\infty}
+(\mathfrak{c}_1+\mathfrak{c}_2\Vert \Lambda\Vert ^2_{L_\infty})^{1/2}\right)\right\rbrace .
\end{equation*}
Interpolating between \eqref{12.16} and \eqref{Th d<=4} for $d\leqslant 4$  and between \eqref{12.16} and \eqref{Th d>=5, chapter 3} for $d\geqslant 5$, we arrive at the following result.

\begin{theorem}
\label{Theorem 12.6}
Suppose that the assumptions of Theorem~\textnormal{\ref{Theorem 12.1}} are satisfied and Condition~\textnormal{\ref{Condition Lambda in L infty}} holds. 
Then for $0\leqslant s\leqslant 1$ and $\tau\in\mathbb{R}$, $0<\varepsilon\leqslant 1$ we have
\begin{equation*}
\begin{split}
\Vert & f^\varepsilon \mathcal{A}_\varepsilon ^{-1/2}\sin (\tau \mathcal{A}_\varepsilon ^{1/2})(f^\varepsilon)^{-1}
-(I+\varepsilon \Lambda ^\varepsilon b(\mathbf{D}))
f_0 (\mathcal{A}^0)^{-1/2}\sin (\tau (\mathcal{A}^0)^{1/2})f_0^{-1}\Vert _{H^{s+1}(\mathbb{R}^d)\rightarrow H^1(\mathbb{R}^d)}
\\
&\leqslant
\mathfrak{C}_3(s) (1+\vert \tau\vert )\varepsilon ^s;\quad \mathfrak{C}_3(s):={ {C}}_{22}^{\,1-s}\ {C}_{19}^s\quad\mbox{for}\quad d\leqslant 4,\quad \mathfrak{C}_3(s):={ {C}}_{22}^{\,1-s}\ {C}_{20}^s\quad\mbox{for}\quad d\geqslant 5.
\end{split}
\end{equation*}
\end{theorem}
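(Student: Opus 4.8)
The plan is to deduce the estimate by operator interpolation between two endpoint bounds already established, exactly in the spirit of the proofs of Theorems~\ref{Theorem 12.2} and \ref{Theorem 12.4}. Write $\mathcal{T}(\varepsilon,\tau)$ for the operator-valued difference appearing under the norm sign in the statement. The target space $H^1(\mathbb{R}^d;\mathbb{C}^n)$ is kept fixed, while the domain runs over the complex-interpolation scale $H^{s+1}(\mathbb{R}^d;\mathbb{C}^n)=[H^1,H^2]_s$; so it suffices to control $\mathcal{T}(\varepsilon,\tau)$ as an operator from $H^1$ to $H^1$ (the case $s=0$) and from $H^2$ to $H^1$ (the case $s=1$).

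First I would record the endpoint at $s=1$. For $d\le 4$ this is precisely estimate~\eqref{Th d<=4} from Theorem~\ref{Theorem d<=4 chapter 3}: $\Vert\mathcal{T}(\varepsilon,\tau)\Vert_{H^2(\mathbb{R}^d)\to H^1(\mathbb{R}^d)}\le C_{19}\varepsilon(1+\vert\tau\vert)$ for $0<\varepsilon\le1$. For $d\ge5$ one first observes that Condition~\ref{Condition Lambda in L infty} implies Condition~\ref{Condition Lambda in Ld}, since the cell $\Omega$ has finite measure and hence $L_\infty(\Omega)\subset L_d(\Omega)$; by Proposition~\ref{Proposition Ld implies multiplier} this gives Condition~\ref{Condition Lambda multiplier}, so Theorem~\ref{Theorem d>=5 eps} applies and yields $\Vert\mathcal{T}(\varepsilon,\tau)\Vert_{H^2(\mathbb{R}^d)\to H^1(\mathbb{R}^d)}\le C_{20}\varepsilon(1+\vert\tau\vert)$. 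The endpoint at $s=0$ is estimate~\eqref{12.16}, $\Vert\mathcal{T}(\varepsilon,\tau)\Vert_{H^1(\mathbb{R}^d)\to H^1(\mathbb{R}^d)}\le C_{22}(1+\vert\tau\vert)$, itself obtained above from \eqref{12.7}, \eqref{12.8} and \eqref{12.15} using Condition~\ref{Condition Lambda in L infty} (through Lemma~\ref{Lemma PSu Lambda in L-infty}).

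Next I would carry out the interpolation in concrete form. Using the isometric isomorphisms $(\mathcal{H}_0+I)^{t/2}\colon H^t(\mathbb{R}^d;\mathbb{C}^n)\to L_2(\mathbb{R}^d;\mathbb{C}^n)$ from \eqref{isometria}, put $B:=(\mathcal{H}_0+I)^{1/2}\mathcal{T}(\varepsilon,\tau)$, so that $\Vert\mathcal{T}(\varepsilon,\tau)\Vert_{H^{s+1}\to H^1}=\Vert B(\mathcal{H}_0+I)^{-(s+1)/2}\Vert_{L_2\to L_2}$. The two endpoint estimates become $\Vert B(\mathcal{H}_0+I)^{-1/2}\Vert_{L_2\to L_2}\le C_{22}(1+\vert\tau\vert)$ and $\Vert B(\mathcal{H}_0+I)^{-1}\Vert_{L_2\to L_2}\le C_\ast\varepsilon(1+\vert\tau\vert)$, with $C_\ast=C_{19}$ for $d\le4$ and $C_\ast=C_{20}$ for $d\ge5$. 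Applying the three-lines theorem to the bounded analytic family $z\mapsto B(\mathcal{H}_0+I)^{-z}$ on the strip $1/2\le\operatorname{Re}z\le1$ — the factors $(\mathcal{H}_0+I)^{-iy}$ being unitary, the two boundary norms are exactly the endpoint constants — at $z=(s+1)/2$ we obtain
\[
\Vert\mathcal{T}(\varepsilon,\tau)\Vert_{H^{s+1}(\mathbb{R}^d)\to H^1(\mathbb{R}^d)}\le\bigl(C_{22}(1+\vert\tau\vert)\bigr)^{1-s}\bigl(C_\ast\varepsilon(1+\vert\tau\vert)\bigr)^{s}=C_{22}^{\,1-s}C_\ast^{\,s}(1+\vert\tau\vert)\varepsilon^{s},
\]
which is the claimed inequality with $\mathfrak{C}_3(s)=C_{22}^{\,1-s}C_{19}^{\,s}$ for $d\le4$ and $\mathfrak{C}_3(s)=C_{22}^{\,1-s}C_{20}^{\,s}$ for $d\ge5$.

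There is no real obstacle: this is the routine complex-interpolation step used repeatedly in this section. The only point deserving a remark is the reduction for $d\ge5$, i.e.\ that the hypothesis $\Lambda\in L_\infty$ suffices to trigger Theorem~\ref{Theorem d>=5 eps} via Conditions~\ref{Condition Lambda in Ld} and~\ref{Condition Lambda multiplier}, which is already noted in the discussion preceding the statement.
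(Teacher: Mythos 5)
Your proposal is correct and follows exactly the paper's own route: the paper's proof is the one-line remark preceding the theorem, ``Interpolating between \eqref{12.16} and \eqref{Th d<=4} for $d\leqslant 4$ and between \eqref{12.16} and \eqref{Th d>=5, chapter 3} for $d\geqslant 5$,'' and you reproduce this, supplying the standard details of the complex-interpolation step and the observation (also made in the paper just before the statement) that $\Lambda\in L_\infty$ triggers Condition~\ref{Condition Lambda in Ld}, hence Condition~\ref{Condition Lambda multiplier}, so that Theorem~\ref{Theorem d>=5 eps} applies when $d\geqslant 5$.
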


\subsection{The case where the corrector is equal to zero}

Assume that $g^0=\overline{g}$, i.~e., relations  \eqref{overline-g} are valid. Then the $\Gamma$-periodic solution of problem \eqref{Lambda problem} is equal to zero: $\Lambda (\mathbf{x})=0$, and Theorem \ref{Theorem 12.4} implies the following result.

\begin{proposition}
Suppose that relations \eqref{overline-g} hold. Then under the assumptions of Theorem~\textnormal{\ref{Theorem 12.1}}, for $0\leqslant s\leqslant 1$ and $\tau\in\mathbb{R}$, $0<\varepsilon\leqslant 1$ we have
\begin{equation*}
\begin{split}
\Vert  f^\varepsilon \mathcal{A}_\varepsilon ^{-1/2}\sin (\tau \mathcal{A}_\varepsilon ^{1/2})(f^\varepsilon)^{-1}
-
f_0 (\mathcal{A}^0)^{-1/2}\sin (\tau (\mathcal{A}^0)^{1/2})f_0^{-1}\Vert _{H^{s+1}(\mathbb{R}^d)\rightarrow H^1(\mathbb{R}^d)}
\leqslant
\mathfrak{C}_2(s) (1+\vert \tau\vert )\varepsilon ^s.
\end{split}
\end{equation*}
\end{proposition}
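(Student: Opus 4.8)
The plan is to deduce the statement as an immediate specialization of Theorem~\ref{Theorem 12.4}. The only substantive observation needed is that, under relations \eqref{overline-g}, the $\Gamma$-periodic solution $\Lambda$ of problem \eqref{Lambda problem} vanishes identically, so that the correction term in \eqref{12.6} disappears.

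First I would verify that $\Lambda(\mathbf{x})\equiv 0$. Indeed, substituting $\Lambda=0$ into \eqref{Lambda problem} gives the equation $b(\mathbf{D})^*g(\mathbf{x})\bigl(b(\mathbf{D})\cdot 0+\mathbf{1}_m\bigr)=b(\mathbf{D})^*g(\mathbf{x})\mathbf{1}_m=0$, which holds column by column precisely by \eqref{overline-g} (these are the relations $b(\mathbf{D})^*\mathbf{g}_k(\mathbf{x})=0$, $k=1,\dots,m$, for the columns $\mathbf{g}_k$ of $g$). The normalization $\int_\Omega\Lambda(\mathbf{x})\,d\mathbf{x}=0$ is trivially satisfied as well, and $0\in\widetilde{H}^1(\Omega)$. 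Since problem \eqref{Lambda problem} has a unique weak solution in $\widetilde{H}^1(\Omega)$ (as recorded right after \eqref{Lambda problem}), we conclude $\Lambda=0$.

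Consequently the operator $\varepsilon\Lambda^\varepsilon b(\mathbf{D})\Pi_\varepsilon$ occurring in the corrector of Theorem~\ref{Theorem 12.4} is the zero operator, and estimate \eqref{12.6} with $\varepsilon$-corrector removed reads exactly
\begin{equation*}
\Vert f^\varepsilon \mathcal{A}_\varepsilon ^{-1/2}\sin (\tau \mathcal{A}_\varepsilon ^{1/2})(f^\varepsilon )^{-1}
- f_0 (\mathcal{A}^0)^{-1/2}\sin (\tau (\mathcal{A}^0)^{1/2})f_0^{-1}\Vert _{H^{s+1}(\mathbb{R}^d)\rightarrow H^1(\mathbb{R}^d)}
\leqslant \mathfrak{C}_2(s)(1+\vert\tau\vert)\varepsilon^s
\end{equation*}
for $0\leqslant s\leqslant 1$, $\tau\in\mathbb{R}$, $0<\varepsilon\leqslant 1$, which is the assertion. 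Since all hypotheses of Theorem~\ref{Theorem 12.1} are assumed and Theorem~\ref{Theorem 12.4} only adds Condition~\ref{Condition Lambda in L infty}-type assumptions on $\Lambda$ that are vacuous here (indeed $\Lambda=0\in L_\infty$), nothing further is required. There is no real obstacle in this argument; the only point deserving a line of justification is the uniqueness step identifying $\Lambda$ with $0$, and that is already provided by the properties of problem \eqref{Lambda problem} quoted earlier in the paper.
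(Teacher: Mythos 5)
Your argument is exactly the one the paper uses: under \eqref{overline-g} the solution $\Lambda$ of \eqref{Lambda problem} vanishes (by direct substitution plus uniqueness), so the corrector in Theorem~\ref{Theorem 12.4} is zero and the claimed estimate is precisely \eqref{12.6} with $\Lambda^\varepsilon=0$. One inessential slip: Theorem~\ref{Theorem 12.4} imposes no extra condition on $\Lambda$ at all (Condition~\ref{Condition Lambda in L infty} belongs to Theorem~\ref{Theorem 12.6}), so your remark that it ``adds Condition~\ref{Condition Lambda in L infty}-type assumptions'' is unnecessary, though it does not affect the argument.
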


\subsection{The Steklov smoothing operator. Another approximation with the corrector}

Let us show that the result of Theorem \ref{Theorem 12.3} remains true with the operator  $\Pi _\varepsilon$  
replaced by another smoothing operator.

The operator $S_\varepsilon$, $\varepsilon >0$, acting in $L_2(\mathbb{R}^d;\mathbb{C}^n)$ and defined by the relation
\begin{equation}
\label{S_eps}
(S_\varepsilon \mathbf{u})(\mathbf{x})=\vert\Omega\vert ^{-1}\int _\Omega \mathbf{u}(\mathbf{x}-\varepsilon\mathbf{z})\,d\mathbf{z},\quad\mathbf{u}\in L_2(\mathbb{R}^d;\mathbb{C}^n),
\end{equation}
is called the Steklov smoothing operator.

The following properties of the operator $S_\varepsilon$ were checked in \cite[Lemmata 1.1 and 1.2]{ZhPas} (see also \cite[Propositions 3.1 and 3.2]{PSu}).

\begin{proposition}
\label{Proposition S_eps -I}
For any function $\mathbf{u}\in H^1(\mathbb{R}^d;\mathbb{C}^n)$ we have
\begin{equation*}
\Vert S_\varepsilon\mathbf{u}-\mathbf{u}\Vert _{L_2(\mathbb{R}^d)}\leqslant \varepsilon r_1\Vert \mathbf{D}\mathbf{u}\Vert _{L_2(\mathbb{R}^d)},\quad\varepsilon >0.
\end{equation*}
\end{proposition}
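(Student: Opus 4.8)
The plan is to reduce to smooth functions, write $S_\varepsilon\mathbf u-\mathbf u$ through the fundamental theorem of calculus, and estimate by the generalized Minkowski inequality. First I would note that $S_\varepsilon$ is a contraction in $L_2(\mathbb{R}^d;\mathbb{C}^n)$: indeed, by the generalized Minkowski inequality applied to \eqref{S_eps} and the translation invariance of Lebesgue measure,
\[
\Vert S_\varepsilon\mathbf u\Vert _{L_2(\mathbb{R}^d)}\leqslant\vert\Omega\vert ^{-1}\int _\Omega \Vert\mathbf u(\cdot-\varepsilon\mathbf z)\Vert _{L_2(\mathbb{R}^d)}\,d\mathbf z=\Vert\mathbf u\Vert _{L_2(\mathbb{R}^d)}.
\]
Since $C_0^\infty(\mathbb{R}^d;\mathbb{C}^n)$ is dense in $H^1(\mathbb{R}^d;\mathbb{C}^n)$ and both sides of the asserted inequality are continuous with respect to the $H^1$-norm, it suffices to prove the estimate for $\mathbf u\in C_0^\infty(\mathbb{R}^d;\mathbb{C}^n)$ and then pass to the limit.

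For such $\mathbf u$ and $\mathbf z\in\Omega$ the fundamental theorem of calculus gives
\[
\mathbf u(\mathbf x-\varepsilon\mathbf z)-\mathbf u(\mathbf x)=-\int _0^\varepsilon \langle\mathbf z,(\nabla\mathbf u)(\mathbf x-s\mathbf z)\rangle\,ds,\qquad\mathbf x\in\mathbb{R}^d,
\]
and, substituting this into \eqref{S_eps} and using Fubini (legitimate since $\mathbf u$ is smooth and compactly supported),
\[
(S_\varepsilon\mathbf u)(\mathbf x)-\mathbf u(\mathbf x)=-\vert\Omega\vert ^{-1}\int _\Omega\int _0^\varepsilon \langle\mathbf z,(\nabla\mathbf u)(\mathbf x-s\mathbf z)\rangle\,ds\,d\mathbf z.
\]
Now I would take the $L_2(\mathbb{R}^d)$-norm, move it inside the $d\mathbf z\,ds$-integration by the generalized Minkowski inequality, and bound the integrand pointwise by $\vert\mathbf z\vert\,\vert(\nabla\mathbf u)(\mathbf x-s\mathbf z)\vert$; together with $\Vert(\nabla\mathbf u)(\cdot-s\mathbf z)\Vert _{L_2(\mathbb{R}^d)}=\Vert\nabla\mathbf u\Vert _{L_2(\mathbb{R}^d)}=\Vert\mathbf D\mathbf u\Vert _{L_2(\mathbb{R}^d)}$ this yields
\[
\Vert S_\varepsilon\mathbf u-\mathbf u\Vert _{L_2(\mathbb{R}^d)}\leqslant\varepsilon\Bigl(\vert\Omega\vert ^{-1}\int _\Omega\vert\mathbf z\vert\,d\mathbf z\Bigr)\Vert\mathbf D\mathbf u\Vert _{L_2(\mathbb{R}^d)}.
\]

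It remains to observe that $\vert\Omega\vert ^{-1}\int _\Omega\vert\mathbf z\vert\,d\mathbf z\leqslant\max _{\mathbf z\in\mathrm{clos}\,\Omega}\vert\mathbf z\vert=r_1$. This is the only geometric point: the cell $\Omega$ is symmetric about the origin, so for any $\mathbf z,\mathbf z'\in\mathrm{clos}\,\Omega$ we have $\vert\mathbf z-\mathbf z'\vert\leqslant\vert\mathbf z\vert+\vert\mathbf z'\vert\leqslant 2\max _{\mathbf w\in\mathrm{clos}\,\Omega}\vert\mathbf w\vert$, and equality is attained for $\mathbf z'=-\mathbf z$; hence $2r_1=\mathrm{diam}\,\Omega=2\max _{\mathbf z\in\mathrm{clos}\,\Omega}\vert\mathbf z\vert$. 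Combining this with the previous display proves the inequality for $\mathbf u\in C_0^\infty$, and the density argument from the first paragraph completes the proof. I do not anticipate a real obstacle: the only steps requiring a word of justification are the application of Fubini and the closing density/limit argument.
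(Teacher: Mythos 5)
Your proof is correct. The paper itself does not prove Proposition~\ref{Proposition S_eps -I}: it simply cites \cite[Lemmata 1.1 and 1.2]{ZhPas} and \cite[Propositions 3.1 and 3.2]{PSu}, so there is no in-text argument to compare against. What you give is the standard physical-space proof: write the difference $\mathbf u(\mathbf x-\varepsilon\mathbf z)-\mathbf u(\mathbf x)$ via the fundamental theorem of calculus, pull the $L_2$-norm inside by the generalized Minkowski inequality, use translation invariance, and then identify $\max_{\mathbf z\in\mathrm{clos}\,\Omega}\vert\mathbf z\vert$ with $r_1$ via the central symmetry of $\Omega$. All the ancillary steps you flag (Fubini on the bounded set $\Omega\times[0,\varepsilon]$ for smooth compactly supported data, and the density reduction, which is legitimate because $S_\varepsilon-I$ is $L_2$-bounded with norm $\leqslant 2$) are sound. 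Two small remarks. First, an equally short alternative is the Fourier-side argument used in the cited sources: $\widehat{(S_\varepsilon\mathbf u-\mathbf u)}(\boldsymbol\xi)=(\Phi(\varepsilon\boldsymbol\xi)-1)\widehat{\mathbf u}(\boldsymbol\xi)$ with $\Phi(\boldsymbol\xi)=\vert\Omega\vert^{-1}\int_\Omega e^{-i\langle\boldsymbol\xi,\mathbf z\rangle}\,d\mathbf z$, and the bound $\vert e^{-i\langle\boldsymbol\xi,\mathbf z\rangle}-1\vert\leqslant\vert\boldsymbol\xi\vert\,\vert\mathbf z\vert\leqslant r_1\vert\boldsymbol\xi\vert$ combined with Parseval gives the same estimate; this is the approach the paper uses elsewhere (e.g., for $\Vert\mathcal R(\mathbf k,\varepsilon)^{1/2}(I-\widehat P)\Vert$ in \eqref{R(I-P)}), so it fits the text's style, but your approach is no less rigorous. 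Second, you actually prove the slightly stronger bound with constant $\vert\Omega\vert^{-1}\int_\Omega\vert\mathbf z\vert\,d\mathbf z$, which is strictly smaller than $r_1$; relaxing to $r_1$ at the end is fine since that is what is claimed.
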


\begin{proposition}
\label{Proposition S_eps f^eps}
Let $\Phi(\mathbf{x})$ be a $\Gamma$-periodic function in $\mathbb{R}^d$ such that $\Phi \in L_2(\Omega)$. Then the operator $[\Phi ^\varepsilon]S_\varepsilon$ is bounded in $L_2(\mathbb{R}^d;\mathbb{C}^n)$, and
\begin{equation*}
\Vert [\Phi ^\varepsilon]S_\varepsilon\Vert _{L_2(\mathbb{R}^d)\rightarrow L_2(\mathbb{R}^d)}\leqslant\vert\Omega\vert ^{-1/2}\Vert \Phi \Vert _{L_2(\Omega)},\quad\varepsilon >0.
\end{equation*}
\end{proposition}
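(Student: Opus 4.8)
The plan is to establish the bound by a direct computation, using Jensen's inequality for the averaging, Tonelli's theorem to interchange integrals, a translation in the $L_2(\mathbb{R}^d)$ variable, and the translation-invariance of the integral of a $\Gamma$-periodic function over a fundamental cell. The argument runs parallel to the proof of Proposition~\ref{Proposition Pi eps f eps}.

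First I would write the operator out explicitly: for $\mathbf{u}\in L_2(\mathbb{R}^d;\mathbb{C}^n)$,
\[
([\Phi^\varepsilon]S_\varepsilon\mathbf{u})(\mathbf{x})=\Phi(\varepsilon^{-1}\mathbf{x})\,\vert\Omega\vert^{-1}\int_\Omega\mathbf{u}(\mathbf{x}-\varepsilon\mathbf{z})\,d\mathbf{z}.
\]
Since $\vert\Omega\vert^{-1}\,d\mathbf{z}$ is a probability measure on $\Omega$, Jensen's inequality gives $\vert(S_\varepsilon\mathbf{u})(\mathbf{x})\vert^2\leqslant\vert\Omega\vert^{-1}\int_\Omega\vert\mathbf{u}(\mathbf{x}-\varepsilon\mathbf{z})\vert^2\,d\mathbf{z}$, whence
\[
\Vert[\Phi^\varepsilon]S_\varepsilon\mathbf{u}\Vert^2_{L_2(\mathbb{R}^d)}\leqslant\vert\Omega\vert^{-1}\int_{\mathbb{R}^d}\int_\Omega\vert\Phi(\varepsilon^{-1}\mathbf{x})\vert^2\,\vert\mathbf{u}(\mathbf{x}-\varepsilon\mathbf{z})\vert^2\,d\mathbf{z}\,d\mathbf{x}.
\]

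Next I would apply Tonelli's theorem (the integrand is nonnegative and measurable, so no a priori finiteness is needed) to interchange the $\mathbf{x}$- and $\mathbf{z}$-integrals, and for each fixed $\mathbf{z}\in\Omega$ perform the substitution $\mathbf{y}=\mathbf{x}-\varepsilon\mathbf{z}$ ($d\mathbf{x}=d\mathbf{y}$). The double integral becomes $\vert\Omega\vert^{-1}\int_\Omega\int_{\mathbb{R}^d}\vert\Phi(\varepsilon^{-1}\mathbf{y}+\mathbf{z})\vert^2\,\vert\mathbf{u}(\mathbf{y})\vert^2\,d\mathbf{y}\,d\mathbf{z}$. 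Interchanging the order once more, the inner integral is $\int_\Omega\vert\Phi(\varepsilon^{-1}\mathbf{y}+\mathbf{z})\vert^2\,d\mathbf{z}$; because $\Phi$ is $\Gamma$-periodic and $\Omega$ is a fundamental cell, the integral over $\Omega$ of the $\Gamma$-periodic function $\mathbf{z}\mapsto\vert\Phi(\mathbf{a}+\mathbf{z})\vert^2$ equals $\int_\Omega\vert\Phi(\mathbf{z})\vert^2\,d\mathbf{z}=\Vert\Phi\Vert^2_{L_2(\Omega)}$ for every $\mathbf{a}\in\mathbb{R}^d$ (the translates $\Omega+\mathbf{a}$, $\mathbf{a}\in\Gamma$, tile $\mathbb{R}^d$ up to a null set, so the integral of a periodic function over any translate of $\Omega$ is the same). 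Taking $\mathbf{a}=\varepsilon^{-1}\mathbf{y}$, this yields
\[
\Vert[\Phi^\varepsilon]S_\varepsilon\mathbf{u}\Vert^2_{L_2(\mathbb{R}^d)}\leqslant\vert\Omega\vert^{-1}\Vert\Phi\Vert^2_{L_2(\Omega)}\Vert\mathbf{u}\Vert^2_{L_2(\mathbb{R}^d)},
\]
which gives both the boundedness of $[\Phi^\varepsilon]S_\varepsilon$ in $L_2(\mathbb{R}^d;\mathbb{C}^n)$ and the claimed norm estimate.

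There is essentially no serious obstacle: the only points requiring (mild) care are the measurability of the map $(\mathbf{x},\mathbf{z})\mapsto\Phi(\varepsilon^{-1}\mathbf{x})\mathbf{u}(\mathbf{x}-\varepsilon\mathbf{z})$ on $\mathbb{R}^d\times\Omega$ — which is routine, since $\Phi\in L_2(\Omega)$ extends to a measurable $\Gamma$-periodic function and $\mathbf{u}$ is measurable — and the translation-invariance of the cell integral of a periodic function, which is the standard tiling argument recalled above.
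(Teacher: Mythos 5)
Your proof is correct and complete. The paper does not prove this proposition itself --- it is quoted from \cite[Lemmata 1.1, 1.2]{ZhPas} (see also \cite[Propositions 3.1, 3.2]{PSu}) --- and the argument you give (Jensen's inequality for the cell average, Tonelli, the change of variable $\mathbf{y}=\mathbf{x}-\varepsilon\mathbf{z}$, and finally the translation invariance of the cell integral of the $\Gamma$-periodic function $\vert\Phi\vert^2$) is precisely the standard proof found in those references. One small remark: your opening sentence says the argument ``runs parallel to the proof of Proposition~\ref{Proposition Pi eps f eps}.'' That proposition is also only cited, not proved, in the paper, and since $\Pi_\varepsilon$ is a sharp Fourier cutoff rather than a local average, its natural proof goes via Plancherel/the Gelfand transform rather than the convolution argument you use here; so the two proofs are in fact of a different nature. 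This does not affect the correctness of your argument for $S_\varepsilon$.
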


We also need the following statement obtained in \cite[Lemma 3.5]{PSu}.

\begin{proposition}
\label{Prop Pi -S}
Let $\Pi _\varepsilon $ be the operator \eqref{Pi eps} and let $S_\varepsilon$ be the Steklov smoothing operator \eqref{S_eps}. Let $\Lambda (\mathbf{x})$ be the $\Gamma$-periodic solution of problem \eqref{Lambda problem}. Then
\begin{equation*}
 \Vert [\Lambda ^\varepsilon ]b(\mathbf{D})(\Pi _\varepsilon -S_\varepsilon )\Vert _{H^2(\mathbb{R}^d)\rightarrow H^1(\mathbb{R}^d)}\leqslant C_{23} ,\quad\varepsilon >0,
\end{equation*}
where the constant $C_{23}$ depends only on $d$, $m$, $r_0$, $r_1$, $\alpha _0$, $\alpha _1$, $\Vert g\Vert _{L_\infty}$, and $\Vert g^{-1}\Vert _{L_\infty}$.
\end{proposition}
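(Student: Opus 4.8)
The plan is to estimate the $H^1(\mathbb{R}^d)$-norm of $[\Lambda^\varepsilon]b(\mathbf{D})(\Pi_\varepsilon-S_\varepsilon)\mathbf{u}$, for $\mathbf{u}\in H^2(\mathbb{R}^d;\mathbb{C}^n)$, by controlling the $L_2$-norm of the function itself and of its gradient. Since $\Pi_\varepsilon$ is a scalar Fourier multiplier and $S_\varepsilon$ is convolution with a scalar kernel, both commute with $b(\mathbf{D})$ and with $\mathbf{D}$, so I first rewrite the operator as $[\Lambda^\varepsilon](\Pi_\varepsilon-S_\varepsilon)\mathbf{w}$ with $\mathbf{w}:=b(\mathbf{D})\mathbf{u}\in H^1(\mathbb{R}^d;\mathbb{C}^m)$ and $\Vert\mathbf{w}\Vert_{H^1(\mathbb{R}^d)}\leqslant C\Vert\mathbf{u}\Vert_{H^2(\mathbb{R}^d)}$ by \eqref{<b^*b<}. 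Applying the Leibniz rule together with the scaling identity $\mathbf{D}(\Lambda^\varepsilon)=\varepsilon^{-1}(\mathbf{D}\Lambda)^\varepsilon$, the gradient splits as $[\Lambda^\varepsilon](\Pi_\varepsilon-S_\varepsilon)\mathbf{D}\mathbf{w}+\varepsilon^{-1}[(\mathbf{D}\Lambda)^\varepsilon](\Pi_\varepsilon-S_\varepsilon)\mathbf{w}$. Thus the proof reduces to bounding the three quantities $\Vert[\Lambda^\varepsilon](\Pi_\varepsilon-S_\varepsilon)\mathbf{w}\Vert_{L_2}$, $\Vert[\Lambda^\varepsilon](\Pi_\varepsilon-S_\varepsilon)\mathbf{D}\mathbf{w}\Vert_{L_2}$, and $\varepsilon^{-1}\Vert[(\mathbf{D}\Lambda)^\varepsilon](\Pi_\varepsilon-S_\varepsilon)\mathbf{w}\Vert_{L_2}$ by $C\Vert\mathbf{u}\Vert_{H^2(\mathbb{R}^d)}$; the third one carries the singular factor $\varepsilon^{-1}$ and is the main obstacle.

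The first two quantities are routine. By Propositions~\ref{Proposition Pi eps f eps} and \ref{Proposition S_eps f^eps}, applied componentwise to the $\Gamma$-periodic $L_2(\Omega)$-matrix $\Lambda$, together with \eqref{Lambda<=}, each of $[\Lambda^\varepsilon]\Pi_\varepsilon$ and $[\Lambda^\varepsilon]S_\varepsilon$ is bounded on $L_2(\mathbb{R}^d)$ with norm $\leqslant M_1$; since $\Vert\mathbf{w}\Vert_{L_2}$ and $\Vert\mathbf{D}\mathbf{w}\Vert_{L_2}$ are bounded by $C\Vert\mathbf{u}\Vert_{H^2(\mathbb{R}^d)}$, this finishes both (and in particular already settles the $L_2$-part of the target $H^1$-norm). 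The genuine difficulty is the third quantity: the factor $\varepsilon^{-1}$ must be absorbed, so I need to exhibit an extra power of $\varepsilon$ gained by $[(\mathbf{D}\Lambda)^\varepsilon](\Pi_\varepsilon-S_\varepsilon)$ as an operator $H^1\to L_2$, despite the fact that $\mathbf{D}\Lambda$ is only in $L_2(\Omega)$.

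To produce that extra $\varepsilon$ I will use the decomposition $\Pi_\varepsilon-S_\varepsilon=(I-S_\varepsilon)\Pi_\varepsilon-S_\varepsilon(I-\Pi_\varepsilon)$. For the piece $[(\mathbf{D}\Lambda)^\varepsilon]S_\varepsilon(I-\Pi_\varepsilon)\mathbf{w}$ the gain is immediate: $[(\mathbf{D}\Lambda)^\varepsilon]S_\varepsilon$ is $L_2$-bounded with norm $\leqslant M_2$ by Proposition~\ref{Proposition S_eps f^eps} and \eqref{D Lambda <=}, while $\Vert(I-\Pi_\varepsilon)\mathbf{w}\Vert_{L_2}\leqslant\varepsilon r_0^{-1}\Vert\mathbf{D}\mathbf{w}\Vert_{L_2}$ by Proposition~\ref{Proposition Pi eps -I}. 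The remaining piece $[(\mathbf{D}\Lambda)^\varepsilon](I-S_\varepsilon)\Pi_\varepsilon\mathbf{w}$ is the heart of the argument, and here I will use the finite-difference representation of the Steklov operator coming from \eqref{S_eps}: with $\boldsymbol{\psi}:=\Pi_\varepsilon\mathbf{w}\in H^1(\mathbb{R}^d;\mathbb{C}^m)$,
\begin{equation*}
(I-S_\varepsilon)\boldsymbol{\psi}(\mathbf{x})=\vert\Omega\vert^{-1}\int_\Omega\bigl(\boldsymbol{\psi}(\mathbf{x})-\boldsymbol{\psi}(\mathbf{x}-\varepsilon\mathbf{z})\bigr)\,d\mathbf{z}=\varepsilon\vert\Omega\vert^{-1}\int_\Omega\int_0^1\sum_{j=1}^d z_j(\partial_j\boldsymbol{\psi})(\mathbf{x}-(1-t)\varepsilon\mathbf{z})\,dt\,d\mathbf{z},
\end{equation*}
which holds a.e. for $\boldsymbol{\psi}\in H^1$. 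The crucial observation is that $\partial_j\boldsymbol{\psi}$ and its translates are again of the form $\Pi_\varepsilon(\cdot)$, since $\partial_j$ and spatial translations commute with the Fourier multiplier $\Pi_\varepsilon$; hence each factor $(D_j\Lambda)^\varepsilon(\partial_j\boldsymbol{\psi})(\cdot-(1-t)\varepsilon\mathbf{z})=(D_j\Lambda)^\varepsilon\,\Pi_\varepsilon\bigl((\partial_j\mathbf{w})(\cdot-(1-t)\varepsilon\mathbf{z})\bigr)$ is estimated in $L_2(\mathbb{R}^d)$ by Proposition~\ref{Proposition Pi eps f eps}, \eqref{D Lambda <=}, and the unitarity of translations, giving a bound $\leqslant M_2\Vert\partial_j\mathbf{w}\Vert_{L_2}$ uniformly in $t$ and in $\mathbf{z}\in\Omega$. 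Inserting this into the displayed identity via Minkowski's integral inequality and using $\vert\mathbf{z}\vert\leqslant r_1$ on $\Omega$ (recall $2r_1=\operatorname{diam}\Omega$) yields $\Vert[(\mathbf{D}\Lambda)^\varepsilon](I-S_\varepsilon)\Pi_\varepsilon\mathbf{w}\Vert_{L_2}\leqslant C\varepsilon r_1 d\,M_2\,\Vert\mathbf{D}\mathbf{w}\Vert_{L_2}\leqslant C\varepsilon\Vert\mathbf{u}\Vert_{H^2(\mathbb{R}^d)}$. Dividing by $\varepsilon$ and combining with the earlier bounds gives the assertion with a constant $C_{23}$ of the stated dependence. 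As indicated, the one delicate point is exactly this last term: one has to trade the singular prefactor $\varepsilon^{-1}$ against the mean-value/finite-difference structure of $S_\varepsilon$ while keeping the rough periodic factor $\mathbf{D}\Lambda\in L_2(\Omega)$ under control, which is precisely what the boundedness of $[\Phi^\varepsilon]\Pi_\varepsilon$ on $L_2$ permits.
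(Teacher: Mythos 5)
The paper does not prove this Proposition itself; it cites it as \cite[Lemma 3.5]{PSu}, so there is no in-paper proof to compare against. Evaluated on its own terms, your argument is correct: commuting $b(\mathbf{D})$ and $\mathbf{D}$ past the scalar multipliers $\Pi_\varepsilon$ and $S_\varepsilon$, using the Leibniz rule with $\mathbf{D}(\Lambda^\varepsilon)=\varepsilon^{-1}(\mathbf{D}\Lambda)^\varepsilon$, and isolating the singular term $\varepsilon^{-1}[(\mathbf{D}\Lambda)^\varepsilon](\Pi_\varepsilon-S_\varepsilon)\mathbf{w}$ via the identity $\Pi_\varepsilon-S_\varepsilon=(I-S_\varepsilon)\Pi_\varepsilon-S_\varepsilon(I-\Pi_\varepsilon)$ is exactly the right skeleton, and the bounds you cite (Propositions~\ref{Proposition Pi eps f eps}, \ref{Proposition S_eps f^eps}, \ref{Proposition Pi eps -I}, together with \eqref{Lambda<=}, \eqref{D Lambda <=}) yield a constant with the stated dependence.

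However, the finite-difference computation you carry out for the piece $[(\mathbf{D}\Lambda)^\varepsilon](I-S_\varepsilon)\Pi_\varepsilon\mathbf{w}$ is an unnecessary detour that also introduces a mild technical subtlety (validity of the pointwise Taylor/mean-value identity for $H^1$ functions, Fubini, approximation). You already used the cleaner mechanism for the companion term: you commuted $S_\varepsilon$ past $(I-\Pi_\varepsilon)$ and applied $\Vert[(\mathbf{D}\Lambda)^\varepsilon]S_\varepsilon\Vert_{L_2\to L_2}\leqslant M_2$. The same trick works here, because $\Pi_\varepsilon$ and $I-S_\varepsilon$ commute (both are Fourier multipliers): $[(\mathbf{D}\Lambda)^\varepsilon](I-S_\varepsilon)\Pi_\varepsilon=[(\mathbf{D}\Lambda)^\varepsilon]\Pi_\varepsilon(I-S_\varepsilon)$, whence
\begin{equation*}
\Vert[(\mathbf{D}\Lambda)^\varepsilon](I-S_\varepsilon)\Pi_\varepsilon\mathbf{w}\Vert_{L_2}\leqslant M_2\Vert(I-S_\varepsilon)\mathbf{w}\Vert_{L_2}\leqslant M_2\,\varepsilon r_1\Vert\mathbf{D}\mathbf{w}\Vert_{L_2}
\end{equation*}
directly by Proposition~\ref{Proposition S_eps -I}. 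This gives the same $O(\varepsilon)$ gain with no recourse to the integral representation of $S_\varepsilon$, and makes the two halves of the decomposition symmetric: in one you pair $[(\mathbf{D}\Lambda)^\varepsilon]\Pi_\varepsilon$ (bounded by $M_2$) with $\Vert(I-S_\varepsilon)\mathbf{w}\Vert_{L_2}\leqslant\varepsilon r_1\Vert\mathbf{D}\mathbf{w}\Vert_{L_2}$, in the other you pair $[(\mathbf{D}\Lambda)^\varepsilon]S_\varepsilon$ (bounded by $M_2$) with $\Vert(I-\Pi_\varepsilon)\mathbf{w}\Vert_{L_2}\leqslant\varepsilon r_0^{-1}\Vert\mathbf{D}\mathbf{w}\Vert_{L_2}$. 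So: correct proof, but the hard step can be replaced by one line.
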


Using \eqref{f0<=}, \eqref{A0 no hat}, Proposition~\ref{Prop Pi -S}, and Theorem~\ref{Theorem 12.3}, we obtain the following result.

\begin{theorem}
\label{Theorem 12.7}
Suppose that the assumptions of Theorem \textnormal{\ref{Theorem 12.1}} are satisfied. Let $\Lambda (\mathbf{x})$ be the $\Gamma$-periodic $(n\times m)$-matrix-valued solution of problem \eqref{Lambda problem}. Let $S_\varepsilon$ be the Steklov smoothing operator \eqref{S_eps}. Then for $\tau\in\mathbb{R}$ and $\varepsilon>0$ we have
\begin{equation*}
\begin{split}
\bigl\Vert & 
f^\varepsilon \mathcal{A}_\varepsilon ^{-1/2}\sin (\tau\mathcal{A}_\varepsilon ^{1/2})(f^\varepsilon)^{-1}
-(I+\varepsilon \Lambda ^\varepsilon b(\mathbf{D})S_\varepsilon)
f_0(\mathcal{A}^0)^{-1/2}\sin (\tau (\mathcal{A}^0)^{1/2})f_0^{-1}
\bigr\Vert _{H^2(\mathbb{R}^d)\rightarrow H^1(\mathbb{R}^d)}
\\
&\leqslant
{C}_{24}\varepsilon (1+\vert \tau\vert ),
\end{split}
\end{equation*}
where the constant ${C}_{24}:={C}_{17}+C_{23}\Vert f\Vert _{L_\infty}\Vert f^{-1}\Vert _{L_\infty}$ depends only on $d$, $m$, $r_0$, $r_1$, $\alpha _0$, $\alpha _1$, $\Vert g\Vert _{L_\infty}$, $\Vert g^{-1}\Vert _{L_\infty}$, $\Vert f\Vert _{L_\infty}$, and $\Vert f^{-1}\Vert _{L_\infty}$.
\end{theorem}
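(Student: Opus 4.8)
The plan is to deduce Theorem~\ref{Theorem 12.7} from Theorem~\ref{Theorem 12.3} together with Proposition~\ref{Prop Pi -S} by a triangle-inequality argument, the only new ingredient being control of the discrepancy between the two smoothing operators $\Pi_\varepsilon$ and $S_\varepsilon$. The two approximate operators occurring in Theorems~\ref{Theorem 12.3} and~\ref{Theorem 12.7} differ only in their correctors, and their difference equals, up to sign,
\begin{equation*}
\varepsilon\Lambda^\varepsilon b(\mathbf{D})(\Pi_\varepsilon-S_\varepsilon)f_0(\mathcal{A}^0)^{-1/2}\sin(\tau(\mathcal{A}^0)^{1/2})f_0^{-1}.
\end{equation*}
Hence it suffices to bound the $(H^2\to H^1)$-norm of this operator by $C\varepsilon(1+\vert\tau\vert)$ and then combine with estimate \eqref{12.3}.

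First I would observe that the operator $f_0(\mathcal{A}^0)^{-1/2}\sin(\tau(\mathcal{A}^0)^{1/2})f_0^{-1}$ is bounded on $H^2(\mathbb{R}^d;\mathbb{C}^n)$ with norm at most $\Vert f\Vert_{L_\infty}\Vert f^{-1}\Vert_{L_\infty}\vert\tau\vert$. Indeed, by \eqref{f0<=} the constant matrices $f_0$ and $f_0^{-1}$ contribute the factors $\Vert f\Vert_{L_\infty}$ and $\Vert f^{-1}\Vert_{L_\infty}$; the elementary inequality $\vert\sin x\vert/\vert x\vert\leqslant 1$ gives $\Vert(\mathcal{A}^0)^{-1/2}\sin(\tau(\mathcal{A}^0)^{1/2})\Vert_{L_2\to L_2}\leqslant\vert\tau\vert$ via the spectral theorem; and since $\mathcal{A}^0$ from \eqref{A0 no hat} has constant coefficients it commutes with $\mathbf{D}$ (equivalently with $\mathcal{H}_0+I$), so the same bound persists in the $H^2$-norm.

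Next, for $\mathbf{u}\in H^2(\mathbb{R}^d;\mathbb{C}^n)$ I would set $\mathbf{w}:=f_0(\mathcal{A}^0)^{-1/2}\sin(\tau(\mathcal{A}^0)^{1/2})f_0^{-1}\mathbf{u}$, which lies in $H^2$ with $\Vert\mathbf{w}\Vert_{H^2}\leqslant\Vert f\Vert_{L_\infty}\Vert f^{-1}\Vert_{L_\infty}\vert\tau\vert\,\Vert\mathbf{u}\Vert_{H^2}$ by the previous step. Proposition~\ref{Prop Pi -S} then yields $\Vert[\Lambda^\varepsilon]b(\mathbf{D})(\Pi_\varepsilon-S_\varepsilon)\mathbf{w}\Vert_{H^1}\leqslant C_{23}\Vert\mathbf{w}\Vert_{H^2}$. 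Multiplying by $\varepsilon$ and combining with estimate \eqref{12.3} of Theorem~\ref{Theorem 12.3} through the triangle inequality gives the required bound with $C_{24}=C_{17}+C_{23}\Vert f\Vert_{L_\infty}\Vert f^{-1}\Vert_{L_\infty}$ (absorbing $\vert\tau\vert\leqslant 1+\vert\tau\vert$); the stated dependence of $C_{24}$ on $d$, $m$, $r_0$, $r_1$, $\alpha_0$, $\alpha_1$, $\Vert g\Vert_{L_\infty}$, $\Vert g^{-1}\Vert_{L_\infty}$, $\Vert f\Vert_{L_\infty}$, $\Vert f^{-1}\Vert_{L_\infty}$ follows from that of $C_{17}$ and $C_{23}$.

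There is essentially no genuine obstacle here: all the real work is hidden in Proposition~\ref{Prop Pi -S}, which already delivers the precise $(H^2\to H^1)$-estimate for $[\Lambda^\varepsilon]b(\mathbf{D})(\Pi_\varepsilon-S_\varepsilon)$. The only point requiring a line of verification is that the intermediate function $\mathbf{w}$ belongs to $H^2$ with norm linear in $\vert\tau\vert$, which is immediate from the constant-coefficient nature of $\mathcal{A}^0$.
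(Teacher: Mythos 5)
Your proof is correct and takes essentially the same route as the paper: the paper's one-line proof cites precisely \eqref{f0<=}, \eqref{A0 no hat}, Proposition~\ref{Prop Pi -S}, and Theorem~\ref{Theorem 12.3}, which is exactly the triangle-inequality argument you spell out, including the intermediate observation that $f_0(\mathcal{A}^0)^{-1/2}\sin(\tau(\mathcal{A}^0)^{1/2})f_0^{-1}$ is bounded on $H^2$ with norm at most $\Vert f\Vert_{L_\infty}\Vert f^{-1}\Vert_{L_\infty}\vert\tau\vert$. The resulting constant $C_{24}=C_{17}+C_{23}\Vert f\Vert_{L_\infty}\Vert f^{-1}\Vert_{L_\infty}$ also matches.
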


\begin{remark}
\label{Remark 12.8}
Similarly to the proof of Theorem \textnormal{\ref{Theorem 12.4}}, using the properties of the Steklov smoothing, one can check that the estimate of the form \eqref{12.6} remains true with $\Pi _\varepsilon$ replaced by $S_\varepsilon$.
\end{remark}

\section{Homogenization of hyperbolic systems with periodic coefficients}

\label{Section hyperbolic general case}

\subsection{The statement of the problem. Homogenization for the solutions of hyperbolic systems} 
Our goal is to apply the results of Section~\ref{Section main results in general case} to homogenization for the solutions of the problem
\begin{equation}
\label{14.1}
\begin{cases}
Q^\varepsilon (\mathbf{x})\frac{\partial ^2\mathbf{u}_\varepsilon (\mathbf{x},\tau)}{ \partial \tau ^2}
=-b(\mathbf{D})^*g^\varepsilon (\mathbf{x})b(\mathbf{D})\mathbf{u}_\varepsilon (\mathbf{x},\tau)+ Q^\varepsilon (\mathbf{x})\mathbf{F}(\mathbf{x},\tau),
\\
\mathbf{u}_\varepsilon (\mathbf{x},0)=0,\quad\frac{\partial\mathbf{u}_\varepsilon (\mathbf{x},0)}{\partial\tau}=\boldsymbol{\psi}(\mathbf{x}),
\end{cases}
\end{equation}
where $\boldsymbol{\psi}\in L_2(\mathbb{R}^d;\mathbb{C}^n)$, $\mathbf{F}\in L_{1,\mathrm{loc}}(\mathbb{R};L_2(\mathbb{R}^d;\mathbb{C}^n))$, and $Q(\mathbf{x})$ is a $\Gamma$-periodic $(n\times n)$-matrix-valued function \eqref{8.1a}. Substituting $\mathbf{z}_\varepsilon (\cdot ,\tau):=(f^\varepsilon)^{-1}\mathbf{u}_\varepsilon (\cdot ,\tau)$ into \eqref{14.1}, we rewrite problem \eqref{14.1} as
\begin{equation*}
\begin{cases}
\frac{\partial ^2\mathbf{z}_\varepsilon (\mathbf{x},\tau)}{ \partial \tau ^2}
=-f^\varepsilon(\mathbf{x})^* b(\mathbf{D})^*g^\varepsilon (\mathbf{x})b(\mathbf{D})f^\varepsilon (\mathbf{x})\mathbf{z}_\varepsilon (\mathbf{x},\tau)
+f^\varepsilon (\mathbf{x})^{-1}\mathbf{F}(\mathbf{x},\tau),
\\
\mathbf{z}_\varepsilon (\mathbf{x},0)=0,\quad \frac{\partial\mathbf{z}_\varepsilon (\mathbf{x},0)}{\partial\tau}=f^\varepsilon (\mathbf{x})^{-1}\boldsymbol{\psi}(\mathbf{x}).
\end{cases}
\end{equation*}
Then
\begin{equation}
\label{14.2}
\begin{split}
\mathbf{z}_\varepsilon (\cdot ,\tau)
=\mathcal{A}_\varepsilon ^{-1/2}\sin (\tau\mathcal{A}_\varepsilon ^{1/2})(f^\varepsilon)^{-1}\boldsymbol{\psi}
+\int _0^\tau \mathcal{A}_\varepsilon ^{-1/2}\sin ( (\tau -\widetilde{\tau})\mathcal{A}_\varepsilon ^{1/2})(f^\varepsilon)^{-1}\mathbf{F}(\cdot ,\widetilde{\tau})\,d\widetilde{\tau}
\end{split}
\end{equation}
and
\begin{equation}
\label{14.3}
\begin{split}
\mathbf{u}_\varepsilon (\cdot,\tau)&=
f^\varepsilon\mathcal{A}_\varepsilon ^{-1/2}\sin (\tau\mathcal{A}_\varepsilon ^{1/2})(f^\varepsilon)^{-1}\boldsymbol{\psi}
+\int _0^\tau f^\varepsilon \mathcal{A}_\varepsilon ^{-1/2}\sin ( (\tau -\widetilde{\tau})\mathcal{A}_\varepsilon ^{1/2})(f^\varepsilon)^{-1}\mathbf{F}(\cdot ,\widetilde{\tau})\,d\widetilde{\tau}.
\end{split}
\end{equation}

Let $\mathbf{u}_0(\mathbf{x},\tau)$ be the solution of the effective problem
\begin{equation}
\label{14.4}
\begin{cases}
\overline{Q}\frac{\partial ^2\mathbf{u}_0(\mathbf{x},\tau)}{\partial \tau ^2}=
-b(\mathbf{D})^*g^0b(\mathbf{D})\mathbf{u}_0(\mathbf{x},\tau)+ \overline{Q}\mathbf{F}(\mathbf{x},\tau),
\\
\mathbf{u}_0(\mathbf{x},0)=0,\quad\frac{\partial\mathbf{u}_0(\mathbf{x},0)}{\partial\tau}=\boldsymbol{\psi}(\mathbf{x}),
\end{cases}
\end{equation}
where $\overline{Q}=\vert\Omega\vert ^{-1}\int _\Omega Q(\mathbf{x})\,d\mathbf{x}$. Similarly to \eqref{14.2} and \eqref{14.3}, we obtain
\begin{equation}
\label{14.5}
\mathbf{u}_0(\cdot ,\tau)
=f_0 (\mathcal{A}^0)^{-1/2}\sin (\tau (\mathcal{A}^0)^{1/2})f_0^{-1}\boldsymbol{\psi}
+\int _0^\tau f_0(\mathcal{A}^0)^{-1/2}\sin ((\tau -\widetilde{\tau})(\mathcal{A}^0)^{1/2})f_0^{-1} \mathbf{F}(\cdot,\widetilde{\tau})\,d\widetilde{\tau}.
\end{equation}

Using Theorems \ref{Theorem 12.1}, \ref{Theorem 12.3}, and \ref{Theorem 12.7}, and identities \eqref{14.3} and \eqref{14.5}, we arrive at the following result.

\begin{theorem}
\label{Theorem 14.1}
Let $\mathbf{u}_\varepsilon$ be the solution of problem \eqref{14.1} and let $\mathbf{u}_0$ be the solution of the effective problem \eqref{14.4}.

\noindent
$1^\circ$. Let $\boldsymbol{\psi}\in H^1(\mathbb{R}^d;\mathbb{C}^n)$ and let $\mathbf{F}\in L_{1,\mathrm{loc}}(\mathbb{R};H^1(\mathbb{R}^d;\mathbb{C}^n))$. Then for $\tau\in\mathbb{R}$ and $\varepsilon >0$ we have
\begin{equation*}
\Vert \mathbf{u}_\varepsilon (\cdot ,\tau)-\mathbf{u}_0(\cdot,\tau)\Vert _{L_2(\mathbb{R}^d)}
\leqslant
C_{12}\varepsilon(1+\vert\tau\vert )
\left(
\Vert \boldsymbol{\psi}\Vert _{H^1(\mathbb{R}^d)}
+\Vert \mathbf{F}\Vert _{L_1((0,\tau);H^1(\mathbb{R}^d))}\right).
\end{equation*}

\noindent
$2^\circ$. Let $\boldsymbol{\psi}\in H^2(\mathbb{R}^d;\mathbb{C}^n)$ and let $\mathbf{F}\in L_{1,\mathrm{loc}}(\mathbb{R};H^2(\mathbb{R}^d;\mathbb{C}^n))$. Let $\Lambda (\mathbf{x})$ be the $\Gamma$-periodic solution of problem \eqref{Lambda problem}. Let $\Pi _\varepsilon$ be the smoothing operator \eqref{Pi eps}. By $\mathbf{v}_\varepsilon$ we denote the first order approximation\textnormal{:}
\begin{equation}
\label{14.5a}
\mathbf{v}_\varepsilon (\mathbf{x},\tau):=\mathbf{u}_0(\mathbf{x},\tau)+\varepsilon \Lambda ^\varepsilon b(\mathbf{D})\Pi_\varepsilon\mathbf{u}_0(\mathbf{x},\tau).
\end{equation}
Then for $\tau\in\mathbb{R}$ and $\varepsilon >0$ we have
\begin{equation}
\label{Th H1 solutions no interpolation}
\Vert \mathbf{u}_\varepsilon (\cdot ,\tau)-\mathbf{v}_\varepsilon(\cdot,\tau)\Vert _{H^1(\mathbb{R}^d)}
\leqslant
C_{17}\varepsilon(1+\vert\tau\vert )
\left(
 \Vert \boldsymbol{\psi}\Vert _{H^2(\mathbb{R}^d)}
+\Vert \mathbf{F}\Vert _{L_1((0,\tau);H^2(\mathbb{R}^d))}\right).
\end{equation}
Let $S_\varepsilon$ be the Steklov smoothing operator \eqref{S_eps}. We put
\begin{equation*}
\check{\mathbf{v}}_\varepsilon (\mathbf{x},\tau):=\mathbf{u}_0(\mathbf{x},\tau)+\varepsilon\Lambda ^\varepsilon b(\mathbf{D})S_\varepsilon \mathbf{u}_0(\mathbf{x},\tau).
\end{equation*}
Then for $\tau\in\mathbb{R}$ and $\varepsilon >0$ we have
\begin{equation*}
\Vert \mathbf{u}_\varepsilon (\cdot ,\tau)-\check{\mathbf{v}}_\varepsilon(\cdot,\tau)\Vert _{H^1(\mathbb{R}^d)}
\leqslant
C_{24}\varepsilon(1+\vert\tau\vert )
\left(
\Vert \boldsymbol{\psi}\Vert _{H^2(\mathbb{R}^d)}
+\Vert \mathbf{F}\Vert _{L_1((0,\tau);H^2(\mathbb{R}^d))}\right).
\end{equation*}
\end{theorem}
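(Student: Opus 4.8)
The plan is to derive Theorem~\ref{Theorem 14.1} directly from the operator-norm estimates of Theorems~\ref{Theorem 12.1}, \ref{Theorem 12.3}, and \ref{Theorem 12.7} by means of the Duhamel representations \eqref{14.3} and \eqref{14.5}. The key observation is that both $\mathbf{u}_\varepsilon(\cdot,\tau)$ and $\mathbf{u}_0(\cdot,\tau)$, as well as the first order approximations $\mathbf{v}_\varepsilon$ and $\check{\mathbf{v}}_\varepsilon$, are obtained by applying the \emph{same} bordered operators $f^\varepsilon\mathcal{A}_\varepsilon^{-1/2}\sin(\sigma\mathcal{A}_\varepsilon^{1/2})(f^\varepsilon)^{-1}$ and $(I+\varepsilon\Lambda^\varepsilon b(\mathbf{D})\Pi_\varepsilon)f_0(\mathcal{A}^0)^{-1/2}\sin(\sigma(\mathcal{A}^0)^{1/2})f_0^{-1}$ (with $\sigma=\tau$ in the initial-data term and $\sigma=\tau-\widetilde{\tau}$ under the integral) to the fixed data $(f^\varepsilon)^{-1}\boldsymbol{\psi}$, resp.\ $f_0^{-1}\boldsymbol{\psi}$, and $\mathbf{F}(\cdot,\widetilde\tau)$. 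Wait --- more carefully, one applies these operators to $\boldsymbol\psi$ and $\mathbf{F}(\cdot,\widetilde\tau)$ themselves, since the factors $(f^\varepsilon)^{-1}$ resp.\ $f_0^{-1}$ are already incorporated into the bordered operators appearing in the theorems. Thus the difference $\mathbf{u}_\varepsilon-\mathbf{u}_0$ (or $\mathbf{u}_\varepsilon-\mathbf{v}_\varepsilon$) is, pointwise in $\tau$, the sum of the operator \emph{difference} applied to $\boldsymbol\psi$ plus the time-integral of the operator difference applied to $\mathbf{F}(\cdot,\widetilde\tau)$.

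The steps I would carry out are as follows. First, for part $1^\circ$, subtract \eqref{14.5} from \eqref{14.3} and write
\[
\mathbf{u}_\varepsilon(\cdot,\tau)-\mathbf{u}_0(\cdot,\tau)
=\bigl(f^\varepsilon\mathcal{A}_\varepsilon^{-1/2}\sin(\tau\mathcal{A}_\varepsilon^{1/2})(f^\varepsilon)^{-1}-f_0(\mathcal{A}^0)^{-1/2}\sin(\tau(\mathcal{A}^0)^{1/2})f_0^{-1}\bigr)\boldsymbol\psi
\]
plus $\int_0^\tau$ of the same expression with $\tau$ replaced by $\tau-\widetilde\tau$ applied to $\mathbf{F}(\cdot,\widetilde\tau)$. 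Take the $L_2(\mathbb{R}^d)$-norm, apply the triangle inequality to pull the norm inside the integral, and invoke Theorem~\ref{Theorem 12.1} with the operator norm $H^1\to L_2$ to bound the $\boldsymbol\psi$-term by $C_{12}\varepsilon(1+|\tau|)\|\boldsymbol\psi\|_{H^1}$ and the integrand by $C_{12}\varepsilon(1+|\tau-\widetilde\tau|)\|\mathbf{F}(\cdot,\widetilde\tau)\|_{H^1}$. Using $1+|\tau-\widetilde\tau|\leqslant 1+|\tau|$ for $\widetilde\tau\in[0,\tau]$ (and the symmetric bound for $\tau<0$), one pulls $C_{12}\varepsilon(1+|\tau|)$ out of the integral and is left with $\int_0^\tau\|\mathbf{F}(\cdot,\widetilde\tau)\|_{H^1}\,d\widetilde\tau=\|\mathbf{F}\|_{L_1((0,\tau);H^1(\mathbb{R}^d))}$, which gives the claimed estimate. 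For part $2^\circ$, the argument is identical but uses the $H^2\to H^1$ estimate of Theorem~\ref{Theorem 12.3} (for $\mathbf{v}_\varepsilon$, built with $\Pi_\varepsilon$) and of Theorem~\ref{Theorem 12.7} (for $\check{\mathbf{v}}_\varepsilon$, built with $S_\varepsilon$); here one must also note that $\mathbf{v}_\varepsilon$ and $\check{\mathbf v}_\varepsilon$ as defined in \eqref{14.5a} are exactly $(I+\varepsilon\Lambda^\varepsilon b(\mathbf D)\Pi_\varepsilon)$, resp.\ $(I+\varepsilon\Lambda^\varepsilon b(\mathbf D)S_\varepsilon)$, applied to the Duhamel representation \eqref{14.5} of $\mathbf u_0$, so that the operator-difference structure is preserved term by term.

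The routine points that still need to be checked are: that the integrals defining $\mathbf u_\varepsilon$, $\mathbf u_0$, $\mathbf v_\varepsilon$ converge in the relevant Sobolev space (this follows from $\mathbf F\in L_{1,\mathrm{loc}}(\mathbb R;H^s)$ together with the uniform bounds $\|\sin(\sigma\mathcal A_\varepsilon^{1/2})\mathcal A_\varepsilon^{-1/2}\|\leqslant|\sigma|$ and the boundedness of the correctors already recorded in the proofs of Theorems~\ref{Theorem 12.3} and \ref{Theorem 12.4}); that the bordered operators may legitimately be moved inside the time integral (Bochner integration, using strong measurability and the integrable majorant supplied by the operator-norm bounds); and that the elementary inequality $1+|\tau-\widetilde\tau|\leqslant 1+|\tau|$ for $\widetilde\tau$ between $0$ and $\tau$ is applied with the correct sign convention. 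None of these is a genuine obstacle; the only mildly delicate point is the bookkeeping that ensures $\mathbf v_\varepsilon$ and $\check{\mathbf v}_\varepsilon$ are produced by applying the \emph{same} correction operator that appears inside the norm in Theorems~\ref{Theorem 12.3} and~\ref{Theorem 12.7} to \eqref{14.5}, so that the difference collapses to the operator difference estimated there. Once that is observed, the proof is a one-line application of the triangle inequality plus Minkowski's integral inequality, and the constants $C_{12}$, $C_{17}$, $C_{24}$ transfer verbatim.
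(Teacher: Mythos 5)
Your proposal is correct and follows exactly the route the paper intends: subtract the Duhamel formulas \eqref{14.3} and \eqref{14.5}, observe that $\mathbf{v}_\varepsilon = (I+\varepsilon\Lambda^\varepsilon b(\mathbf{D})\Pi_\varepsilon)\mathbf{u}_0$ (and likewise $\check{\mathbf{v}}_\varepsilon$ with $S_\varepsilon$) so the same corrector operator appears in both terms of the Duhamel formula, apply the operator-norm bounds of Theorems~\ref{Theorem 12.1}, \ref{Theorem 12.3}, \ref{Theorem 12.7} pointwise in $\widetilde\tau$, and use $1+|\tau-\widetilde\tau|\leqslant 1+|\tau|$ for $\widetilde\tau$ between $0$ and $\tau$ together with Minkowski's integral inequality. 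The paper states the proof in a single sentence, and you have supplied precisely the intended details; the constants $C_{12}$, $C_{17}$, $C_{24}$ transfer without change.
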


\begin{remark}
If $d\leqslant 4$ \textnormal{(}or $d\geqslant 5$ and Condition~\textnormal{\ref{Condition Lambda multiplier}} is satisfied\textnormal{),} then we can use Theorem~\textnormal{\ref{Theorem d<=4 chapter 3}} \textnormal{(}respectively, Theorem~\textnormal{\ref{Theorem d>=5 eps})}, i.~e., the estimate of the form \eqref{Th H1 solutions no interpolation} is valid with $\mathbf{v}_\varepsilon$ replaced by
\begin{equation}
\label{corrector V_eps^0}
\mathbf{v}_\varepsilon ^{(0)}(\mathbf{x},\tau):=\mathbf{u}_0(\mathbf{x},\tau)+\varepsilon \Lambda ^\varepsilon b(\mathbf{D})\mathbf{u}_0(\mathbf{x},\tau).
\end{equation}
\end{remark}

Theorem~\ref{Theorem D>=5 smooth date sec 9} implies the following statement.

\begin{proposition}
Assume that $d\geqslant 5$. Let $\boldsymbol{\psi}\in H^{d/2}(\mathbb{R}^d;\mathbb{C}^n)$ and let $\mathbf{F}\in L_{1,\mathrm{loc}}(\mathbb{R};H^{d/2}(\mathbb{R}^d;\mathbb{C}^n))$. Let $\mathbf{u}_\varepsilon$ and $\mathbf{u}_0$ be the solutions of problems \eqref{14.1} and  \eqref{14.4} respectively. Let $\mathbf{v}_\varepsilon ^{(0)}$ be given by \eqref{corrector V_eps^0}. Then for $\tau\in\mathbb{R}$ and $0<\varepsilon\leqslant 1$ we have
\begin{equation*}
\Vert \mathbf{u}_\varepsilon (\cdot ,\tau)-{\mathbf{v}}_\varepsilon ^{(0)}(\cdot,\tau)\Vert _{H^1(\mathbb{R}^d)}
\leqslant
C_{21}\varepsilon(1+\vert\tau\vert )
\left(
\Vert \boldsymbol{\psi}\Vert _{H^{d/2}(\mathbb{R}^d)}
+\Vert \mathbf{F}\Vert _{L_1((0,\tau);H^{d/2}(\mathbb{R}^d))}\right).
\end{equation*}
\end{proposition}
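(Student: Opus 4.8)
The plan is to deduce the estimate directly from Theorem~\ref{Theorem D>=5 smooth date sec 9} and the Duhamel representations \eqref{14.3}, \eqref{14.5}, exactly as Theorem~\ref{Theorem 14.1} was deduced from Theorems~\ref{Theorem 12.1}, \ref{Theorem 12.3}, and \ref{Theorem 12.7}. First I would record that, since $H^{d/2}(\mathbb{R}^d;\mathbb{C}^n)\subset L_2(\mathbb{R}^d;\mathbb{C}^n)$, the hypotheses $\boldsymbol{\psi}\in H^{d/2}$ and $\mathbf{F}\in L_{1,\mathrm{loc}}(\mathbb{R};H^{d/2})$ guarantee that the mild solutions $\mathbf{u}_\varepsilon$ of \eqref{14.1} and $\mathbf{u}_0$ of \eqref{14.4} are well defined and admit the representations \eqref{14.3} and \eqref{14.5}; strong continuity in $t$ of $\mathcal{A}_\varepsilon^{-1/2}\sin(t\mathcal{A}_\varepsilon^{1/2})$ and of $f_0(\mathcal{A}^0)^{-1/2}\sin(t(\mathcal{A}^0)^{1/2})f_0^{-1}$, together with their uniform boundedness on compact $t$-intervals, makes the Duhamel integrals meaningful Bochner integrals in $L_2$.

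Next I would observe that $\mathbf{v}_\varepsilon^{(0)}(\cdot,\tau)=(I+\varepsilon\Lambda^\varepsilon b(\mathbf{D}))\mathbf{u}_0(\cdot,\tau)$, so that, subtracting $(I+\varepsilon\Lambda^\varepsilon b(\mathbf{D}))$ applied to \eqref{14.5} from \eqref{14.3},
\begin{equation*}
\mathbf{u}_\varepsilon(\cdot,\tau)-\mathbf{v}_\varepsilon^{(0)}(\cdot,\tau)=\mathcal{G}_\varepsilon(\tau)\boldsymbol{\psi}+\int_0^\tau\mathcal{G}_\varepsilon(\tau-\widetilde{\tau})\,\mathbf{F}(\cdot,\widetilde{\tau})\,d\widetilde{\tau},
\end{equation*}
where $\mathcal{G}_\varepsilon(t):=f^\varepsilon\mathcal{A}_\varepsilon^{-1/2}\sin(t\mathcal{A}_\varepsilon^{1/2})(f^\varepsilon)^{-1}-(I+\varepsilon\Lambda^\varepsilon b(\mathbf{D}))f_0(\mathcal{A}^0)^{-1/2}\sin(t(\mathcal{A}^0)^{1/2})f_0^{-1}$ is precisely the operator estimated in \eqref{Th d>5 smooth data ch 3}. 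Applying the bound $\Vert\mathcal{G}_\varepsilon(t)\Vert_{H^{d/2}(\mathbb{R}^d)\rightarrow H^1(\mathbb{R}^d)}\leqslant C_{21}\varepsilon(1+\vert t\vert)$ from Theorem~\ref{Theorem D>=5 smooth date sec 9} to the initial-data term with $t=\tau$ and, inside the integral, to each value $t=\tau-\widetilde{\tau}$, and using that $1+\vert\tau-\widetilde{\tau}\vert\leqslant 1+\vert\tau\vert$ whenever $\widetilde{\tau}$ lies between $0$ and $\tau$, I would obtain
\begin{equation*}
\Vert\mathbf{u}_\varepsilon(\cdot,\tau)-\mathbf{v}_\varepsilon^{(0)}(\cdot,\tau)\Vert_{H^1(\mathbb{R}^d)}\leqslant C_{21}\varepsilon(1+\vert\tau\vert)\Bigl(\Vert\boldsymbol{\psi}\Vert_{H^{d/2}(\mathbb{R}^d)}+\Vert\mathbf{F}\Vert_{L_1((0,\tau);H^{d/2}(\mathbb{R}^d))}\Bigr),
\end{equation*}
which is the assertion.

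There is no genuine obstacle here: the only points requiring attention are the (standard) justification that the Duhamel formulas \eqref{14.3} and \eqref{14.5} hold under merely $L_1$-in-time right-hand sides, and the elementary time bound $1+\vert\tau-\widetilde{\tau}\vert\leqslant 1+\vert\tau\vert$ under the integral sign, which lets the time-independent constant $C_{21}$ be pulled out of the integral. All the analytic content has already been supplied by Theorem~\ref{Theorem D>=5 smooth date sec 9} and, ultimately, by Proposition~\ref{Proposition d>5 from H-kappa chapter 2}.
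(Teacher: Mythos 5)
Your proof is correct and takes exactly the paper's route: the paper simply says ``Theorem~\ref{Theorem D>=5 smooth date sec 9} implies the following statement,'' leaving the deduction via the Duhamel representations \eqref{14.3} and \eqref{14.5} implicit, and your argument (pulling $I+\varepsilon\Lambda^\varepsilon b(\mathbf{D})$ into the integral, applying the operator bound pointwise in $\widetilde{\tau}$, and using $1+\vert\tau-\widetilde{\tau}\vert\leqslant 1+\vert\tau\vert$) fills in precisely those routine steps, matching the scheme of Theorem~\ref{Theorem 14.1}.
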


Applying Theorems \ref{Theorem 12.2} and \ref{Theorem 12.4}, we arrive at the following result.

\begin{theorem}
\label{Theorem 14.2}
Let $\mathbf{u}_\varepsilon$ be the solution of problem \eqref{14.1} and let $\mathbf{u}_0$ be the solution of the effective problem \eqref{14.4}.

\noindent
$1^\circ$. 
Let $\boldsymbol{\psi}\in H^s(\mathbb{R}^d;\mathbb{C}^n)$ and $\mathbf{F}\in L_{1,\mathrm{loc}}(\mathbb{R};H^s(\mathbb{R}^d;\mathbb{C}^n))$, $0\leqslant s\leqslant 1$. Then for $\tau\in\mathbb{R}$ and $\varepsilon >0$ we have
\begin{equation*}
\Vert \mathbf{u}_\varepsilon (\cdot ,\tau)-\mathbf{u}_0(\cdot ,\tau)\Vert _{L_2(\mathbb{R}^d)}
\leqslant \mathfrak{C}_1(s)(1+\vert\tau\vert)\varepsilon ^s\left(\Vert \boldsymbol{\psi}\Vert _{H^s(\mathbb{R}^d)}+ \Vert \mathbf{F}\Vert _{L_{1}((0,\tau);H^s(\mathbb{R}^d))}\right).
\end{equation*}
Under the additional assumption that $\mathbf{F}\in L_1(\mathbb{R}_\pm; H^{s}(\mathbb{R}^d;\mathbb{C}^n))$, for $0<s\leqslant 1$, $\vert \tau\vert =\varepsilon ^{-\alpha}$, $0<\varepsilon \leqslant 1$, $0<\alpha<s$, we have
\begin{equation*}
\begin{split}
\Vert \mathbf{u}_\varepsilon (\cdot ,\pm \varepsilon ^{-\alpha})-\mathbf{u}_0(\cdot ,\pm \varepsilon ^{-\alpha})\Vert _{L_2(\mathbb{R}^d)}
\leqslant 2 \mathfrak{C}_1(s) \varepsilon ^{s-\alpha}\left(\Vert \boldsymbol{\psi}\Vert _{H^s(\mathbb{R}^d)}+ \Vert \mathbf{F}\Vert _{L_{1}(\mathbb{R}_\pm ;H^s(\mathbb{R}^d))}\right).
\end{split}
\end{equation*}

\noindent
$2^\circ$. 
Let $\boldsymbol{\psi}\in H^{1+s}(\mathbb{R}^d;\mathbb{C}^n)$ and $\mathbf{F}\in L_{1,\mathrm{loc}}(\mathbb{R};H^{1+s}(\mathbb{R}^d;\mathbb{C}^n))$, $0\leqslant s\leqslant 1$.
 Let $\mathbf{v}_\varepsilon$ be given by \eqref{14.5a}. Then for $\tau\in\mathbb{R}$ and $0<\varepsilon\leqslant 1$ we have
\begin{equation*}
\Vert \mathbf{u}_\varepsilon(\cdot ,\tau)-\mathbf{v}_\varepsilon (\cdot ,\tau)\Vert _{H^1(\mathbb{R}^d)}
\leqslant  \mathfrak{C}_2(s)(1+\vert\tau\vert)\varepsilon ^s\left( \Vert \boldsymbol{\psi}\Vert _{H^{1+s}(\mathbb{R}^d)}+\Vert \mathbf{F}\Vert _{L_{1}((0,\tau);H^{1+s}(\mathbb{R}^d))}\right).
\end{equation*}
Under the additional assumption that $\mathbf{F}\in L_1(\mathbb{R}_\pm; H^{1+s}(\mathbb{R}^d;\mathbb{C}^n))$, where $0<s\leqslant 1$, for $\tau=\pm\varepsilon^{-\alpha}$, $0<\varepsilon\leqslant 1$, $0<\alpha<s$, we have
\begin{equation*}
\begin{split}
\Vert \mathbf{u}_\varepsilon(\cdot ,\pm\varepsilon^{-\alpha})-\mathbf{v}_\varepsilon (\cdot ,\pm\varepsilon^{-\alpha})\Vert _{H^1(\mathbb{R}^d)}
\leqslant 2\mathfrak{C}_2(s)\varepsilon ^{s-\alpha }\left( \Vert \boldsymbol{\psi}\Vert _{H^{1+s}(\mathbb{R}^d)}+ \Vert \mathbf{F}\Vert _{L_{1}(\mathbb{R}_\pm ;H^{1+s}(\mathbb{R}^d))}\right) .
 \end{split}
\end{equation*}
\end{theorem}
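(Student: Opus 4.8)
The plan is to combine the Duhamel (variation of constants) representations \eqref{14.3} and \eqref{14.5} for $\mathbf{u}_\varepsilon$ and $\mathbf{u}_0$ with the operator estimates of Theorems~\ref{Theorem 12.2} and~\ref{Theorem 12.4}. For part~$1^\circ$, subtracting \eqref{14.5} from \eqref{14.3} writes $\mathbf{u}_\varepsilon(\cdot,\tau)-\mathbf{u}_0(\cdot,\tau)$ as the sum of $\bigl(f^\varepsilon\mathcal{A}_\varepsilon^{-1/2}\sin(\tau\mathcal{A}_\varepsilon^{1/2})(f^\varepsilon)^{-1}-f_0(\mathcal{A}^0)^{-1/2}\sin(\tau(\mathcal{A}^0)^{1/2})f_0^{-1}\bigr)\boldsymbol{\psi}$ and the time integral over $(0,\tau)$ of the same operator difference, with $\tau$ replaced by $\tau-\widetilde{\tau}$, applied to $\mathbf{F}(\cdot,\widetilde{\tau})$. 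To the first summand I would apply Theorem~\ref{Theorem 12.2} directly in the $(H^s\to L_2)$-norm; to the integrand I would apply Theorem~\ref{Theorem 12.2} with parameter $\tau-\widetilde{\tau}$, using that $1+|\tau-\widetilde{\tau}|\leqslant 1+|\tau|$ whenever $\widetilde{\tau}$ lies between $0$ and $\tau$. Then the integral is estimated by Minkowski's inequality, bounding $\int_0^\tau\|\mathbf{F}(\cdot,\widetilde{\tau})\|_{H^s(\mathbb{R}^d)}\,d\widetilde{\tau}$ by $\|\mathbf{F}\|_{L_1((0,\tau);H^s(\mathbb{R}^d))}$. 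This yields the first displayed inequality of $1^\circ$.

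For the large-time version, I would substitute $\tau=\pm\varepsilon^{-\alpha}$ with $0<\varepsilon\leqslant 1$; since $\varepsilon^{-\alpha}\geqslant 1$ one gets $(1+|\tau|)\varepsilon^s\leqslant 2\varepsilon^{-\alpha}\varepsilon^s=2\varepsilon^{s-\alpha}$, and the added hypothesis $\mathbf{F}\in L_1(\mathbb{R}_\pm;H^s(\mathbb{R}^d;\mathbb{C}^n))$ lets one replace the truncated norm $\|\mathbf{F}\|_{L_1((0,\pm\varepsilon^{-\alpha});H^s)}$ by $\|\mathbf{F}\|_{L_1(\mathbb{R}_\pm;H^s)}$, which is finite and independent of $\varepsilon$; the condition $s-\alpha>0$ makes the bound tend to $0$.

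Part~$2^\circ$ is handled in exactly the same way, but with the corrected approximation. I would rewrite \eqref{14.5a} as $\mathbf{v}_\varepsilon=(I+\varepsilon\Lambda^\varepsilon b(\mathbf{D})\Pi_\varepsilon)\mathbf{u}_0$ and insert \eqref{14.5}; then $\mathbf{u}_\varepsilon(\cdot,\tau)-\mathbf{v}_\varepsilon(\cdot,\tau)$ is the sum of $\bigl(f^\varepsilon\mathcal{A}_\varepsilon^{-1/2}\sin(\tau\mathcal{A}_\varepsilon^{1/2})(f^\varepsilon)^{-1}-(I+\varepsilon\Lambda^\varepsilon b(\mathbf{D})\Pi_\varepsilon)f_0(\mathcal{A}^0)^{-1/2}\sin(\tau(\mathcal{A}^0)^{1/2})f_0^{-1}\bigr)\boldsymbol{\psi}$ and the analogous time integral acting on $\mathbf{F}$. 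Applying Theorem~\ref{Theorem 12.4} (with exponent $s$, in the $H^{1+s}\to H^1$-norm) to the $\boldsymbol{\psi}$-term and, with $\tau$ replaced by $\tau-\widetilde{\tau}$, to the integrand, and then integrating via Minkowski's inequality, gives the $H^1$-estimate of $2^\circ$; the large-time variant follows as in $1^\circ$ by setting $\tau=\pm\varepsilon^{-\alpha}$ and using $\mathbf{F}\in L_1(\mathbb{R}_\pm;H^{1+s})$.

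The argument is essentially bookkeeping around the variation-of-constants formula, so I do not expect a serious obstacle; the one point requiring care is the uniformity in $\tau$ inside the Duhamel integral, namely that replacing $\tau$ by $\tau-\widetilde{\tau}$ does not worsen the factor $(1+|\tau|)$ (immediate since $|\tau-\widetilde{\tau}|\leqslant|\tau|$ on the segment of integration) and that the regularity hypotheses $\boldsymbol{\psi}\in H^s$, $\mathbf{F}\in L_{1,\mathrm{loc}}(\mathbb{R};H^s)$ (resp. $H^{1+s}$) are precisely what makes the operators in Theorems~\ref{Theorem 12.2} and~\ref{Theorem 12.4} applicable. For $0<\varepsilon\leqslant 1$ one also uses that $\|\Pi_\varepsilon\|_{H^\kappa\to H^\kappa}\leqslant 1$, already absorbed into the constants of Theorem~\ref{Theorem 12.4}, so no extra estimate on the smoothing operator is needed.
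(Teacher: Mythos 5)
Your proposal is correct and follows exactly the paper's approach. The paper's own proof of Theorem~\ref{Theorem 14.2} is the single sentence ``Applying Theorems \ref{Theorem 12.2} and \ref{Theorem 12.4}, we arrive at the following result,'' and you have correctly supplied the implicit bookkeeping: the Duhamel representations \eqref{14.3} and \eqref{14.5}, the monotonicity $1+|\tau-\widetilde{\tau}|\leqslant 1+|\tau|$ on the segment of integration, Minkowski's inequality to pull out the $L_1((0,\tau);H^s)$-norm, and the algebra $(1+\varepsilon^{-\alpha})\varepsilon^{s}\leqslant 2\varepsilon^{s-\alpha}$ for the large-time variant.
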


By the Banach-Steinhaus theorem, this result implies the following theorem.

\begin{theorem}
\label{Theorem 14.3}
Let $\mathbf{u}_\varepsilon$ be the solution of problem \eqref{14.1}, and let $\mathbf{u}_0$ be the solution of the effective problem \eqref{14.4}.

\noindent
$1^\circ$. 
Let $\boldsymbol{\psi}\in L_2(\mathbb{R}^d;\mathbb{C}^n)$ and $\mathbf{F}\in L_{1,\mathrm{loc}}(\mathbb{R};L_2(\mathbb{R}^d;\mathbb{C}^n))$. Then
\begin{equation*}
\lim _{\varepsilon\rightarrow 0 }\Vert \mathbf{u}_\varepsilon (\cdot ,\tau)-\mathbf{u}_0(\cdot ,\tau)\Vert _{L_2(\mathbb{R}^d)}
=0,\quad \tau\in\mathbb{R}.
\end{equation*}

\noindent
$2^\circ$. 
Let $\boldsymbol{\psi}\in H^{1}(\mathbb{R}^d;\mathbb{C}^n)$ and $\mathbf{F}\in L_{1,\mathrm{loc}}(\mathbb{R};H^{1}(\mathbb{R}^d;\mathbb{C}^n))$.
 Let $\mathbf{v}_\varepsilon$ be given by \eqref{14.5a}. Then for $\tau\in\mathbb{R}$  we have
\begin{equation*}
\lim _{\varepsilon\rightarrow 0}\Vert \mathbf{u}_\varepsilon(\cdot ,\tau)-\mathbf{v}_\varepsilon (\cdot ,\tau)\Vert _{H^1(\mathbb{R}^d)}
=0.
\end{equation*} 
\end{theorem}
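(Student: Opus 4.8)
The plan is to deduce Theorem~\ref{Theorem 14.3} from Theorem~\ref{Theorem 14.2} by a Banach--Steinhaus argument. Theorem~\ref{Theorem 14.2} already furnishes convergence at the rate $\varepsilon^{s}$ once the data have $s>0$ derivatives of extra smoothness, so it remains only to remove this surplus regularity by density. Throughout, fix $\tau\in\mathbb{R}$ and write $I_\tau$ for the interval between $0$ and $\tau$; by the Duhamel formulas \eqref{14.3} and \eqref{14.5} the functions $\mathbf{u}_\varepsilon(\cdot,\tau)$, $\mathbf{u}_0(\cdot,\tau)$, $\mathbf{v}_\varepsilon(\cdot,\tau)$ depend only on $\boldsymbol{\psi}$ and on $\mathbf{F}|_{I_\tau}\in L_1(I_\tau;\cdot)$, so it suffices to work at this fixed $\tau$.

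For part~$1^\circ$ I would introduce the linear operator $\mathcal{T}_\varepsilon(\tau)\colon(\boldsymbol{\psi},\mathbf{F})\mapsto\mathbf{u}_\varepsilon(\cdot,\tau)-\mathbf{u}_0(\cdot,\tau)$ from $L_2(\mathbb{R}^d;\mathbb{C}^n)\oplus L_1(I_\tau;L_2(\mathbb{R}^d;\mathbb{C}^n))$ to $L_2(\mathbb{R}^d;\mathbb{C}^n)$. First I would observe that $\mathcal{T}_\varepsilon(\tau)$ is bounded uniformly in $\varepsilon>0$: this is the $s=0$ case of part~$1^\circ$ of Theorem~\ref{Theorem 14.2}, and it is in any event immediate from $\|\mathcal{A}_\varepsilon^{-1/2}\sin(\widetilde\tau\mathcal{A}_\varepsilon^{1/2})\|_{L_2\to L_2}\le|\widetilde\tau|$, the same bound for $\mathcal{A}^0$, and $\|f^\varepsilon\|,\|(f^\varepsilon)^{-1}\|\le\|f\|_{L_\infty},\|f^{-1}\|_{L_\infty}$. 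Then the $s=1$ case of part~$1^\circ$ of Theorem~\ref{Theorem 14.2} gives $\|\mathcal{T}_\varepsilon(\tau)(\boldsymbol{\psi},\mathbf{F})\|_{L_2(\mathbb{R}^d)}\to0$ as $\varepsilon\to0$ whenever $\boldsymbol{\psi}\in H^1(\mathbb{R}^d;\mathbb{C}^n)$ and $\mathbf{F}\in L_1(I_\tau;H^1(\mathbb{R}^d;\mathbb{C}^n))$. Since such pairs are dense in $L_2(\mathbb{R}^d;\mathbb{C}^n)\oplus L_1(I_\tau;L_2(\mathbb{R}^d;\mathbb{C}^n))$, the standard $\varepsilon/3$-argument --- a uniformly bounded family of operators that converges to $0$ on a dense set converges to $0$ on the whole space --- yields $\mathcal{T}_\varepsilon(\tau)(\boldsymbol{\psi},\mathbf{F})\to0$ for all admissible $\boldsymbol{\psi}$ and $\mathbf{F}$, which is statement~$1^\circ$.

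For part~$2^\circ$ I would repeat the scheme with $\mathcal{S}_\varepsilon(\tau)\colon(\boldsymbol{\psi},\mathbf{F})\mapsto\mathbf{u}_\varepsilon(\cdot,\tau)-\mathbf{v}_\varepsilon(\cdot,\tau)$, where $\mathbf{v}_\varepsilon=\mathbf{u}_0+\varepsilon\Lambda^\varepsilon b(\mathbf{D})\Pi_\varepsilon\mathbf{u}_0$, now viewed as an operator from $H^1(\mathbb{R}^d;\mathbb{C}^n)\oplus L_1(I_\tau;H^1(\mathbb{R}^d;\mathbb{C}^n))$ to $H^1(\mathbb{R}^d;\mathbb{C}^n)$; convergence of $\mathcal{S}_\varepsilon(\tau)(\boldsymbol{\psi},\mathbf{F})$ to $0$ on the dense subset with $\boldsymbol{\psi}\in H^2$ and $\mathbf{F}\in L_1(I_\tau;H^2)$ is the $s=1$ case of part~$2^\circ$ of Theorem~\ref{Theorem 14.2}. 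The step I expect to be the main obstacle is the $\varepsilon$-uniform $(H^1\to H^1)$-boundedness of $\mathcal{S}_\varepsilon(\tau)$: for the leading terms this is provided by \eqref{12.7} and \eqref{12.8} together with \eqref{14.3}, \eqref{14.5}, but for the corrector one must verify that $\varepsilon\Lambda^\varepsilon b(\mathbf{D})\Pi_\varepsilon$ is bounded on $H^1$ uniformly in $\varepsilon$. This I would settle using that $\Pi_\varepsilon$ restricts the Fourier transform to the set $\{\boldsymbol{\xi}:\varepsilon\boldsymbol{\xi}\in\widetilde{\Omega}\}$, whose radius is $O(\varepsilon^{-1})$: writing $h=b(\mathbf{D})\Pi_\varepsilon\mathbf{u}_0$ and $\mathbf{D}(\varepsilon\Lambda^\varepsilon h)=(\mathbf{D}\Lambda)^\varepsilon h+\varepsilon\Lambda^\varepsilon\mathbf{D}h$ (where $h$ and $\mathbf{D}h$ already have Fourier support in $\widetilde{\Omega}/\varepsilon$), the multiplier bounds $\|[\Lambda^\varepsilon]\Pi_\varepsilon\|_{L_2\to L_2}$ and $\|[(\mathbf{D}\Lambda)^\varepsilon]\Pi_\varepsilon\|_{L_2\to L_2}$ supplied by Proposition~\ref{Proposition Pi eps f eps} (with \eqref{Lambda<=}, \eqref{D Lambda <=}, \eqref{proof fluxes 6}) combine with the elementary estimates $\|b(\mathbf{D})\Pi_\varepsilon\mathbf{u}_0\|_{L_2}\lesssim\|\mathbf{u}_0\|_{H^1}$ and $\|\mathbf{D}b(\mathbf{D})\Pi_\varepsilon\mathbf{u}_0\|_{L_2}\lesssim\varepsilon^{-1}\|\mathbf{u}_0\|_{H^1}$, so that the surplus factor $\varepsilon$ exactly absorbs the $O(\varepsilon^{-1})$ frequency growth. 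Once this uniform bound is secured, density of the $H^2$-data plus the same $\varepsilon/3$-argument conclude the proof.
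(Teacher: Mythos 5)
Your proof is correct and is exactly the paper's argument: the paper's own proof of Theorem~\ref{Theorem 14.3} consists of the single remark ``By the Banach--Steinhaus theorem, this result [Theorem~\ref{Theorem 14.2}] implies the following theorem,'' which is precisely the uniform-boundedness-plus-density scheme you spell out. Note that the $\varepsilon$-uniform $(H^1\to H^1)$ bound on the corrector that you flag as the ``main obstacle'' is already supplied by the $s=0$ endpoint of Theorem~\ref{Theorem 14.2}($2^\circ$) --- it is estimate \eqref{12.10}, established in the proof of Theorem~\ref{Theorem 12.4} via \eqref{12.9} --- so your Fourier-support re-derivation of it, while valid, is a small alternative to the paper's route and is not strictly necessary.
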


\begin{remark}
Taking Remark \textnormal{\ref{Remark 12.8}} into account, we see that the results of Theorems \textnormal{\ref{Theorem 14.2}($2^\circ$)} and \textnormal{\ref{Theorem 14.3}($2^\circ$)} remain true with the operator $\Pi _\varepsilon$ replaced by the Steklov smoothing $S_\varepsilon$, i. e., 
with $\mathbf{v}_\varepsilon$ replaced by $\check{\mathbf{v}}_\varepsilon$. 
This only changes the constants in estimates.
\end{remark}

Applying Theorem \ref{Theorem 12.6}, we make the following observation.

\begin{remark}
\label{Remark no S-eps}
For $0<\varepsilon\leqslant 1$, under Condition \textnormal{\ref{Condition Lambda in L infty}}, the analogs of Theorems \textnormal{\ref{Theorem 14.1}, \ref{Theorem 14.2}}, and \textnormal{\ref{Theorem 14.3}} are valid with the operators $\Pi _\varepsilon$ and $S_\varepsilon$ replaced by the identity operator.
\end{remark}

\subsection{Approximation of the flux}

Let $\mathbf{p}_\varepsilon (\mathbf{x},\tau )$ be the ,,flux''
\begin{equation}
\label{P_eps ho hat}
\mathbf{p}_\varepsilon (\mathbf{x},\tau):=g^\varepsilon (\mathbf{x})b(\mathbf{D})\mathbf{u}_\varepsilon (\mathbf{x},\tau).
\end{equation}

\begin{theorem}
\label{Theorem 14.6}
Suppose that the assumptions of Theorem~\textnormal{\ref{Theorem 14.1}($2^\circ$)} are satisfied. Let $\mathbf{p}_\varepsilon $ be the ,,flux'' \eqref{P_eps ho hat}, and let $\widetilde{g}(\mathbf{x})$ be the matrix-valued function \eqref{tilde g}. Then for $\tau\in\mathbb{R}$ and $\varepsilon >0$ we have
\begin{align}
\label{flux with Pi_eps no hat}
&\Vert \mathbf{p}_\varepsilon (\cdot,\tau)-\widetilde{g}^\varepsilon b(\mathbf{D})\Pi _\varepsilon \mathbf{u}_0(\cdot ,\tau)\Vert _{L_2(\mathbb{R}^d)}
\leqslant C_{25} \varepsilon (1+\vert\tau\vert)\left(\Vert \boldsymbol{\psi}\Vert _{H^2(\mathbb{R}^d)}+\Vert \mathbf{F}\Vert _{L_{1}((0,\tau);H^2(\mathbb{R}^d))}\right),
\\
\label{flux with S_eps no hat}
&\Vert \mathbf{p}_\varepsilon (\cdot,\tau)-\widetilde{g}^\varepsilon b(\mathbf{D})S _\varepsilon \mathbf{u}_0(\cdot ,\tau)\Vert _{L_2(\mathbb{R}^d)}
\leqslant C_{26}\varepsilon (1+\vert\tau\vert)\left(\Vert \boldsymbol{\psi}\Vert _{H^2(\mathbb{R}^d)}+\Vert \mathbf{F}\Vert _{L_{1}((0,\tau);H^2(\mathbb{R}^d))}\right).
\end{align}
The constants ${C}_{25}$ and ${C}_{26}$ depend only on $m$, $d$, $\alpha _0$, $\alpha _1$, $\Vert g\Vert _{L_\infty}$, $\Vert g^{-1}\Vert _{L_\infty}$, $\Vert f\Vert _{L_\infty}$, $\Vert f^{-1}\Vert _{L_\infty}$, and the~parameters of the lattice $\Gamma$.
\end{theorem}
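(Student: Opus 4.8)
The plan is to deduce Theorem~\ref{Theorem 14.6} from the solution estimate of Theorem~\ref{Theorem 14.1}($2^\circ$) together with an exact algebraic identity for the flux of the first order approximation. Two preliminary observations: by \eqref{<b^*b<} the operator $[g^\varepsilon]b(\mathbf D)$ maps $H^1(\mathbb R^d;\mathbb C^n)$ into $L_2(\mathbb R^d;\mathbb C^m)$ with norm at most $\alpha_1^{1/2}\Vert g\Vert_{L_\infty}$; and since $\mathcal A^0$ has constant coefficients and the symbol of $(\mathcal A^0)^{-1/2}\sin(t(\mathcal A^0)^{1/2})$ has norm $\le|t|$ (as $|\sin x|\le|x|$), the representation \eqref{14.5} gives $\Vert\mathbf u_0(\cdot,\tau)\Vert_{H^2(\mathbb R^d)}\le\Vert f\Vert_{L_\infty}\Vert f^{-1}\Vert_{L_\infty}\,|\tau|\,(\Vert\boldsymbol\psi\Vert_{H^2(\mathbb R^d)}+\Vert\mathbf F\Vert_{L_1((0,\tau);H^2(\mathbb R^d))})$. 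From the first observation and Theorem~\ref{Theorem 14.1}($2^\circ$), $\Vert\mathbf p_\varepsilon(\cdot,\tau)-g^\varepsilon b(\mathbf D)\mathbf v_\varepsilon(\cdot,\tau)\Vert_{L_2}=\Vert g^\varepsilon b(\mathbf D)(\mathbf u_\varepsilon-\mathbf v_\varepsilon)(\cdot,\tau)\Vert_{L_2}\le\alpha_1^{1/2}\Vert g\Vert_{L_\infty}\Vert\mathbf u_\varepsilon(\cdot,\tau)-\mathbf v_\varepsilon(\cdot,\tau)\Vert_{H^1}$ is $O(\varepsilon(1+|\tau|))$ with the claimed dependence on the data, and similarly with $\mathbf v_\varepsilon,\Pi_\varepsilon$ replaced by $\check{\mathbf v}_\varepsilon,S_\varepsilon$ via the Steklov part of Theorem~\ref{Theorem 14.1}($2^\circ$). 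So it remains to compare $g^\varepsilon b(\mathbf D)\mathbf v_\varepsilon$ with $\widetilde g^\varepsilon b(\mathbf D)\Pi_\varepsilon\mathbf u_0$, and $g^\varepsilon b(\mathbf D)\check{\mathbf v}_\varepsilon$ with $\widetilde g^\varepsilon b(\mathbf D)S_\varepsilon\mathbf u_0$.

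For this I would use the Leibniz rule. Write $\mathbf v_\varepsilon=\mathbf u_0+\varepsilon\Lambda^\varepsilon\mathbf w$ with $\mathbf w:=b(\mathbf D)\Pi_\varepsilon\mathbf u_0=\Pi_\varepsilon^{(m)}b(\mathbf D)\mathbf u_0$, where $\Pi_\varepsilon^{(m)}$ is the cutoff on $\mathbb C^m$-valued functions from the proof of Theorem~\ref{Theorem 12.3}. Since $D_j\Lambda^\varepsilon=\varepsilon^{-1}(D_j\Lambda)^\varepsilon$ one gets $\varepsilon b(\mathbf D)(\Lambda^\varepsilon\mathbf w)=(b(\mathbf D)\Lambda)^\varepsilon\mathbf w+\varepsilon\sum_{j=1}^d b_j\Lambda^\varepsilon D_j\mathbf w$, so the prefactor $\varepsilon$ absorbs the singular $\varepsilon^{-1}$; multiplying by $g^\varepsilon$ and using $g(b(\mathbf D)\Lambda+\mathbf 1_m)=\widetilde g$ from \eqref{tilde g}, together with $\Pi_\varepsilon^{(m)}b(\mathbf D)=b(\mathbf D)\Pi_\varepsilon$, yields the exact identity
\begin{align*}
g^\varepsilon b(\mathbf D)\mathbf v_\varepsilon&=\widetilde g^\varepsilon b(\mathbf D)\Pi_\varepsilon\mathbf u_0+g^\varepsilon(I-\Pi_\varepsilon^{(m)})b(\mathbf D)\mathbf u_0\\
&\quad+\varepsilon\,g^\varepsilon\sum_{j=1}^d b_j\,\Lambda^\varepsilon\Pi_\varepsilon^{(m)}D_j b(\mathbf D)\mathbf u_0.
\end{align*}
The middle term is bounded by $\Vert g\Vert_{L_\infty}\varepsilon r_0^{-1}\Vert\mathbf D b(\mathbf D)\mathbf u_0\Vert_{L_2}$ by Proposition~\ref{Proposition Pi eps -I} (applicable since $\mathbf u_0\in H^2$), and the last term by $\varepsilon\alpha_1^{1/2}\Vert g\Vert_{L_\infty}d^{1/2}\Vert\Lambda^\varepsilon\Pi_\varepsilon^{(m)}\Vert\,\Vert\mathbf D b(\mathbf D)\mathbf u_0\Vert_{L_2}$, where $\Vert\Lambda^\varepsilon\Pi_\varepsilon^{(m)}\Vert\le M_1$ by Proposition~\ref{Proposition Pi eps f eps} and \eqref{Lambda<=}. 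As $\Vert\mathbf D b(\mathbf D)\mathbf u_0(\cdot,\tau)\Vert_{L_2}\le\alpha_1^{1/2}\Vert\mathbf u_0(\cdot,\tau)\Vert_{H^2}$, the $H^2$-bound for $\mathbf u_0$ recorded above closes the estimate and gives \eqref{flux with Pi_eps no hat}. For \eqref{flux with S_eps no hat} the computation is identical with $\Pi_\varepsilon,\Pi_\varepsilon^{(m)},\mathbf v_\varepsilon$ replaced by $S_\varepsilon,S_\varepsilon^{(m)},\check{\mathbf v}_\varepsilon$ (here $S_\varepsilon^{(m)}$ is the Steklov operator \eqref{S_eps} on $\mathbb C^m$-valued functions): $S_\varepsilon$ is a convolution operator, hence commutes with $b(\mathbf D)$ and with each $D_j$, and the two remainders are now controlled by Proposition~\ref{Proposition S_eps -I} in place of Proposition~\ref{Proposition Pi eps -I} and Proposition~\ref{Proposition S_eps f^eps} in place of Proposition~\ref{Proposition Pi eps f eps}. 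Tracking the constants along these steps yields the stated dependence of $C_{25}$ and $C_{26}$. (Alternatively, the reduction to $g^\varepsilon b(\mathbf D)\mathbf v_\varepsilon$ could be carried out directly at the operator level, using the polar decomposition of $g^{1/2}b(\mathbf D)$ and the estimate for $\widehat{\mathcal A}_\varepsilon^{1/2}(\cdots)$ established inside the proof of Theorem~\ref{Theorem 12.3}; this gives the same conclusion.)

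The one step that needs genuine care — though it is routine within this framework — is the algebraic identity: one must verify that the singular $\varepsilon^{-1}$ term produced by differentiating $\Lambda^\varepsilon$ is absorbed \emph{exactly} into the corrected flux matrix $\widetilde g^\varepsilon$, and that the leftover terms really carry an extra power of $\varepsilon$. The latter relies essentially on the presence of the smoothing operator, through the boundedness of $[\Lambda^\varepsilon]\Pi_\varepsilon^{(m)}$ and $[\Lambda^\varepsilon]S_\varepsilon^{(m)}$ (Propositions~\ref{Proposition Pi eps f eps} and \ref{Proposition S_eps f^eps}), since $\Lambda$ itself need not be bounded; and the factor $1+|\tau|$ in the conclusion is produced by the $\tau$-linear growth of $\Vert\mathbf u_0(\cdot,\tau)\Vert_{H^2}$, which follows immediately from \eqref{14.5}.
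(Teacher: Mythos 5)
Your proposal is correct and follows essentially the same route as the paper: the Leibniz rule applied to $\varepsilon b(\mathbf D)(\Lambda^\varepsilon\cdot)$, absorption of the $(b(\mathbf D)\Lambda)^\varepsilon$ term into $\widetilde g^\varepsilon$ via \eqref{tilde g}, control of the $I-\Pi_\varepsilon^{(m)}$ remainder by Proposition~\ref{Proposition Pi eps -I} (resp.\ Proposition~\ref{Proposition S_eps -I}), control of the $\varepsilon\sum_j b_j\Lambda^\varepsilon\Pi_\varepsilon^{(m)}D_j$ remainder by Proposition~\ref{Proposition Pi eps f eps} with \eqref{Lambda<=} (resp.\ Proposition~\ref{Proposition S_eps f^eps}), and the $\tau$-linear $H^2$-bound \eqref{u0 in H2<=} for $\mathbf u_0$. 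The only cosmetic divergence is in the first reduction: you pass from $\mathbf u_\varepsilon-\mathbf v_\varepsilon$ to the flux difference via $\Vert g^\varepsilon b(\mathbf D)\cdot\Vert_{L_2}\le\alpha_1^{1/2}\Vert g\Vert_{L_\infty}\Vert\cdot\Vert_{H^1}$ and the already-stated $(H^2\to H^1)$-estimate of Theorem~\ref{Theorem 14.1}($2^\circ$), whereas the paper works directly with the $\widehat{\mathcal A}_\varepsilon^{1/2}$-estimate \eqref{12.3a-2} and uses $\Vert g^\varepsilon b(\mathbf D)\mathbf w\Vert_{L_2}\le\Vert g\Vert_{L_\infty}^{1/2}\Vert\widehat{\mathcal A}_\varepsilon^{1/2}\mathbf w\Vert_{L_2}$; this gives a slightly sharper constant but both yield the stated parameter dependence, and you explicitly note the alternative at the operator level.
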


\begin{proof}
From \eqref{isometria}, \eqref{12.3a-2}, \eqref{14.3}, and \eqref{14.5}, it follows that
\begin{equation}
\label{14.6}
\begin{split}
\Bigl\Vert & \widehat{\mathcal{A}}_\varepsilon ^{1/2}
\Bigl(
\mathbf{u}_\varepsilon (\cdot ,\tau)-(I+\varepsilon \Lambda ^\varepsilon b(\mathbf{D})\Pi _\varepsilon )\mathbf{u}_0(\cdot,\tau)\Bigr)\Bigr\Vert _{L_2(\mathbb{R}^d)}
\\
&\leqslant
 C_{13}\varepsilon(1+\vert\tau\vert)
\left(\Vert \boldsymbol{\psi}\Vert _{H^2(\mathbb{R}^d)}+\Vert \mathbf{F}\Vert _{L_{1}((0,\tau);H^2(\mathbb{R}^d))}\right).
\end{split}
\end{equation}
By  \eqref{A_eps} and Proposition~\ref{Proposition Pi eps -I}, 
\begin{equation}
\label{14.7}
\begin{split}
\Vert &\widehat{\mathcal{A}}_\varepsilon ^{1/2}(\Pi _\varepsilon -I)\mathbf{u}_0(\cdot ,\tau)\Vert _{L_2(\mathbb{R}^d)}
\leqslant\varepsilon \alpha _1^{1/2}r_0^{-1}\Vert g\Vert ^{1/2}_{L_\infty}
\Vert \mathbf{D}^2 \mathbf{u}_0(\cdot,\tau)\Vert _{L_2(\mathbb{R}^d)}
.
\end{split}
\end{equation}
Using \eqref{f0<=},  \eqref{14.5},  and the  inequality $\vert \sin x\vert /\vert x\vert \leqslant 1$, $x\in\mathbb{R}$, we obtain
\begin{equation}
\label{u0 in H2<=}
\Vert \mathbf{D}^2 \mathbf{u}_0(\cdot,\tau)\Vert _{L_2(\mathbb{R}^d)}\leqslant\Vert \mathbf{u}_0(\cdot,\tau)\Vert _{H^2(\mathbb{R}^d)}\leqslant \vert \tau\vert \Vert f\Vert _{L_\infty}\Vert f^{-1}\Vert _{L_\infty}\left(\Vert \boldsymbol{\psi}\Vert _{H^2(\mathbb{R}^d)}+\Vert \mathbf{F}\Vert _{L_{1}((0,\tau);H^2(\mathbb{R}^d))}\right).
\end{equation}
Combining \eqref{P_eps ho hat} and \eqref{14.6}--\eqref{u0 in H2<=}, we arrive at
\begin{equation}
\label{14.8}
\begin{split}
\Vert &\mathbf{p}_\varepsilon (\cdot ,\tau)-g^\varepsilon b(\mathbf{D})(I+\varepsilon \Lambda ^\varepsilon b(\mathbf{D}))\Pi_\varepsilon \mathbf{u}_0(\cdot ,\tau)\Vert_{L_2(\mathbb{R}^d)}
\\
&\leqslant
C_{27}\varepsilon (1+\vert \tau\vert)
\left(\Vert \boldsymbol{\psi}\Vert _{H^2(\mathbb{R}^d)}+\Vert \mathbf{F}\Vert _{L_{1}((0,\tau);H^2(\mathbb{R}^d))}\right),
\end{split}
\end{equation}
where ${{C}}_{27}:={C}_{13}\Vert g\Vert ^{1/2}_{L_\infty}
+\alpha_1^{1/2}r_0^{-1}\Vert g\Vert _{L_\infty}\Vert f\Vert _{L_\infty}\Vert f^{-1}\Vert _{L_\infty}$.

We have
\begin{equation}
\label{proof fluxes 5}
\begin{split}
\varepsilon g^\varepsilon b(\mathbf{D})\Lambda^\varepsilon b(\mathbf{D})\Pi _\varepsilon\mathbf{u}_0(\cdot ,\tau)
=g^\varepsilon (b(\mathbf{D})\Lambda)^\varepsilon b(\mathbf{D})\Pi_\varepsilon\mathbf{u}_0(\cdot ,\tau)
+\varepsilon g^\varepsilon \sum _{l=1}^d b_l \Lambda ^\varepsilon \Pi _\varepsilon ^{(m)}D_lb(\mathbf{D})\mathbf{u}_0(\cdot ,\tau).
\end{split}
\end{equation}
By \eqref{<b^*b<}, \eqref{b_j <=}, \eqref{proof fluxes 6}, and \eqref{u0 in H2<=},
\begin{equation}
\label{14.9}
\begin{split}
\Bigl\Vert &\varepsilon g^\varepsilon \sum _{l=1}^d b_l\Lambda ^\varepsilon \Pi _\varepsilon ^{(m)}D_l b(\mathbf{D})\mathbf{u}_0(\cdot,\tau)\Bigr\Vert _{L_2(\mathbb{R}^d)}
\leqslant
\varepsilon \Vert g\Vert _{L_\infty}\alpha _1 d^{1/2}M_1\Vert \mathbf{D}^2\mathbf{u}_0(\cdot ,\tau)\Vert _{L_2(\mathbb{R}^d)}
\\
&\leqslant
\varepsilon\vert\tau\vert \alpha _1 d^{1/2}M_1\Vert g\Vert _{L_\infty}
\Vert f\Vert _{L_\infty}\Vert f^{-1}\Vert _{L_\infty}\left(\Vert \boldsymbol{\psi}\Vert _{H^2(\mathbb{R}^d)}+
\Vert \mathbf{F}\Vert _{L_{1}((0,\tau);H^2(\mathbb{R}^d))}\right).
\end{split}
\end{equation}
Now, relations \eqref{tilde g} and \eqref{14.8}--\eqref{14.9} imply estimate \eqref{flux with Pi_eps no hat} with the constant  ${C}_{25}:={{C}}_{27}+\alpha _1 d^{1/2}M_1\Vert g\Vert _{L_\infty}\Vert f\Vert _{L_\infty}\Vert f^{-1}\Vert _{L_\infty}$.

We proceed to the proof of inequality \eqref{flux with S_eps no hat}. By \eqref{14.6},
\begin{equation}
\label{14.9-2}
\begin{split}
\Bigl\Vert &\widehat{\mathcal{A}}_\varepsilon ^{1/2}
\Bigl(
\mathbf{u}_\varepsilon (\cdot ,\tau)
-(I+\varepsilon \Lambda ^\varepsilon b(\mathbf{D}))S_\varepsilon \mathbf{u}_0(\cdot ,\tau)\Bigr)
\Bigr\Vert _{L_2(\mathbb{R}^d)}
\\
&\leqslant
C_{13}\varepsilon (1+\vert\tau\vert)\left(\Vert \boldsymbol{\psi}\Vert _{H^2(\mathbb{R}^d)}+\Vert \mathbf{F}\Vert _{L_{1}((0,\tau);H^2(\mathbb{R}^d))}\right)
\\
&+\Vert \widehat{\mathcal{A}}_\varepsilon ^{1/2}(S_\varepsilon -I)\mathbf{u}_0(\cdot ,\tau)\Vert _{L_2(\mathbb{R}^d)}
+\varepsilon\Vert \widehat{\mathcal{A}}_\varepsilon ^{1/2}\Lambda^\varepsilon b(\mathbf{D})(\Pi _\varepsilon -S_\varepsilon)\mathbf{u}_0(\cdot,\tau)\Vert _{L_2(\mathbb{R}^d)}.
\end{split}
\end{equation}
Similarly to \eqref{14.7}, using 
Proposition~\ref{Proposition S_eps -I} and \eqref{u0 in H2<=}, we have
\begin{equation}
\label{14.10}
\begin{split}
\Vert &\widehat{\mathcal{A}}_\varepsilon ^{1/2}(S_\varepsilon -I)\mathbf{u}_0(\cdot ,\tau)\Vert _{L_2(\mathbb{R}^d)}
\leqslant
\varepsilon  r_1
\alpha _1^{1/2}\Vert g\Vert ^{1/2}_{L_\infty}\Vert \mathbf{D}^2\mathbf{u}_0(\cdot ,\tau)\Vert _{L_2(\mathbb{R}^d)}
\\
&\leqslant
\varepsilon \vert\tau\vert r_1
\alpha _1^{1/2}\Vert g\Vert ^{1/2}_{L_\infty}\Vert f\Vert _{L_\infty}\Vert f^{-1}\Vert _{L_\infty} \left(\Vert \boldsymbol{\psi}\Vert _{H^2(\mathbb{R}^d)}+\Vert \mathbf{F}\Vert _{L_{1}((0,\tau);H^2(\mathbb{R}^d))}\right).
\end{split}
\end{equation}
To estimate the third summand in the right-hand side of \eqref{14.9-2}, we use \eqref{<b^*b<}, \eqref{A_eps}, and Proposition~\ref{Prop Pi -S}. Then
\begin{equation}
\label{14.11}
\begin{split}
\varepsilon \Vert &\widehat{\mathcal{A}}_\varepsilon ^{1/2}\Lambda ^\varepsilon b(\mathbf{D})(\Pi _\varepsilon -S_\varepsilon )\mathbf{u}_0(\cdot,\tau)\Vert _{L_2(\mathbb{R}^d)}
\leqslant \varepsilon \alpha_1^{1/2}C_{23}\Vert g\Vert ^{1/2}_{L_\infty}\Vert \mathbf{u}_0(\cdot ,\tau)\Vert _{H^2(\mathbb{R}^d)}
\\
&\leqslant
\varepsilon\vert\tau\vert \alpha _1^{1/2}C_{23}\Vert g\Vert ^{1/2}_{L_\infty}
\Vert f\Vert _{L_\infty}\Vert f^{-1}\Vert _{L_\infty}\left( \Vert \boldsymbol{\psi}\Vert _{H^2(\mathbb{R}^d)}+\Vert \mathbf{F}\Vert _{L_{1}((0,\tau);H^2(\mathbb{R}^d))}\right).
\end{split}
\end{equation}
Combining \eqref{A_eps}, \eqref{P_eps ho hat}, and \eqref{14.9-2}--\eqref{14.11}, we have
\begin{equation}
\label{14.12}
\begin{split}
\Vert& \mathbf{p}_\varepsilon (\cdot ,\tau)-g^\varepsilon b(\mathbf{D})(I+\varepsilon \Lambda ^\varepsilon b(\mathbf{D}))S_\varepsilon \mathbf{u}_0 (\cdot ,\tau)\Vert _{L_2(\mathbb{R}^d)}
\\
&\leqslant
C_{28}\varepsilon (1+\vert \tau\vert )
\left(\Vert \boldsymbol{\psi}\Vert _{H^2(\mathbb{R}^d)}+\Vert \mathbf{F}\Vert _{L_{1}((0,\tau);H^2(\mathbb{R}^d))}\right).
\end{split}
\end{equation}
Here 
$
{{C}}_{28}:={C}_{13}\Vert g\Vert ^{1/2}_{L_\infty}
+\alpha _1^{1/2}(r_1+C_{23} )\Vert g\Vert _{L_\infty}\Vert f\Vert _{L_\infty}\Vert f^{-1}\Vert _{L_\infty}$. 
From Proposition \ref{Proposition S_eps f^eps} and \eqref{Lambda<=} it follows that 
$
\Vert \Lambda ^\varepsilon S_\varepsilon ^{(m)}\Vert _{L_2(\mathbb{R}^d)\rightarrow L_2(\mathbb{R}^d)}
\leqslant M_1$. 
(Here $S_\varepsilon ^{(m)}$ is the Steklov smoothing operator acting in $L_2(\mathbb{R}^d;\mathbb{C}^m)$.) 
Thus, 
by analogy with \eqref{proof fluxes 5} and \eqref{14.9}, from \eqref{14.12} we deduce estimate \eqref{flux with S_eps no hat} with the constant ${C}_{26}:={{C}}_{28}+\alpha _1 d^{1/2}M_1\Vert g\Vert _{L_\infty}\Vert f\Vert _{L_\infty}\Vert f^{-1}\Vert _{L_\infty}$.
\end{proof}

\begin{lemma}
\label{Lemma 14.7}
For $\varepsilon >0$ and $\tau\in\mathbb{R}$ we have
\begin{equation}
\label{Lm flux grubo}
\Vert g^\varepsilon b(\mathbf{D})f^\varepsilon\mathcal{A}_\varepsilon ^{-1/2}\sin (\tau\mathcal{A}_\varepsilon ^{1/2})(f^\varepsilon)^{-1}
-\widetilde{g}^\varepsilon b(\mathbf{D})\Pi _\varepsilon f_0 (\mathcal{A}^0)^{-1/2}\sin (\tau (\mathcal{A}^0)^{1/2})f_0^{-1}\Vert _{L_2(\mathbb{R}^d)\rightarrow L_2(\mathbb{R}^d)}
\leqslant
{C}_{29}.
\end{equation}
Here ${C}_{29}:=\left(\Vert g\Vert ^{1/2}_{L_\infty}+\Vert g\Vert _{L_\infty}\Vert g^{-1}\Vert_{L_\infty}^{1/2}(m^{1/2}\Vert g\Vert _{L_\infty}^{1/2}\Vert g^{-1}\Vert ^{1/2}_{L_\infty}+1)\right)\Vert f^{-1}\Vert _{L_\infty}$.
\end{lemma}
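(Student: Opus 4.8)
The plan is to bound separately, via the triangle inequality, the two operators whose difference appears on the left of \eqref{Lm flux grubo}. Each of them turns out to be $O(1)$ uniformly in $\varepsilon$ and $\tau$; the key point is that once the ``scalar-like'' factor $g^{1/2}$ is extracted in front, the operator $b(\mathbf{D})$ merges with $\mathcal{A}^{-1/2}_\bullet$ into a contraction, so no growing factor $\vert\tau\vert$ arises (in contrast with the crude bound \eqref{A^-1-2sin <= grubo}).

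\emph{First operator.} Recall that $\mathcal{A}_\varepsilon=\mathcal{X}_\varepsilon^{*}\mathcal{X}_\varepsilon$ with $\mathcal{X}_\varepsilon=(g^\varepsilon)^{1/2}b(\mathbf{D})f^\varepsilon$, so $g^\varepsilon b(\mathbf{D})f^\varepsilon=(g^\varepsilon)^{1/2}\mathcal{X}_\varepsilon$. The operator $\mathcal{A}_\varepsilon^{-1/2}\sin(\tau\mathcal{A}_\varepsilon^{1/2})$ is bounded (its spectral symbol $\lambda^{-1/2}\sin(\tau\lambda^{1/2})$ is bounded by $\vert\tau\vert$) and, since $\lambda\,(\lambda^{-1/2}\sin(\tau\lambda^{1/2}))^{2}=\sin^{2}(\tau\lambda^{1/2})\le1$, it maps $L_2(\mathbb{R}^d;\mathbb{C}^n)$ into $\mathrm{Dom}\,\mathcal{A}_\varepsilon^{1/2}=\mathrm{Dom}\,\mathcal{X}_\varepsilon$ with $\mathcal{A}_\varepsilon^{1/2}\mathcal{A}_\varepsilon^{-1/2}\sin(\tau\mathcal{A}_\varepsilon^{1/2})=\sin(\tau\mathcal{A}_\varepsilon^{1/2})$ by the spectral theorem. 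Hence $\Vert\mathcal{X}_\varepsilon\mathcal{A}_\varepsilon^{-1/2}\sin(\tau\mathcal{A}_\varepsilon^{1/2})\mathbf{u}\Vert=\Vert\sin(\tau\mathcal{A}_\varepsilon^{1/2})\mathbf{u}\Vert\le\Vert\mathbf{u}\Vert$, and the first operator is bounded in $L_2\to L_2$ by $\Vert g\Vert_{L_\infty}^{1/2}\Vert f^{-1}\Vert_{L_\infty}$.

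\emph{Second operator.} Using \eqref{tilde g}, write the matrix multiplier $\widetilde{g}^\varepsilon$ as $g^\varepsilon(b(\mathbf{D})\Lambda)^\varepsilon+g^\varepsilon$, where $(b(\mathbf{D})\Lambda)^\varepsilon(\mathbf{x})=(b(\mathbf{D})\Lambda)(\varepsilon^{-1}\mathbf{x})$. Since $\Pi_\varepsilon$ and $b(\mathbf{D})$ are Fourier multipliers, $b(\mathbf{D})\Pi_\varepsilon=\Pi_\varepsilon^{(m)}b(\mathbf{D})$, where $\Pi_\varepsilon^{(m)}$ is the analogous smoothing operator in $L_2(\mathbb{R}^d;\mathbb{C}^m)$, with $\Vert\Pi_\varepsilon^{(m)}\Vert\le1$. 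Proposition~\ref{Proposition Pi eps f eps} applied with $\Phi=b(\mathbf{D})\Lambda$, together with \eqref{b(D)Lambda <=}, gives $\Vert[(b(\mathbf{D})\Lambda)^\varepsilon]\Pi_\varepsilon^{(m)}\Vert\le m^{1/2}\Vert g\Vert_{L_\infty}^{1/2}\Vert g^{-1}\Vert_{L_\infty}^{1/2}$. For the factor $b(\mathbf{D})f_0(\mathcal{A}^0)^{-1/2}\sin(\tau(\mathcal{A}^0)^{1/2})f_0^{-1}$ use $\mathcal{A}^0=(\mathcal{X}^0)^{*}\mathcal{X}^0$ with $\mathcal{X}^0=(g^0)^{1/2}b(\mathbf{D})f_0$, so $b(\mathbf{D})f_0=(g^0)^{-1/2}\mathcal{X}^0$; arguing exactly as for the first operator gives $\Vert\mathcal{X}^0(\mathcal{A}^0)^{-1/2}\sin(\tau(\mathcal{A}^0)^{1/2})\Vert\le1$, and then \eqref{g^0<=} and \eqref{f0<=} yield $\Vert b(\mathbf{D})f_0(\mathcal{A}^0)^{-1/2}\sin(\tau(\mathcal{A}^0)^{1/2})f_0^{-1}\Vert\le\Vert g^{-1}\Vert_{L_\infty}^{1/2}\Vert f^{-1}\Vert_{L_\infty}$. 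Multiplying this by $\Vert[g^\varepsilon]\Vert\le\Vert g\Vert_{L_\infty}$ and by the bounds for $[(b(\mathbf{D})\Lambda)^\varepsilon]\Pi_\varepsilon^{(m)}$ and $\Pi_\varepsilon^{(m)}$, the two summands coming from the decomposition of $\widetilde{g}^\varepsilon$ are bounded by $m^{1/2}\Vert g\Vert_{L_\infty}^{3/2}\Vert g^{-1}\Vert_{L_\infty}\Vert f^{-1}\Vert_{L_\infty}$ and $\Vert g\Vert_{L_\infty}\Vert g^{-1}\Vert_{L_\infty}^{1/2}\Vert f^{-1}\Vert_{L_\infty}$, so the second operator is bounded by their sum.

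Adding the two bounds gives exactly \eqref{Lm flux grubo} with the stated constant $C_{29}$. I expect no genuine obstacle; the only point needing a little care is the elementary spectral-theoretic observation that $\mathcal{X}_\bullet\mathcal{A}_\bullet^{-1/2}\sin(\tau\mathcal{A}_\bullet^{1/2})$ is a contraction, which is precisely what keeps the estimate independent of $\tau$, whereas a crude use of $\vert\sin x\vert/\vert x\vert\le1$ would only give a constant times $\vert\tau\vert$.
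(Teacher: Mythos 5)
Your proof is correct and is essentially the paper's argument: triangle inequality, then bound the first factor using the factorization $g^\varepsilon b(\mathbf{D})f^\varepsilon=(g^\varepsilon)^{1/2}\mathcal{X}_\varepsilon$ together with the spectral bound $\Vert\mathcal{X}_\varepsilon\mathcal{A}_\varepsilon^{-1/2}\sin(\tau\mathcal{A}_\varepsilon^{1/2})\Vert\leqslant 1$, and bound the second via $b(\mathbf{D})\Pi_\varepsilon=\Pi_\varepsilon^{(m)}b(\mathbf{D})$, Proposition~\ref{Proposition Pi eps f eps}, and \eqref{b(D)Lambda <=}. The only cosmetic difference is that you split $\widetilde{g}^\varepsilon$ into $g^\varepsilon(b(\mathbf{D})\Lambda)^\varepsilon+g^\varepsilon$ before applying Proposition~\ref{Proposition Pi eps f eps} to $\Phi=b(\mathbf{D})\Lambda$, whereas the paper applies it directly to $\Phi=\widetilde{g}$; the resulting constant $C_{29}$ is identical.
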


\begin{proof}
By \eqref{A_eps no hat},
\begin{equation}
\label{lm pr 1}
\Vert g^\varepsilon b(\mathbf{D})f^\varepsilon \mathcal{A}_\varepsilon ^{-1/2}\sin (\tau {\mathcal{A}}_\varepsilon ^{1/2})(f^\varepsilon)^{-1}\Vert _{L_2(\mathbb{R}^d)\rightarrow L_2(\mathbb{R}^d)}
\leqslant \Vert g\Vert ^{1/2}_{L_\infty}\Vert f^{-1}\Vert _{L_\infty}.
\end{equation}

Next, by \eqref{g^0<=},  \eqref{f0<=},  and \eqref{A0 no hat},  
\begin{equation}
\begin{split}
\Vert &\widetilde{g}^\varepsilon \Pi _\varepsilon ^{(m)}b(\mathbf{D})f_0 (\mathcal{A}^0)^{-1/2}\sin (\tau(\mathcal{A}^0)^{1/2})f_0^{-1}\Vert _{L_2(\mathbb{R}^d)\rightarrow L_2(\mathbb{R}^d)}
\\
&\leqslant
\Vert \widetilde{g}^\varepsilon \Pi _\varepsilon ^{(m)}\Vert _{L_2(\mathbb{R}^d)\rightarrow L_2(\mathbb{R}^d)}\Vert g^{-1}\Vert ^{1/2}_{L_\infty}\Vert f^{-1}\Vert _{L_\infty}.
\end{split}
\end{equation}
Using Proposition \ref{Proposition Pi eps f eps} and \eqref{tilde g},  \eqref{b(D)Lambda <=}, we obtain
\begin{equation}
\label{lm pr 3}
\Vert \widetilde{g}^\varepsilon \Pi _\varepsilon ^{(m)}\Vert _{L_2(\mathbb{R}^d)\rightarrow L_2(\mathbb{R}^d)}
\leqslant\Vert g\Vert _{L_\infty}(\vert\Omega\vert ^{-1/2}\Vert b(\mathbf{D})\Lambda\Vert _{L_2(\Omega)}+1)
\leqslant\Vert g\Vert _{L_\infty}(m^{1/2}\Vert g\Vert _{L_\infty}^{1/2}\Vert g^{-1}\Vert ^{1/2}_{L_\infty}+1).
\end{equation}

Combining \eqref{lm pr 1}--\eqref{lm pr 3}, we arrive at estimate \eqref{Lm flux grubo}. 
\end{proof}

\begin{theorem}
\label{Theorem 14.8}
$1^\circ$. 
Let $\mathbf{u}_\varepsilon$ and $\mathbf{u}_0$ be the solutions of problems \eqref{14.1} and \eqref{14.4}, respectively, for  $\boldsymbol{\psi}\in H^s(\mathbb{R}^d;\mathbb{C}^n)$ and $ \mathbf{F}\in L_{1,\mathrm{loc}}(\mathbb{R};H^s(\mathbb{R}^d;\mathbb{C}^n))$, where $0\leqslant s\leqslant 2$. Let $\mathbf{p}_\varepsilon$ be given by \eqref{P_eps ho hat} and let $\widetilde{g}(\mathbf{x})$ be the matrix-valued function \eqref{tilde g}. Then for $\tau\in\mathbb{R}$ and $\varepsilon >0$ we have
\begin{equation}
\label{10.24a}
\begin{split}
\Vert \mathbf{p}_\varepsilon (\cdot ,\tau)-\widetilde{g}^\varepsilon b(\mathbf{D})\Pi _\varepsilon \mathbf{u}_0(\cdot,\tau)\Vert _{L_2(\mathbb{R}^d)}
\leqslant \mathfrak{C}_4(s)(1+\vert\tau\vert)^{s/2}\varepsilon ^{s/2}\left(\Vert\boldsymbol{\psi}\Vert _{H^s(\mathbb{R}^d)}
+\Vert \mathbf{F}\Vert _{L_1((0,\tau);H^s(\mathbb{R}^d))}\right).
\end{split}
\end{equation}
Here $\mathfrak{C}_4(s):={C}_{29}^{1-s/2}{C}_{25}^{s/2}$. Under the additional assumption that $\mathbf{F}\in L_1(\mathbb{R}_\pm; H^{s}(\mathbb{R}^d;\mathbb{C}^n))$, where $0\leqslant s\leqslant 2$, for $\vert\tau\vert =\varepsilon^{-\alpha}$, $0<\varepsilon\leqslant 1$, $0<\alpha<1$, we have
\begin{equation}
\label{10.24b}
\begin{split}
\Vert &\mathbf{p}_\varepsilon (\cdot ,\pm\varepsilon^{-\alpha})-\widetilde{g}^\varepsilon b(\mathbf{D})\Pi _\varepsilon \mathbf{u}_0(\cdot,\pm\varepsilon^{-\alpha})\Vert _{L_2(\mathbb{R}^d)}
\\
&\leqslant 2^{s/2}\mathfrak{C}_4(s)\varepsilon ^{s(1-\alpha)/2}\left(\Vert\boldsymbol{\psi}\Vert _{H^s(\mathbb{R}^d)}+\Vert \mathbf{F}\Vert _{L_1(\mathbb{R}_\pm ;H^s(\mathbb{R}^d))}\right).
\end{split}
\end{equation}

\noindent
$2^\circ$. If $\boldsymbol{\psi}\in L_2(\mathbb{R}^d;\mathbb{C}^n)$ and $\mathbf{F}\in L_{1,\mathrm{loc}}(\mathbb{R};L_2(\mathbb{R}^d;\mathbb{C}^n))$, then
\begin{equation*}
\lim \limits _{\varepsilon\rightarrow 0}\Vert \mathbf{p}_\varepsilon(\cdot,\tau)-\widetilde{g}^\varepsilon b(\mathbf{D})\Pi_\varepsilon \mathbf{u}_0(\cdot,\tau)\Vert _{L_2(\mathbb{R}^d)}=0,\quad\tau\in\mathbb{R}.
\end{equation*}

\noindent
$3^\circ$. If $\boldsymbol{\psi}\in L_2(\mathbb{R}^d;\mathbb{C}^n)$ and $\mathbf{F}\in L_{1}(\mathbb{R}_\pm;L_2(\mathbb{R}^d;\mathbb{C}^n))$, then
\begin{equation*}
\lim _{\varepsilon \rightarrow 0}\Vert \mathbf{p}_\varepsilon (\cdot ,\pm \varepsilon ^{-\alpha})-\widetilde{g}^\varepsilon b(\mathbf{D})\Pi _\varepsilon\mathbf{u}_0(\cdot,\pm\varepsilon ^{-\alpha} )\Vert _{L_2(\mathbb{R}^d)}=0,
\quad 0<\varepsilon\leqslant 1,
\quad0<\alpha<1.
\end{equation*}
\end{theorem}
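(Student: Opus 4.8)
The plan is to reduce all three parts to a single operator‑valued function, namely
\begin{equation*}
\mathcal{G}(\varepsilon,\tau):=g^\varepsilon b(\mathbf{D})f^\varepsilon\mathcal{A}_\varepsilon^{-1/2}\sin(\tau\mathcal{A}_\varepsilon^{1/2})(f^\varepsilon)^{-1}-\widetilde{g}^\varepsilon b(\mathbf{D})\Pi_\varepsilon f_0(\mathcal{A}^0)^{-1/2}\sin(\tau(\mathcal{A}^0)^{1/2})f_0^{-1},
\end{equation*}
which is exactly the operator estimated in Lemma~\ref{Lemma 14.7}. First I would observe that, combining the Duhamel representations \eqref{14.3} and \eqref{14.5} with the definition \eqref{P_eps ho hat} (and the identity $b(\mathbf{D})\Pi_\varepsilon=\Pi_\varepsilon^{(m)}b(\mathbf{D})$), the quantity in question can be written as
\begin{equation*}
\mathbf{p}_\varepsilon(\cdot,\tau)-\widetilde{g}^\varepsilon b(\mathbf{D})\Pi_\varepsilon\mathbf{u}_0(\cdot,\tau)=\mathcal{G}(\varepsilon,\tau)\boldsymbol{\psi}+\int_0^\tau\mathcal{G}(\varepsilon,\tau-\widetilde{\tau})\mathbf{F}(\cdot,\widetilde{\tau})\,d\widetilde{\tau}.
\end{equation*}
Lemma~\ref{Lemma 14.7} provides the crude uniform bound $\Vert\mathcal{G}(\varepsilon,\tau)\Vert_{L_2\to L_2}\leqslant C_{29}$, while Theorem~\ref{Theorem 14.6} (estimate \eqref{flux with Pi_eps no hat}) applied with $\mathbf{F}\equiv 0$ gives $\Vert\mathcal{G}(\varepsilon,\tau)\Vert_{H^2\to L_2}\leqslant C_{25}\varepsilon(1+\vert\tau\vert)$. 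Interpolating between these two bounds with parameter $\theta=s/2$ yields
\begin{equation*}
\Vert\mathcal{G}(\varepsilon,\tau)\Vert_{H^s\to L_2}\leqslant C_{29}^{1-s/2}\bigl(C_{25}\varepsilon(1+\vert\tau\vert)\bigr)^{s/2}=\mathfrak{C}_4(s)\,\varepsilon^{s/2}(1+\vert\tau\vert)^{s/2},\qquad 0\leqslant s\leqslant 2.
\end{equation*}

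Then I would substitute this bound into the displayed identity. The first term contributes $\mathfrak{C}_4(s)\varepsilon^{s/2}(1+\vert\tau\vert)^{s/2}\Vert\boldsymbol{\psi}\Vert_{H^s}$; in the integral term one uses $\vert\tau-\widetilde{\tau}\vert\leqslant\vert\tau\vert$ for $\widetilde{\tau}$ between $0$ and $\tau$ to replace $(1+\vert\tau-\widetilde{\tau}\vert)^{s/2}$ by $(1+\vert\tau\vert)^{s/2}$ and then pulls this factor out of the integral, which produces the term $\mathfrak{C}_4(s)\varepsilon^{s/2}(1+\vert\tau\vert)^{s/2}\Vert\mathbf{F}\Vert_{L_1((0,\tau);H^s)}$. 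This gives \eqref{10.24a}. For the large‑time refinement \eqref{10.24b} I would set $\tau=\pm\varepsilon^{-\alpha}$ and use $1+\varepsilon^{-\alpha}\leqslant 2\varepsilon^{-\alpha}$ for $0<\varepsilon\leqslant 1$, so that $(1+\vert\tau\vert)^{s/2}\leqslant 2^{s/2}\varepsilon^{-\alpha s/2}$ and $\varepsilon^{s/2}\varepsilon^{-\alpha s/2}=\varepsilon^{s(1-\alpha)/2}$, together with $\Vert\mathbf{F}\Vert_{L_1((0,\tau);H^s)}\leqslant\Vert\mathbf{F}\Vert_{L_1(\mathbb{R}_\pm;H^s)}$.

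For the limit statements $2^\circ$ and $3^\circ$, where $s=0$ and the above gives only boundedness, I would proceed as in the analogous places of the paper. In $2^\circ$ the time $\tau$ is fixed: the operators $\mathcal{G}(\varepsilon,\tau-\widetilde{\tau})$ are bounded in $L_2(\mathbb{R}^d;\mathbb{C}^n)$ uniformly in $\varepsilon$ and $\widetilde{\tau}$ by $C_{29}$, and by \eqref{10.24a} with $s=2$ they converge strongly to $0$ on the dense subspace $H^2(\mathbb{R}^d;\mathbb{C}^n)$; the Banach--Steinhaus theorem then gives strong convergence to $0$ on $L_2$, hence $\mathcal{G}(\varepsilon,\tau)\boldsymbol{\psi}\to 0$, and for the integral term the integrand converges to $0$ in $L_2$ for a.e.\ $\widetilde{\tau}$ while being dominated by $C_{29}\Vert\mathbf{F}(\cdot,\widetilde{\tau})\Vert_{L_2}\in L_1((0,\tau))$, so dominated convergence applies. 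In $3^\circ$ the time $\tau=\pm\varepsilon^{-\alpha}$ depends on $\varepsilon$, so a density argument is needed: split $(\boldsymbol{\psi},\mathbf{F})$ into a smooth part $(\boldsymbol{\psi}_\delta,\mathbf{F}_\delta)\in H^2\times L_1(\mathbb{R}_\pm;H^2)$ plus a remainder of $L_2\times L_1(\mathbb{R}_\pm;L_2)$-norm at most $\delta$; the smooth part is handled by \eqref{10.24b} with $s=2$, which is $O(\varepsilon^{1-\alpha})\to 0$ since $0<\alpha<1$, and the remainder by the uniform bound $C_{29}$; letting $\varepsilon\to 0$ and then $\delta\to 0$ yields the claim. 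The part needing the most care is precisely this interplay in $2^\circ$ and $3^\circ$ between the $\varepsilon$-rate available only for $H^2$-data and the $\varepsilon$-independent bound of Lemma~\ref{Lemma 14.7} valid for $L_2$-data; everything else is bookkeeping, in particular tracking the time‑growth factor through the Duhamel convolution and through the substitution $\tau=\pm\varepsilon^{-\alpha}$.
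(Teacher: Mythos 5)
Your proposal is correct and follows essentially the same route as the paper: you interpolate the $(L_2\to L_2)$ bound of Lemma~\ref{Lemma 14.7} against the $(H^2\to L_2)$ bound obtained from \eqref{flux with Pi_eps no hat} with $\mathbf{F}=0$, substitute the resulting operator bound into the Duhamel representations \eqref{14.3}, \eqref{14.5}, and invoke Banach--Steinhaus for $2^\circ$, $3^\circ$. The only difference is cosmetic: the paper compresses the Banach--Steinhaus step into one line, while you spell out the density argument (splitting data into an $H^2$ part plus a small $L_2$ remainder), which is a correct and slightly more explicit presentation of the same idea.
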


\begin{proof}
Rewriting estimate 
\eqref{flux with Pi_eps no hat} with $\mathbf{F}=0$ in operator terms and interpolating with estimate \eqref{Lm flux grubo}, we conclude that
\begin{equation*}
\begin{split}
\Vert& g^\varepsilon b(\mathbf{D})f^\varepsilon {\mathcal{A}}_\varepsilon ^{-1/2}\sin(\tau  {\mathcal{A}}_\varepsilon ^{1/2} )(f^\varepsilon)^{-1} -\widetilde{g}^\varepsilon b(\mathbf{D})\Pi _\varepsilon f_0({\mathcal{A}}^0)^{-1/2}\sin(\tau({\mathcal{A}}^0)^{1/2})f_0^{-1}\Vert _{H^s(\mathbb{R}^d)\rightarrow L_2(\mathbb{R}^d)}\\
&\leqslant C_{29}^{1-s/2}C_{25}^{s/2}(1+\vert \tau\vert )^{s/2}\varepsilon ^{s/2}.
\end{split}
\end{equation*}
Thus, by \eqref{14.3} and \eqref{14.5}, we derive estimate \eqref{10.24a}.

The assertion $2^\circ$ follows from \eqref{10.24a} by the Banach-Steinhaus theorem.

The result $3^\circ$ is a consequence of \eqref{10.24b} and  the Banach-Steinhaus theorem.
\end{proof}

\begin{remark}
\label{Remark flux no S-eps}
Using Proposition \textnormal{\ref{Proposition S_eps f^eps}}, 
it is easily seen that the results of Lemma~\textnormal{\ref{Lemma 14.7}}  are valid with the operator $\Pi_\varepsilon$ replaced by the  operator $S_\varepsilon$. Hence, by using \eqref{flux with S_eps no hat} and interpolation, we deduce the analog of Theorem \textnormal{\ref{Theorem 14.8}} with $\Pi _\varepsilon$ replaced by $S_\varepsilon$. This only changes the constants in estimates.
\end{remark}

\subsection{On the possibility to remove $\Pi _\varepsilon$ from approximation of the flux}

\begin{theorem}
\label{Theorem d<=4 fluxes}
Under the assumptions of Theorem~\textnormal{\ref{Theorem 14.6}}, let $d\leqslant 4$. Then for $\tau\in\mathbb{R}$ and $0<\varepsilon\leqslant 1$ we have
\begin{equation}
\label{Th fluxes d<=4}
\Vert \mathbf{p}_\varepsilon (\cdot,\tau)-\widetilde{g}^\varepsilon b(\mathbf{D}) \mathbf{u}_0(\cdot ,\tau)\Vert _{L_2(\mathbb{R}^d)}
\leqslant C_{30} \varepsilon (1+\vert\tau\vert)\left(\Vert \boldsymbol{\psi}\Vert _{H^2(\mathbb{R}^d)}+\Vert \mathbf{F}\Vert _{L_{1}((0,\tau);H^2(\mathbb{R}^d))}\right).
\end{equation}
The constant $C_{30}$ depends only on $m$, $n$, $d$, $\alpha _0$, $\alpha _1$, $\Vert g\Vert _{L_\infty}$, $\Vert g^{-1}\Vert _{L_\infty}$, $\Vert f\Vert _{L_\infty}$, $\Vert f^{-1}\Vert _{L_\infty}$, and the parameters of the lattice $\Gamma$.
\end{theorem}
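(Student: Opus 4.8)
The plan is to repeat the argument behind Theorem~\ref{Theorem 14.6} with the smoothing operator $\Pi_\varepsilon$ simply deleted, the deletion being legitimate \emph{precisely} in the range $d\leqslant 4$ because of Theorem~\ref{Theorem 8/1 NO PI} (via the scaling transformation) and Proposition~\ref{Proposition Lambda Hs to L2}. The algebraic backbone is the identity
\[
g^\varepsilon b(\mathbf{D})\bigl(I+\varepsilon\Lambda^\varepsilon b(\mathbf{D})\bigr)\mathbf{u}_0
=\widetilde{g}^\varepsilon b(\mathbf{D})\mathbf{u}_0+\varepsilon g^\varepsilon\sum_{l=1}^{d} b_l\,\Lambda^\varepsilon D_l b(\mathbf{D})\mathbf{u}_0 ,
\]
which follows from the relation $g\,b(\mathbf{D})\Lambda=\widetilde{g}-g$ (see \eqref{tilde g}) together with the Leibniz rule $\varepsilon b(\mathbf{D})(\Lambda^\varepsilon\mathbf{w})=(b(\mathbf{D})\Lambda)^\varepsilon\mathbf{w}+\varepsilon\sum_l b_l\Lambda^\varepsilon D_l\mathbf{w}$. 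Writing it in this form is exactly what is needed: the rapidly oscillating factor $(b(\mathbf{D})\Lambda)^\varepsilon$ — which is only $L_2(\Omega)$ and therefore \emph{not} a bounded multiplier without a smoothing operator next to it — has been absorbed into $\widetilde{g}^\varepsilon b(\mathbf{D})\mathbf{u}_0$, and the residual term involves only $\Lambda^\varepsilon$, while $\Lambda\in\widetilde{H}^1(\Omega)$ automatically lies in $L_d(\Omega)$ for $d\leqslant 4$, so that $[\Lambda^\varepsilon]$ is a bounded operator from $H^1$ to $L_2$.

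\textbf{Step 1 (removing $\Pi_\varepsilon$ in the energy estimate).} By the scaling transformation, Theorem~\ref{Theorem 8/1 NO PI} gives \eqref{9.20a NEW}; combining it with \eqref{A_eps^1/2>=}, with the bound $\Vert g^\varepsilon b(\mathbf{D})\mathbf{w}\Vert_{L_2}=\Vert g^{1/2,\varepsilon}\widehat{\mathcal{A}}_\varepsilon^{1/2}\mathbf{w}\Vert_{L_2}\leqslant\Vert g\Vert^{1/2}_{L_\infty}\Vert\widehat{\mathcal{A}}_\varepsilon^{1/2}\mathbf{w}\Vert_{L_2}$, with \eqref{14.3}, \eqref{14.5}, and \eqref{isometria} (applied to $\boldsymbol{\psi}\in H^2$ and to $\mathbf{F}(\cdot,\widetilde{\tau})\in H^2$, exactly as \eqref{14.6} was obtained), I would derive
\[
\bigl\Vert\mathbf{p}_\varepsilon(\cdot,\tau)-g^\varepsilon b(\mathbf{D})\bigl(I+\varepsilon\Lambda^\varepsilon b(\mathbf{D})\bigr)\mathbf{u}_0(\cdot,\tau)\bigr\Vert_{L_2(\mathbb{R}^d)}
\leqslant C\,\varepsilon(1+\vert\tau\vert)\bigl(\Vert\boldsymbol{\psi}\Vert_{H^2(\mathbb{R}^d)}+\Vert\mathbf{F}\Vert_{L_1((0,\tau);H^2(\mathbb{R}^d))}\bigr),
\]
with $C$ controlled by $\widehat{c}_*$, $\Vert g\Vert_{L_\infty}$ and $C_{14}$. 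Here the composition $\widehat{\mathcal{A}}_\varepsilon^{1/2}[\varepsilon\Lambda^\varepsilon b(\mathbf{D})](\cdots)$ is understood, as in Theorem~\ref{Theorem 8/1 NO PI}, through the continuous extension furnished by Proposition~\ref{Proposition MMNP 9/3} with $l\leqslant 2$ (legitimate since $d\leqslant 4$), and the identity $g^{1/2,\varepsilon}\widehat{\mathcal{A}}_\varepsilon^{1/2}(I+\varepsilon\Lambda^\varepsilon b(\mathbf{D}))\mathbf{u}_0=g^\varepsilon b(\mathbf{D})(I+\varepsilon\Lambda^\varepsilon b(\mathbf{D}))\mathbf{u}_0$ is checked on a dense set and extended by continuity using that $\mathbf{u}_0(\cdot,\tau)\in H^3$ (the operator $(\mathcal{A}^0)^{-1/2}\sin(\tau(\mathcal{A}^0)^{1/2})$ gains one derivative).

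\textbf{Step 2 (the left-over corrector term and the conclusion).} By the algebraic identity above it remains to estimate $\varepsilon g^\varepsilon\sum_l b_l\Lambda^\varepsilon D_l b(\mathbf{D})\mathbf{u}_0(\cdot,\tau)$ in $L_2$. From \eqref{S(k)=}, \eqref{f_0 dots >=} and $\vert\sin x\vert\leqslant 1$ the symbol of $D_l b(\mathbf{D})f_0(\mathcal{A}^0)^{-1/2}\sin(\tau(\mathcal{A}^0)^{1/2})f_0^{-1}$ is $O(\vert\boldsymbol{\xi}\vert)$ \emph{uniformly in} $\tau$ (indeed $\vert\boldsymbol{\xi}\vert\,\vert b(\boldsymbol{\xi})\vert\cdot\min(\vert\tau\vert,c_*^{-1/2}\vert\boldsymbol{\xi}\vert^{-1})\leqslant\alpha_1^{1/2}c_*^{-1/2}(1+\vert\boldsymbol{\xi}\vert^2)^{1/2}$), whence this operator is bounded from $H^2$ to $H^1$ uniformly in $\tau$, and by \eqref{14.5} $\Vert D_l b(\mathbf{D})\mathbf{u}_0(\cdot,\tau)\Vert_{H^1}\leqslant C(\Vert\boldsymbol{\psi}\Vert_{H^2}+\Vert\mathbf{F}\Vert_{L_1((0,\tau);H^2)})$. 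For $d\leqslant 4$ Proposition~\ref{Proposition Lambda Hs to L2} yields \eqref{Lambda H1->L2}, i.e.\ $\Vert[\Lambda]\Vert_{H^1\to L_2}\leqslant\mathfrak{C}_d$; since $[\Lambda^\varepsilon]=T_\varepsilon^*[\Lambda]T_\varepsilon$ and $\Vert T_\varepsilon\phi\Vert_{H^1}\leqslant\Vert\phi\Vert_{H^1}$ for $0<\varepsilon\leqslant 1$, we get $\Vert\Lambda^\varepsilon\phi\Vert_{L_2}\leqslant\mathfrak{C}_d\Vert\phi\Vert_{H^1}$. Combining these with \eqref{b_j <=} and $g\in L_\infty$ bounds the residual by $C\varepsilon(\Vert\boldsymbol{\psi}\Vert_{H^2}+\Vert\mathbf{F}\Vert_{L_1((0,\tau);H^2)})$ (which is in fact $O(\varepsilon)$), and adding this to the estimate of Step~1 gives \eqref{Th fluxes d<=4}; tracking the constants through $C_{14}$, $\widehat{c}_*$, $\mathfrak{C}_d$, $\Vert g\Vert_{L_\infty}$, $\Vert f\Vert_{L_\infty}$, $\Vert f^{-1}\Vert_{L_\infty}$ produces the claimed dependence of $C_{30}$.

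\textbf{Main obstacle.} The mathematical content (using $g\,b(\mathbf{D})\Lambda=\widetilde{g}-g$ to reabsorb $(b(\mathbf{D})\Lambda)^\varepsilon$, then invoking Proposition~\ref{Proposition Lambda Hs to L2}) is short; the real work is the regularity bookkeeping attached to the loss of $\Pi_\varepsilon$. One must verify that $[\Lambda^\varepsilon]$ is a bounded multiplier $H^1\to L_2$ — which holds iff $s=d/2-1\leqslant 1$, i.e.\ $d\leqslant 4$, and this is where the dimension restriction enters — and that $\mathbf{u}_0(\cdot,\tau)$ is regular enough (it lies in $H^3$, hence $b(\mathbf{D})\mathbf{u}_0\in H^{d/2}$ for $d\leqslant 4$) for every composition with $\widehat{\mathcal{A}}_\varepsilon^{1/2}$ and the Leibniz identity to be meaningful, the delicate compositions being handled through the continuous extensions of Propositions~\ref{Proposition MMNP 9/3} and~\ref{Proposition Lambda Hs to L2} rather than literally as $g^{1/2,\varepsilon}b(\mathbf{D})[\Lambda^\varepsilon]$. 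Once these points are settled, the remaining estimates are routine applications of the spectral theorem together with \eqref{f_0 dots >=} and \eqref{u0 in H2<=}.
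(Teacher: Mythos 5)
Your proof is correct and follows essentially the same route as the paper: the same use of Theorem~\ref{Theorem 8/1 NO PI} via scaling, the same Leibniz decomposition $\varepsilon g^\varepsilon b(\mathbf{D})\Lambda^\varepsilon b(\mathbf{D})\mathbf{u}_0 = g^\varepsilon (b(\mathbf{D})\Lambda)^\varepsilon b(\mathbf{D})\mathbf{u}_0 + \varepsilon g^\varepsilon\sum_l b_l\Lambda^\varepsilon D_l b(\mathbf{D})\mathbf{u}_0$ with absorption of the first term into $\widetilde{g}^\varepsilon b(\mathbf{D})\mathbf{u}_0$, and the same invocation of Proposition~\ref{Proposition Lambda Hs to L2} (the bound $\Vert[\Lambda]\Vert_{H^1\to L_2}\leqslant\mathfrak{C}_d$ valid precisely for $d\leqslant 4$) to control the residual. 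The only cosmetic difference is that you make the symbol estimate on $D_lb(\mathbf{D})(\mathcal{A}^0)^{-1/2}\sin(\tau(\mathcal{A}^0)^{1/2})$ explicit, whereas the paper cites the operator bounds \eqref{g^0<=}, \eqref{f0<=}, \eqref{A0 no hat}; these are equivalent.
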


\begin{proof}
The proof repeats the proof of Theorem~\textnormal{\ref{Theorem 14.6}} with some simplifications. By \eqref{9.20a NEW}, \eqref{14.3}, and \eqref{14.5}, 
\begin{equation}
\label{proof flux no Pi d<=4 0}
\Vert \widehat{\mathcal{A}}_\varepsilon ^{1/2}\left(\mathbf{u}_\varepsilon(\cdot ,\tau )
-(I+\varepsilon\Lambda ^\varepsilon b(\mathbf{D}))\mathbf{u}_0(\cdot ,\tau)\right)\Vert _{L_2(\mathbb{R}^d)}
\leqslant C_{14}\varepsilon (1+\vert\tau\vert)\left(\Vert \boldsymbol{\psi}\Vert _{H^2(\mathbb{R}^d)}+\Vert \mathbf{F}\Vert _{L_{1}((0,\tau);H^2(\mathbb{R}^d))}\right).
\end{equation}
Then, according to \eqref{A_eps} and \eqref{P_eps ho hat},
\begin{equation}
\label{proof flux no Pi d<=4 I}
\begin{split}
\Vert &\mathbf{p}_\varepsilon (\cdot ,\tau)-g^\varepsilon b(\mathbf{D})(I+\varepsilon \Lambda ^\varepsilon b(\mathbf{D}))\mathbf{u}_0(\cdot ,\tau)\Vert _{L_2(\mathbb{R}^d)}
\\
&\leqslant
\Vert g\Vert ^{1/2}_{L_\infty}C_{14}\varepsilon (1+\vert \tau\vert)\left(\Vert \boldsymbol{\psi}\Vert _{H^2(\mathbb{R}^d)}+\Vert \mathbf{F}\Vert _{L_{1}((0,\tau);H^2(\mathbb{R}^d))}\right).
\end{split}
\end{equation}
Similarly to \eqref{proof fluxes 5},
\begin{equation}
\label{proof flux no Pi d<=4 II}
\varepsilon g^\varepsilon b(\mathbf{D})\Lambda^\varepsilon b(\mathbf{D})\mathbf{u}_0(\cdot ,\tau)
=g^\varepsilon (b(\mathbf{D})\Lambda)^\varepsilon b(\mathbf{D})\mathbf{u}_0(\cdot ,\tau)
+\varepsilon g^\varepsilon \sum _{l=1}^d b_l \Lambda ^\varepsilon  D_lb(\mathbf{D})\mathbf{u}_0(\cdot ,\tau).
\end{equation}
Let us estimate the second summand in the right-hand side. By \eqref{b_j <=},
\begin{equation}
\label{proof flux no Pi d<=4 a}
\begin{split}
\Bigl\Vert &\varepsilon g^\varepsilon \sum _{l=1}^d b_l \Lambda ^\varepsilon  D_lb(\mathbf{D})\mathbf{u}_0(\cdot ,\tau)\Bigr\Vert _{L_2(\mathbb{R}^d)}
\leqslant \varepsilon\Vert g\Vert _{L_\infty}(d\alpha _1)^{1/2}\Vert \Lambda ^\varepsilon\mathbf{D}
b(\mathbf{D})\mathbf{u}_0(\cdot ,\tau)\Vert _{L_2(\mathbb{R}^d)}
\\
&\leqslant  \varepsilon\Vert g\Vert _{L_\infty}(d\alpha _1)^{1/2}
\Vert [\Lambda]\Vert _{H^1(\mathbb{R}^d)\rightarrow L_2(\mathbb{R}^d)}
\Vert \mathbf{D}
b(\mathbf{D})\mathbf{u}_0(\cdot ,\tau)\Vert _{H^1(\mathbb{R}^d)},\quad 0<\varepsilon\leqslant 1.
\end{split}
\end{equation} 
By \eqref{g^0<=}, \eqref{f0<=}, \eqref{A0 no hat}, and \eqref{14.5},
\begin{equation}
\label{proof flux no Pi d<=4 b}
\Vert \mathbf{D}
b(\mathbf{D})\mathbf{u}_0(\cdot ,\tau)\Vert _{H^1(\mathbb{R}^d)}\leqslant
\Vert g^{-1}\Vert ^{1/2}_{L_\infty}\Vert f^{-1}\Vert _{L_\infty}\left(\Vert \boldsymbol{\psi}\Vert _{H^2(\mathbb{R}^d)}+\Vert \mathbf{F}\Vert _{L_{1}((0,\tau);H^2(\mathbb{R}^d))}\right).
\end{equation}
Combining \eqref{Lambda H1->L2}, \eqref{proof flux no Pi d<=4 a}, and \eqref{proof flux no Pi d<=4 b}, we have
\begin{equation}
\label{proof flux no Pi d<=4 III}
\begin{split}
\Bigl\Vert \varepsilon g^\varepsilon \sum _{l=1}^d b_l \Lambda ^\varepsilon  D_lb(\mathbf{D})\mathbf{u}_0(\cdot ,\tau)\Bigr\Vert _{L_2(\mathbb{R}^d)}
&\leqslant 
\varepsilon\Vert g\Vert _{L_\infty}(d\alpha _1)^{1/2}\mathfrak{C}_d\Vert g^{-1}\Vert ^{1/2}_{L_\infty}\Vert f^{-1}\Vert _{L_\infty}
\\
&\times
\left(\Vert \boldsymbol{\psi}\Vert _{H^2(\mathbb{R}^d)}+\Vert \mathbf{F}\Vert _{L_{1}((0,\tau);H^2(\mathbb{R}^d))}\right),\quad d\leqslant 4.
\end{split}
\end{equation}
Now relations \eqref{tilde g}, \eqref{proof flux no Pi d<=4 I}, \eqref{proof flux no Pi d<=4 II}, and \eqref{proof flux no Pi d<=4 III} imply estimate \eqref{Th fluxes d<=4} with the constant
$$
C_{30}:=C_{14}\Vert g\Vert ^{1/2}_{L_\infty}+(d\alpha _1)^{1/2}\mathfrak{C}_d\Vert g\Vert _{L_\infty}\Vert g^{-1}\Vert ^{1/2}_{L_\infty}\Vert f^{-1}\Vert _{L_\infty} .$$
\end{proof}

Let $d\geqslant 5$ and let Condition~\ref{Condition Lambda multiplier} be satisfied. Then, by the scaling transformation, the analog of \eqref{9.20a NEW} (with the constant $C_{15}$ instead of $C_{14}$) follows from \eqref{Th d>4 Lambda multiplier}. We wish to remove $\Pi _\varepsilon$ from approximation for the flux similarly to \eqref{proof flux no Pi d<=4 0}--\eqref{proof flux no Pi d<=4 III}. According to \cite[Subsection~1.6, Proposition~1]{MaSh}, Condition~\ref{Condition Lambda multiplier} implies the boundedness of $[\Lambda]$ as an operator from $H^1(\mathbb{R}^d;\mathbb{C}^m)$ to $L_2(\mathbb{R}^d;\mathbb{C}^n)$ with the estimate 
$\Vert [\Lambda ]\Vert _{H^1(\mathbb{R}^d)\rightarrow L_2(\mathbb{R}^d)}\leqslant C \Vert [\Lambda ]\Vert _{H^2(\mathbb{R}^d)\rightarrow H^1(\mathbb{R}^d)}$.

The following statement can be checked by analogy with the proof of Theorem~\ref{Theorem d<=4 fluxes}.

\begin{theorem}
\label{Theorem d<=5 fluxes}
Let $d\geqslant 5$. Let Condition~\textnormal{\ref{Condition Lambda multiplier}} be satisfied. Then, under the assumptions of Theorem~\textnormal{\ref{Theorem 14.6}}, for $0<\varepsilon\leqslant 1$ and $\tau\in\mathbb{R}$ we have
\begin{equation*}
\Vert \mathbf{p}_\varepsilon (\cdot,\tau)-\widetilde{g}^\varepsilon b(\mathbf{D}) \mathbf{u}_0(\cdot ,\tau)\Vert _{L_2(\mathbb{R}^d)}
\leqslant C_{31} \varepsilon (1+\vert\tau\vert)\left(\Vert \boldsymbol{\psi}\Vert _{H^2(\mathbb{R}^d)}+\Vert \mathbf{F}\Vert _{L_{1}((0,\tau);H^2(\mathbb{R}^d))}\right).
\end{equation*}
The constant $C_{31}:=C_{15}\Vert g\Vert ^{1/2}_{L_\infty}+(d\alpha _1)^{1/2}\Vert g\Vert _{L_\infty}\Vert g^{-1}\Vert _{L_\infty}^{1/2}\Vert f^{-1}\Vert _{L_\infty}\Vert [\Lambda]\Vert _{H^1(\mathbb{R}^d)\rightarrow L_2(\mathbb{R}^d)}$ depends only on $d$, $m$, $n$, $\alpha _0$, $\alpha _1$, $\Vert g\Vert _{L_\infty}$, $\Vert g^{-1}\Vert _{L_\infty}$, $\Vert f\Vert _{L_\infty}$, $\Vert f^{-1}\Vert _{L_\infty}$, the parameters of the lattice $\Gamma$, and the norm $\Vert [\Lambda]\Vert _{H^2(\mathbb{R}^d)\rightarrow H^1(\mathbb{R}^d)}$.
\end{theorem}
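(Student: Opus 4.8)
The plan is to run the argument of Theorem~\ref{Theorem d<=4 fluxes} essentially verbatim, substituting the two places where that proof used $d\leqslant 4$. The first substitution is at the very beginning: instead of \eqref{9.20a NEW}, which was obtained from Theorem~\ref{Theorem 8/1 NO PI}, I would apply the unitary scaling transformation $T_\varepsilon$ to estimate \eqref{Th d>4 Lambda multiplier} of Theorem~\ref{Theorem d>4 chapter 2}. Using the scaling identities $\mathcal{A}_\varepsilon^{-1/2}\sin(\tau\mathcal{A}_\varepsilon^{1/2})=\varepsilon\,T_\varepsilon^*\mathcal{A}^{-1/2}\sin(\varepsilon^{-1}\tau\mathcal{A}^{1/2})T_\varepsilon$ (and the analogues for $\mathcal{A}^0$ and $\widehat{\mathcal{A}}_\varepsilon$), together with $(\mathcal{H}_0+I)^{-1}=T_\varepsilon^*\mathcal{R}(\varepsilon)T_\varepsilon$, one gets for $0<\varepsilon\leqslant1$ the exact analogue of \eqref{9.20a NEW} with $C_{15}$ in place of $C_{14}$; it requires Condition~\ref{Condition Lambda multiplier} precisely because Theorem~\ref{Theorem d>4 chapter 2} does.

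Next I would proceed exactly as in \eqref{proof flux no Pi d<=4 0}--\eqref{proof flux no Pi d<=4 III}. Inserting the Duhamel representations \eqref{14.3} and \eqref{14.5} into the scaled estimate and recalling that $(\mathcal{H}_0+I)^{1/2}$ acts as an isometry from $H^2$ to $H^1$ on the data (cf.\ \eqref{isometria}), I obtain the $d\geqslant5$ counterpart of \eqref{proof flux no Pi d<=4 0} for $\widehat{\mathcal{A}}_\varepsilon^{1/2}\bigl(\mathbf{u}_\varepsilon-(I+\varepsilon\Lambda^\varepsilon b(\mathbf{D}))\mathbf{u}_0\bigr)$, with $C_{15}$ and the norm $\Vert\boldsymbol{\psi}\Vert_{H^2}+\Vert\mathbf{F}\Vert_{L_1((0,\tau);H^2)}$. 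Since $\mathbf{p}_\varepsilon=g^\varepsilon b(\mathbf{D})\mathbf{u}_\varepsilon$ (see \eqref{A_eps}, \eqref{P_eps ho hat}) and $\Vert g^\varepsilon b(\mathbf{D})\mathbf{w}\Vert_{L_2}\leqslant\Vert g\Vert_{L_\infty}^{1/2}\Vert\widehat{\mathcal{A}}_\varepsilon^{1/2}\mathbf{w}\Vert_{L_2}$, this gives the analogue of \eqref{proof flux no Pi d<=4 I}. Then I split $\varepsilon g^\varepsilon b(\mathbf{D})\Lambda^\varepsilon b(\mathbf{D})\mathbf{u}_0$ by the Leibniz rule as in \eqref{proof flux no Pi d<=4 II}: the leading term combines with $g^\varepsilon b(\mathbf{D})\mathbf{u}_0$ to form $\widetilde g^\varepsilon b(\mathbf{D})\mathbf{u}_0$ via \eqref{tilde g}, and the commutator term $\varepsilon g^\varepsilon\sum_{l}b_l\Lambda^\varepsilon D_l b(\mathbf{D})\mathbf{u}_0$ is bounded by \eqref{b_j <=} and the $H^1$-bound \eqref{proof flux no Pi d<=4 b} on $\mathbf{D}b(\mathbf{D})\mathbf{u}_0$ — but here, in place of \eqref{Lambda H1->L2} (Proposition~\ref{Proposition Lambda Hs to L2}, which fails for $d\geqslant5$), I use the boundedness of $[\Lambda]$ from $H^1(\mathbb{R}^d;\mathbb{C}^m)$ to $L_2(\mathbb{R}^d;\mathbb{C}^n)$ that follows from Condition~\ref{Condition Lambda multiplier} by the multiplier result \cite[Subsec.~1.6, Prop.~1]{MaSh}, with $\Vert[\Lambda]\Vert_{H^1\to L_2}\leqslant C\Vert[\Lambda]\Vert_{H^2\to H^1}$. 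Collecting the three contributions yields the claimed estimate with the constant $C_{31}$ (obtained from the constant of \eqref{Th fluxes d<=4} by replacing $C_{14}\rightsquigarrow C_{15}$ and $\mathfrak{C}_d\rightsquigarrow\Vert[\Lambda]\Vert_{H^1\to L_2}$).

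The main point to watch — and really the only non-mechanical step — is the commutator estimate: it needs $[\Lambda^\varepsilon]\mathbf{D}$ bounded on $H^1$ uniformly in $\varepsilon\in(0,1]$, which for $d\geqslant5$ is no longer automatic from Sobolev embedding and is exactly what Condition~\ref{Condition Lambda multiplier} supplies (after scaling, since $[\Lambda^\varepsilon]=T_\varepsilon^*[\Lambda]T_\varepsilon$ and $T_\varepsilon$ only dilates the $H^1$-seminorm). Apart from that, the computation is the routine scaling bookkeeping of Theorem~\ref{Theorem d<=4 fluxes}; one must only keep the restriction $0<\varepsilon\leqslant1$, which is already imposed in \eqref{Th d>4 Lambda multiplier} and in the statement.
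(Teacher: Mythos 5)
Your proof is correct and follows precisely the route the paper has in mind: the paper states the theorem follows "by analogy with Theorem~\ref{Theorem d<=4 fluxes}" and, in the preceding paragraph, flags exactly the two substitutions you make — replacing the scaled estimate coming from Theorem~\ref{Theorem 8/1 NO PI} by the one coming from Theorem~\ref{Theorem d>4 chapter 2} (hence $C_{14}\rightsquigarrow C_{15}$), and replacing the Sobolev bound $\mathfrak{C}_d$ of Proposition~\ref{Proposition Lambda Hs to L2} by $\Vert[\Lambda]\Vert_{H^1\rightarrow L_2}$ obtained from Condition~\ref{Condition Lambda multiplier} via the Maz'ya--Shaposhnikova result. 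One minor imprecision: the scaling claim should be that $\Vert T_\varepsilon w\Vert_{H^1}\leqslant\Vert w\Vert_{H^1}$ for $0<\varepsilon\leqslant1$ (the seminorm contracts rather than "dilates"), which is what makes $\Vert\Lambda^\varepsilon w\Vert_{L_2}\leqslant\Vert[\Lambda]\Vert_{H^1\to L_2}\Vert w\Vert_{H^1}$ uniform; this does not affect the argument.
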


By analogy with \eqref{proof flux no Pi d<=4 0}--\eqref{proof flux no Pi d<=4 III}, using Proposition~\ref{Proposition Lambda Hs to L2}, from Theorem~\ref{Theorem D>=5 smooth date sec 9} we derive the following result.

\begin{theorem}
Let $d\geqslant 5$. Let $\mathbf{u}_\varepsilon$ and $\mathbf{u}_0$ be the solutions of problems \eqref{14.1} and \eqref{14.4}, respectively, where $\boldsymbol{\psi}\in H^{d/2}(\mathbb{R}^d;\mathbb{C}^n)$ and $\mathbf{F}\in L_1((0,\tau);H^{d/2}(\mathbb{R}^d;\mathbb{C}^n))$. Let $\mathbf{p}_\varepsilon$ be defined by \eqref{P_eps ho hat} and let $\widetilde{g}$ be the matrix-valued function \eqref{tilde g}. Then for $0<\varepsilon\leqslant 1$ and $\tau\in\mathbb{R}$ we have
\begin{equation*}
\Vert \mathbf{p}_\varepsilon (\cdot,\tau)-\widetilde{g}^\varepsilon b(\mathbf{D}) \mathbf{u}_0(\cdot ,\tau)\Vert _{L_2(\mathbb{R}^d)}
\leqslant C_{32} \varepsilon (1+\vert\tau\vert)\left(\Vert \boldsymbol{\psi}\Vert _{H^{d/2}(\mathbb{R}^d)}+\Vert \mathbf{F}\Vert _{L_{1}((0,\tau);H^{d/2}(\mathbb{R}^d))}\right).
\end{equation*}
The constant $C_{32}$ depends only on $d$, $m$, $n$, $\alpha _0$, $\alpha _1$, $\Vert g\Vert _{L_\infty}$, $\Vert g^{-1}\Vert _{L_\infty}$, $\Vert f\Vert _{L_\infty}$, $\Vert f^{-1}\Vert _{L_\infty}$, and the parameters of the lattice $\Gamma$.
\end{theorem}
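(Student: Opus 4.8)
The plan is to follow the scheme of the proof of Theorem~\ref{Theorem d<=4 fluxes}, replacing the estimate used there (which comes from Theorem~\ref{Theorem 8/1 NO PI}) by the higher‑regularity estimate of Theorem~\ref{Theorem D>=5 smooth date sec 9}, and replacing the bound $\Vert[\Lambda]\Vert_{H^1\to L_2}$ by the bound $\Vert[\Lambda]\Vert_{H^{d/2-1}\to L_2}\leqslant\mathfrak{C}_d$ of Proposition~\ref{Proposition Lambda Hs to L2}, which is available since $d\geqslant 5\geqslant 3$.

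First I would record, by the scaling transformation applied to Proposition~\ref{Proposition d>5 from H-kappa chapter 2} (this is the intermediate operator estimate obtained in the course of the proof of Theorem~\ref{Theorem D>=5 smooth date sec 9}), the inequality
\begin{multline*}
\bigl\Vert\widehat{\mathcal{A}}_\varepsilon^{1/2}\bigl(f^\varepsilon\mathcal{A}_\varepsilon^{-1/2}\sin(\tau\mathcal{A}_\varepsilon^{1/2})(f^\varepsilon)^{-1}\\
-(I+\varepsilon\Lambda^\varepsilon b(\mathbf{D}))f_0(\mathcal{A}^0)^{-1/2}\sin(\tau(\mathcal{A}^0)^{1/2})f_0^{-1}\bigr)(\mathcal{H}_0+I)^{-d/4}\bigr\Vert_{L_2(\mathbb{R}^d)\to L_2(\mathbb{R}^d)}\leqslant C_{16}\varepsilon(1+\vert\tau\vert).
\end{multline*}
Since $(\mathcal{H}_0+I)^{d/4}$ is an isometric isomorphism of $H^{d/2}(\mathbb{R}^d;\mathbb{C}^n)$ onto $L_2(\mathbb{R}^d;\mathbb{C}^n)$ by~\eqref{isometria}, applying this operator inequality to $(\mathcal{H}_0+I)^{d/4}\boldsymbol{\psi}$ and to $(\mathcal{H}_0+I)^{d/4}\mathbf{F}(\cdot,\widetilde{\tau})$, and using the Duhamel representations~\eqref{14.3} and~\eqref{14.5}, I obtain
\begin{multline*}
\bigl\Vert\widehat{\mathcal{A}}_\varepsilon^{1/2}\bigl(\mathbf{u}_\varepsilon(\cdot,\tau)-(I+\varepsilon\Lambda^\varepsilon b(\mathbf{D}))\mathbf{u}_0(\cdot,\tau)\bigr)\bigr\Vert_{L_2(\mathbb{R}^d)}\\
\leqslant C_{16}\varepsilon(1+\vert\tau\vert)\bigl(\Vert\boldsymbol{\psi}\Vert_{H^{d/2}(\mathbb{R}^d)}+\Vert\mathbf{F}\Vert_{L_1((0,\tau);H^{d/2}(\mathbb{R}^d))}\bigr).
\end{multline*}
Since $\widehat{\mathcal{A}}_\varepsilon=b(\mathbf{D})^*g^\varepsilon b(\mathbf{D})$ (see~\eqref{A_eps}) one has $\Vert g^\varepsilon b(\mathbf{D})\mathbf{w}\Vert_{L_2}\leqslant\Vert g\Vert_{L_\infty}^{1/2}\Vert\widehat{\mathcal{A}}_\varepsilon^{1/2}\mathbf{w}\Vert_{L_2}$, so, recalling~\eqref{P_eps ho hat}, the last display bounds $\Vert\mathbf{p}_\varepsilon(\cdot,\tau)-g^\varepsilon b(\mathbf{D})(I+\varepsilon\Lambda^\varepsilon b(\mathbf{D}))\mathbf{u}_0(\cdot,\tau)\Vert_{L_2}$ by $\Vert g\Vert_{L_\infty}^{1/2}C_{16}\varepsilon(1+\vert\tau\vert)(\Vert\boldsymbol{\psi}\Vert_{H^{d/2}}+\Vert\mathbf{F}\Vert_{L_1((0,\tau);H^{d/2})})$.

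Next, exactly as in~\eqref{proof flux no Pi d<=4 II} and using the definition~\eqref{tilde g} of $\widetilde g$, the Leibniz rule gives $g^\varepsilon b(\mathbf{D})(I+\varepsilon\Lambda^\varepsilon b(\mathbf{D}))\mathbf{u}_0=\widetilde g^\varepsilon b(\mathbf{D})\mathbf{u}_0+\varepsilon g^\varepsilon\sum_{l=1}^d b_l\Lambda^\varepsilon D_l b(\mathbf{D})\mathbf{u}_0$, so it remains to estimate the $L_2$‑norm of the last term. By~\eqref{b_j <=} it is at most $\varepsilon\Vert g\Vert_{L_\infty}(d\alpha_1)^{1/2}\Vert\Lambda^\varepsilon\mathbf{D}b(\mathbf{D})\mathbf{u}_0\Vert_{L_2}$, and by the scaling transformation (valid for $0<\varepsilon\leqslant1$) together with Proposition~\ref{Proposition Lambda Hs to L2} applied with $s=d/2-1$, the latter is at most $\mathfrak{C}_d\Vert\mathbf{D}b(\mathbf{D})\mathbf{u}_0(\cdot,\tau)\Vert_{H^{d/2-1}}$. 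Finally, since $\mathcal{A}^0$ has constant coefficients it commutes with $\mathbf{D}$, $b(\mathbf{D})$ and $(\mathcal{H}_0+I)$; combining this with $\Vert b(\mathbf{D})f_0(\mathcal{A}^0)^{-1/2}\Vert_{L_2\to L_2}\leqslant\Vert g^{-1}\Vert_{L_\infty}^{1/2}$ (which follows from $\mathcal{A}^0=\bigl((g^0)^{1/2}b(\mathbf{D})f_0\bigr)^*(g^0)^{1/2}b(\mathbf{D})f_0$ and~\eqref{g^0<=}), the bound $\Vert\sin(\tau(\mathcal{A}^0)^{1/2})\Vert_{L_2\to L_2}\leqslant1$, \eqref{f0<=}, and~\eqref{14.5}, I get $\Vert\mathbf{D}b(\mathbf{D})\mathbf{u}_0(\cdot,\tau)\Vert_{H^{d/2-1}}\leqslant\Vert g^{-1}\Vert_{L_\infty}^{1/2}\Vert f^{-1}\Vert_{L_\infty}(\Vert\boldsymbol{\psi}\Vert_{H^{d/2}}+\Vert\mathbf{F}\Vert_{L_1((0,\tau);H^{d/2})})$, the point being that $\mathbf{D}b(\mathbf{D})f_0(\mathcal{A}^0)^{-1/2}$ is of order $1$ and hence maps $H^{d/2}$ boundedly into $H^{d/2-1}$. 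Adding the two contributions yields the assertion with $C_{32}:=C_{16}\Vert g\Vert_{L_\infty}^{1/2}+(d\alpha_1)^{1/2}\mathfrak{C}_d\Vert g\Vert_{L_\infty}\Vert g^{-1}\Vert_{L_\infty}^{1/2}\Vert f^{-1}\Vert_{L_\infty}$, which depends only on the quantities listed in the statement.

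The computation is routine once the scheme of~\eqref{proof flux no Pi d<=4 0}--\eqref{proof flux no Pi d<=4 III} is in place; the one step to be careful with is the bookkeeping of Sobolev orders. One must check that $\mathbf{D}b(\mathbf{D})\mathbf{u}_0(\cdot,\tau)$ lies in $H^{d/2-1}$ — precisely the space on which Proposition~\ref{Proposition Lambda Hs to L2} makes $[\Lambda]$ an $L_2$‑multiplier — with norm controlled by the $H^{d/2}$‑norms of the data. This works because $(\mathcal{A}^0)^{-1/2}$ restores one order of smoothing relative to $b(\mathbf{D})f_0$ and because $\mathcal{A}^0$ commutes with all constant‑coefficient differential operators, so that one never needs more than $d/2$ derivatives of $\boldsymbol{\psi}$ (respectively of $\mathbf{F}(\cdot,\widetilde{\tau})$).
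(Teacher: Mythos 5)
Your proof is correct and matches the paper's indicated strategy exactly. The paper states that this theorem is derived ``By analogy with \eqref{proof flux no Pi d<=4 0}--\eqref{proof flux no Pi d<=4 III}, using Proposition~\ref{Proposition Lambda Hs to L2}, from Theorem~\ref{Theorem D>=5 smooth date sec 9}'' -- which is precisely what you carry out: you import the rescaled version of Proposition~\ref{Proposition d>5 from H-kappa chapter 2} (the intermediate estimate from the proof of Theorem~\ref{Theorem D>=5 smooth date sec 9}), apply it through the Duhamel representations \eqref{14.3}, \eqref{14.5}, perform the Leibniz split into the $\widetilde g^\varepsilon$ term plus the commutator remainder as in \eqref{proof flux no Pi d<=4 II}, and then control the remainder via $\Vert[\Lambda]\Vert_{H^{d/2-1}\to L_2}\leqslant\mathfrak{C}_d$ from Proposition~\ref{Proposition Lambda Hs to L2} (with the scaling argument showing $\Vert[\Lambda^\varepsilon]\Vert_{H^{d/2-1}\to L_2}\leqslant\Vert[\Lambda]\Vert_{H^{d/2-1}\to L_2}$ for $0<\varepsilon\leqslant 1$). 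The bookkeeping of Sobolev orders that you single out -- that $\mathbf{D}b(\mathbf{D})f_0(\mathcal{A}^0)^{-1/2}\sin(\tau(\mathcal{A}^0)^{1/2})f_0^{-1}$ maps $H^{d/2}$ boundedly into $H^{d/2-1}$ with norm $\leqslant\Vert g^{-1}\Vert_{L_\infty}^{1/2}\Vert f^{-1}\Vert_{L_\infty}$ -- is the right place to be careful and you handle it correctly, and your constant $C_{32}=C_{16}\Vert g\Vert_{L_\infty}^{1/2}+(d\alpha_1)^{1/2}\mathfrak{C}_d\Vert g\Vert_{L_\infty}\Vert g^{-1}\Vert_{L_\infty}^{1/2}\Vert f^{-1}\Vert_{L_\infty}$ is the exact analogue of $C_{30}$ with $C_{14}$ replaced by $C_{16}$.
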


To obtain interpolational results without any smoothing operator, we need to prove the analog of Lemma~\ref{Lemma 14.7} without $\Pi _\varepsilon$. I.~e., we want to prove $(L_2\rightarrow L_2)$-boundedness of the operator
\begin{equation}
\label{g tilde dots i wan boundedness}
\widetilde{g}^\varepsilon b(\mathbf{D})f_0(\mathcal{A}^0)^{-1/2}\sin (\tau (\mathcal{A}^0)^{1/2})f_0^{-1}.
\end{equation}
The following property of $\widetilde{g}$ was obtained in \cite[Proposition 9.6]{Su_MMNP}. (The one dimensional case will be considered in Subsection~\ref{Subsection special case} below.)

\begin{proposition}
\label{Proposition tilde g multiplier}
Let $l>1$ for $d=2$, and $l=d/2$ for $d\geqslant 3$. The operator $[\widetilde{g}]$ is a continuous mapping of $H^l(\mathbb{R}^d;\mathbb{C}^m)$ to $L_2(\mathbb{R}^d;\mathbb{C}^m)$, and
\begin{equation*}
\Vert [\widetilde{g}]\Vert _{H^l(\mathbb{R}^d)\rightarrow L_2(\mathbb{R}^d)}\leqslant\mathfrak{C}_d'.
\end{equation*}
The constant $\mathfrak{C}_d'$ depends only $d$, $m$, $n$, $\alpha _0$, $\alpha _1$, $\Vert g\Vert _{L_\infty}$, $\Vert g^{-1}\Vert _{L_\infty}$, and the parameters of the lattice $\Gamma$\textnormal{;} for $d=2$ it depends also on $l$.
\end{proposition}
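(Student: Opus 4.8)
The plan is to reduce the assertion to a higher-integrability property of the corrector $\Lambda$ combined with the (endpoint) Sobolev embedding theorem, exactly as in \cite[Proposition 9.6]{Su_MMNP}. By the definition \eqref{tilde g} we may write $\widetilde{g}=g\cdot h$ with $h:=b(\mathbf{D})\Lambda+\mathbf{1}_m$, and since $g\in L_\infty$ it suffices to show that $[h]$ maps $H^l(\mathbb{R}^d;\mathbb{C}^m)$ continuously into $L_2(\mathbb{R}^d;\mathbb{C}^m)$. For $\mathbf{u}\in H^l(\mathbb{R}^d;\mathbb{C}^m)$ I would decompose $\mathbb{R}^d=\bigcup_{\mathbf{a}\in\Gamma}(\Omega+\mathbf{a})$, so that $\Vert h\mathbf{u}\Vert_{L_2(\mathbb{R}^d)}^2=\sum_{\mathbf{a}}\Vert h\mathbf{u}\Vert_{L_2(\Omega+\mathbf{a})}^2$; applying H\"older's inequality with exponents $p/2$ and $p/(p-2)$ on each cell gives $\Vert h\mathbf{u}\Vert_{L_2(\Omega+\mathbf{a})}\leqslant\Vert h\Vert_{L_p(\Omega)}\,\Vert\mathbf{u}\Vert_{L_q(\Omega+\mathbf{a})}$ with $q=2p/(p-2)$. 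Provided the local embedding $H^l(\Omega+\mathbf{a})\hookrightarrow L_q(\Omega+\mathbf{a})$ holds (with a $\Gamma$-translation-invariant constant, which is automatic since all cells are translates of $\Omega$), summing these inequalities and using $\sum_{\mathbf{a}}\Vert\mathbf{u}\Vert_{H^l(\Omega+\mathbf{a})}^2\leqslant C\Vert\mathbf{u}\Vert_{H^l(\mathbb{R}^d)}^2$ yields $\Vert[\widetilde{g}]\mathbf{u}\Vert_{L_2(\mathbb{R}^d)}\leqslant\Vert g\Vert_{L_\infty}\Vert h\Vert_{L_p(\Omega)}\,C\,\Vert\mathbf{u}\Vert_{H^l(\mathbb{R}^d)}$.

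For $d=2$ one can take the crude choice $p=2$: then $h\in L_2(\Omega)$ by \eqref{b(D)Lambda <=}, $q=\infty$, and the supercritical embedding $H^l(\Omega)\hookrightarrow L_\infty(\Omega)$ is available for $l>1=d/2$ with a constant depending on $l$, which accounts for the $l$-dependence of the constant in that case. For $d\geqslant3$ this crude choice only produces $l>d/2$, so the point is to gain integrability of $h$, i.e.\ of $b(\mathbf{D})\Lambda$. Here I would invoke the Meyers (Gehring) higher-integrability estimate for the second order strongly elliptic system \eqref{Lambda problem}: using the coercivity of $b(\mathbf{D})^*g\,b(\mathbf{D})$ on $\dot{H}^1$ coming from \eqref{<a<}, the Caccioppoli inequality together with the Sobolev--Poincar\'e inequality on balls gives a reverse H\"older inequality for $|b(\mathbf{D})\Lambda|$, and Gehring's lemma upgrades it to $b(\mathbf{D})\Lambda\in L_p(\Omega)$ for some $p=p(d,\alpha_0,\alpha_1,\Vert g\Vert_{L_\infty},\Vert g^{-1}\Vert_{L_\infty},\Gamma)>2$. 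For this $p$ the exponent $q=2p/(p-2)$ is finite, and the borderline Sobolev embedding $H^{d/2}(\Omega)\hookrightarrow L_q(\Omega)$, which holds for every finite $q$, applies with $l=d/2$; the resulting constant depends only on the data listed in the statement.

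The main obstacle is precisely this higher-integrability step for $d\geqslant3$: one needs the full Meyers--Gehring machinery for elliptic systems, and in particular a Caccioppoli estimate valid on balls of all scales inside the cell, which is handled by exploiting the $\Gamma$-periodicity of $\Lambda$ (equivalently, by regarding \eqref{Lambda problem} as an equation on the torus $\mathbb{R}^d/\Gamma$). Once $p>2$ is secured, everything else is the routine cell-by-cell H\"older/Sobolev argument above, and the case $d=2$ requires no higher integrability at all. A fully self-contained alternative is simply to quote \cite[Proposition 9.6]{Su_MMNP}, where this estimate is established.
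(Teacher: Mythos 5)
The paper does not prove this proposition: it simply cites \cite[Proposition~9.6]{Su_MMNP}, so there is no in-text argument to compare your proposal against. Your reconstruction is, however, mathematically sound and is, as far as I can tell, the same route taken in that reference. The reduction to the scalar statement that $[h]$ with $h=b(\mathbf{D})\Lambda+\mathbf{1}_m$ maps $H^l\to L_2$, the cell-by-cell H\"older/Sobolev argument with the periodicity of $h$, and the dichotomy $d=2$ (supercritical embedding $H^l\hookrightarrow L_\infty$ for $l>1$, no gain in integrability needed) versus $d\geqslant 3$ (borderline embedding $H^{d/2}\hookrightarrow L_q$ for finite $q$, which forces $p>2$ and hence a Meyers--Gehring higher-integrability input on the torus $\mathbb{R}^d/\Gamma$) are exactly the ingredients one expects here. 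The dependence of the resulting constant on the data matches the statement, since the Meyers exponent $p$ and the Gehring constant are controlled by $d$, the ellipticity constants $\alpha_0,\alpha_1$, $\Vert g\Vert_{L_\infty}$, $\Vert g^{-1}\Vert_{L_\infty}$ and the geometry of $\Omega$, while the extra $l$-dependence in dimension $2$ comes from the embedding constant of $H^l(\Omega)\hookrightarrow L_\infty(\Omega)$, which blows up as $l\downarrow 1$.

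Two small points worth making explicit. First, the inequality $\sum_{\mathbf{a}\in\Gamma}\Vert\mathbf{u}\Vert_{H^l(\Omega+\mathbf{a})}^2\leqslant C\Vert\mathbf{u}\Vert_{H^l(\mathbb{R}^d)}^2$ is immediate for integer $l$ but needs a word for fractional $l$ (e.g.\ $l=3/2$ when $d=3$); it does hold for the intrinsic Slobodeckij norm, because restricting the Gagliardo double integral to $(\Omega+\mathbf{a})\times(\Omega+\mathbf{a})$ and summing over $\mathbf{a}$ only undercounts, and one then passes to the equivalent Fourier norm at the end. Using the intrinsic norm on each cell is also what makes the cell-wise embedding constant genuinely translation-invariant, as you assert. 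Second, note that the statement deliberately starts at $d=2$; for $d=1$ one has $m=n$, $\widetilde{g}(\mathbf{x})=g^0$ is a constant matrix (cf.\ \eqref{underline-g} and Subsection~\ref{Subsection special case}), and the conclusion is trivial, which is why the paper treats $d=1$ separately rather than through this proposition.
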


So, for $d\geqslant 2$, we can not expect the $(L_2\rightarrow L_2)$-boundedness of the operator~\eqref{g tilde dots i wan boundedness}. The $(H^2\rightarrow L_2)$-continuity of the operator \eqref{g tilde dots i wan boundedness} was used in Theorem~\ref{Theorem d<=4 fluxes} and, under Condition~\ref{Condition Lambda multiplier}, in Theorem~\ref{Theorem d<=5 fluxes}. (The $(H^2\rightarrow L_2)$-boundedness of $[\widetilde{g}]$ follows from  \cite[Subsection~1.3.2, Lemma~1]{MaSh}.) So, without any additional conditions on $\Lambda$, using Proposition~\ref{Proposition tilde g multiplier}, we can obtain some interpolational results only for $d\leqslant 3$.

By \eqref{g^0<=}, \eqref{f0<=}, \eqref{A0 no hat}, and Proposition~\ref{Proposition tilde g multiplier},
\begin{equation}
\label{tilde g dots Hl to L2}
\Vert \widetilde{g}^\varepsilon b(\mathbf{D})f_0(\mathcal{A}^0)^{-1/2}\sin (\tau (\mathcal{A}^0)^{1/2})f_0^{-1}\Vert _{H^l(\mathbb{R}^d)\rightarrow L_2(\mathbb{R}^d)}
\leqslant \mathfrak{C}_d'\Vert g^{-1}\Vert ^{1/2}_{L_\infty}\Vert f^{-1}\Vert _{L_\infty}.
\end{equation}
(Here $l$ is as in Proposition~\ref{Proposition tilde g multiplier}.)

Combining \eqref{lm pr 1} and \eqref{tilde g dots Hl to L2} and interpolating with \eqref{Th fluxes d<=4}, we obtain the following result.

\begin{theorem}
Let $2\leqslant d\leqslant 3$, and let $1<l<2$ for $d=2$ and $l=3/2$ for $d=3$. Let $0\leqslant s\leqslant 1$. 
Assume that  $\theta = l+(2-l)s$ for $d=2$ and $\theta =3/2+s/2$ for $d=3$. Let $\mathbf{u}_\varepsilon$ and $\mathbf{u}_0$ be the solutions of problems \eqref{14.1} and \eqref{14.4}, respectively, where 
$\boldsymbol{\psi}\in H^\theta (\mathbb{R}^d;\mathbb{C}^n)$ and $\mathbf{F}\in L_1((0,\tau);H^\theta (\mathbb{R}^d;\mathbb{C}^n))$. Let $\mathbf{p}_\varepsilon$ be the flux \eqref{P_eps ho hat} and let $\widetilde{g}$ be the matrix-valued function \eqref{tilde g}. Then for $0<\varepsilon\leqslant 1$ and $\tau\in\mathbb{R}$ we have
\begin{equation*}
\Vert \mathbf{p}_\varepsilon (\cdot ,\tau)-\widetilde{g}^\varepsilon b(\mathbf{D})\mathbf{u}_0(\cdot ,\tau)\Vert _{L_2(\mathbb{R}^d)}
\leqslant
\mathfrak{C}_5(s)\varepsilon ^s(1+\vert \tau\vert)^s\left(\Vert \boldsymbol{\psi}\Vert _{H^\theta (\mathbb{R}^d)}+\Vert \mathbf{F}\Vert _{L_1((0,\tau);H^\theta (\mathbb{R}^d))}\right).
\end{equation*}
Here $\mathfrak{C}_5(s):=C_{31}^s(\Vert g\Vert ^{1/2}_{L_\infty}+\mathfrak{C}_d'\Vert g^{-1}\Vert ^{1/2}_{L_\infty})^{1-s}\Vert f^{-1}\Vert ^{1-s}_{L_\infty}$.
\end{theorem}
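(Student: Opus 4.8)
The plan is to reduce the assertion to an operator–norm interpolation between the sharp $O(\varepsilon)$ bound in the strong norm $H^2\to L_2$ and a crude $\varepsilon$-free bound in the weaker norm $H^l\to L_2$, and then to transfer the resulting operator estimate to the solutions by means of the Duhamel formulas \eqref{14.3} and \eqref{14.5}.

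\textbf{Operator reformulation.} Since $d\leqslant 3$, Theorem~\ref{Theorem d<=4 fluxes} applies. Taking $\mathbf{F}=0$ in \eqref{Th fluxes d<=4} and recalling that then $\mathbf{u}_\varepsilon(\cdot,\tau)=f^\varepsilon\mathcal{A}_\varepsilon^{-1/2}\sin(\tau\mathcal{A}_\varepsilon^{1/2})(f^\varepsilon)^{-1}\boldsymbol{\psi}$, $\mathbf{u}_0(\cdot,\tau)=f_0(\mathcal{A}^0)^{-1/2}\sin(\tau(\mathcal{A}^0)^{1/2})f_0^{-1}\boldsymbol{\psi}$ and $\mathbf{p}_\varepsilon=g^\varepsilon b(\mathbf{D})\mathbf{u}_\varepsilon$, one obtains, for $0<\varepsilon\leqslant 1$ and $\tau\in\mathbb{R}$,
\[
\bigl\Vert g^\varepsilon b(\mathbf{D})f^\varepsilon\mathcal{A}_\varepsilon^{-1/2}\sin(\tau\mathcal{A}_\varepsilon^{1/2})(f^\varepsilon)^{-1}-\widetilde{g}^\varepsilon b(\mathbf{D})f_0(\mathcal{A}^0)^{-1/2}\sin(\tau(\mathcal{A}^0)^{1/2})f_0^{-1}\bigr\Vert_{H^2(\mathbb{R}^d)\to L_2(\mathbb{R}^d)}\leqslant C_{31}\varepsilon(1+\vert\tau\vert),
\]
where $C_{31}$ is the constant from the statement. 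Next, using $\Vert\cdot\Vert_{L_2}\leqslant\Vert\cdot\Vert_{H^l}$, the bound \eqref{lm pr 1} controls the first of these operators as a map $H^l(\mathbb{R}^d)\to L_2(\mathbb{R}^d)$, while \eqref{tilde g dots Hl to L2} (which rests on Proposition~\ref{Proposition tilde g multiplier}) controls the second one in the same norm; here $l$ is taken as in Proposition~\ref{Proposition tilde g multiplier}, i.e.\ $l=3/2$ for $d=3$ and $1<l<2$ arbitrary for $d=2$. Hence the difference operator satisfies an $\varepsilon$- and $\tau$-independent bound
\[
\bigl\Vert g^\varepsilon b(\mathbf{D})f^\varepsilon\mathcal{A}_\varepsilon^{-1/2}\sin(\tau\mathcal{A}_\varepsilon^{1/2})(f^\varepsilon)^{-1}-\widetilde{g}^\varepsilon b(\mathbf{D})f_0(\mathcal{A}^0)^{-1/2}\sin(\tau(\mathcal{A}^0)^{1/2})f_0^{-1}\bigr\Vert_{H^l(\mathbb{R}^d)\to L_2(\mathbb{R}^d)}\leqslant\bigl(\Vert g\Vert_{L_\infty}^{1/2}+\mathfrak{C}_d'\Vert g^{-1}\Vert_{L_\infty}^{1/2}\bigr)\Vert f^{-1}\Vert_{L_\infty}.
\]

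\textbf{Interpolation.} Interpolating the two operator bounds with parameter $s$ attached to the $H^2\to L_2$ endpoint, we get an estimate in the norm $H^\theta(\mathbb{R}^d)\to L_2(\mathbb{R}^d)$ with $\theta=(1-s)l+2s$, that is $\theta=l+(2-l)s$ for $d=2$ and $\theta=3/2+s/2$ for $d=3$, of order $\varepsilon^s(1+\vert\tau\vert)^s$ and with constant $\mathfrak{C}_5(s)=C_{31}^s(\Vert g\Vert_{L_\infty}^{1/2}+\mathfrak{C}_d'\Vert g^{-1}\Vert_{L_\infty}^{1/2})^{1-s}\Vert f^{-1}\Vert_{L_\infty}^{1-s}$. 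It is exactly at this point that the restriction $d\leqslant 3$ is used: for $d\geqslant 4$ Proposition~\ref{Proposition tilde g multiplier} forces $l=d/2\geqslant 2$, so the weak endpoint would not be strictly below $H^2$ and no genuine gain in regularity would result; this is why $\theta<2$ throughout and why the theorem is stated only for $2\leqslant d\leqslant 3$.

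\textbf{Passage to the solutions.} Finally, inserting \eqref{14.3} and \eqref{14.5} and using linearity, $\mathbf{p}_\varepsilon(\cdot,\tau)-\widetilde{g}^\varepsilon b(\mathbf{D})\mathbf{u}_0(\cdot,\tau)$ equals the difference operator above, at time $\tau$, applied to $\boldsymbol{\psi}$, plus $\int_0^\tau$ of the same operator at time $\tau-\widetilde{\tau}$ applied to $\mathbf{F}(\cdot,\widetilde{\tau})$. Estimating the $L_2$-norm term by term with the interpolation bound, using $(1+\vert\tau-\widetilde{\tau}\vert)^s\leqslant(1+\vert\tau\vert)^s$ for $\widetilde{\tau}$ between $0$ and $\tau$, and $\int_0^\tau\Vert\mathbf{F}(\cdot,\widetilde{\tau})\Vert_{H^\theta(\mathbb{R}^d)}\,d\widetilde{\tau}=\Vert\mathbf{F}\Vert_{L_1((0,\tau);H^\theta(\mathbb{R}^d))}$, yields the claimed estimate. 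The routine parts are the operator reformulation and the Duhamel bookkeeping; the only substantive point, already highlighted, is the availability of a weak endpoint strictly below $H^2$, which constrains the admissible dimensions and the exponent $\theta$.
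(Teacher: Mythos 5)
Your proof takes essentially the same route as the paper: bound the difference operator in the $H^l\to L_2$ norm by $O(1)$ using \eqref{lm pr 1} and \eqref{tilde g dots Hl to L2}, interpolate against the $O(\varepsilon)$ bound in $H^2\to L_2$ from Theorem~\ref{Theorem d<=4 fluxes}, and transfer to the solutions via the Duhamel representations \eqref{14.3} and \eqref{14.5}. One small wrinkle you inherited from the statement: Theorem~\ref{Theorem d<=4 fluxes}, which is the one applicable for $d\leqslant 3$, yields the constant $C_{30}$ (not $C_{31}$, which is from Theorem~\ref{Theorem d<=5 fluxes} for $d\geqslant 5$ under Condition~\ref{Condition Lambda multiplier}), so the endpoint bound in your operator reformulation should carry $C_{30}$ and the interpolated constant should read $C_{30}^s(\Vert g\Vert _{L_\infty}^{1/2}+\mathfrak{C}_d'\Vert g^{-1}\Vert _{L_\infty}^{1/2})^{1-s}\Vert f^{-1}\Vert _{L_\infty}^{1-s}$; this appears to be a typo already present in the theorem statement.
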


\subsection{The special case} 

\label{Subsection special case}

Suppose that $g^0=\underline{g}$, i. e., relations \eqref{underline-g} hold. 
For $d=1$, identity $g^0=\underline{g}$ is always true, see, e. g., \cite[Chapter~I, \S 2]{ZhKO}. 
In accordance with \cite[Remark 3.5]{BSu05}, in this case the matrix-valued function \eqref{tilde g} is constant and coincides with $g^0$, i. e., $\widetilde{g}(\mathbf{x})=g^0=\underline{g}$. The following statement is a consequence of Theorem~\ref{Theorem 14.8}($1^\circ$.)

\begin{proposition}
Assume that relations \eqref{underline-g} hold. 
Let $\mathbf{u}_\varepsilon$ and $\mathbf{u}_0$ be the solutions of problems \eqref{14.1} and \eqref{14.4}, respectively, for  $\boldsymbol{\psi}\in H^s(\mathbb{R}^d;\mathbb{C}^n)$ and $ \mathbf{F}\in L_{1,\mathrm{loc}}(\mathbb{R};H^s(\mathbb{R}^d;\mathbb{C}^n))$, where $0\leqslant s\leqslant 2$. Let $\mathbf{p}_\varepsilon$ be given by \eqref{P_eps ho hat}. Then for $\tau\in\mathbb{R}$ and $\varepsilon >0$ we have
\begin{equation}
\label{fluxes in special case}
\Vert \mathbf{p}_\varepsilon (\cdot ,\tau)-g^0b(\mathbf{D})\mathbf{u}_0(\cdot ,\tau)\Vert _{L_2(\mathbb{R}^d)}
\leqslant \mathfrak{C}_6(s)(1+\vert \tau\vert )^{s/2}\varepsilon ^{s/2}
\left(\Vert\boldsymbol{\psi}\Vert _{H^s(\mathbb{R}^d)}
+\Vert \mathbf{F}\Vert _{L_1((0,\tau);H^s(\mathbb{R}^d))}\right).
\end{equation}
Here $\mathfrak{C}_6(s):=\mathfrak{C}_4(s)+2^{1-s/2}r_0^{-s/2}\Vert g\Vert ^{1/2}_{L_\infty}\Vert f^{-1}\Vert _{L_\infty}$.
\end{proposition}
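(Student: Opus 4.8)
The plan is to start from the estimate already contained in Theorem~\ref{Theorem 14.8}($1^\circ$) and to correct for the difference between $\widetilde{g}^\varepsilon b(\mathbf{D})\Pi_\varepsilon\mathbf{u}_0$ and $g^0 b(\mathbf{D})\mathbf{u}_0$. Under \eqref{underline-g} the matrix-valued function \eqref{tilde g} is constant, $\widetilde{g}(\mathbf{x})=g^0=\underline{g}$ (see \cite[Remark~3.5]{BSu05}); hence $\widetilde{g}^\varepsilon b(\mathbf{D})\Pi_\varepsilon\mathbf{u}_0=g^0 b(\mathbf{D})\Pi_\varepsilon\mathbf{u}_0$, and since $g^0$ is a constant matrix and $\Pi_\varepsilon$ commutes with the Fourier multipliers $g^0$, $b(\mathbf{D})$, $f_0$ and $\mathcal{A}^0$, the discrepancy is $\widetilde{g}^\varepsilon b(\mathbf{D})\Pi_\varepsilon\mathbf{u}_0-g^0 b(\mathbf{D})\mathbf{u}_0=(\Pi_\varepsilon-I)\,g^0 b(\mathbf{D})\mathbf{u}_0$. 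By the triangle inequality it suffices to add the $L_2(\mathbb{R}^d)$-norm of this term to the right-hand side of \eqref{10.24a}; this will produce the second summand in $\mathfrak{C}_6(s)$.

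To control $(\Pi_\varepsilon-I)g^0 b(\mathbf{D})\mathbf{u}_0(\cdot,\tau)$ I would pass to operator form through the Duhamel representation \eqref{14.5}. Setting $\mathcal{G}(\sigma):=(\Pi_\varepsilon-I)g^0 b(\mathbf{D})f_0(\mathcal{A}^0)^{-1/2}\sin(\sigma(\mathcal{A}^0)^{1/2})f_0^{-1}$, one has $(\Pi_\varepsilon-I)g^0 b(\mathbf{D})\mathbf{u}_0(\cdot,\tau)=\mathcal{G}(\tau)\boldsymbol{\psi}+\int_0^\tau\mathcal{G}(\tau-\widetilde{\tau})\mathbf{F}(\cdot,\widetilde{\tau})\,d\widetilde{\tau}$, so the matter reduces to bounding $\Vert\mathcal{G}(\sigma)\Vert_{H^s(\mathbb{R}^d)\to L_2(\mathbb{R}^d)}$ uniformly for $\vert\sigma\vert\leqslant\vert\tau\vert$. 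For $s=0$ I would use $\Vert\Pi_\varepsilon-I\Vert\leqslant 2$, the elementary inequality $\vert\sin x\vert\leqslant 1$, $\vert f_0^{-1}\vert\leqslant\Vert f^{-1}\Vert_{L_\infty}$ (see \eqref{f0<=}), and the key observation that, since $f_0 b(\boldsymbol{\xi})^* g^0 b(\boldsymbol{\xi})f_0$ is the symbol of $\mathcal{A}^0$ and $f_0 b(\boldsymbol{\xi})^* g^0 b(\boldsymbol{\xi})f_0\geqslant c_*\vert\boldsymbol{\xi}\vert^2$ by \eqref{f_0 dots >=}, the operator $(g^0)^{1/2}b(\boldsymbol{\xi})f_0(f_0 b(\boldsymbol{\xi})^* g^0 b(\boldsymbol{\xi})f_0)^{-1/2}$ is a contraction, whence $\vert g^0 b(\boldsymbol{\xi})f_0(f_0 b(\boldsymbol{\xi})^* g^0 b(\boldsymbol{\xi})f_0)^{-1/2}\vert\leqslant\vert g^0\vert^{1/2}\leqslant\Vert g\Vert_{L_\infty}^{1/2}$ (here \eqref{g^0<=} is used). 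This gives $\Vert\mathcal{G}(\sigma)\Vert_{L_2\to L_2}\leqslant 2\Vert g\Vert_{L_\infty}^{1/2}\Vert f^{-1}\Vert_{L_\infty}$. For $s=2$ I would move the factor $\Pi_\varepsilon-I$ to the left (all remaining factors being Fourier multipliers), invoke Proposition~\ref{Proposition Pi eps -I} to gain $\varepsilon r_0^{-1}$ at the cost of one differentiation, and absorb the extra $\vert\boldsymbol{\xi}\vert$ into the $H^2$-norm via $\vert\boldsymbol{\xi}\vert\leqslant 1+\vert\boldsymbol{\xi}\vert^2$ together with the same contraction bound as above; this yields $\Vert\mathcal{G}(\sigma)\Vert_{H^2\to L_2}\leqslant\varepsilon r_0^{-1}\Vert g\Vert_{L_\infty}^{1/2}\Vert f^{-1}\Vert_{L_\infty}\leqslant\varepsilon(1+\vert\tau\vert)r_0^{-1}\Vert g\Vert_{L_\infty}^{1/2}\Vert f^{-1}\Vert_{L_\infty}$.

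Interpolating these two operator bounds at the level $s=2\theta$, $\theta=s/2$, gives $\Vert\mathcal{G}(\sigma)\Vert_{H^s\to L_2}\leqslant 2^{1-s/2}r_0^{-s/2}\Vert g\Vert_{L_\infty}^{1/2}\Vert f^{-1}\Vert_{L_\infty}\varepsilon^{s/2}(1+\vert\tau\vert)^{s/2}$ for $0\leqslant s\leqslant 2$ and $\vert\sigma\vert\leqslant\vert\tau\vert$. Substituting into the Duhamel representation, estimating the integral term by $\sup_{\vert\sigma\vert\leqslant\vert\tau\vert}\Vert\mathcal{G}(\sigma)\Vert_{H^s\to L_2}\,\Vert\mathbf{F}\Vert_{L_1((0,\tau);H^s(\mathbb{R}^d))}$ and using $\sup_{\vert\sigma\vert\leqslant\vert\tau\vert}(1+\vert\sigma\vert)^{s/2}=(1+\vert\tau\vert)^{s/2}$, and finally adding the result to \eqref{10.24a} via the triangle inequality, we obtain \eqref{fluxes in special case} with $\mathfrak{C}_6(s)=\mathfrak{C}_4(s)+2^{1-s/2}r_0^{-s/2}\Vert g\Vert_{L_\infty}^{1/2}\Vert f^{-1}\Vert_{L_\infty}$. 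There is no genuinely hard step here; the only point needing care — and the reason the new constant carries $\Vert g\Vert_{L_\infty}^{1/2}$ rather than $\Vert g\Vert_{L_\infty}$ — is to treat $g^0 b(\mathbf{D})f_0(\mathcal{A}^0)^{-1/2}$ as a single Fourier multiplier when estimating $\mathcal{G}(\sigma)$, instead of bounding $g^0$ and $b(\mathbf{D})(\mathcal{A}^0)^{-1/2}$ separately, and to keep track that $\Pi_\varepsilon$ commutes with all the constant-coefficient operators involved.
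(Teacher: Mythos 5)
Your proof is correct and follows essentially the same route as the paper's: commute $\Pi_\varepsilon-I$ past the constant-coefficient Fourier multipliers, use $\widetilde g=g^0$ in the special case, interpolate between the $L_2\to L_2$ bound $2$ and the $H^2\to L_2$ bound $\varepsilon r_0^{-1}$ for $\Pi_\varepsilon-I$, and bound the symbol of $g^0 b(\mathbf{D})f_0(\mathcal{A}^0)^{-1/2}\sin(\sigma(\mathcal{A}^0)^{1/2})f_0^{-1}$ by $\Vert g\Vert_{L_\infty}^{1/2}\Vert f^{-1}\Vert_{L_\infty}$ via the partial-isometry argument. The paper interpolates $\Pi_\varepsilon-I$ alone rather than the full $\mathcal{G}(\sigma)$, and does not insert the harmless but superfluous factor $(1+\vert\tau\vert)$ into the $H^2\to L_2$ bound; these are cosmetic differences that do not affect the validity of your argument.
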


\begin{proof}
We wish to remove the operator $\Pi _\varepsilon$ from the approximation \eqref{10.24a}. 
Obviously, \break$\Vert \Pi _\varepsilon -I\Vert _{L_2(\mathbb{R}^d)\rightarrow L_2(\mathbb{R}^d)}\leqslant 2$. According to Proposition~\ref{Proposition Pi eps -I}, 
$$\Vert \Pi _\varepsilon -I\Vert _{H^2(\mathbb{R}^d)\rightarrow L_2(\mathbb{R}^d)}\leqslant
\Vert \Pi _\varepsilon -I\Vert _{H^1(\mathbb{R}^d)\rightarrow L_2(\mathbb{R}^d)}\leqslant \varepsilon r_0^{-1}.$$ 
Then, by interpolation, 
$\Vert \Pi _\varepsilon -I\Vert _{H^s(\mathbb{R}^d)\rightarrow L_2(\mathbb{R}^d)}\leqslant 2^{1-s/2}r_0^{-s/2}\varepsilon ^{s/2}$, $0\leqslant s\leqslant 2$. Combining this with \eqref{g^0<=}, \eqref{f0<=}, \eqref{A0 no hat}, \eqref{14.5}, and taking into account that the operator $\mathcal{A}^0$ with constant coefficients commutes with the smoothing operator $\Pi _\varepsilon$, we obtain
\begin{equation}
\label{proof flux special case}
\begin{split}
\Vert &g^0 b(\mathbf{D})(\Pi _\varepsilon -I)\mathbf{u}_0(\cdot ,\tau )\Vert _{L_2(\mathbb{R}^d)}
\\
&\leqslant 2^{1-s/2}r_0^{-s/2}\Vert g\Vert ^{1/2}_{L_\infty}\Vert f^{-1}\Vert _{L_\infty}\varepsilon ^{s/2}\left(\Vert\boldsymbol{\psi}\Vert _{H^s(\mathbb{R}^d)}
+\Vert \mathbf{F}\Vert _{L_1((0,\tau);H^s(\mathbb{R}^d))}\right).
\end{split}
\end{equation}
Now, from identity $g^0=\widetilde{g}$, \eqref{10.24a}, and \eqref{proof flux special case} we derive estimate \eqref{fluxes in special case}.
\end{proof}

\section{Applications of the general results}

\label{Section Applications}

The following examples were previously considered in \cite{BSu,BSu08,DSu17,DSu17-2}.

\subsection{The acoustics equation}
\label{Subsection acoustics}

In $L_2(\mathbb{R}^d)$, we consider the operator 
\begin{equation}
\label{acoustics hat A}
\widehat{\mathcal{A}}=\mathbf{D}^*g(\mathbf{x})\mathbf{D}=-\mathrm{div}\,g(\mathbf{x})\nabla,
\end{equation}
where $g(\mathbf{x})$ is a periodic symmetric matrix with real entries. Assume that $g(\mathbf{x})>0$, $g,g^{-1}\in L_\infty$. The operator $\widehat{\mathcal{A}}$ describes a periodic acoustical medium. The operator \eqref{acoustics hat A} is a particular case of the operator \eqref{hat A}. Now we have $n=1$, $m=d$, $b(\mathbf{D})=\mathbf{D}$, $\alpha _0=\alpha _1=1$. Consider the operator $\widehat{\mathcal{A}}_\varepsilon =\mathbf{D}^* g^\varepsilon (\mathbf{x})\mathbf{D}$, whose coefficients oscillate rapidly for small $\varepsilon$.

Let us write down the effective operator. In the case under consideration, the $\Gamma$-periodic solution of problem \eqref{Lambda problem} is a row: $\Lambda (\mathbf{x})=i{\Phi}(\mathbf{x})$, ${\Phi}(\mathbf{x})=\left(\Phi _1(\mathbf{x}),\dots,\Phi _d(\mathbf{x})\right)$, where $\Phi _j\in \widetilde{H}^1(\Omega)$ is the solution of the problem
\begin{equation*}
\mathrm{div}\,g(\mathbf{x})\left(\nabla \Phi _j(\mathbf{x})+\mathbf{e}_j\right)=0,\quad \int _\Omega \Phi _j(\mathbf{x})\,d\mathbf{x}=0.
\end{equation*}
Here $\mathbf{e}_j$, $j=1,\dots,d$, is the standard orthonormal basis in $\mathbb{R}^d$. Clearly, the functions $\Phi _j(\mathbf{x})$ are real-valued, and the entries of $\Lambda (\mathbf{x})$ are purely imaginary. By \eqref{tilde g}, the columns of the $(d\times d)$-matrix-valued function $\widetilde{g}(\mathbf{x})$ are the vector-valued functions $g(\mathbf{x})\left(\nabla \Phi _j(\mathbf{x})+\mathbf{e}_j\right)$, $j=1,\dots,d$. The effective matrix is defined according to \eqref{g0}: $g^0=\vert \Omega \vert ^{-1}\int _\Omega \widetilde{g}(\mathbf{x})\,d\mathbf{x}$. Clearly, $\widetilde{g}(\mathbf{x})$ and $g^0$ have real entries. If $d=1$, then $m=n=1$, whence $g^0=\underline{g}$.

Let $Q(\mathbf{x})$ be a $\Gamma$-periodic function on $\mathbb{R}^d$ such that $Q(\mathbf{x})>0$, $Q,Q^{-1}\in L_\infty$. The function $Q(\mathbf{x})$ describes the density of the medium.

Consider the Cauchy problem for the acoustics equation in the medium with rapidly oscillating characteristics:
\begin{equation}
\label{acoustics problem}
\begin{cases}
Q^\varepsilon (\mathbf{x})\frac{\partial ^2 u_\varepsilon (\mathbf{x},\tau)}{\partial \tau ^2}=-\mathrm{div}\,g^\varepsilon (\mathbf{x})\nabla u_\varepsilon (\mathbf{x},\tau),\quad \mathbf{x}\in\mathbb{R}^d,\quad \tau\in\mathbb{R},
\\
u_\varepsilon (\mathbf{x},0)=0,\quad \frac{\partial u_\varepsilon (\mathbf{x},0)}{\partial \tau}=\psi (\mathbf{x}),
\end{cases}
\end{equation}
where $\psi\in L_2(\mathbb{R}^d)$ is a given function. (For simplicity, we consider the homogeneous equation.) Then the homogenized problem takes the form
\begin{equation}
\label{acoustics eff problem}
\begin{cases}
\overline{Q}\frac{\partial ^2 u_0 (\mathbf{x},\tau)}{\partial \tau ^2}=-\mathrm{div}\,g^0\nabla u_0(\mathbf{x},\tau),\quad \mathbf{x}\in\mathbb{R}^d,\quad \tau\in\mathbb{R},
\\
u_0 (\mathbf{x},0)=0,\quad \frac{\partial u_0 (\mathbf{x},0)}{\partial \tau}=\psi (\mathbf{x}).
\end{cases}
\end{equation}

According to \cite[Chapter~III, Theorem 13.1]{LaU}, $\Lambda\in L_\infty$ and the norm $\Vert\Lambda\Vert _{L_\infty}$ does not exceed a constant depending on $d$, $\Vert g\Vert _{L_\infty}$, $\Vert g^{-1}\Vert _{L_\infty}$, and $\Omega$. Applying Theorems~\ref{Theorem 14.2} and \ref{Theorem 14.8}($1^\circ$) and taking into account Remark~\ref{Remark no S-eps}, we arrive at the following result.

\begin{proposition}
Under the assumptions of Subsection~\textnormal{\ref{Subsection acoustics}}, let $u_\varepsilon$ be the solution of problem~\eqref{acoustics problem} and let $u_0$ be the solution of the effective problem \eqref{acoustics eff problem}.

\noindent
$1^\circ$. Let $\psi\in H^s(\mathbb{R}^d)$ for some $0\leqslant s\leqslant 1$. Then for $\tau\in\mathbb{R}$ and $\varepsilon >0$ we have
\begin{equation*}
\Vert u_\varepsilon (\cdot ,\tau)-u_0(\cdot ,\tau)\Vert _{L_2(\mathbb{R}^d)}\leqslant \mathfrak{C}_6(s)(1+\vert \tau\vert)\varepsilon ^s\Vert \psi\Vert _{H^s(\mathbb{R}^d)}.
\end{equation*}

\noindent
$2^\circ$. Let $\psi\in H^{s+1}(\mathbb{R}^d)$ for some $0\leqslant s\leqslant 1$. Then for $\tau \in\mathbb{R}$ and $0<\varepsilon\leqslant 1$ we have
\begin{equation*}
\Vert u_\varepsilon (\cdot ,\tau)-u_0(\cdot ,\tau)-\varepsilon {\Phi} ^\varepsilon \nabla u_0(\cdot ,\tau)\Vert _{H^1(\mathbb{R}^d)}
\leqslant \mathfrak{C}_7(s)(1+\vert \tau\vert )\varepsilon ^s\Vert \psi\Vert _{H^{1+s}(\mathbb{R}^d)}.
\end{equation*}

\noindent
$3^\circ$. Let $\psi \in H^s(\mathbb{R}^d)$ for some $0\leqslant s\leqslant 2$. Let $\Pi_\varepsilon$ be defined by \eqref{Pi eps}. Then for $\tau\in\mathbb{R}$ and $\varepsilon >0$ we have
\begin{equation*}
\Vert g^\varepsilon\nabla u_\varepsilon (\cdot ,\tau)-\widetilde{g}^\varepsilon\Pi _\varepsilon \nabla u_0(\cdot ,\tau)\Vert _{L_2(\mathbb{R}^d)}
\leqslant \mathfrak{C}_8(s)(1+\vert \tau\vert )^{s/2}\varepsilon ^{s/2}\Vert \psi\Vert _{H^s(\mathbb{R}^d)}.
\end{equation*}

The constants $\mathfrak{C}_6(s)$, $\mathfrak{C}_7(s)$, and $\mathfrak{C}_8(s)$ depend only on $s$, $d$, $\Vert g\Vert _{L_\infty}$, $\Vert g^{-1}\Vert _{L_\infty}$, and parameters of the lattice $\Gamma$.
\end{proposition}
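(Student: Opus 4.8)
The plan is to specialize the general results of Section~\ref{Section hyperbolic general case} to the acoustics setting. First I would record the dictionary: in~\eqref{acoustics problem} we have $n=1$, $m=d$, $b(\mathbf{D})=\mathbf{D}$, $f=\mathbf{1}_n$ (so $f^\varepsilon=1$ and $f_0=1$), and $\alpha_0=\alpha_1=1$; consequently $\mathcal{A}_\varepsilon=\widehat{\mathcal{A}}_\varepsilon=\mathbf{D}^*g^\varepsilon\mathbf{D}$, $\mathcal{A}^0=\mathbf{D}^*g^0\mathbf{D}$, and the right-hand side $\mathbf{F}$ vanishes, so problems \eqref{acoustics problem} and \eqref{acoustics eff problem} are the special cases of \eqref{14.1} and \eqref{14.4}. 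Since $\Lambda(\mathbf{x})=i\Phi(\mathbf{x})$, one has $\Lambda^\varepsilon b(\mathbf{D})=i\Phi^\varepsilon\mathbf{D}=\Phi^\varepsilon\nabla$, so the abstract corrector $\varepsilon\Lambda^\varepsilon b(\mathbf{D})u_0$ becomes $\varepsilon\Phi^\varepsilon\nabla u_0$; likewise $b(\mathbf{D})\Pi_\varepsilon u_0=-i\Pi_\varepsilon\nabla u_0$ and $\mathbf{p}_\varepsilon=g^\varepsilon b(\mathbf{D})u_\varepsilon=-ig^\varepsilon\nabla u_\varepsilon$, the harmless factor $-i$ dropping out of all $L_2$-norms. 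With these substitutions, part $1^\circ$ is exactly Theorem~\ref{Theorem 14.2}($1^\circ$) with $\mathbf{F}=0$, and part $3^\circ$ is exactly Theorem~\ref{Theorem 14.8}($1^\circ$) with $\mathbf{F}=0$ (using that $\Pi_\varepsilon$ commutes with $\nabla$).

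For part $2^\circ$ the only extra ingredient is the removal of the smoothing operator $\Pi_\varepsilon$ from the corrector. To this end I would invoke the $L_\infty$-bound on the solution of the cell problem: each $\Phi_j$ is a weak solution of a scalar divergence-form equation with bounded measurable coefficients, so by~\cite[Chapter~III, Theorem~13.1]{LaU} $\Phi_j\in L_\infty(\mathbb{R}^d)$, with $\Vert\Phi_j\Vert_{L_\infty}$ controlled in terms of $d$, $\Vert g\Vert_{L_\infty}$, $\Vert g^{-1}\Vert_{L_\infty}$ and $\Omega$; hence $\Lambda\in L_\infty$, i.e.\ Condition~\ref{Condition Lambda in L infty} holds. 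By Remark~\ref{Remark no S-eps}, Theorem~\ref{Theorem 14.2}($2^\circ$) then remains valid with $\Pi_\varepsilon$ replaced by the identity operator, which after the substitution $\Lambda^\varepsilon b(\mathbf{D})=\Phi^\varepsilon\nabla$ is precisely the claimed estimate for $\Vert u_\varepsilon-u_0-\varepsilon\Phi^\varepsilon\nabla u_0\Vert_{H^1(\mathbb{R}^d)}$.

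Finally I would track the constants. Because $f=\mathbf{1}_n$, $n=1$, $m=d$ and $\alpha_0=\alpha_1=1$, every quantity entering $\mathfrak{C}_1(s)$, $\mathfrak{C}_2(s)$, $\mathfrak{C}_4(s)$ collapses to a dependence on $s$, $d$, $\Vert g\Vert_{L_\infty}$, $\Vert g^{-1}\Vert_{L_\infty}$ and the lattice parameters only; the contribution of $\Vert\Lambda\Vert_{L_\infty}$ that enters through Condition~\ref{Condition Lambda in L infty} is of the same type by the cited regularity bound. Renaming the resulting constants as $\mathfrak{C}_6(s)$, $\mathfrak{C}_7(s)$, $\mathfrak{C}_8(s)$ yields the statement. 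I do not expect a genuine obstacle: the substance is entirely contained in the general theory of Chapter~III together with the classical $L_\infty$-estimate for the corrector. The only points requiring a line of care are that the case $d=1$ also fits (there $g^0=\underline{g}$ and $\widetilde g=g^0$, but the three estimates above are insensitive to this) and that the smoothing operator may indeed be dropped in part $2^\circ$, which is exactly the role of Remark~\ref{Remark no S-eps}.
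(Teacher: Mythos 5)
Your overall plan matches the paper's exactly: specialize Theorems~\ref{Theorem 14.2} and~\ref{Theorem 14.8}($1^\circ$) to the acoustics setting with $\mathbf{F}=0$, establish $\Lambda\in L_\infty$ via \cite[Ch.~III, Thm.~13.1]{LaU}, and then invoke Remark~\ref{Remark no S-eps} to drop $\Pi_\varepsilon$ in part~$2^\circ$. That is precisely what the paper does, and the substitutions $\Lambda^\varepsilon b(\mathbf{D})=\Phi^\varepsilon\nabla$, $\mathbf{p}_\varepsilon=-ig^\varepsilon\nabla u_\varepsilon$ are handled correctly.

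However, your dictionary contains a genuine slip that you should repair. You set $f=\mathbf{1}_n$ and conclude $\mathcal{A}_\varepsilon=\widehat{\mathcal{A}}_\varepsilon=\mathbf{D}^*g^\varepsilon\mathbf{D}$. But problem~\eqref{acoustics problem} carries the density weight $Q^\varepsilon$, and in the general framework $Q=(ff^*)^{-1}$ (see~\eqref{8.1a}); since $n=1$, this forces $f=Q^{-1/2}\neq 1$ whenever $Q\not\equiv 1$. Setting $f=\mathbf{1}_n$ is thus inconsistent with taking~\eqref{acoustics problem} as a special case of~\eqref{14.1}: it would force $Q\equiv 1$, eliminating the weight. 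This does not sabotage the proof — Theorems~\ref{Theorem 14.2} and~\ref{Theorem 14.8} are stated directly for the solutions of~\eqref{14.1} and~\eqref{14.4}, so the estimates you quote apply verbatim once you drop the $f=\mathbf{1}_n$ claim — but it does invalidate your last paragraph's reasoning about constants. With $f=Q^{-1/2}$, the constants $\mathfrak{C}_1(s),\mathfrak{C}_2(s),\mathfrak{C}_4(s)$ also involve $\Vert f\Vert_{L_\infty}=\Vert Q^{-1}\Vert_{L_\infty}^{1/2}$ and $\Vert f^{-1}\Vert_{L_\infty}=\Vert Q\Vert_{L_\infty}^{1/2}$, so a fully scrupulous statement of the proposition should add $\Vert Q\Vert_{L_\infty},\Vert Q^{-1}\Vert_{L_\infty}$ to the list of parameters controlling $\mathfrak{C}_6(s),\mathfrak{C}_7(s),\mathfrak{C}_8(s)$. (The paper's own proposition statement shares this omission.) One further small point: for this $g$, which is real and symmetric, Condition~\ref{Condition Lambda in L infty} is already guaranteed by case~$2^\circ)$ of Proposition~\ref{Proposition Lambda in L infty <=}; citing \cite{LaU} is still useful because it gives an explicit bound on $\Vert\Lambda\Vert_{L_\infty}$ in terms of $d$, $\Vert g\Vert_{L_\infty}$, $\Vert g^{-1}\Vert_{L_\infty}$, $\Omega$ only, which is what lets those quantities absorb the $\Lambda$-dependence in the constants.
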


\subsection{The operator of elasticity theory} 
Let $d\geqslant 2$. We represent the operator of elasticity theory in the form used in \cite[Chapter 5, \S 2]{BSu}. Let $\zeta$ be an orthogonal second rank tensor in $\mathbb{R}^d$; in the standard orthonormal basis in $\mathbb{R}^d$, it can be represented by a matrix $\zeta =\lbrace\zeta_{jl}\rbrace_{j,l=1}^d$. We shall consider symmetric tensors $\zeta$, which will be identified with vectors $\zeta _*\in\mathbb{C}^m$, $2m=d(d+1)$, by the following rule. The vector $\zeta _*$ is formed by all components $\zeta_{jl}$, $j\leqslant l$, and the pairs $(j,l)$ are put in order in some fixed way. Let $\chi$ be an $(m\times m)$-matrix, $\chi=\mathrm{diag}\,\lbrace\chi _{(j,l)}\rbrace$, where $\chi_{(j,l)}=1$ for $j=l$ and $\chi_{(j,l)}=2$ for $j<l$. Then ${\pmb |}\zeta{\pmb |}^2=\langle\chi\zeta _*,\zeta _*\rangle _{\mathbb{C}^m}$.

Let $\mathbf{u}\in H^1(\mathbb{R}^d;\mathbb{C}^d)$ be the \textit{displacement vector}. Then the \textit{deformation tensor} is given by $e(\mathbf{u})=\frac{1}{2}\left\lbrace\frac{\partial u_j}{\partial x_l}+\frac{\partial u_l}{\partial x_j}\right\rbrace$. The corresponding vector is denoted by $e_*(\mathbf{u})$. The relation $b(\mathbf{D})\mathbf{u}=-ie_*(\mathbf{u})$ determines an $(m\times d)$-matrix homogeneous DO $b(\mathbf{D})$ uniquely; the symbol of this DO is a matrix with real entries. For instance, with an appropriate ordering, we have
\begin{equation*}
b(\boldsymbol{\xi})=\begin{pmatrix}
\xi _1&0\\
\frac{\xi_2}{2}&\frac{\xi_1}{2}\\
0&\xi_2
\end{pmatrix},\quad d=2;\quad
b(\boldsymbol{\xi})=
\begin{pmatrix}
\xi_1&0&0\\
\frac{\xi_2}{2}&\frac{\xi_1}{2}&0\\
0&\xi_2&0\\
0&\frac{\xi_3}{2}&\frac{\xi_2}{2}\\
0&0&\xi_3\\
\frac{\xi_3}{2}&0&\frac{\xi_1}{2}
\end{pmatrix},\quad d=3.
\end{equation*}

Let $\sigma (\mathbf{u})$ be the \textit{stress tensor}, and let $\sigma_*(\mathbf{u})$ be the corresponding vector. The Hooke law can be represented by the relation $\sigma _*(\mathbf{u})=g(\mathbf{x})e_*(\mathbf{u})$, where $g(\mathbf{x})$ is an $(m\times m)$ matrix (which gives a ,,concise'' description of the Hooke tensor). This matrix characterizes the parameters of the elastic (in general, anisotropic) medium. We assume that $g(\mathbf{x})$ is $\Gamma$-periodic and such that $g(\mathbf{x})>0$, and $g,g^{-1}\in L_\infty$.

The energy of elastic deformations is given by the quadratic form
\begin{equation}
\label{form w}
\mathfrak{w}[\mathbf{u},\mathbf{u}]=\frac{1}{2}\int_{\mathbb{R}^d}\langle\sigma _*(\mathbf{u}),e_*(\mathbf{u})\rangle _{\mathbb{C}^m}\,d\mathbf{x}
=
\frac{1}{2}\int _{\mathbb{R}^d}\langle g(\mathbf{x})b(\mathbf{D})\mathbf{u},b(\mathbf{D})\mathbf{u}\rangle _{\mathbb{C}^m}\,d\mathbf{x},\quad
\mathbf{u}\in H^1(\mathbb{R}^d;\mathbb{C}^d).
\end{equation}
The operator $\mathcal{W}$ generated by this form is the operator of elasticity theory. Thus, the operator $2\mathcal{W}=b(\mathbf{D})^*g(\mathbf{x})b(\mathbf{D})=\widehat{\mathcal{A}}$ is of the form \eqref{hat A} with $n=d$ and $m=d(d+1)/2$.

In the case of an \textit{isotropic} medium, the expression for the form \eqref{form w} simplifies significantly and depends only on two functional \textit{Lam\'e parameters} $\lambda(\mathbf{x})$, $\mu(\mathbf{x})$:
\begin{equation*}
\mathfrak{w}[\mathbf{u},\mathbf{u}]=\int_{\mathbb{R}^d}\left(\mu(\mathbf{x}){\pmb |}e(\mathbf{u}){\pmb |}^2+\frac{\lambda(\mathbf{x})}{2}\vert \mathrm{div}\,\mathbf{u}\vert ^2\right)\,d\mathbf{x}.
\end{equation*}
The parameter $\mu$ is the \textit{shear modulus}. The modulus $\lambda (\mathbf{x})$ may be negative. Often, another parameter $\kappa (\mathbf{x})=\lambda (\mathbf{x})+2\mu(\mathbf{x})/d$ is introduced instead of $\lambda(\mathbf{x})$; $\kappa$ is called the \textit{modulus of volume compression}. In the isotropic case, the conditions that ensure the positive definiteness of the matrix  $g(\mathbf{x})$ are $\mu(\mathbf{x})\geqslant \mu _0>0$, $\kappa(\mathbf{x})\geqslant\kappa _0>0$. We write down the ,,isotropic'' matrices $g$ for $d=2$ and $d=3$:
\begin{align*}
g&=\begin{pmatrix}
\kappa+\mu&0&\kappa-\mu\\
0&4\mu&0\\
\kappa-\mu&0&\kappa+\mu
\end{pmatrix},\quad d=2;\\
g&=\frac{1}{3}\begin{pmatrix}
3\kappa+4\mu&0&3\kappa-2\mu&0&3\kappa-2\mu &0\\
0& 12\mu &0&0&0&0&\\
3\kappa -2\mu&0&3\kappa+4\mu&0&3\kappa-2\mu&0\\
0&0&0&12\mu&0&0\\
3\kappa -2\mu&0&3\kappa -2\mu&0&3\kappa+4\mu&0\\
0&0&0&0&0&12\mu
\end{pmatrix},\quad d=3.
\end{align*}

Consider the operator $\mathcal{W}_\varepsilon =\frac{1}{2}\widehat{\mathcal{A}}_\varepsilon$ with rapidly oscillating coefficients. The effective matrix $g^0$ and the effective operator $\mathcal{W}^0=\frac{1}{2}\widehat{\mathcal{A}}^0$ are defined by the general rules (see  \eqref{tilde g}, \eqref{g0}, and \eqref{A^0 hat}).

Let $Q(\mathbf{x})$ be a $\Gamma$-periodic $(d\times d)$-matrix-valued function such that $Q(\mathbf{x})>0$, $Q,Q^{-1}\in L_\infty$. Usually, $Q(\mathbf{x})$ is a scalar-valued function describing the density of the medium. We assume that $Q(\mathbf{x})$ is a matrix-valued function in order to take possible anisotropy into account.

Consider the following Cauchy problem for the system of elasticity theory:
\begin{equation}
\label{elasticity problem}
\begin{cases}
Q^\varepsilon (\mathbf{x})\frac{\partial ^2\mathbf{u}_\varepsilon (\mathbf{x},\tau)}{\partial \tau ^2}=-\mathcal{W}_\varepsilon\mathbf{u}_\varepsilon (\mathbf{x},\tau),\quad\mathbf{x}\in\mathbb{R}^d,\quad\tau\in\mathbb{R},
\\
\mathbf{u}_\varepsilon (\mathbf{x},0)=0,\quad\frac{\partial\mathbf{u}_\varepsilon (\mathbf{x},0)}{\partial\tau}=\boldsymbol{\psi}(\mathbf{x}),
\end{cases}
\end{equation}
where $\boldsymbol{\psi}\in L_2(\mathbb{R}^d;\mathbb{C}^d)$ is a given function. The homogenized problem takes the form
\begin{equation*}
\begin{cases}
\overline{Q}\frac{\partial ^2 \mathbf{u}_0(\mathbf{x},\tau)}{\partial \tau ^2}=-\mathcal{W}^0\mathbf{u}_0 (\mathbf{x},\tau),\quad\mathbf{x}\in\mathbb{R}^d,\quad\tau\in\mathbb{R},
\\
\mathbf{u}_0 (\mathbf{x},0)=0,\quad\frac{\partial\mathbf{u}_0 (\mathbf{x},0)}{\partial\tau}=\boldsymbol{\psi}(\mathbf{x}).
\end{cases}
\end{equation*}
Theorems \ref{Theorem 14.2} and \ref{Theorem 14.8} can be applied to problem \eqref{elasticity problem}. If $d=2$, then Condition~\ref{Condition Lambda in L infty} is satisfied according to Proposition~\ref{Proposition Lambda in L infty <=}. So, we can use Theorem~\ref{Theorem 12.6}. If $d=3$, then Theorem~\ref{Theorem d<=4 chapter 3} is applicable.

\subsection{The model equation of electrodynamics}
\label{Subsection The model equation of electrodynamics}

We cannot include the general Maxwell operator in the scheme developed above; we have to assume that the magnetic permeability is unit. In $L_2(\mathbb{R}^3;\mathbb{C}^3)$, we consider the model operator $\mathcal{L}$ formally given  by the expression $\mathcal{L}=\mathrm{curl}\,\eta(\mathbf{x})^{-1}\mathrm{curl}-\nabla \nu (\mathbf{x})\mathrm{div}$. Here the \textit{dielectric permittivity} $\eta(\mathbf{x})$ is $\Gamma$-periodic $(3\times 3)$-matrix-valued function in $\mathbb{R}^3$ with real entries such that $\eta (\mathbf{x})>0$; $\eta,\eta^{-1}\in L_\infty$; $\nu(\mathbf{x})$ is real-valued $\Gamma$-periodic function in $\mathbb{R}^3$ such that $\nu(\mathbf{x})>0$; $\nu,\nu ^{-1}\in L_\infty$. The precise definition of $\mathcal{L}$ is given via the closed positive form
\begin{equation*}
\int _{\mathbb{R}^3}\left(
\langle \eta (\mathbf{x})^{-1}\mathrm{curl}\,\mathbf{u},\mathrm{curl}\,\mathbf{u}\rangle
+\nu (\mathbf{x})\vert \mathrm{div}\,\mathbf{u}\vert ^2\right)\,d\mathbf{x},\quad\mathbf{u}\in H^1(\mathbb{R}^3;\mathbb{C}^3).
\end{equation*}
The operator $\mathcal{L}$ can be written as $\widehat{\mathcal{A}}=b(\mathbf{D})^*g(\mathbf{x})b(\mathbf{D})$ with $n=3$, $m=4$, and
\begin{equation}
\label{b(D)=,g= Maxwell}
b(\mathbf{D})=\begin{pmatrix}
-i\mathrm{curl}\\
-i\mathrm{div}
\end{pmatrix},
\quad
g(\mathbf{x})=\begin{pmatrix}
\eta (\mathbf{x})^{-1}&0\\
0&\nu (\mathbf{x})
\end{pmatrix}.
\end{equation}
The corresponding symbol of $b(\mathbf{D})$ is
\begin{equation*}
b(\boldsymbol{\xi})=
\begin{pmatrix}
0&-\xi _3&\xi _2\\
\xi _3&0&-\xi _1\\
-\xi _2&\xi _1&0\\
\xi _1&\xi _2&\xi _3
\end{pmatrix}.
\end{equation*}

According to \cite[\S 7.2]{BSu} the effective matrix has the form
\begin{equation*}
g^0=\begin{pmatrix}
(\eta ^0)^{-1}&0\\
0&\underline{\nu}
\end{pmatrix},
\end{equation*}
where $\eta ^0$ is the effective matrix for the scalar elliptic operator $-\mathrm{div}\,\eta\nabla =\mathbf{D}^*\eta\mathbf{D}$. The effective operator is given by
\begin{equation*}
\mathcal{L}^0=\mathrm{curl}\,(\eta ^0)^{-1}\mathrm{curl}-\nabla \underline{\nu}\mathrm{div}.
\end{equation*}

Let $\mathbf{v}_j\in \widetilde{H}^1(\Omega ;\mathbb{C}^3)$ be the $\Gamma$-periodic solution of the problem
\begin{equation*}
b(\mathbf{D})^*g(\mathbf{x})\left(b(\mathbf{D})\mathbf{v}_j(\mathbf{x})+\mathbf{e}_j\right)=0,\quad \int _\Omega \mathbf{v}_j(\mathbf{x})\,d\mathbf{x}=0,
\end{equation*}
$j=1,2,3,4$. Here $\mathbf{e}_j$, $j=1,2,3,4$, is the standard orthonormal basis in $\mathbb{C}^4$. As was shown in \cite[\S 14]{BSu05}, the solutions $\mathbf{v}_j$, $j=1,2,3$, can be determined as follows. Let $\widetilde{\Phi}_j(\mathbf{x})$ be the $\Gamma$-periodic solution of the problem
\begin{equation*}
\mathrm{div}\,\eta (\mathbf{x})\left(\nabla \widetilde{\Phi}_j(\mathbf{x})+\mathbf{c}_j\right)=0,\quad \int _\Omega \widetilde{\Phi}_j(\mathbf{x})\,d\mathbf{x}=0,
\end{equation*}
$j=1,2,3$, where $\mathbf{c}_j=(\eta ^0)^{-1}\widetilde{\mathbf{e}}_j$, and $\widetilde{\mathbf{e}}_j$, $j=1,2,3$, is the standard orthonormal basis in $\mathbb{C}^3$. Let $\mathbf{q}_j$ be the $\Gamma$-periodic solution of the problem
\begin{equation*}
\Delta\mathbf{q}_j=\eta\left(\nabla\widetilde{\Phi}_j+\mathbf{c}_j\right)-\widetilde{\mathbf{e}}_j,\quad\int _\Omega \mathbf{q}_j(\mathbf{x})\,d\mathbf{x}=0.
\end{equation*}
Then $\mathbf{v}_j=i\mathrm{curl}\,\mathbf{q}_j$, $j=1,2,3$.

Next, we have $\mathbf{v}_4=i\nabla \phi$, where $\phi$ is the $\Gamma$-periodic solution of the problem
\begin{equation*}
\Delta\phi=\underline{\nu}\left(\nu (\mathbf{x})\right)^{-1}-1,\quad \int _\Omega \phi (\mathbf{x})\,d\mathbf{x}=0.
\end{equation*}
The matrix $\Lambda (\mathbf{x})$ is the $(3\times 4)$-matrix with the columns $i\mathrm{curl}\,\mathbf{q}_1$, $i\mathrm{curl}\,\mathbf{q}_2$, $i\mathrm{curl}\,\mathbf{q}_3$, $i\nabla \phi$. By $\Psi (\mathbf{x})$ we denote the $(3\times 3)$-matrix-valued function with the columns $\mathrm{curl}\,\mathbf{q}_1$, $\mathrm{curl}\,\mathbf{q}_2$, $\mathrm{curl}\,\mathbf{q}_3$ (then $\Psi (\mathbf{x})$ has real entries). We put $\mathbf{w}=\nabla\phi$. Then
\begin{equation*}
\Lambda (\mathbf{x})b(\mathbf{D})=\Psi (\mathbf{x})\mathrm{curl} + \mathbf{w}(\mathbf{x})\mathrm{div}.
\end{equation*}

The application of Theorems~\ref{Theorem 12.1} and \ref{Theorem d<=4 chapter 3} gives the following result.

\begin{theorem}
\label{Theorem Maxwell}
Under the assumptions of Subsection~\textnormal{\ref{Subsection The model equation of electrodynamics}}, denote $$\mathcal{L}_\varepsilon :=\mathrm{curl}\,(\eta ^\varepsilon)^{-1}\mathrm{curl}-\nabla \nu ^\varepsilon\mathrm{div}.$$ Then for $\tau\in\mathbb{R}$ we have
\begin{align}
\label{Th Maxwell 1}
\Vert &\mathcal{L}_\varepsilon ^{-1/2}\sin (\tau\mathcal{L}_\varepsilon ^{1/2})-(\mathcal{L}^0)^{-1/2}\sin(\tau (\mathcal{L}^0)^{1/2})\Vert _{H^1(\mathbb{R}^3)\rightarrow L_2(\mathbb{R}^3)}\leqslant C_{12}\varepsilon (1+\vert \tau\vert),\quad\varepsilon >0,
\\
\label{Th Maxwell 2}
\begin{split}
\Vert  &\mathcal{L}_\varepsilon ^{-1/2}\sin (\tau\mathcal{L}_\varepsilon ^{1/2})
-(I+\varepsilon\Psi ^\varepsilon\mathrm{curl}+\varepsilon\mathbf{w}^\varepsilon\mathrm{div} )(\mathcal{L}^0)^{-1/2}\sin(\tau (\mathcal{L}^0)^{1/2})\Vert _{H^2(\mathbb{R}^3)\rightarrow H^1(\mathbb{R}^3)}
\\
&\leqslant C_{19}\varepsilon (1+\vert\tau\vert),\quad 0<\varepsilon\leqslant 1.
\end{split}
\end{align}
The constants $C_{12}$ and $C_{19}$ depend only on $\Vert \eta\Vert _{L_\infty}$, $\Vert \eta ^{-1}\Vert _{L_\infty}$, $\Vert \nu\Vert _{L_\infty}$, $\Vert \nu ^{-1}\Vert _{L_\infty}$, and the parameters of the lattice $\Gamma$.
\end{theorem}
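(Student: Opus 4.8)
The plan is to recognize $\mathcal{L}_\varepsilon$ as an operator of the class $\widehat{\mathcal{A}}_\varepsilon=b(\mathbf{D})^*g^\varepsilon(\mathbf{x})b(\mathbf{D})$ treated in Chapters~II and III, with $f=\mathbf{1}_3$, and then to invoke the general Theorems~\ref{Theorem 12.1} and \ref{Theorem d<=4 chapter 3}. First I would check that, with $b(\mathbf{D})$ and $g(\mathbf{x})$ as in \eqref{b(D)=,g= Maxwell}, the form generated by $b(\mathbf{D})^*g(\mathbf{x})b(\mathbf{D})$ on $H^1(\mathbb{R}^3;\mathbb{C}^3)$ coincides with the closed form defining $\mathcal{L}$; by the block structure of $g$ this is the elementary identity $\langle g(\mathbf{x})b(\mathbf{D})\mathbf{u},b(\mathbf{D})\mathbf{u}\rangle=\langle\eta(\mathbf{x})^{-1}\mathrm{curl}\,\mathbf{u},\mathrm{curl}\,\mathbf{u}\rangle+\nu(\mathbf{x})|\mathrm{div}\,\mathbf{u}|^2$. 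Next I would verify the structural hypotheses of the scheme: $n=3\leqslant m=4$, and $\mathrm{rank}\,b(\boldsymbol{\xi})=3$ for $\boldsymbol{\xi}\neq 0$, which follows from the direct computation $b(\boldsymbol{\xi})^*b(\boldsymbol{\xi})=|\boldsymbol{\xi}|^2\mathbf{1}_3$ (the "curl'' block contributes $|\boldsymbol{\xi}|^2\mathbf{1}_3-\boldsymbol{\xi}\boldsymbol{\xi}^{\!\top}$ and the "div'' row contributes $\boldsymbol{\xi}\boldsymbol{\xi}^{\!\top}$), so that \eqref{<b^*b<} holds with $\alpha_0=\alpha_1=1$; positivity and boundedness of $g,g^{-1}$ follow from the assumptions on $\eta,\eta^{-1},\nu,\nu^{-1}$. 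Since $f=\mathbf{1}_3$ we have $f_0=\mathbf{1}_3$, $\mathcal{A}_\varepsilon=\widehat{\mathcal{A}}_\varepsilon=\mathcal{L}_\varepsilon$, and $\mathcal{A}^0=\widehat{\mathcal{A}}^0=\mathcal{L}^0$.

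With this identification, estimate \eqref{Th Maxwell 1} is exactly the conclusion of Theorem~\ref{Theorem 12.1} specialized to $f=\mathbf{1}_3$, so that the conjugating factors $f^\varepsilon$ and $f_0$ disappear; it is valid for all $\varepsilon>0$ and $\tau\in\mathbb{R}$. For the corrector estimate \eqref{Th Maxwell 2} I would use that $d=3\leqslant 4$, so Theorem~\ref{Theorem d<=4 chapter 3} applies and gives, for $0<\varepsilon\leqslant 1$ and $\tau\in\mathbb{R}$,
\[
\bigl\| \mathcal{L}_\varepsilon^{-1/2}\sin(\tau\mathcal{L}_\varepsilon^{1/2})-(I+\varepsilon\Lambda^\varepsilon b(\mathbf{D}))(\mathcal{L}^0)^{-1/2}\sin(\tau(\mathcal{L}^0)^{1/2})\bigr\|_{H^2(\mathbb{R}^3)\to H^1(\mathbb{R}^3)}\leqslant C_{19}\varepsilon(1+|\tau|),
\]
again with the $f$-factors trivial. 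It then remains to rewrite the corrector in the electrodynamic notation: using the explicit columns of $\Lambda$ (namely $i\,\mathrm{curl}\,\mathbf{q}_1,i\,\mathrm{curl}\,\mathbf{q}_2,i\,\mathrm{curl}\,\mathbf{q}_3,i\nabla\phi$) together with $b(\mathbf{D})=\bigl(\begin{smallmatrix}-i\,\mathrm{curl}\\ -i\,\mathrm{div}\end{smallmatrix}\bigr)$, one obtains $\Lambda(\mathbf{x})b(\mathbf{D})=\Psi(\mathbf{x})\,\mathrm{curl}+\mathbf{w}(\mathbf{x})\,\mathrm{div}$ with $\mathbf{w}=\nabla\phi$ — the identity recorded just before the theorem. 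Substituting $\varepsilon\Lambda^\varepsilon b(\mathbf{D})=\varepsilon\Psi^\varepsilon\mathrm{curl}+\varepsilon\mathbf{w}^\varepsilon\mathrm{div}$ into the displayed estimate yields \eqref{Th Maxwell 2}.

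Finally, for the stated dependence of $C_{12}$ and $C_{19}$ I would track the constants of Theorems~\ref{Theorem 12.1} and \ref{Theorem d<=4 chapter 3}: they depend on $m,n,d,\alpha_0,\alpha_1,\|g\|_{L_\infty},\|g^{-1}\|_{L_\infty},\|f\|_{L_\infty},\|f^{-1}\|_{L_\infty}$ and the parameters of $\Gamma$. Here $m=4$, $n=3$, $d=3$, $\alpha_0=\alpha_1=1$ are fixed, $\|f\|_{L_\infty}=\|f^{-1}\|_{L_\infty}=1$, while $\|g\|_{L_\infty}\leqslant\max\{\|\eta^{-1}\|_{L_\infty},\|\nu\|_{L_\infty}\}$ and $\|g^{-1}\|_{L_\infty}\leqslant\max\{\|\eta\|_{L_\infty},\|\nu^{-1}\|_{L_\infty}\}$; hence the constants are controlled by $\|\eta\|_{L_\infty}$, $\|\eta^{-1}\|_{L_\infty}$, $\|\nu\|_{L_\infty}$, $\|\nu^{-1}\|_{L_\infty}$ and $\Gamma$, as claimed. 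There is essentially no analytic obstacle in this proof; the only point requiring care is the bookkeeping of the first paragraph — verifying that the abstract framework genuinely applies to the model electrodynamic operator (the form identification and the maximal-rank condition on $b$) — after which the theorem is an immediate corollary of the general results.
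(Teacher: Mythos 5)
Your proposal is correct and matches the paper's own argument, which is precisely the one-line appeal to Theorems~\ref{Theorem 12.1} and \ref{Theorem d<=4 chapter 3} (with $f=\mathbf{1}_3$, $n=3$, $m=4$, $d=3$, $\alpha_0=\alpha_1=1$) followed by the identification $\Lambda(\mathbf{x})b(\mathbf{D})=\Psi(\mathbf{x})\,\mathrm{curl}+\mathbf{w}(\mathbf{x})\,\mathrm{div}$ already recorded before the theorem. Your explicit verification of $b(\boldsymbol{\xi})^*b(\boldsymbol{\xi})=|\boldsymbol{\xi}|^2\mathbf{1}_3$ and of the dependence of $\|g\|_{L_\infty},\|g^{-1}\|_{L_\infty}$ on $\|\eta\|_{L_\infty},\|\eta^{-1}\|_{L_\infty},\|\nu\|_{L_\infty},\|\nu^{-1}\|_{L_\infty}$ is a useful fleshing-out of the bookkeeping that the paper leaves implicit.
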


Also, we can apply (interpolational) Theorems~\ref{Theorem 12.2} and \ref{Theorem 12.4}. But in this case the correction term contains the smoothing operator $\Pi _\varepsilon$ (see \eqref{Pi eps}). We omit the details.

It turns out that the operators $\mathcal{L}_\varepsilon$ and $\mathcal{L}^0$ split in the Weyl decomposition $L_2(\mathbb{R}^3;\mathbb{C}^3)=J\oplus G$ simultaneously. Here the ,,solenoidal'' subspace $J$ consists of vector functions $\mathbf{u}\in L_2(\mathbb{R}^3;\mathbb{C}^3)$ for which $\mathrm{div}\,\mathbf{u}=0$ (in the sense of distributions) and the ,,potential'' subspace is 
$$G:=\lbrace\mathbf{u}=\nabla\phi: \phi\in H^1_{\mathrm{loc}}(\mathbb{R}^3), \nabla \phi\in L_2(\mathbb{R}^3;\mathbb{C}^3)\rbrace.$$ The Weyl decomposition reduces the operators $\mathcal{L}_\varepsilon$ and $\mathcal{L}^0$, i.~e., $\mathcal{L}_\varepsilon =\mathcal{L}_{\varepsilon,J}\oplus \mathcal{L}_{\varepsilon,G}$ and $\mathcal{L}^0=\mathcal{L}^0_J\oplus \mathcal{L}^0_G$. The part $\mathcal{L}_{\varepsilon ,J}$ acting in the ,,solenoidal'' subspace $J$ is formally defined by the differential expression $\mathrm{curl}\,\eta ^\varepsilon(\mathbf{x})^{-1}\mathrm{curl}$, while the part $\mathcal{L}_{\varepsilon,G}$ acting in the ,,potential'' subspace $G$ corresponds to the expression $-\nabla \nu ^\varepsilon (\mathbf{x})\nabla$. The parts $\mathcal{L}_J^0$ and $\mathcal{L}^0_G$ can be written in the same way. The Weyl decomposition allows us to apply Theorem~\ref{Theorem Maxwell} to homogenization 
of the Cauchy problem for the model hyperbolic equation appearing in electrodynamics:
\begin{equation}
\label{maxwell problem}
\begin{cases}
\partial _\tau ^2\mathbf{u}_\varepsilon =-\mathrm{curl}\,\eta ^\varepsilon (\mathbf{x})^{-1}\mathrm{curl}\,\mathbf{u}_\varepsilon,\quad\mathrm{div}\,\mathbf{u}_\varepsilon =0,\\
\mathbf{u}_\varepsilon (\mathbf{x},0)=0,\quad \partial _\tau\mathbf{u}_\varepsilon (\mathbf{x},0)=\boldsymbol{\psi}(\mathbf{x}).
\end{cases}
\end{equation}
The effective problem takes the form
\begin{equation}
\label{maxwell eff problem}
\begin{cases}
\partial _\tau ^2\mathbf{u}_0 =-\mathrm{curl}\,(\eta ^0)^{-1}\mathrm{curl}\,\mathbf{u}_0,\quad\mathrm{div}\,\mathbf{u}_0 =0,\\
\mathbf{u}_0 (\mathbf{x},0)=0,\quad \partial _\tau\mathbf{u}_0 (\mathbf{x},0)=\boldsymbol{\psi}(\mathbf{x}).
\end{cases}
\end{equation}

Let $\mathcal{P}$ be the orthogonal projection of $L_2(\mathbb{R}^3;\mathbb{C}^3)$ onto $J$. Then (see \cite[Subsection 2.4 of Chapter 7]{BSu}) the operator $\mathcal{P}$ (restricted to $H^s(\mathbb{R}^3;\mathbb{C}^3)$) is also the orthogonal projection of the space $H^s(\mathbb{R}^3;\mathbb{C}^3)$ onto the subspace $J\cap H^s(\mathbb{R}^3;\mathbb{C}^3)$ for all $s>0$.

Restricting the operators under the norm sign in \eqref{Th Maxwell 1} and \eqref{Th Maxwell 2} 
to the subspaces \break$J\cap H^1(\mathbb{R}^3;\mathbb{C}^3)$ and   $J\cap H^2(\mathbb{R}^3;\mathbb{C}^3)$, respectively,  and multiplying by $\mathcal{P}$ from the left, we see that Theorem~\ref{Theorem Maxwell} implies the following result.

\begin{theorem}
\label{Theorem Maxwell solutions}
Under the assumptions of Subsection~\textnormal{\ref{Subsection The model equation of electrodynamics}}, let $\mathbf{u}_\varepsilon$ and $\mathbf{u}_0$ be the solutions of problems \eqref{maxwell problem} and \eqref{maxwell eff problem}, respectively.

\noindent
$1^\circ$. Let $\boldsymbol{\psi}\in J\cap H^1(\mathbb{R}^3;\mathbb{C}^3)$. Then for $\varepsilon >0$ and $\tau\in\mathbb{R}$ we have
\begin{equation*}
\Vert \mathbf{u}_\varepsilon (\cdot ,\tau)-\mathbf{u}_0(\cdot ,\tau)\Vert _{L_2(\mathbb{R}^3)}
\leqslant C_{12}\varepsilon (1+\vert \tau\vert)\Vert \boldsymbol{\psi}\Vert _{H^1(\mathbb{R}^3)}.
\end{equation*}

\noindent 
$2^\circ$. Let $\boldsymbol{\psi}\in J\cap H^2(\mathbb{R}^3;\mathbb{C}^3)$. Then for $0<\varepsilon\leqslant 1$ and $\tau\in\mathbb{R}$ we have
\begin{equation*}
\Vert \mathbf{u}_\varepsilon (\cdot ,\tau)-\mathbf{u}_0(\cdot ,\tau)-\varepsilon \Psi ^\varepsilon\mathrm{curl}\,\mathbf{u}_0(\cdot ,\tau)\Vert _{H^1(\mathbb{R}^3)}
\leqslant C_{19}\varepsilon (1+\vert \tau\vert)\Vert \boldsymbol{\psi}\Vert _{H^2(\mathbb{R}^3)}.
\end{equation*}
\end{theorem}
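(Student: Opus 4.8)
The plan is to deduce both statements directly from the operator-norm estimates \eqref{Th Maxwell 1} and \eqref{Th Maxwell 2} of Theorem~\ref{Theorem Maxwell}, exploiting that the Weyl decomposition $L_2(\mathbb{R}^3;\mathbb{C}^3)=J\oplus G$ reduces $\mathcal{L}_\varepsilon$ and $\mathcal{L}^0$. First I would record the solution formulas. Since $\mathcal{L}_\varepsilon$ is reduced by $J\oplus G$, the orthogonal projection $\mathcal{P}$ onto $J$ commutes with every bounded Borel function of $\mathcal{L}_\varepsilon$, in particular with $\mathcal{L}_\varepsilon^{-1/2}\sin(\tau\mathcal{L}_\varepsilon^{1/2})$; hence for $\boldsymbol{\psi}\in J$ the solution of \eqref{maxwell problem} is
\begin{equation*}
\mathbf{u}_\varepsilon(\cdot,\tau)=\mathcal{L}_{\varepsilon,J}^{-1/2}\sin\bigl(\tau\mathcal{L}_{\varepsilon,J}^{1/2}\bigr)\boldsymbol{\psi}=\mathcal{L}_\varepsilon^{-1/2}\sin\bigl(\tau\mathcal{L}_\varepsilon^{1/2}\bigr)\boldsymbol{\psi}\in J,
\end{equation*}
so that $\mathrm{div}\,\mathbf{u}_\varepsilon(\cdot,\tau)=0$ automatically, and likewise $\mathbf{u}_0(\cdot,\tau)=(\mathcal{L}^0)^{-1/2}\sin(\tau(\mathcal{L}^0)^{1/2})\boldsymbol{\psi}\in J$ solves \eqref{maxwell eff problem}. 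For the regularity needed in $2^\circ$ I would invoke the fact cited from \cite[Subsection 2.4 of Chapter 7]{BSu} that $\mathcal{P}$ restricts to the orthogonal projection of $H^s(\mathbb{R}^3;\mathbb{C}^3)$ onto $J\cap H^s(\mathbb{R}^3;\mathbb{C}^3)$ for every $s>0$; in particular $\|\mathcal{P}\boldsymbol{\psi}\|_{H^s(\mathbb{R}^3)}\leqslant\|\boldsymbol{\psi}\|_{H^s(\mathbb{R}^3)}$, and $\boldsymbol{\psi}\in J\cap H^2$ forces $\mathbf{u}_0(\cdot,\tau)\in J\cap H^2$.

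For $1^\circ$ I would subtract the two solution formulas to obtain
\begin{equation*}
\mathbf{u}_\varepsilon(\cdot,\tau)-\mathbf{u}_0(\cdot,\tau)=\Bigl(\mathcal{L}_\varepsilon^{-1/2}\sin(\tau\mathcal{L}_\varepsilon^{1/2})-(\mathcal{L}^0)^{-1/2}\sin(\tau(\mathcal{L}^0)^{1/2})\Bigr)\boldsymbol{\psi},
\end{equation*}
and then apply \eqref{Th Maxwell 1} directly: the $L_2$-norm of the left-hand side is at most $C_{12}\varepsilon(1+|\tau|)\|\boldsymbol{\psi}\|_{H^1(\mathbb{R}^3)}$, which is the claimed bound.

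For $2^\circ$ I would use the identity $\Lambda b(\mathbf{D})=\Psi\,\mathrm{curl}+\mathbf{w}\,\mathrm{div}$ recorded in Subsection~\ref{Subsection The model equation of electrodynamics} and apply the operator in \eqref{Th Maxwell 2} to $\boldsymbol{\psi}$:
\begin{equation*}
\begin{split}
\Bigl(&\mathcal{L}_\varepsilon^{-1/2}\sin(\tau\mathcal{L}_\varepsilon^{1/2})-(I+\varepsilon\Psi^\varepsilon\mathrm{curl}+\varepsilon\mathbf{w}^\varepsilon\mathrm{div})(\mathcal{L}^0)^{-1/2}\sin(\tau(\mathcal{L}^0)^{1/2})\Bigr)\boldsymbol{\psi}\\
&=\mathbf{u}_\varepsilon(\cdot,\tau)-\mathbf{u}_0(\cdot,\tau)-\varepsilon\Psi^\varepsilon\mathrm{curl}\,\mathbf{u}_0(\cdot,\tau)-\varepsilon\mathbf{w}^\varepsilon\mathrm{div}\,\mathbf{u}_0(\cdot,\tau).
\end{split}
\end{equation*}
The decisive simplification is that $\mathrm{div}\,\mathbf{u}_0(\cdot,\tau)=0$, since $\mathbf{u}_0(\cdot,\tau)\in J$; hence the last term drops out and the right-hand side equals exactly $\mathbf{u}_\varepsilon(\cdot,\tau)-\mathbf{u}_0(\cdot,\tau)-\varepsilon\Psi^\varepsilon\mathrm{curl}\,\mathbf{u}_0(\cdot,\tau)$. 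Taking $H^1(\mathbb{R}^3)$-norms and invoking \eqref{Th Maxwell 2} yields the bound $C_{19}\varepsilon(1+|\tau|)\|\boldsymbol{\psi}\|_{H^2(\mathbb{R}^3)}$.

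The whole argument is essentially bookkeeping once Theorem~\ref{Theorem Maxwell} is available; the only place needing a little care is the reduction step — verifying that the solutions of the constrained Cauchy problems \eqref{maxwell problem}, \eqref{maxwell eff problem} coincide with the restrictions to $J$ of the (unconstrained) operator functions figuring in \eqref{Th Maxwell 1}, \eqref{Th Maxwell 2}, and that passing to these restrictions neither enlarges the operator norms nor the Sobolev norms of $\boldsymbol{\psi}$. No new estimates are required.
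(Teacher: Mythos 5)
Your proof is correct and matches the paper's intended argument: restrict the operator estimates \eqref{Th Maxwell 1} and \eqref{Th Maxwell 2} to the invariant subspace $J$, using that the Weyl decomposition reduces both $\mathcal{L}_\varepsilon$ and $\mathcal{L}^0$ and that $\mathrm{div}\,\mathbf{u}_0=0$ kills the $\mathbf{w}^\varepsilon\,\mathrm{div}$ part of the corrector. In fact you are a bit more careful than the paper's terse sentence, which speaks of "multiplying by $\mathcal{P}$ from the left" — an operation that, taken literally in $2^\circ$, would replace $\Psi^\varepsilon\,\mathrm{curl}$ by $\mathcal{P}\,\Psi^\varepsilon\,\mathrm{curl}$ and thus not reproduce the stated corrector; your route, via $\mathrm{div}\,\mathbf{u}_0=0$, lands exactly on the claimed inequality.
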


According to \eqref{b(D)=,g= Maxwell}, the role of the flux for problem \eqref{maxwell problem} is played by the vector-valued function
\begin{equation*}
\mathbf{p}_\varepsilon = g^\varepsilon b(\mathbf{D})\mathbf{u}_\varepsilon
=-i\begin{pmatrix}
(\eta ^\varepsilon )^{-1}\mathrm{curl}\,\mathbf{u}_\varepsilon \\
\nu ^\varepsilon \mathrm{div}\,\mathbf{u}_\varepsilon
\end{pmatrix}
=
-i\begin{pmatrix}
(\eta ^\varepsilon )^{-1}\mathrm{curl}\,\mathbf{u}_\varepsilon \\
0
\end{pmatrix}.
\end{equation*}
To approximate the flux, we apply Theorem~\ref{Theorem d<=4 fluxes}. The matrix $\widetilde{g}=g(\mathbf{1}+b(\mathbf{D})\Lambda)$ has a block-diagonal structure, see \cite[Subsection 14.3]{BSu05}): the upper left $(3\times 3)$ block is represented by the matrix with the columns $\nabla \widetilde{\Phi}_j(\mathbf{x})+\mathbf{c}_j$, $j=1,2,3$. We denote this block by $a(\mathbf{x})$. The element at the right lower corner is equal to $\underline{\nu}$. The other elements are zero. Then, by \eqref{b(D)=,g= Maxwell} and \eqref{maxwell eff problem},
\begin{equation*}
\widetilde{g}^\varepsilon b(\mathbf{D})\mathbf{u}_0=-i\begin{pmatrix}
a^\varepsilon\mathrm{curl}\,\mathbf{u}_0\\0
\end{pmatrix}.
\end{equation*}

We arrive at the following statement.

\begin{theorem}
Under the assumptions of Theorem~\textnormal{\ref{Theorem Maxwell solutions}}, let $\boldsymbol{\psi}\in J\cap H^2(\mathbb{R}^3;\mathbb{C}^3)$. Then for $0<\varepsilon\leqslant 1$ and $\tau \in\mathbb{R}$ we have
\begin{equation*}
\Vert (\eta ^\varepsilon)^{-1}\mathrm{curl}\,\mathbf{u}_\varepsilon (\cdot ,\tau)-a^\varepsilon\mathrm{curl}\,\mathbf{u}_0(\cdot ,\tau)\Vert _{L_2(\mathbb{R}^3)}
\leqslant C_{30}\varepsilon (1+\vert\tau\vert)\Vert \boldsymbol{\psi}\Vert _{H^2(\mathbb{R}^3)}.
\end{equation*}
The constant $C_{30}$ depends only on $\Vert \eta\Vert _{L_\infty}$,  $\Vert \eta^{-1}\Vert _{L_\infty}$, $\Vert \nu\Vert _{L_\infty}$, $\Vert \nu ^{-1}\Vert _{L_\infty}$, and the parameters of the lattice $\Gamma$.
\end{theorem}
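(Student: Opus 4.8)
The plan is to reduce the claimed flux estimate to the already-proved Theorem~\ref{Theorem d<=4 fluxes}, by recognizing that in the Maxwell setting the operators $\mathcal{L}_\varepsilon$, $\mathcal{L}^0$ decompose under the Weyl decomposition $L_2(\mathbb{R}^3;\mathbb{C}^3)=J\oplus G$ and that on the solenoidal subspace the divergence part of the flux vanishes identically. First I would recall that $\mathcal{L}_\varepsilon=\widehat{\mathcal{A}}_\varepsilon$ with $b(\mathbf{D})$ and $g(\mathbf{x})$ given by \eqref{b(D)=,g= Maxwell}, and that here $f=\mathbf{1}_n$, so $f_0=\mathbf{1}_n$ and $Q=\mathbf{1}_n$; thus Theorem~\ref{Theorem 14.6} and Theorem~\ref{Theorem d<=4 fluxes} apply with $d=3\leqslant 4$ to the Cauchy problem \eqref{14.1} with this choice of data. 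Since $\boldsymbol{\psi}\in J\cap H^2(\mathbb{R}^3;\mathbb{C}^3)$ and $\mathbf{F}=0$, the solution $\mathbf{u}_\varepsilon$ of \eqref{maxwell problem} coincides with the solution of the full problem (the solenoidal subspace is invariant under $\mathcal{L}_{\varepsilon}$), and likewise $\mathbf{u}_0$ solving \eqref{maxwell eff problem} coincides with the solution of the effective problem \eqref{14.4} with $\overline{Q}=\mathbf{1}_3$.

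Next I would apply \eqref{Th fluxes d<=4} directly: it gives
\begin{equation*}
\Vert \mathbf{p}_\varepsilon(\cdot,\tau)-\widetilde{g}^\varepsilon b(\mathbf{D})\mathbf{u}_0(\cdot,\tau)\Vert_{L_2(\mathbb{R}^3)}\leqslant C_{30}\varepsilon(1+\vert\tau\vert)\Vert\boldsymbol{\psi}\Vert_{H^2(\mathbb{R}^3)},\quad 0<\varepsilon\leqslant 1,
\end{equation*}
where $\mathbf{p}_\varepsilon=g^\varepsilon b(\mathbf{D})\mathbf{u}_\varepsilon$. The remaining point is to identify both sides with the stated scalar ($\mathbb{C}^3$-valued) quantities. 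For the left side, using \eqref{b(D)=,g= Maxwell} and $\mathrm{div}\,\mathbf{u}_\varepsilon=0$, the last component of $\mathbf{p}_\varepsilon$ is $-i\nu^\varepsilon\mathrm{div}\,\mathbf{u}_\varepsilon=0$, so $\mathbf{p}_\varepsilon=-i\bigl((\eta^\varepsilon)^{-1}\mathrm{curl}\,\mathbf{u}_\varepsilon,\,0\bigr)^{\mathsf{T}}$. For the right side, I would invoke the block-diagonal structure of $\widetilde{g}$ recorded in \cite[Subsection 14.3]{BSu05} (and reproduced in the excerpt): the upper-left $3\times 3$ block is $a(\mathbf{x})$, the entry in the lower-right corner is $\underline{\nu}$, all other entries vanish; together with $\mathrm{div}\,\mathbf{u}_0=0$ this yields $\widetilde{g}^\varepsilon b(\mathbf{D})\mathbf{u}_0=-i\bigl(a^\varepsilon\mathrm{curl}\,\mathbf{u}_0,\,0\bigr)^{\mathsf{T}}$. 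Hence the $L_2(\mathbb{R}^3;\mathbb{C}^4)$-norm on the left-hand side of \eqref{Th fluxes d<=4} reduces to the $L_2(\mathbb{R}^3;\mathbb{C}^3)$-norm of $(\eta^\varepsilon)^{-1}\mathrm{curl}\,\mathbf{u}_\varepsilon-a^\varepsilon\mathrm{curl}\,\mathbf{u}_0$, which is exactly the quantity in the statement. The dependence of $C_{30}$ follows from Theorem~\ref{Theorem d<=4 fluxes}: the abstract parameters $m=4$, $n=3$, $\alpha_0$, $\alpha_1$ are absolute here, $\Vert g\Vert_{L_\infty}$, $\Vert g^{-1}\Vert_{L_\infty}$ are controlled by $\Vert\eta\Vert_{L_\infty}$, $\Vert\eta^{-1}\Vert_{L_\infty}$, $\Vert\nu\Vert_{L_\infty}$, $\Vert\nu^{-1}\Vert_{L_\infty}$, and $\Vert f\Vert_{L_\infty}=\Vert f^{-1}\Vert_{L_\infty}=1$.

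I do not expect a genuine obstacle here; the only care needed is bookkeeping of the Weyl decomposition. Specifically, one must be sure that restricting the operator identity in \eqref{Th fluxes d<=4} (which is proved for the full $\mathbb{R}^3$ problem) to solenoidal data is legitimate — this is immediate since $J$ reduces $\mathcal{L}_\varepsilon$ and $\mathcal{L}^0$, and the operators $\mathcal{L}_\varepsilon^{-1/2}\sin(\tau\mathcal{L}_\varepsilon^{1/2})$, $(\mathcal{L}^0)^{-1/2}\sin(\tau(\mathcal{L}^0)^{1/2})$ therefore commute with the projection $\mathcal{P}$, which by the quoted property of $\mathcal{P}$ also projects $H^2(\mathbb{R}^3;\mathbb{C}^3)$ onto $J\cap H^2$. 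The mildly delicate part is the verification that $\widetilde g$ has precisely the asserted block structure and that the vanishing of $\mathrm{div}\,\mathbf{u}_0$ kills the $\underline\nu$-block's contribution; both are already established in \cite{BSu05} and in the discussion preceding the statement, so it suffices to cite them.
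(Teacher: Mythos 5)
Your argument reproduces the paper's own proof: one applies Theorem~\ref{Theorem d<=4 fluxes} with $d=3$, $f=\mathbf{1}_n$, $Q=\mathbf{1}_n$, then uses the block-diagonal structure of $\widetilde{g}$ together with $\mathrm{div}\,\mathbf{u}_\varepsilon=\mathrm{div}\,\mathbf{u}_0=0$ to drop the $\nu$-component and identify $\mathbf{p}_\varepsilon$ and $\widetilde{g}^\varepsilon b(\mathbf{D})\mathbf{u}_0$ with the stated $\mathbb{C}^3$-valued quantities. The only thing you add beyond the paper's terse presentation is the (correct and worth recording) remark that the Weyl decomposition reduces both $\mathcal{L}_\varepsilon$ and $\mathcal{L}^0$, so restricting to solenoidal data is legitimate.
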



\begin{thebibliography}{XXXX}
\bibitem[ABriV]{ABriV} 
{G. Allaire, M. Briane and M. Vanninathan, \textit{A comparison between two-scale asymptotic expansions and Bloch wave expansions for the homogenization of periodic structures}, SeMA Journal {\bf 73} (2016), no.~3,  237--259.}
\bibitem[BaPa]{BaPa} N.~S.~Bakhvalov and G.~P.~Panasenko,
\textit{Homogenization: Averaging processes in periodic media. Mathematical problems in mechanics of composite materials}, Math. Appl. (Soviet Ser.), vol. {36}, Kluwer Acad. Publ. Group, Dordrecht, 1989.
\bibitem[BeLPap]{BeLP} A. Bensoussan, J.-L. Lions, and G. Papanicolaou, \textit{Asymptotic analysis for periodic structures},  corrected reprint of the 1978 original. AMS Chelsea Publishing, Providence, 2011. 
\bibitem[BSu1]{BSu} 
M.~Sh.~Birman and T.~A.~Suslina,
\textit{Second order periodic differential operators. Threshold properties and homogenization},
Algebra i Analiz {\bf 15} (2003), no. 5, 1-108; English transl., St.~Petersburg Math. J. {\bf 15} (2004), no.~5, 639--714.
\bibitem[BSu2]{BSu05-1} 
{M.~Sh.~Birman and T.~A.~Suslina, \textit{Threshold approximations with corrector for the resolvent of a factorized selfadjoint operator family}, Algebra i Analiz {\bf 17} (2005), no.~5, 69--90;
English transl., St.~Petersburg Math. J. {\bf 17} (2006), no.~5, 745--762.}
\bibitem[BSu3]{BSu05} 
M.~Sh.~Birman and T.~A.~Suslina,
\textit{Homogenization with corrector term for periodic elliptic differential operators},
Algebra i Analiz {\bf 17} (2005), no.~6, 1--104;
English transl., St.~Petersburg Math. J. {\bf 17} (2006), no.~6, 897--973.
\bibitem[BSu4]{BSu06} 
 M.~Sh.~Birman and T.~A.~Suslina,
\textit{Homogenization with corrector term for periodic differential operators. Approximation
of solutions in the Sobolev class} $H^1 (\mathbb{R}^{d})$,
Algebra i Analiz  {\bf 18} (2006), no.~6, 1--130; English transl.,
St.~Petersburg  Math. J. {\bf 18} (2007), no.~6, 857--955.
\bibitem[BSu5]{BSu08} 
{M.~Sh.~Birman and T.~A.~Suslina, \textit{Operator error estimates in the homogenization problem for nonstationary periodic equations}, Algebra i Analiz {\bf 20} (2008), no.~6, 30--107; English transl.,
St.~Petersburg  Math. J. {\bf 20} (2009), no.~6, 873--928.}
\bibitem[BrOtFMu]{BrOFMu} 
{S. Brahim-Otsmane, G. A. Francfort, and F. Murat, \textit{Correctors for the homogenization of the wave and heat equations}, J. Math. Pures Appl. {\bf 71} (1992), 197--231.} 
\bibitem[BraLe]{BraLe} 
{M. Brassart and M. Lenczner, \textit{A two scale model for the periodic homogenization of the wave equation}, J. Math. Pures Appl. 
{\bf 93} (2010), no. 5, 474--517.}
\bibitem[CaDiCoCalMaMarG]{CaDCoCaMaMarG1} 
{J. Casado-Diaz, J. Couce-Calvo, F. Maestre, and J. D. Martin-Gomez, \textit{Homogenization and correctors for the wave equation
with periodic coefficients}, Math. Models Methods Appl. Sci. {\bf 24} (2014), 1343–-1388.}
\bibitem[ConOrV]{ConOrV} 
{C. Conca, R. Orive and M. Vanninathan, \textit{On Burnett coefficients in periodic media}, J. Math. Phys. {\bf 47}, 032902 (2006), no. 3.}
\bibitem[ConSaMaBalV]{ConSaMaBalV} 
{C. Conca, J. SanMartin, L. Balilescu and M. Vanninathan, \textit{Optimal bounds on dispersion coefficient in one-dimensional periodic media}, Math. Models Methods Appl. Sci. {\bf 19} (2009), no. 9, 1743--1764.}
\bibitem[DSu1]{DSu}  
{M. A. Dorodnyi and T. A. Suslina,  \textit{Homogenization of hyperbolic equations},  Funktsional.
Analiz i ego Prilozhen. {\bf 50} (2016), no.~4, 91--96; English transl., Funct. Anal.
Appl. {\bf 50} (2016), no.~4, 319--324.} 
\bibitem[DSu2]{DSu2} 
{M. Dorodnyi and T. Suslina, \textit{Homogenization of hyperbolic equations with periodic coefficients}, arXiv:1606.05868 (2016).}
\bibitem[DSu3]{DSu17} 
{M. Dorodnyi and T. Suslina, \textit{Spectral approach to homogenization of hyperbolic equations with periodic coefficients},  arXiv:1708.00859,  preprint (2017);} J. Differential Equations, to appear. DOI: 10.1016/j.jde.2018.02.023.
\bibitem[DSu4]{DSu17-2} M. A. Dorodnyi, T. A. Suslina, \textit{Homogenization of a nonstationary model equation of electrodynamics}, Mat. Zametki {\bf 102} (2017), no. 5, 700--720; English transl., Mathematical Notes {\bf 102} (2017), no. 5-6, 645--663. 
\bibitem[Ka]{K} T. Kato, \textit{Perturbation theory for linear operators}, Springer-Verlag, Berlin, 1995.
\bibitem[LaU]{LaU}
O.~A.~Ladyzhenskaya and N.~N.~Uraltseva,
\textit{Linear and quasi-linear equations of elliptic type},
Acad. Press, New York-London, 1968.
\bibitem[MaSh]{MaSh}
V.~G.~Maz'ya and T.~O.~Shaposhnikova,
\emph{Theory of multipliers in spaces of differentiable functions},
Monographs and Studies in Mathematica, vol. 23, Brookling (NY), 1985.
\bibitem[PSu]{PSu} 
 M.~A.~Pakhnin and T.~A.~Suslina,
\textit{Operator error estimates for homogenization of the elliptic Dirichlet problem in a bounded domain},
Algebra i Analiz \textbf{24} (2012), no. 6, 139--177;
English transl., St.~Petersburg  Math. J. {\bf 24} (2013), no.~6, 949--976.
\bibitem[Sa]{Sa} E.~Sanchez-Palencia, \textit{Non-homogeneous media and vibration theory}, 
Lecture Notes in Physics, vol. 127, Springer-Verlag, Berlin, 1980.
\bibitem[Su1]{Su04} 
T.~A.~Suslina, \textit{On homogenization of periodic parabolic systems}, 
Funktsional. Anal. i Prilozhen. \textbf{38} (2004), no. 4, 86--90;
English transl., Funct. Anal. Appl. \textbf{38} (2004), no. 4, 309--312.
\bibitem[Su2]{Su07}  T. A. Suslina, \textit{Homogenization of a periodic parabolic Cauchy problem}, Amer. Math. Soc. Transl. (2), vol. 220, 2007, pp. 201--233.
\bibitem[Su3]{Su_MMNP} 
{T. A. Suslina, \textit{Homogenization of a periodic parabolic Cauchy problem in the
Sobolev space $H^1(\mathbb{R}^d)$}, Math. Model. Nat. Phenom. {\bf 5} (2010), no. 4, 390--447.}
\bibitem[Su4]{SuAA10} T. A. Suslina, \textit{Homogenization in the Sobolev class $ H^1(\mathbb{R}^d)$ for second order periodic elliptic operators with the inclusion of first order terms}, Algebra i Analiz \textbf{22} (2010), no. 1, 108--222; English transl., St.~Petersburg  Math. J. {\bf 22} (2011), no. 1, 81--162.
\bibitem[Su5]{Su17} 
{T. Suslina, \textit{Spectral approach to homogenization of nonstationary Schrödinger-type equations}, J. Math. Anal. Appl. {\bf 446} (2017), no. 2, 1466--1523.}
\bibitem[ViKr]{ViKr} N. Ya. Vilenkin,  S. G. Krein et al., \textit{Functional Analysis}, Groningen (Netherlands): Wolters-Noordhoff Publishing, 1972.
\bibitem[ZhKO]{ZhKO} V.~V.~Zhikov, S.~M.~Kozlov, and O.~A.~Ole\u{\i}nik,
\textit{Homogenization of differential operators}, Springer-Verlag, Berlin, 1994.
\bibitem[Zh1]{Zh1}  V.~V.~Zhikov,
\textit{On operator estimates in homogenization theory},
Dokl. Ros. Akad. Nauk \textbf{403} (2005), no. 3, 305-308;
English transl., Dokl. Math. \textbf{72} (2005), no. 1, 534--538.
\bibitem[Zh2]{Zh2}  V.~V.~Zhikov, \textit{Some estimates from homogenization theory},
Dokl. Ros. Akad. Nauk \textbf{406} (2006), no. 5, 597-601; English transl., Dokl. Math. \textbf{73}
(2006), no. 1, 96--99.
\bibitem[ZhPas1]{ZhPas} V. V. Zhikov and S. E. Pastukhova, \textit{On operator estimates for some problems in homogenization theory}, Russ. J. Math. Phys. {\bf 12} (2005), no. 4, 515--524.
\bibitem[ZhPas2]{ZhPAs_parabol} 
{V. V. Zhikov and S. E. Pastukhova, 
\emph{Estimates of homogenization for a parabolic equation with periodic 
coefficients}, Russ. J. Math. Phys. {\bf 13} (2006), no. 2, 224--237.}
\bibitem[ZhPas3]{ZhPasUMN} 
V.~V.~Zhikov and S.~E.~Pastukhova, \textit{Operator estimates in homogenization theory}, Uspekhi Matem. Nauk {\bf 71 (429)} (2016), no.~3, 27--122; English transl., Russian~Math.~Surveys {\bf 71} (2016), no.~3, 417--511.
\end{thebibliography}
\end{document}